\newtheorem{theorem}{Theorem}
\newtheorem{lemma}{Lemma}
\newtheorem{proposition}[theorem]{Proposition}
\renewcommand*\env@matrix[1][*\c@MaxMatrixCols c]{%
  \hskip -\arraycolsep
  \let\@ifnextchar\new@ifnextchar
  \array{#1}}
\let\e=\varepsilon
\let\p=\partial
\let\O=\Omega
\let\o=\omega
\numberwithin{equation}{section}
\let\hide\iffalse
\let\unhide\fi
\newcommand{\R}{\mathbb{R}}
\renewcommand{\S}{\mathbb{S}}
\newcommand{\be}{\begin{equation}}
\newcommand{\bm}{\begin{multline}}
\newcommand{\ee}{\end{equation}}
\newcommand{\dd}{\mathrm{d}}
\newcommand{\xb}{x_{\mathbf{b}}}
\newcommand{\xbp}{x_{\mathbf{b},\parallel}}
\newcommand{\tb}{t_{\mathbf{b}}}
\newcommand{\vb}{v_{\mathbf{b}}}
\newcommand{\xf}{x_{\mathbf{f}}}
\newcommand{\tf}{t_{\mathbf{f}}}
\newcommand{\xba}{x_{\mathbf{b},1}}
\newcommand{\xbb}{x_{\mathbf{b},2}}
\newcommand{\Bes}{\begin{eqnarray*}}
\newcommand{\Ees}{\end{eqnarray*}}
\newcommand{\Be}{\begin{equation} }
\newcommand{\Ee}{\end{equation}}
\def\p{\partial}
\def\O{\Omega}
\def\R{\mathbb{R}}
\def\B{\begin{equation}}
\def\E{\end{equation}}
\def\BN{\begin{eqnarray*}}
\def\EN{\end{eqnarray*}}
\begin{document}

\title{Regularity of Boltzmann equation with External Fields in Convex Domains of Diffuse Reflection}
\author{Yunbai Cao}

 \address{Department of Mathematics, University of Wisconsin, Madison, WI 53706 USA}
\email{ycao35@wisc.edu}
\begin{abstract}
We consider the Boltzmann equation with external fields in strictly convex domains with diffuse reflection boundary condition. As long as the normal derivative of external fields satisfy some sign condition on the boundary (\ref{signEonbdry}) we construct classical $C^1$ solutions away from the grazing set. As a consequence we construct solutions of Vlasov-Poisson-Boltzmann system having bounded derivatives away from the grazing set (weighted $W^{1,\infty}$ estimate). In particular this improves the recent regularity estimate of such system in weighted $W^{1,p}$ space for $p<6$ in \cite{VPBKim}. 
\end{abstract}

\maketitle

\tableofcontents


\section{Introduction}
The object of kinetic theory is the modeling of particles by a distribution function in the phase space: $F(t,x,v)$ for $(t,x,v) \in [0, \infty) \times  {\O} \times \R^{3}$ where $\O$ is an open bounded subset of $\R^{3}$. Dynamics and collision processes of dilute charged particles with a field $E$ can be modeled by the Boltzmann equation
\Be\label{Boltzmann_E}
\partial_{t} F + v\cdot \nabla_{x} F + E\cdot \nabla_{v} F = Q(F,F).
\Ee
The collision operator measures ``the change rate'' in binary collisions and takes the form of
\Be\begin{split}\label{Q}
Q(F_{1},F_{2}) (v)&: = Q_\mathrm{gain}(F_1,F_2)-Q_\mathrm{loss}(F_1,F_2)\\
&: =\int_{\R^3} \int_{\S^2} 
B(v-u) \cdot \omega) [F_1 (u^\prime) F_2 (v^\prime) - F_1 (u) F_2 (v)]
 \dd \omega \dd u,
\end{split}\Ee   
where $u^\prime = u - [(u-v) \cdot \omega] \omega$ and $v^\prime = v + [(u-v) \cdot \omega] \omega$. Here, $B(v-u,\omega) = |v - u|^\kappa q_0( \frac{v-u}{|v -u |} \cdot \omega ) $ and $0 \le \kappa \le 1 $ (hard potential) and $0 \le q_0( \frac{v-u}{|v -u |} \cdot \omega ) \le C |\frac{v-u}{ |v -u | } \cdot \omega | $ (angular cutoff).

The collision operator enjoys collision invariance: for any measurable function $G$,  
\Be\label{collison_invariance}
\int_{\R^{3}} \begin{bmatrix}1 & v & \frac{|v|^{2}-3}{2}\end{bmatrix} Q(G,G) \dd v = \begin{bmatrix}0 & 0 & 0 \end{bmatrix} .
\Ee 
It is well-known that a global Maxwellian $\mu$ 
satisfies $Q(\mu,\mu)=0$ where
\Be\label{Maxwellian}
\mu(v):= \frac{1}{(2\pi)^{3/2}} \exp\bigg(
 - \frac{|v |^{2}}{2 }
 \bigg).
\Ee

Throughout this paper we assume that $\Omega$ is a bounded open subset of $\mathbb R^3$ and there exists a $C^3$ function $\xi: \mathbb R^3 \to \mathbb R$ such that $\Omega = \{ x \in \mathbb R^3: \xi(x) < 0 \}$, and $\partial \Omega = \{ x\in \mathbb R^3 : \xi(x) = 0 \}$. Moreover we assume the domain is \textit{strictly convex}:
\[
\sum_{i,j} \partial_{ij} \xi(x) \zeta_i \zeta_j \ge C_\xi |\zeta|^2 \, \text{ for all } \, \zeta \in \mathbb R^3 \text{ and for all } x\in \bar \Omega = \Omega \cup \partial \Omega.
\]
We assume that 
\Be \label{gradientxinot0}
\nabla \xi(x) \neq 0  \text{ when } |\xi(x) | \ll 1,
\Ee
and we define the outward normal as $n(x) = \frac{ \nabla \xi(x) }{ | \nabla \xi (x) |}$ at the boundary.
%
The boundary of the phase space $
\gamma := \{ (x,v) \in \partial \Omega \times \mathbb R^3 \}$ can be decomposed as 
\begin{equation} \begin{split}
\gamma_- = \{ (x,v) \in \partial \Omega \times \mathbb R^3 : n(x) \cdot v < 0 \}, &\quad (\text{the incoming set}),
\\ \gamma_+ = \{ (x,v) \in \partial \Omega \times \mathbb R^3 : n(x) \cdot v > 0 \}, &\quad (\text{the outcoming set}),
\\ \gamma_0 = \{ (x,v) \in \partial \Omega \times \mathbb R^3 : n(x) \cdot v = 0 \}, &\quad (\text{the grazing set}).
\end{split} \end{equation}
In general the boundary condition is imposed only for the incoming set $\gamma_-$ for general kinetic PDEs. In this paper we consider a so-called diffuse boundary condition
\Be \label{diffuseF}
F(t,x,v) = c_\mu \mu(v) \int_{n(x) \cdot u > 0 } F(t,x,u) \{ n(x) \cdot u \} du, \text{ on } (x,v) \in \gamma_-,
\Ee
with $ c_\mu \int_{n(x) \cdot u > 0 } \mu(u) \{ n(x) \cdot u \} du = 1$. For other important boundary condition, such as the specular reflection boundary condition, we refer \cite{Guo10,KL1,KL2} and the references therein.

Due to its importance of the Boltzmann equation in the mathematical theory and application,
there have been explosive research activities in analytic study of the equation. Notably the nonlinear energy method has led to solutions of many open problems including global strong solution of Boltzmann equation coupled with either the Poisson equation or the Maxwell system for electromagnetism when the initial data are close to the Maxwellian $\mu$ in periodic box (no boundary). See \cite{Guo_M} and the references therein. 
%
%
%
%
In many important physical applications, e.g. semiconductor and tokamak, the charged dilute
gas is confined within a container, and its interaction with the boundary plays a crucial role both in physics and mathematics. 

However, in general, higher regularity may not be expected for solutions of the Boltzmann equation in physical bounded domains. Such a drastic difference of solutions with boundaries had been
demonstrated as the formation and propagation of discontinuity in non-convex domains \cite{Kim11, EGKM}, and a non-existence of some second order derivative at the boundary in convex domains \cite{GKTT1}. Evidently the nonlinear energy method is not generally available to the boundary problems.
In order to overcome such critical difficulty, Guo developed a $L^2$-$L^\infty$ framework in \cite{Guo10} to study global solutions of the Boltzmann equation with various boundary conditions. The core of the method lays in a direct approach (without taking derivatives) to achieve a pointwise bound using trajectory of the transport operator, which leads substantial development in various directions including \cite{EGKM2, EGKM, GKTT1, GKTT2, KBOX}.
In \cite{GKTT1}, with the acid of some distance function towards the grazing set, they construct weighted classical $C^1$ solutions of Boltzmann equation ($E\equiv0$ in (\ref{Boltzmann_E})) with various boundary conditions away from the grazing set. They also construct $W^{1,p}$ solution for $1<p<2$ and weighted $W^{1,p}$ solutions for $2 \le p < \infty$ as well.
%
%

In the first part of the paper, we extend a result of \cite{GKTT1} to the Boltzmann equation (\ref{Boltzmann_E}) with an external field $(E\neq 0 )$ satisfying a crucial sign condition on the boundary:
\Be \label{signEonbdry}
E(t,x) \cdot n(x) > C_E > 0 \quad \text{ for all } t \text{ and all } x \in \partial \Omega.
\Ee
One of the major difficulties is that trajectories are curved and behave in a very complicated way when they hit the boundary. 

 We denote $\| \cdot \|_p$ the $L^p(\Omega \times \mathbb R^3 )$ norm, while $| \cdot |_{\gamma,p } = |\cdot |_p$ is the $L^p(\partial \Omega \times \mathbb R^3; d\gamma)$ norm, $| \cdot |_{\gamma_{\pm},p} = | \cdot \textbf{1}_{\gamma_{\pm}} |_{\gamma,p}$, $d \gamma = |n(x) \cdot v | d S_x dv$ with the surface measure $dS_x$ on $\partial \Omega$.

Our main results are $W^{1,p} (1<p<2)$ estimate, weighted $W^{1,p} (2\le p<\infty)$ estimate and weighted $C^1$ estimate for the solution of (\ref{Boltzmann_E}) with diffuse boundary condition (\ref{diffuseF}) in a short time. For the $W^{1,p}$ estimate with $1< p<2$, the result is

\begin{theorem}[$W^{1,p}$ Estimate for $1 < p< 2$] \label{W1ppless2thm} 
Suppose $E$ satisfies (\ref{signEonbdry}), and  $\| E \|_\infty< \infty$. Assume the compatibility condition of $F_0 = \sqrt \mu f_0 $ on $(x,v) \in \gamma_-$,
\begin{equation} \label{compatibility}
f_0(x,v) = c_\mu \sqrt{\mu(v) } \int_{n(x) \cdot u > 0 } f_0 (x,u) \sqrt{\mu(u) } (n(x) \cdot u ) du.
\end{equation}
If $\| e^{\theta |v|^2 } f_0 \|_\infty + \|  \nabla_{x,v} f_0 \|_p < \infty$ for some $0< \theta <1/4$ and any fixed $1 < p <2$, then there exists a unique solution $F(t) = \sqrt \mu f(t)$ for $t \in [0, T]$ with $0 < T \ll 1 $ to the system (\ref{Boltzmann_E}), (\ref{diffuseF}) that satisfies
, for all $0 \le t \le T$, 
\Be
\| e^{-\varpi \langle v \rangle t } \nabla_{x,v} f(t) \|_p^p + \int_0^t | e^{-\varpi \langle v \rangle s } \nabla_{x,v} f(s) |_{\gamma, p}^p ds + \| e^{\theta' |v|^2 }f(t) \|_\infty \lesssim_t \|\nabla_{x,v} f_0 \|_p^p + P(\| e^{\theta |v|^2} f_0 \|_\infty), 
\Ee
for some polynomial $P$, $0 < \theta' < \theta$, and $\varpi \gg 1$.
\end{theorem}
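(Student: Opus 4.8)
\emph{Proof strategy.} The plan is to pass to the perturbative unknown $f=F/\sqrt\mu$, which solves
\[ \partial_t f + v\cd\nabla_x f + E\cd\nabla_v f - \tfrac12(E\cd v)f = \Gamma(f,f),\qquad \Gamma(f,f)=\tfrac1{\sqrt\mu}Q(\sqrt\mu f,\sqrt\mu f)=\Gamma_{\mathrm{gain}}-\Gamma_{\mathrm{loss}}, \]
with $\Gamma_{\mathrm{loss}}$ of the form $f\,\nu_f$, $|\nu_f(v)|\ls\langle v\rangle^{\kappa}\|e^{\theta|v|^2}f\|_\infty$, and whose boundary condition is precisely (\ref{compatibility}) at all times. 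A solution is produced by the usual iteration: $f^{\ell+1}$ solves the linear transport equation above with $\Gamma_{\mathrm{gain}}$ bilinear in $(f^\ell,f^{\ell+1})$ and $\nu_f$ read off $f^\ell$ (and $E$ frozen at step $\ell$ when it is itself coupled, as in Vlasov--Poisson--Boltzmann); once the asserted bounds hold uniformly in $\ell$ the contraction and the passage to the limit on $[0,T]$ with $T\ll1$ are routine, uniqueness following from the same estimate applied to the difference of two solutions. So the content is the a priori estimate, established in two stages.

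First, the weighted $L^\infty$ bound, following Guo's $L^2$--$L^\infty$ scheme along the field-driven characteristics $(X(s;t,x,v),V(s;t,x,v))$ solving $\dot X=V,\ \dot V=E$: write the Duhamel representation along these curves, accounting for each diffuse bounce through the $\mu$-weighted average over $\{n\cd u>0\}$; move $\Gamma_{\mathrm{loss}}$ to the left as a (generalized) damping; decompose the gain operator (with one factor bounded in $L^\infty$) into a small part and a part with a smooth kernel, and iterate the representation once more so the surviving contribution becomes a bounded-domain $L^1_{x,v}$ integral dominated by $\|e^{\theta|v|^2}f\|_\infty$ (for the short time $T$ a pure $L^\infty$ closure suffices). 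Since all collision and field contributions carry a factor $T$ and the Gaussian weight dominates the linearly growing source $\tfrac12(E\cd v)f$, propagated up to the loss $\theta\to\theta'$, a Gronwall argument closes $\|e^{\theta'|v|^2}f(t)\|_\infty\ls_T P(\|e^{\theta|v|^2}f_0\|_\infty)$. The sign condition (\ref{signEonbdry}) is used here only to guarantee that each interior point has a well-defined backward bounce with no accumulation of bounce times, via the velocity lemma below.

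Second, the weighted $W^{1,p}$ bound. Apply $\partial\in\{\nabla_x,\nabla_v\}$ to the equation: $\partial f$ solves the same transport--collision equation with source $\partial\Gamma(f,f)$ plus the field commutators, the genuinely new one being $(\nabla_x E)\cd(\nabla_v f-\tfrac v2 f)$ — a lower-order term controlled by the regularity of $E$ (in the Vlasov--Poisson--Boltzmann application $\nabla_x E=\nabla^2\phi$ is bounded by elliptic regularity). Multiply by $p|\partial f|^{p-2}\partial f\,e^{-p\varpi\langle v\rangle t}$ and integrate over $\O\times\R^3$. The time derivative hitting the weight yields the good dissipative term $p\varpi\int\langle v\rangle|\partial f|^p e^{-p\varpi\langle v\rangle t}$, which for $\varpi\gg1$ (depending on $\|e^{\theta'|v|^2}f\|_\infty$ and $\|\nabla_x E\|_\infty$, hence on the data for short time) absorbs every contribution growing at most linearly in $|v|$: the derivatives of $\tfrac12(E\cd v)f$, and the $\langle v\rangle^{\kappa}$-growth of $\partial\Gamma$ and of $\Gamma_{\mathrm{loss}}$. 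The remaining collision terms are bounded by $\|e^{\theta'|v|^2}f\|_\infty\,\|e^{-\varpi\langle v\rangle t}\partial f\|_p^p$ using the first stage, and the Gaussian decay of $f$ makes the zeroth-order pieces $\ls\|e^{\theta'|v|^2}f\|_\infty^p$. Integrating $v\cd\nabla_x\partial f$ by parts produces the outgoing trace $|e^{-\varpi\langle v\rangle t}\partial f|_{\g_+,p}^p$ (which stays on the left) minus the incoming trace $|e^{-\varpi\langle v\rangle t}\partial f|_{\g_-,p}^p$; the latter is read off by differentiating the diffuse boundary condition, and — up to $\|f\|_\infty$-controlled terms coming from $\nabla_x n$ — is a $\sqrt\mu$-average of the outgoing trace, estimated via Jensen's inequality against the probability measure $c_\mu\mu(u)\{n\cd u\}\dd u$ on $\{n\cd u>0\}$. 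Gronwall in $t$ then closes the inequality against $\|\nabla_{x,v}f_0\|_p^p+P(\|e^{\theta|v|^2}f_0\|_\infty)$, provided the boundary traces are finite — which brings us to the obstacle.

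The main difficulty — and the source of both the restriction $p<2$ and the hypothesis (\ref{signEonbdry}) — is the behaviour of $\partial_{x,v}f$ near the grazing set $\g_0$ when the characteristics are curved. Writing $v=(n\cd v)n+v_{\parallel}$ and using the equation to replace $v\cd\nabla_x f$ on $\g_+$ gives $\partial_n f\sim(n\cd v)^{-1}\big[-\partial_t f-E\cd\nabla_v f-\nu_f f+\Gamma_{\mathrm{gain}}+\tfrac12(E\cd v)f-v_{\parallel}\cd\nabla_{\parallel}f\big]$, so the normal derivative blows up like $(n\cd v)^{-1}$ near $\g_0$, and the boundary norm $|\partial f|_{\g,p}^p=\int_{\g}|\partial f|^p|n\cd v|\,\dd S\,\dd v\sim\int|n\cd v|^{1-p}\,\dd S\,\dd v$ is finite exactly when $p<2$ (on $\g_-$ the $\mu$-average moreover cancels this factor against $n\cd u\,\dd u$, so there is no loss there). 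To rule out any worse blow-up — and to control the exit time $\tb$, point $\xb$, velocity $\vb$ and their $(x,v)$-derivatives, all defined only implicitly through the nonlinear flow $\dot X=V,\ \dot V=E$ — one needs a field-dependent velocity lemma: the quantity $\a(x,v)$ comparable to $|n(x)\cd v|^2+|\xi(x)|$ is quasi-invariant along characteristics because $\tfrac{\dd^2}{\dd s^2}\xi(X(s))=V\cd\nabla^2\xi(X)\cd V+E\cd\nabla\xi(X)$, which near $\pt\O$ is $\ge C_\xi|V|^2+|\nabla\xi|(E\cd n)>0$ precisely by strict convexity together with (\ref{signEonbdry}); hence $|n(\xb)\cd\vb|^2\approx\a(x,v)$ and the exit-data derivatives are no larger than $\a(x,v)^{-1/2}$. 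Establishing this lemma in the presence of multiple bounces, and checking it is stable along the approximating sequence, is where most of the work goes; the remainder is careful but standard bookkeeping of the two estimates above.
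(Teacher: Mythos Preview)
Your overall architecture is right --- iterate, get the weighted $L^\infty$ bound first, differentiate the equation, apply the $L^p$ energy identity with the weight $e^{-\varpi\langle v\rangle t}$, and use the equation on the boundary to express $\partial_n f$ with the $(n\cdot v)^{-1}$ singularity, which is integrable against $|n\cdot v|\,\dd v$ precisely when $p<2$. But there is a genuine gap in the closure of the boundary term, and your invocation of the velocity lemma is misplaced for this particular theorem.

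\textbf{The gap.} After you write $\partial_n f|_{\gamma_-}\sim(n\cdot v)^{-1}\{\partial_t f+\partial_\tau f+\nabla_v f+\cdots\}$ and compute $\partial_t f,\partial_\tau f,\nabla_v f$ on $\gamma_-$ from the diffuse condition (and the equation at the previous iterate), what survives is
\[
\int_0^t|e^{-\varpi\langle v\rangle s}\partial f^{m+1}|_{\gamma_-,p}^p
\ \lesssim\ \int_0^t\int_{\partial\O}\Big[\int_{n\cdot u>0}|\partial f^{m}(s,x,u)|\mu^{1/8}(u)(n\cdot u)\,\dd u\Big]^p\dd S_x\,\dd s\ +\ \text{data}.
\]
Jensen bounds the bracket by $C_0\int_0^t|\partial f^m|_{\gamma_+,p}^p$, but the constant $C_0$ is $O(1)$ (it picks up $\int_{\gamma_-}\mu^{p/2}\langle v\rangle^p|n\cdot v|^{1-p}\dd v$, finite for $p<2$ but not small). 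This is a \emph{boundary} term at the previous step, not a time-integrated bulk term, so Gronwall does not touch it and the iteration does not close. The paper's mechanism is different: split $\{n\cdot u>0\}=\gamma_+^\epsilon\cup(\gamma_+\setminus\gamma_+^\epsilon)$; on the almost-grazing set $\gamma_+^\epsilon$ the H\"older bound carries a factor $\big(\int_{\gamma_+^\epsilon}\mu^{c}(n\cdot u)\dd u\big)^{p-1}=o_\epsilon(1)$; on the non-grazing set one applies the trace theorem (Lemma~\ref{tracebddpotential}) to $|e^{-\varpi\langle v\rangle s}\partial f^m|^p$, converting the boundary integral into $\|\partial f_0\|_p^p+\int_0^t\|\partial f^m\|_p^p+\int_0^t\|\mathcal G^{m-1}\|\cdots$, all of which carry an $O(t)$ or data factor. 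Only after this decomposition does one get $a_{m+1}\le\tfrac18\max(a_m,a_{m-1})+D$ and the inductive bound. Your sketch omits exactly this step.

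\textbf{The role of the velocity lemma.} For the $p<2$ theorem the energy argument never sees $\tb,\xb,\vb$ or their derivatives --- the boundary contribution is handled entirely through the diffuse formula plus the trace theorem above, and no $\alpha$-weight appears. The sign condition (\ref{signEonbdry}) and the velocity lemma enter only indirectly, to guarantee that the traces of $\partial f$ exist (Proposition~\ref{inflowexistence1}), not to quantify the blow-up rate. Your last paragraph reads as if the velocity lemma is the analytic core here; it is central for the weighted results (Theorems~\ref{weightedw1ppge2theorem}--\ref{WlinftyVPBthm}) but not for this one.
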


In order to have weighted $W^{1,p}$ estimate for $p \ge 2$ and the weighted $C^1$ estimate, we introduce a distance function $\alpha(t,x,v)$ towards the grazing set $\gamma_0$:
\Be \label{alphatilde}
 \alpha(t,x,v) \sim \bigg[ |v \cdot \nabla \xi (x)| ^2 + \xi (x)^2 - 2 (v \cdot \nabla^2 \xi(x) \cdot v ) \xi(x) - 2(E(t,\overline x ) \cdot \nabla \xi (\overline x ) )\xi(x) \bigg]^{1/2}
\Ee
for $x \in \Omega$ close to boundary, where $\overline x := \{ \bar x \in \p \Omega :  d(x,\bar x ) = d(x, \partial \Omega) \}$ is uniquely defined. The precise definition of $\alpha$ can be found in (\ref{alphadef}). Note that $\alpha \vert_{\gamma_-} \sim | n(x) \cdot v |$, and similar distance function towards $\gamma_0$ was used in \cite{GKTT1,Guo_V,Hwang}. 

One of the crucial property $\alpha$ enjoys, under the assumption of the sign condition \eqref{signEonbdry}, is the velocity lemma (Lemma \ref{velocitylemma}):
\begin{equation} \label{vlemma}
e^{ - C \int_s ^ t \langle V(\tau') \rangle d \tau'} \alpha ( s,X(s),V(s) ) \le \alpha (t,x,v) \le  e^{C \int_s ^ t  \langle V(\tau') \rangle d \tau'} \alpha (s,X(s),V(s)).
\end{equation}
This can be seen by directly taking derivatives along the trajectory:
\Be \label{transderivbeta1}
 |\{  \p_t + v\cdot \nabla_x + E \cdot \nabla_v \} { \alpha}^2(t,x,v) | \sim |v| { \alpha}^2 +C |v| \xi (x),
\Ee
for some $C \lesssim_{\xi, E } 1 $. Now under \eqref{signEonbdry}, we get an extra stronger control for $\xi(x)$ from the last term of $ \alpha^2$, and therefore the second term on the right-hand side of \eqref{transderivbeta1} can be bounded by:
\Be \label{boundalphaxi}
C |v| \xi (x) \le \frac{C}{ \inf_{y \in \p \O} E(t,y) \cdot \nabla \xi (y) } |v| (E(t,\overline x ) \cdot \nabla \xi (\overline x ) ) \xi (x)  \le \frac{C}{ C_E}  \alpha^2(t,x,v).
\Ee
Thus combing \eqref{transderivbeta1} and \eqref{boundalphaxi} we obtain \eqref{vlemma} from Gronwall. \eqref{vlemma} tells that $\alpha$ is almost invariant along the characteristics, especially for small $t \ll 1$, which is crucially used for establishing the following theorems.


\begin{theorem}[Weighted $W^{1,p}$ Estimate for $2 \le p < \infty$]  \label{weightedw1ppge2theorem} Suppose $E$ satisfies the sign condition (\ref{signEonbdry}), and
\Be \label{c1bddforthepotentail}
\| E(t,x) \|_\infty + \| \nabla_x E(t,x) \|_\infty +  \| \partial_t E(t,x) \|_\infty < \infty.
\Ee
Assume the compatibility condition (\ref{compatibility}). For any fixed $2 \le p < \infty$ and $\frac{p-2}{p} < \beta < \frac{p-1}{p}$, if $\| \alpha^\beta \nabla_{x,v} f_0 \|_p + \| e^{\theta |v|^2} f_0 \|_\infty < \infty$ for some $0 < \theta < \frac{1}{4}$, then there exists a unique solution $F(t) = \sqrt \mu f(t)$ for $t \in [0,T]$ with $0 < T \ll 1$ to the system (\ref{Boltzmann_E}), (\ref{diffuseF}) that satisfies, for all $0 \le t \le T$, 
%
\Be
\| e^{-\varpi \langle v \rangle t }  \alpha ^\beta \nabla_{x,v} f (t) \|_p^p + \int_0^t |e^{-\varpi \langle v \rangle s }  \alpha ^\beta \nabla_{x,v} f(s)  |_{\gamma, p}^p ds + \| e^{\theta' |v|^2 } f(t) \|_\infty \lesssim_t  \| \alpha^\beta \nabla_{x,v} f_0 \|_p + P(\| e^{\theta |v|^2} f_0 \|_\infty),
\Ee
for some polynomial $P$, $0 < \theta' < \theta$, and $\varpi \gg 1$.
\end{theorem}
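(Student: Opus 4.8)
The plan is to prove the displayed inequality as an \emph{a priori} weighted $W^{1,p}$ bound and then convert it into existence and uniqueness by the usual linear iteration. The weighted $L^\infty$ bound $\|e^{\theta'|v|^2}f(t)\|_\infty \lesssim_t \|e^{\theta|v|^2}f_0\|_\infty + P(\|e^{\theta|v|^2}f_0\|_\infty)$ on $[0,T]$, $T\ll1$, is taken as already in hand (it is the $L^2$--$L^\infty$ part of the scheme, built from the stochastic cycles of the curved characteristics $\dot X=V$, $\dot V=E(t,X)$, and needs only $\|E\|_\infty<\infty$ and the sign condition \eqref{signEonbdry}); every collision and boundary source term below is measured against this $L^\infty$ norm. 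Writing $F=\sqrt{\mu}\,f$, the perturbation $f$ solves a transport equation whose right-hand side consists of the collision contribution $\Gamma(f,f)$ (equivalently a loss frequency times $f$ plus a velocity-smoothing gain) together with the term $\tfrac12(v\cdot E)f$ coming from $E\cdot\nabla_v\sqrt{\mu}$, and on $\gamma_-$ it satisfies the diffuse condition $f(t,x,v)=c_\mu\sqrt{\mu(v)}\int_{n(x)\cdot u>0}f(t,x,u)\sqrt{\mu(u)}(n(x)\cdot u)\,du$, i.e.\ \eqref{diffuseF} for $\sqrt{\mu}\,f$.

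First I would differentiate the equation by $\partial\in\{\partial_{x_i},\partial_{v_i}\}$. The commutator $[\partial_{v_i},v\cdot\nabla_x]$ produces only the same-order term $\partial_{x_i}f$; $[\partial_{x_i},E\cdot\nabla_v]$ produces $(\partial_{x_i}E)\cdot\nabla_v f$, which is why $\|\nabla_x E\|_\infty<\infty$ is assumed in \eqref{c1bddforthepotentail}; and the differentiated collision terms are bounded pointwise, by the standard $\Gamma$- and $K$-estimates, by $C\langle v\rangle\,\|e^{\theta|v|^2}f\|_\infty\big(|\nabla_{x,v}f|+(\text{a velocity average of }|\nabla_{x,v}f|)\big)$ plus terms pure in $\|e^{\theta|v|^2}f\|_\infty$. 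The resulting $\langle v\rangle$-growth (from the loss frequency, from these estimates, and, crucially, from the velocity-lemma exponent $e^{C\int_s^t\langle V\rangle}$ in \eqref{vlemma}) is exactly what the prefactor $e^{-\varpi\langle v\rangle t}$ is designed to kill: since $\langle V(\tau)\rangle\le\langle v\rangle+\|E\|_\infty(t-\tau)$ along a backward characteristic, taking $\varpi\gg1$ makes all such terms come with a small constant and keeps the estimate self-improving. Because the weight $\alpha=\alpha(t,x,v)$ is itself differentiated and carries $E(t,\overline x)$, the hypotheses $\|\nabla_x E\|_\infty,\|\partial_t E\|_\infty<\infty$ enter precisely through $\partial_t\alpha^2$ and the transport derivative \eqref{transderivbeta1}; and the velocity lemma \eqref{vlemma} is what lets me transfer the factor $\alpha^\beta$ freely between $(t,x,v)$ and any point $(s,X(s),V(s))$ of its characteristic, at the cost of $e^{\pm C\int_s^t\langle V\rangle}$, again absorbed by $e^{-\varpi\langle v\rangle t}$.

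The core is the pointwise trajectory (Duhamel) representation of $\nabla_{x,v}f(t,x,v)$ together with the boundary-bounce term inside it, followed by taking $L^p(\Omega\times\mathbb R^3)$ and $L^p(\gamma)$. If the backward characteristic through $(t,x,v)$ stays in $\Omega$ on $[0,t]$, then $\nabla_{x,v}f$ is $\nabla_{x,v}f_0$ transported (with Jacobian $\sim1$) plus the time integral of the differentiated source; multiplying by $\alpha^\beta$, using \eqref{vlemma}, and applying Minkowski and H\"older controls this part by $\|\alpha^\beta\nabla_{x,v}f_0\|_p$ and $\int_0^t\|\alpha^\beta\nabla_{x,v}f(s)\|_p^p\,ds$. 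If instead the characteristic meets $\partial\Omega$ at the first backward time $t-\tb$ at $(\xb,\vb)\in\gamma_-$, then $\partial$ hits the trajectory map, which contributes the singular factors $\nabla_{x,v}(\tb,\xb,\vb)$: \emph{proving that these are $\lesssim 1/|n(\xb)\cdot\vb|\sim1/\alpha(t-\tb,\xb,\vb)$ for the curved, field-driven characteristics is the one genuinely new estimate, and the main obstacle.} Moreover $\partial$ hits $f$ at $(t-\tb,\xb,\vb)\in\gamma_-$; there I substitute the differentiated diffuse condition, using the equation on the boundary to trade $\partial_t f$ for $v\cdot\nabla_x f$, so that $\partial_v f|_{\gamma_-}$ is controlled purely by $\|e^{\theta|v|^2}f\|_\infty$ while $\partial_x f|_{\gamma_-}$ reduces to $\int_{n(\xb)\cdot u>0}\langle u\rangle\sqrt{\mu(u)}\,|\nabla_{x,v}f(t-\tb,\xb,u)|(n(\xb)\cdot u)\,du$ plus an $\|e^{\theta|v|^2}f\|_\infty$ term, i.e.\ to the $\gamma_+$-trace of $\nabla_{x,v}f$. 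Multiplying the whole boundary term by $\alpha^\beta(t,x,v)$, transferring the weight to $(t-\tb,\xb,\vb)$ (so the net power is $|n(\xb)\cdot\vb|^{\beta-1}e^{C\int\langle V\rangle}$), changing variables $dx\,dv\approx|n(\xb)\cdot\vb|\,d\tb\,dS(\xb)\,d\vb$, and writing $\nabla_{x,v}f=\alpha^{-\beta}[\alpha^\beta\nabla_{x,v}f]$ with $\alpha(t-\tb,\xb,u)\sim|n(\xb)\cdot u|$ (since $\xi(\xb)=0$), one sees that $L^p$-integrability near the grazing set $\gamma_0$ forces exactly the two-sided window for $\beta$: the boundary power $|n(\xb)\cdot\vb|^{(\beta-1)p+1}$ must be locally integrable on $\gamma_-$, which gives $\beta>\frac{p-2}{p}$; and the negative-power integral $\int_{n(\xb)\cdot u>0}\langle u\rangle^{p'}\mu(u)^{p'/2}|n(\xb)\cdot u|^{1-\beta p'}\,du$ from applying H\"older to the reflected gain must converge, with the diffuse reflection then acting as a near-contraction on the weighted $\gamma_+$-trace modulo a lower-order piece, which is what pins $\beta<\frac{p-1}{p}$. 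The leftover $\gamma_+$-trace of $\alpha^\beta\nabla_{x,v}f$ is controlled by the Ukai-type trace inequality for the transport operator $\partial_t+v\cdot\nabla_x+E\cdot\nabla_v$, which bounds $\int_0^t|\alpha^\beta\nabla_{x,v}f|_{\gamma,p}^p$ by $\|\alpha^\beta\nabla_{x,v}f\|_p^p$ plus the interior (source and commutator) contributions already estimated.

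Collecting the interior-only part, the boundary-bounce part, and the trace part, and using $\|\alpha^\beta\langle v\rangle e^{-\varpi\langle v\rangle t}f\|_\infty\lesssim\|e^{\theta'|v|^2}f\|_\infty$, yields for $0\le t\le T$ a closed inequality bounding $\|e^{-\varpi\langle v\rangle t}\alpha^\beta\nabla_{x,v}f(t)\|_p^p+\int_0^t|e^{-\varpi\langle v\rangle s}\alpha^\beta\nabla_{x,v}f(s)|_{\gamma,p}^p\,ds$ by $\|\alpha^\beta\nabla_{x,v}f_0\|_p^p+P(\|e^{\theta|v|^2}f_0\|_\infty)+C_{T,\varpi}\int_0^t\|e^{-\varpi\langle v\rangle s}\alpha^\beta\nabla_{x,v}f(s)\|_p^p\,ds$, with $C_{T,\varpi}$ small when $\varpi\gg1$ and $T\ll1$, so that Gronwall closes the a priori bound (the exponent on $\|\alpha^\beta\nabla_{x,v}f_0\|_p$ is cleaned up afterwards). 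Existence and uniqueness of $F=\sqrt{\mu}\,f$ in this class then follow by running the same estimate on the linear iterates $f^{\ell+1}$ defined from $f^\ell$ (diffuse-reflecting data, linearized transport), which are uniformly bounded in the $e^{-\varpi\langle v\rangle t}\alpha^\beta W^{1,p}$ norm and, for $T\ll1$, form a Cauchy sequence in a weaker norm; uniqueness is the difference estimate in the same space. The step I expect to be hardest throughout is the geometric one flagged above: controlling $\nabla_{x,v}(\tb,\xb,\vb)$, the change-of-variables Jacobian, and the persistence of the velocity lemma \eqref{vlemma} for the bent characteristics, using only the structural input \eqref{signEonbdry}.
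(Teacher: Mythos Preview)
Your proposal is workable but takes a genuinely different route from the paper, and it misjudges where the real difficulty sits.

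\textbf{Framework difference.} The paper does \emph{not} close the weighted $W^{1,p}$ bound by taking $L^p$ of the Duhamel/trajectory formula. Instead it derives a transport equation for $e^{-\varpi\langle v\rangle t}\alpha^\beta\partial f^{m+1}$ (the weight $\alpha^\beta$ is differentiated along the flow, and \eqref{velocitylemmaderivative} guarantees the resulting zeroth-order term is $\gtrsim\frac{\varpi}{2}\langle v\rangle$ for $\varpi$ large), and then applies the Green's identity (Lemma~\ref{green'sidentity}) directly. This produces an energy inequality with three ingredients: a bulk source $\iint\alpha^{\beta p}|\mathcal G^m|\,|\partial f^{m+1}|^{p-1}$, the $\gamma_-$ boundary flux, and the $\gamma_+$ flux on the good side. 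No detailed control of $\nabla_{x,v}(\tb,\xb,\vb)$ is needed for the closure; those formulas \eqref{computation_tb_x} are only used in Propositions~\ref{inflowexistence1}--\ref{inflowproposition2} to make sense of traces, not in the $L^p$ estimate itself. So the step you flag as ``hardest'' is, in the paper's approach, essentially bypassed.

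\textbf{The actual crux you gloss over.} The technical heart of the paper's proof is the bulk nonlocal estimate \eqref{uv}:
\[
\sup_{x\in\Omega}\int_{\mathbb R^3}\frac{e^{-C_\theta|v-u|^2}}{|v-u|^{2-\kappa}}\,
\frac{[e^{-\varpi\langle v\rangle s}\alpha(s,x,v)]^{\beta p/(p-1)}}{[e^{-\varpi\langle u\rangle s}\alpha(s,x,u)]^{\beta p/(p-1)}}\,du
\ \lesssim\ 1,
\]
valid precisely when $\beta<\frac{p-1}{p}$. This is what lets you H\"older the collision kernel against $\alpha^\beta\partial f^m(u)$ and close in $L^p$; your phrase ``Minkowski and H\"older controls this part by $\int_0^t\|\alpha^\beta\nabla_{x,v}f(s)\|_p^p$'' hides exactly this computation. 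In your trajectory scheme the same estimate is unavoidable after the change of variables $(x,v)\mapsto(X(s),V(s))$, so you should state it explicitly. Note that the upper constraint $\beta<\frac{p-1}{p}$ is used \emph{twice}: once here in the bulk, and once in the boundary grazing split (your H\"older on $\int_{n\cdot u>0}|n\cdot u|^{1-\beta p'}\mu^{p'/4}\,du$), not only at the boundary as you suggest.

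\textbf{Boundary term.} Your treatment is essentially the paper's: the $\gamma_-$ flux is expressed via \eqref{seqbdrynoalpha} (derived from the equation and the diffuse condition, not from trajectory derivatives), multiplied by $\alpha^\beta\sim|n\cdot v|^\beta$, and split as $\gamma_+^\epsilon$ versus $\gamma_+\setminus\gamma_+^\epsilon$. The grazing piece gives $O(\epsilon^{1-\beta q})\int_0^t|\alpha^\beta\partial f^m|_{\gamma_+,p}^p$ (absorbed for small $\epsilon$), and the non-grazing piece is controlled by the trace theorem (Lemma~\ref{tracebddpotential}), which feeds back only bulk quantities already estimated. The condition $\beta>\frac{p-2}{p}$ enters exactly where you say, from local integrability of $|n\cdot v|^{\beta p-p+1}$ on $\gamma_-$.

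In short: your Lagrangian scheme would go through once you insert \eqref{uv}, but the paper's Eulerian (Green's identity) route is shorter and makes the role of the velocity lemma and of the two $\beta$-constraints more transparent; conversely, your route reuses machinery that the paper later needs anyway for the $C^1$ estimate, so it is a reasonable unification.
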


\begin{theorem}[Weighted $C^1$ Estimate]  \label{C1linearthm} 
Suppose $E$ satisfies (\ref{signEonbdry}), and (\ref{c1bddforthepotentail}).
Assume the compatibility condition (\ref{compatibility}).
If $\| \alpha \nabla_{x,v} f \|_\infty + \| e^{\theta |v|^2} f_0 \|_\infty < \infty$ for some $0 < \theta < \frac{1}{4}$, then there exists a unique solution $F(t) = \sqrt \mu f(t)$ for $t \in [0,T]$ with $0 < T \ll 1$ to the system (\ref{Boltzmann_E}), (\ref{diffuseF}) that satisfies 
%
for all $0 \le t \le T$,
\Be \label{weightedW1inftyforexternalp}
\| e^{-\varpi \langle v \rangle t }  \alpha  \nabla_{x,v} f (t) \|_\infty + \| e^{\theta' |v|^2 } f(t) \|_\infty  \lesssim_t  \| \alpha \nabla_{x,v} f_0 \|_\infty + P(\| e^{\theta |v|^2} f_0 \|_\infty), \text{ for all } 0\le t \le T,
\Ee
for some polynomial $P$, $0 < \theta' < \theta$, and $\varpi \gg 1$. If $\alpha \nabla f_0 \in C^0( \bar \Omega \times \mathbb R^3 ) $
is valid for $\gamma_-$, then $f \in C^1$ away from the gazing set $\gamma_0$.
\end{theorem}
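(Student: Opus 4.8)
The plan is to run the standard $L^2$--$L^\infty$ bootstrap of Guo, now adapted to curved characteristics and the weight $\alpha$. First I would set up the linearized-around-Maxwellian form of \eqref{Boltzmann_E}: writing $F = \sqrt\mu f$, the equation becomes $\p_t f + v\cdot\nabla_x f + E\cdot\nabla_v f + \nu(v) f = K f + \Gamma(f,f)$ (the field creates an extra lower-order term from $E\cdot\nabla_v(\sqrt\mu f)$, namely $-\tfrac12 (E\cdot v) f$, which is harmless since $\|E\|_\infty<\infty$). The strategy is iteration: solve the linear problem $\p_t f^{\ell+1} + v\cdot\nabla_x f^{\ell+1} + E\cdot\nabla_v f^{\ell+1} + \nu f^{\ell+1} = K f^\ell + \Gamma(f^\ell,f^\ell)$ with diffuse boundary data built from $f^\ell$, establish uniform-in-$\ell$ bounds for $\|e^{\theta'|v|^2} f^\ell\|_\infty$ and $\|e^{-\varpi\langle v\rangle t}\alpha\nabla_{x,v} f^\ell\|_\infty$ on a short interval $[0,T]$, then pass to the limit and prove contraction in a weaker norm for uniqueness. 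The zeroth-order $L^\infty$ bound on $f$ itself is obtained by the now-classical route: an $L^2$ decay/energy estimate for the linearized operator (using the coercivity of $\nu - K$ on $\ker(\mathbf I - \mathbf P)^\perp$ together with the boundary term from diffuse reflection), upgraded to $L^\infty$ via the Duhamel/stochastic-cycle representation along the characteristics $(X(s;t,x,v),V(s;t,x,v))$ of $\dot X = V$, $\dot V = E$; here the only new point relative to \cite{GKTT1} is that the characteristics are curved, but since $\|E\|_\infty<\infty$ over a short time the bounce-back map and the small-measure estimate for nearly-grazing velocities go through with $E$-dependent constants, and on a short time interval the number of bounces stays controlled.

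For the weighted gradient estimate, the key is to differentiate the equation and the boundary condition. Along characteristics, $\p_{x}, \p_{v}$ of $f$ satisfy transport equations whose source terms are (i) $\nabla_{x,v}(Kf+\Gamma)$, which is controlled by $\nabla f$ in a weaker weight plus $\|e^{\theta|v|^2}f\|_\infty$ via the usual commutator estimates for $K$ and $\Gamma$; (ii) terms from differentiating $\nu(v)f$ and the field term $E\cdot\nabla_v f$, the latter producing $\nabla_x E\cdot\nabla_v f$, bounded by \eqref{c1bddforthepotentail}; and (iii), most importantly, boundary contributions. Differentiating the diffuse condition \eqref{diffuseF} in $(x,v)$ brings out factors of $1/(n(x)\cdot v)$ from the Jacobian of the specular-like velocity reflection and from $\nabla_x n$, and these are exactly compensated by the weight $\alpha$: since $\alpha|_{\gamma_-}\sim |n(x)\cdot v|$, the quantity $\alpha\nabla_{x,v} f$ has a finite trace on $\gamma_-$ controlled by the bulk norm $\|e^{\theta|v|^2}f\|_\infty$ (the outgoing integral in \eqref{diffuseF} smooths in $v$). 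Then the velocity lemma \eqref{vlemma} is invoked to transport the weight: because $\alpha(s,X(s),V(s))$ and $\alpha(t,x,v)$ differ only by $e^{\pm C\int\langle V\rangle}$, the weight $\alpha$ evaluated at the current point can be traded for $\alpha$ at the boundary bounce or at time $0$, at the cost of the factor $e^{-\varpi\langle v\rangle t}$ with $\varpi$ chosen large relative to $C$ and $\nu$. Closing the resulting Gronwall inequality on $M(t):=\|e^{-\varpi\langle v\rangle t}\alpha\nabla_{x,v}f(t)\|_\infty$ gives \eqref{weightedW1inftyforexternalp} for $T\ll1$.

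Finally, for the $C^1$ statement: once the uniform weighted $W^{1,\infty}$ bound is in hand, continuity of $\alpha\nabla f$ away from $\gamma_0$ follows by propagating continuity along characteristics. Away from the grazing set, characteristics depend continuously (indeed $C^1$) on $(t,x,v)$ — the backward exit time $\tb$ and exit position are $C^1$ where $n(\xb)\cdot\vb\neq0$, which is guaranteed off $\gamma_0$ by strict convexity plus the sign condition \eqref{signEonbdry} ensuring trajectories genuinely cross the boundary transversally — so the Duhamel representation exhibits $f$ as a locally uniform limit of continuous functions, and the same for its (weighted) derivatives using the continuity of the boundary data $\alpha\nabla f_0$ on $\gamma_-$ and the smoothing in $K,\Gamma$. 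The main obstacle, and where the bulk of the work lies, is step (iii): tracking the precise singular factors produced when differentiating the diffuse boundary condition along a \emph{curved} trajectory and showing term-by-term that each is absorbed by one power of $\alpha$ via \eqref{vlemma} — in particular handling the contribution of $\p_x$ hitting the implicitly-defined bounce time/position $\tb(x,v),\xb(x,v)$, whose derivatives carry the $1/(n\cdot v)$ singularity, and verifying that the extra $E$-dependent terms in \eqref{alphatilde} do not spoil the compensation. This is precisely where the sign condition \eqref{signEonbdry}, through \eqref{boundalphaxi} and the velocity lemma, is indispensable.
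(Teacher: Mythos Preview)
Your outline captures the overall architecture correctly --- iteration, Duhamel along curved characteristics, the boundary singularity $1/(n\cdot v)$ being absorbed by one power of $\alpha$, and the velocity lemma transporting the weight --- but there is a genuine gap in step (i), and it is precisely the place where the $C^1$ case ($\beta=1$) differs from the $W^{1,p}$ cases.

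You write that $\nabla_{x,v}(Kf+\Gamma)$ ``is controlled by $\nabla f$ in a weaker weight \dots\ via the usual commutator estimates.'' This is where the argument breaks. After Duhamel, the non-local source produces a term of the form
\[
e^{-\varpi\langle v\rangle t}\,\alpha(t,x,v)\int_{\max\{0,t-\tb\}}^t \int_{\mathbb R^3}\frac{e^{-C_\theta|V(s)-u|^2}}{|V(s)-u|^{2-\kappa}}\,|\partial f(s,X(s),u)|\,du\,ds.
\]
To close in $\|e^{-\varpi\langle v\rangle t}\alpha\,\partial f\|_\infty$ you must bound
\[
\alpha(t,x,v)\int_{\max\{0,t-\tb\}}^t \int_{\mathbb R^3}\frac{e^{-C_\theta|V(s)-u|^2}}{|V(s)-u|^{2-\kappa}}\,\frac{1}{\alpha(s,X(s),u)}\,du\,ds.
\]
The velocity lemma relates $\alpha(t,x,v)$ to $\alpha(s,X(s),V(s))$ along the \emph{same} characteristic, but gives you nothing for $\alpha(s,X(s),u)$ with $u\neq V(s)$. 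And $\int_u \mathbf k(V(s),u)/\alpha(s,X(s),u)\,du$ is \emph{not} bounded pointwise in $(s,X(s))$: near the boundary $\alpha(s,X(s),u)\sim |n\cdot u|$, and $\int |n\cdot u|^{-1}\,du=\infty$. A Gronwall argument therefore cannot close directly.

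What the paper does instead --- and this is the heart of the $C^1$ proof --- is a two-step ``non-local to local'' estimate (Lemma~\ref{int1overalphabetainu} and Lemma~\ref{keylemma}). First, for $1<\beta<3$ one shows
\[
\int_{\mathbb R^3}\frac{e^{-\theta|V(s)-u|^2}}{|V(s)-u|^{2-\kappa}}\,\frac{du}{\alpha(s,X(s),u)^\beta}\ \lesssim\ \frac{1}{\big(|V(s)|^2|\xi(X(s))|+c(X(s))\big)^{(\beta-1)/2}},
\]
i.e.\ the $u$-average trades the non-local $\alpha^{-\beta}$ for a local quantity $\sim|\xi(X(s))|^{-(\beta-1)/2}$. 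Second, one integrates this in $s$ along the trajectory: near the boundary one uses monotonicity of $|\xi(X(s))|$ and the change of variables $ds\simeq d|\xi|/|V\cdot\nabla\xi|$ together with the velocity lemma to recover $\alpha(t,x,v)^{2-\beta}$; away from the boundary one uses the sign condition \eqref{signEonbdry} to force the lower bound $|\xi(X(s))|\gtrsim \sigma_2^2$ (your proposal does not mention this geometric lower bound, which is where \eqref{signEonbdry} enters crucially beyond the velocity lemma), and the exponential factor $e^{-\varpi(t-s)/2}$ then provides the missing $1/\varpi$ smallness. Choosing $1<\beta\le 2$ close to $1$ yields the closure. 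Without this dynamical estimate the $\beta=1$ endpoint simply does not close; this is exactly why the paper singles out Lemma~\ref{keylemma} as the key new ingredient relative to the $W^{1,p}$ theory.
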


For the second part of this paper we consider a so-called Vlasov-Poisson-Boltzmann system (VPB) where the potential consists of a self-generated electrostatic potential and an external potential: $E = \nabla \phi$, where
%
\begin{equation} \label{VPB2}
\phi(t,x) = \phi_F(t,x) + \phi_E(t,x), \text{ with } \frac{\p \phi_E}{\p n } > C_E > 0 \text{ on } \p \Omega,
\end{equation}
\begin{equation} \label{VPB3}
-\Delta_x \phi_F(t,x) = \int_{\mathbb R^3 } F (t,x,v)dv - \rho_0 \text{ in } \Omega,  \,\,  \frac{\partial \phi_F}{\partial n } = 0 \text{  on } \partial \Omega,
\end{equation}
with the same diffuse boundary condition (\ref{diffuseF}). The coupled system (\ref{Boltzmann_E}), (\ref{VPB2}), (\ref{VPB3}) describes the dynamics of collisional electrons in the presence of a external field. With the help of the external field $\phi_E$ and its sign condition on the boundary (\ref{signEonbdry}), we could construct a short time weighted $W^{1,\infty}$ solution to the VPB system, which improves the recent regularity estimate of such system in weighted $W^{1,p}$ space for $p< 6$ in \cite{VPBKim, CaoKRM}. It is important to note that $\alpha$ in (\ref{alphadef}) only depends on $E|_{\p \Omega}$, therefore $\nabla \phi_E$, but not $\phi_F$.
Our main result is
\begin{theorem}[Weighted $W^{1,\infty}$ estimate for the VPB system]  \label{WlinftyVPBthm} 
 Let $\phi_E (t,x)$ be a given external potential with $\nabla_x \phi_E$ satisfying (\ref{signEonbdry}), and 
\Be
\| \nabla_x \phi_E(t,x) \|_\infty + \| \nabla_x^2 \phi_E(t,x) \|_\infty +  \| \partial_t \nabla_x \phi_E(t,x) \|_\infty < \infty.
\Ee
 Assume that
\Be \label{VPBf0assumption}
\|  e^{\theta |v|^2 } \alpha \nabla_{x,v} f_0 \|_\infty + \| e^{\theta |v|^2 } f_0 \|_\infty + \| e^{\theta |v|^2 } \nabla_v f_0 \|_{L^3_{x,v}} < \infty,
\Ee
for some $0< \theta < \frac{1}{4}$.Then there exists a unique solution $F(t,x,v) = \sqrt \mu f(t,x,v) $ to (\ref{Boltzmann_E}), (\ref{VPB2}), (\ref{VPB3}) for $t \in [0,T]$ with $0 < T \ll 1$, such that for some $0< \theta' < \theta $, $\varpi \gg 1$,
\begin{equation}  \label{linfinitybddsolution}
\sup_{0\le t \le T} \| e^{\theta' |v|^2 } f(t) \|_\infty < \infty.
\end{equation}
Moreover
\begin{equation}\label{weightedC1bddsolution}
\sup_{0 \le t \le T}  \| e^{\theta' |v| ^2 } e^{-\varpi  \langle v \rangle t } \alpha  \nabla_{x,v}  f^{} (t,x,v) \|_\infty < \infty,
\end{equation}
and
\begin{equation} \label{L3L1plusbddsolution}
 \sup_{0 \le t \le T}\| e^{-\varpi \langle v \rangle t } \nabla_v f(t) \|_{L^3_x(\Omega)L_v^{1+\delta}(\mathbb R^3 ) } < \infty \text{ for } 0< \delta \ll 1.
\end{equation}
\end{theorem}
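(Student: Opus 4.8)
The plan is to reduce Theorem \ref{WlinftyVPBthm} to the linear theory, Theorems \ref{weightedw1ppge2theorem} and \ref{C1linearthm}, by an iteration in which the self-consistent field $\phi_F$ is frozen at each step. First I would set up the iteration: given $f^{(k)}$, solve the Poisson problem \eqref{VPB3} for $\phi_F^{(k)}$ with Neumann data, set $E^{(k)} = \nabla\phi_E + \nabla\phi_F^{(k)}$, and then solve the linear Boltzmann transport equation \eqref{Boltzmann_E} with field $E^{(k)}$ and diffuse boundary data \eqref{diffuseF} to produce $f^{(k+1)}$, with $f^{(0)}$ the solution of the free-transport-plus-collision problem. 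The crucial structural point, already emphasized in the excerpt, is that $\alpha$ in \eqref{alphadef} depends only on $E|_{\partial\Omega}$, hence only on $\nabla\phi_E$, which is \emph{fixed} throughout the iteration; so the sign condition \eqref{signEonbdry} and the velocity lemma \eqref{vlemma} hold uniformly for every $E^{(k)}$, \emph{provided} I can show $\nabla\phi_F^{(k)}$ is small in $\|\cdot\|_\infty$ (or at least that its normal trace does not spoil the sign), which forces $T\ll 1$.

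Next I would establish the uniform-in-$k$ bounds. The $L^\infty$ bound \eqref{linfinitybddsolution} comes from the $L^2$–$L^\infty$ machinery of Guo adapted to nonzero field (as used for the linear theorems): one needs $\|e^{\theta|v|^2}f^{(k)}_0\|_\infty$ and control of $\|E^{(k)}\|_\infty$, the latter following from elliptic regularity $\|\nabla\phi_F^{(k)}\|_\infty \lesssim \|\int F^{(k)}dv\|_{L^p}$ for $p>3$, which in turn is controlled by $\|e^{\theta|v|^2}f^{(k)}\|_\infty$. The weighted $C^1$ bound \eqref{weightedC1bddsolution} is exactly the output of Theorem \ref{C1linearthm} applied with $E = E^{(k)}$, once I verify its hypotheses for $E^{(k)}$: here I need $\|\nabla_x E^{(k)}\|_\infty$ and $\|\partial_t E^{(k)}\|_\infty$ finite, i.e. $\|\nabla_x^2\phi_F^{(k)}\|_\infty$ and $\|\partial_t\nabla_x\phi_F^{(k)}\|_\infty$ finite; the first is elliptic regularity again, and the second requires differentiating Poisson in $t$ and using the transport equation to write $\partial_t\!\int F dv$ in terms of $\nabla_x\!\int vF\,dv$ and boundary flux terms, which is where the extra assumption $\|e^{\theta|v|^2}\nabla_v f_0\|_{L^3_{x,v}}$ enters and produces the auxiliary bound \eqref{L3L1plusbddsolution}. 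I would propagate \eqref{L3L1plusbddsolution} along the iteration by an $L^3_xL^{1+\delta}_v$ estimate on $\nabla_v f^{(k)}$, using that velocity derivatives of the transport equation do not see the $v$-singularity of $\alpha$ as badly, together with Hölder to absorb the $e^{\theta|v|^2}$ weight.

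Then I would prove contraction: estimate $f^{(k+1)}-f^{(k)}$ in a \emph{low-regularity} norm — say weighted $L^\infty$ or $L^2$ in $(x,v)$ with the $e^{\theta'|v|^2}$ weight — exploiting that the difference of fields $E^{(k)}-E^{(k-1)} = \nabla(\phi_F^{(k)}-\phi_F^{(k-1)})$ is controlled by $\|\int(F^{(k)}-F^{(k-1)})dv\|_{L^p}$, which is small for $T\ll 1$; the collision term difference is handled by the standard bilinear estimate for $\Gamma$, and the transport/boundary terms contribute a factor $O(T)$ or $O(\sqrt T)$ from Duhamel. This gives a Cauchy sequence in the weak norm; combined with the uniform bounds in the strong ($C^1_\alpha$, $L^3L^{1+\delta}$) norms and interpolation/weak-$*$ compactness, the limit $f$ solves the VPB system and inherits \eqref{linfinitybddsolution}–\eqref{L3L1plusbddsolution}. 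Uniqueness follows from the same contraction estimate applied to two solutions.

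The main obstacle I anticipate is the time-derivative bound on the self-consistent field, i.e. controlling $\|\partial_t\nabla_x\phi_F\|_\infty$ uniformly along the iteration. Unlike $\|\nabla_x\phi_F\|_\infty$ and $\|\nabla_x^2\phi_F\|_\infty$, which are immediate from elliptic regularity on the density, the $t$-derivative forces one to differentiate the Boltzmann equation in $t$ — or equivalently to trade $\partial_t$ for $v\cdot\nabla_x$ plus the field and collision terms — and the resulting current $\int vF\,dv$ must be estimated in a space strong enough ($W^{1,p}$ with $p>3$, or $L^3_xL^{1+\delta}_v$ for its $v$-gradient) to feed back into $C^0$ of $\nabla_x\phi_F$; closing this loop is precisely what dictates the somewhat unusual hypothesis \eqref{VPBf0assumption} and the extra conclusion \eqref{L3L1plusbddsolution}, and getting the weights and the $T\ll 1$ smallness to cooperate is the delicate part.
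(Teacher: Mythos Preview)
Your overall architecture (freeze the field, iterate, propagate uniform bounds, contract in a weak norm, pass to the limit) matches the paper's. But two of your key attributions are off, and the first one is the actual gap.

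The obstacle you flag --- controlling $\|\partial_t\nabla\phi_F^{(k)}\|_\infty$ --- is a phantom, and your proposed route through it misdirects the whole argument. The only place $\partial_t E$ enters the velocity lemma (and hence the weighted $C^1$ machinery) is the term $\partial_t E(t,\bar x)\cdot\nabla\xi(\bar x)$ with $\bar x\in\partial\Omega$, i.e.\ $\partial_t(E\cdot n)|_{\partial\Omega}$. The Neumann condition $\partial\phi_{F^m}/\partial n=0$ kills the self-consistent contribution there for all $t$, so this term is controlled by $\|\partial_t\nabla\phi_E\|_\infty$, a fixed hypothesis independent of $m$. You therefore cannot apply Theorem~\ref{C1linearthm} as a black box (its hypothesis \eqref{c1bddforthepotentail} is stated for a generic $E$), but if you rerun its proof with $E=-\nabla\phi^m$ the $\partial_t E$ dependence is harmless. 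Correspondingly, the role of the $L^3_xL^{1+\delta}_v$ bound \eqref{L3L1plusbddsolution} is \emph{not} to feed $\partial_t$ of the current back into the field; it is needed only in the \emph{stability} step, to handle the difference term $(\nabla\phi_{F^m}-\nabla\phi_{F^{m-1}})\cdot\nabla_v f^m$ via H\"older with exponents $\tfrac{3(1+\delta)}{2-\delta},\,3,\,\tfrac{1+\delta}{\delta}$ and the embedding $W^{1,1+\delta}(\Omega)\subset L^{3(1+\delta)/(2-\delta)}(\Omega)$. This also means your contraction norm should be $L^{1+\delta}_{x,v}$, not weighted $L^\infty$ (where the $\nabla_v f$ factor, carrying $\alpha^{-1}$, does not close) or $L^2$.

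Second, ``the first is elliptic regularity again'' for $\|\nabla^2\phi_{F^m}\|_\infty$ is too fast: this is the borderline Schauder case, and $L^\infty$ of the density does not give $C^2$ of the potential. The paper gets $C^{0,1-3/p}$ of $\int f^m\sqrt\mu\,dv$ by Morrey from $\|\int\nabla_x f^m\sqrt\mu\,dv\|_{L^p(\Omega)}$ for $p>3$, which it bounds by $\|e^{-\varpi\langle v\rangle t}\alpha\nabla_x f^m\|_\infty\cdot\|\int e^{\varpi\langle v\rangle t}\sqrt\mu\,\alpha^{-1}\,dv\|_{L^p(\Omega)}$, the second factor being finite by a direct computation. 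This couples $\|\nabla^2\phi_{F^m}\|_\infty$ back to the weighted $C^1$ norm of $f^m$, so the uniform-in-$m$ estimate must be closed by induction: assume the weighted $C^1$ bound up to step $m$, use it to bound $\|\nabla^2\phi^{m}\|_\infty$ and hence the velocity-lemma constant, then choose $\varpi$ large and $T$ small to recover the bound at step $m+1$.
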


We now illustrate the main ideas in the proof of the theorems. The intrinsic difficulty of regularity estimates stems from the singularity of the spatial normal derivative of $F$ at the boundary. From the equation (\ref{Boltzmann_E}), formally we have
\Be \label{pnF}
\frac{\p F}{\p n } \sim \frac{1}{n \cdot v } \Big\{ Q(F,F) - E \cdot \nabla_v F - \p_t F - \sum_{i=1}^2 \tau_i \p_{\tau_i} F \Big\} \text{ on } \p \Omega,
\Ee
where $\tau_1(x)$ and $\tau_2(x)$ are unit tangential vectors to $\p \Omega$ satisfying 
\begin{equation} \label{tangentialderivativedef}
\tau_1(x) \cdot n(x) = 0 = \tau_2 (x) \cdot n(x) \text{ and } \tau_1(x) \times \tau_2(x) = n(x).
\end{equation}
We note that the non-local term $Q(F,F)$ prevents the right hand side of (\ref{pnF}) from vanishing and hence this singularity persists in general.

The proofs of Theorem 1-3 devote a \textit{nontrivial} extension of the argument of \cite{GKTT1} in the presence of external fields with the crucial sign condition (\ref{signEonbdry}). For Theorem \ref{W1ppless2thm}, we establish the green's identity for transport equation with external field and apply it to the derivatives $\nabla_{x,v} f$.
Clearly, the $v$ derivatives behave nicely for the diffuse boundary condition. For the $x$ derivatives on the boundary, one can decompose $\nabla_x$ as the tangential derivatives $\partial_\tau$ and normal derivative $\partial_n$. As in \cite{GKTT1}, we use the Boltzmann equation and the diffuse boundary condition to find a formula of $\partial_n f$ on $\gamma_-$:
\Be \label{pnf}
\p_n f \sim \frac{1}{n\cdot v } \int_{n \cdot u > 0 } \Big\{ - u \cdot \nabla_x f +\sum_{i=1}^2 \p_{\tau_i} f + \nabla_v f + \text{ lower order terms } \Big\}  \left( n(x) \cdot u \right) du.
\Ee
Due to the crucial factor $|n(x) \cdot u |$ in the integral of (\ref{pnf}), the boundary integral of $L^p$ in the green's identity has integrand with singularity as order
%
\[
\frac{1}{(n\cdot v)^{p-1}} \in L^1_{\text{loc}}(v) \ \ \text{for} \ \ 1<p<2. 
\]

The distance function $\alpha$ plays a crucial role in the proofs of Theorem \ref{weightedw1ppge2theorem}, Theorem \ref{C1linearthm}, and Theorem \ref{WlinftyVPBthm}, which can be controlled along the characteristics via the geometric velocity lemma (Lemma \ref{velocitylemma}). Note that in the presence of external fields and \eqref{transderivbeta1}, \eqref{boundalphaxi}, we can prove the velocity lemma \textit{only} when the sign condition (\ref{signEonbdry}) holds. Because of the non-local nature of the Boltzmann collision operator, which mixes up different velocities $u \in \mathbb R^3$, we establish a delicate estimate for the interaction of $ \alpha^\beta(t,x,v)$ with the collision kernel in (\ref{uv}), where by the way $\alpha$ is defined, we can control
\[
\int_{|u|<1 } \frac{1}{\{ \alpha(s,x,u) \}^{\frac{\beta p }{ p -1 } }}  du \lesssim \int_{|u| < 1}  \frac{1}{|n(x) \cdot u|^{\frac{\beta p }{ p -1 } }} du < \infty \ \ \text{for} \  \ \beta < \frac{p-1}{p}.
\]
 On the other hand, the appearance of $|n(x) \cdot v |^{ \beta p -p + 1 } $ in the boundary estimate will need
 an additional requirement $\beta > \frac{p-2}{p}$ to control the boundary singularity in (\ref{gamma-pge2}). These estimates are sufficient to treat the case for $\beta < 1$, but unfortunately fail for the use $\beta =1$, which accounts for the important $C^1$ estimate.

In order to establish the $C^1$ estiamte, we employ the Lagrangian view point, estimating along the trajectory. Even though one can not hope to control the regularity near $\gamma_0$ due to non-local nature of the collision operator, one can control its singular behavior (i.e. with weight $\alpha$) with an important dynamical non-local to local estimate (Lemma \ref{keylemma}). The crucial gain of $\alpha$, which only can be obtained for expected singular behavior with negative power of $\alpha$, is due to a combination of two facts: the gain of power $1$ is due to a velocity average, and gain of the local behavior of $\alpha$ is due to time integration and convexity.

The proof of such non-local to local estimates is a combination of analytical and geometrical arguments. The first part (Lemma \ref{int1overalphabetainu}) is a precise estimate of the velocity integration which is bounded by $  |\xi (X(s)) | ^{-\frac{\beta -1 }{2}},$ here one may roughly regard $\xi(X(s)) \sim \text{dist}(X(s),\p \Omega )$.
 In this part of the proof we make use of a series of change of variables to obtain the precise power $\frac{\beta -1}{2}$. The second part is to relate the time integration back to $\frac{1}{\alpha}$. For this part of proof, we first have the velocity lemma (Lemma \ref{velocitylemma}) and the boundedness of the external field to ensure the monotonicity of $|\xi(X(s))|$ near the boundary, where we can use the change of variable
\[
dt \simeq \frac{d\xi}{|v\cdot \nabla \xi |},
\]
and recover a power of $\alpha$ as in the bound of $\xi$-integration through the velocity lemma (Lemma \ref{velocitylemma}). On the other hand, we use the sign condition (\ref{signEonbdry}) crucially to establish a lower bound for $|\xi(X(s))|$ when it's away from the boundary, which helps to recover a power of $\alpha$ as wanted.

In Theorem \ref{WlinftyVPBthm}, we apply the idea of weighted $C^1$ estimate, essentially the non-local to local estimate (Lemma \ref{keylemma}), to the VPB system. 
Here the argument is more delicate as the potential is no longer fixed as in previous case. Thus in the bulk we have to control the quadratic nonlinear term
\[
  \p \nabla \phi \cdot \nabla_v f  .
\]
In order to handle this term we need a bound for $\phi_F(t)$ in $C^2_x$. Unfortunately such estimate is a boarder line case of the well-known Schauder elliptic regularity theory in (\ref{VPB3}) when $F$ is merely continuous or bounded. A key observation is that
\[
 \left \| \int_{\mathbb R^3} \nabla_x f(t) \sqrt \mu dv \right \|_{L^p(\Omega ) } \lesssim  \left\| e^{-\varpi \langle v \rangle t } \alpha \nabla_{x} f (t) \right\|_\infty \left\| \int_{\mathbb R^3 } e^{ \varpi \langle v \rangle t } \sqrt \mu \frac{1}{\alpha} dv \right\|_{L^p(\Omega ) },
\]
which leads $C^{2,0+}$ bound of $\phi_F$ by the Morrey inequality for $p>3$ as we can bound $\left\| \int_{\mathbb R^3 } e^{ \varpi \langle v \rangle t } \sqrt \mu \frac{1}{\alpha} dv \right\|_{L^p(\Omega ) }< \infty$ in (\ref{int1alphaLpbdd}).

For constructing a solution and proving its uniqueness, we need some \textit{stability} estimate of the difference of the solutions $f-g$. The difficulty comes from again the term of $\nabla_x \phi_F \cdot \nabla_ v f$. To prove $L^{q}$-stability for $q=1+\delta$ with $0< \delta \ll 1$ we have, by Sobolev embedding $\nabla_x \phi_{f-g} \in W^{1, q } (\O) \subset L(\O)^{ \frac{3q}{3-q}}$, 
\Be\notag
\iint |\nabla_x \phi_{f-g} \cdot \nabla_v f | |f-g|^{q-1}
\lesssim  \|\nabla_x \phi_{f-g}\|_{L_x^{ \frac{3q}{3-q}}}  \left\| \| \nabla_v f \|_{L^q_v} \right\|_{L^3_x} \left\| |f-g|^{q-1}\right\|_{L^{\frac{q}{q-1}}_{x,v}  }. 
\Ee
Note that $\nabla_v f$ is bounded from the boundary condition (\ref{diffuseF}). However the equation of $\nabla_v f$ has $\nabla_x f$ as a forcing term. Therefore the key term to bound $\left\| \| \nabla_v f \|_{L^{q}_v} \right\|_{L^3_x}$ for $q=1+ \delta $ is
\Be\notag\begin{split}
\left\|\left\|\int^t_0 \nabla_x f(s,X(s;t,x,v), V(s;t,x,v)) \dd s \right\|_{L^{1+\delta}_v}\right\|_{L^{3}_x} 
\lesssim  \sup_t
\Big\| \frac{e^{-\frac{\theta'}{2} |v|^2 }}{ \alpha} \Big\|_{L^3_x L^{1+\delta}_v}
\| e^{\theta' |v|^2} e^{-\varpi \langle v \rangle t } \alpha \nabla_x f \|_{\infty} < \infty,
\end{split}\Ee
as $\sup_t \| \frac{e^{-\frac{\theta'}{2} |v|^2 }}{ \alpha} \|_{L^3_x L^{1+\delta}_v} < \infty$.


\section{Traces and in-flow problems with external fields}
Now let $F(t,x,v) = \sqrt{\mu} f(t,x,v)$. Then the corresponding problem to (\ref{Boltzmann_E}), (\ref{diffuseF}) is
\begin{equation} \label{Bextfield1}
(\partial_t + v \cdot \nabla_x + E \cdot \nabla_v  - \frac{v}{2} \cdot E+ \nu( \sqrt \mu f) ) f = \Gamma_{\text{gain}} (f,f),
\end{equation}
\Be \label{diffusebdycondtionf}
f(t,x,v) = c_\mu \sqrt \mu(v) \int_{n(x) \cdot u > 0 } f(t,x,u) \sqrt{\mu(u)} \{ n(x) \cdot u \} du, \text{ on } (x,v) \in \gamma_-.
\Ee
Here
 \[ \begin{split}
 \nu (\sqrt \mu f )(v) : & = \frac{1}{\sqrt \mu (v) } Q_{\text{loss}} (\sqrt \mu f, \sqrt \mu f )(v)
 \\ & = \int_{\mathbb R^3 } \int_{\mathbb S^2} |v - u|^{\kappa} q_0 (\frac{ v -u}{ |v -u| } \cdot w ) \sqrt{\mu(u) } f(u) d\omega du,
\end{split} \]
and
\[ \begin{split}
 \Gamma_{\text{gain}} (f_1,f_2) (v): & = \frac{1}{\sqrt \mu (v) } Q_{\text{gain}} (\sqrt \mu f_1, \sqrt \mu f_2 )(v)
 \\ & = \int_{\mathbb R^3 } \int_{\mathbb S^2} |v - u|^{\kappa} q_0 (\frac{ v -u}{ |v -u| } \cdot w ) \sqrt{\mu(u) } f_1(u')f_2(v') d\omega du.
\end{split} \]
%
%
Throughout this paper we extend $f$ for a \textit{negative time}. Let 
\Be\label{negative_t_extension}
f(s,x,v) := e^{s} f_0(x,v) \ \ \text{for} \ - \infty<s<0.
\Ee
Note that this allows $\phi_F$ to solve (\ref{VPB3}) for a negative time. 

For $(t,x,v) \in (-\infty,T]  \times  \O \times \R^3$, let $(X(s;t,x,v),V(s;t,x,v))$ denotes the characteristics
\Be\label{hamilton_ODE}
\frac{d}{ds} \left[ \begin{matrix}X(s;t,x,v)\\ V(s;t,x,v)\end{matrix} \right] = \left[ \begin{matrix}V(s;t,x,v)\\ 
E (s, X(s;t,x,v))\end{matrix} \right]  \ \ \text{ for }   -\infty <  s ,  t \le T  ,
\Ee
with $(X(t;t,x,v), V(t;t,x,v)) =  (x,v)$.

We define \textit{the backward exit time} $\tb(t,x,v)$ as   
\Be\label{tb}
\tb (t,x,v) := \sup \{s \geq 0 : X(\tau;t,x,v) \in \O \ \ \text{for all } \tau \in (t-s,t) \}.
\Ee
Furthermore, we define $\xb (t,x,v) := X(t-\tb(t,x,v);t,x,v)$, and $\vb(t,x,v) := V(t-\tb(t,x,v);t,x,v)$. We also define the  \textit{the forward exit time} $\tf(t,x,v)$ as   
$
\tf (t,x,v) := \sup \{s \geq 0 : X(\tau;t,x,v) \in \O \ \ \text{for all } \tau \in (t,t+s) \}.
$

For the rest part of the section we prove some estimates for the initial-boundary problems of the transport equation with a given time dependent potential $E(t,x)$ which is defined for all $t \in \mathbb R$.
\begin{equation} \label{traninflowfixE}
\partial_t f + v \cdot \nabla_x f+ E \cdot \nabla_v f + \nu f = H,
\end{equation}
where $H=H(t,x,v)$ and $\nu = \nu (t,x,v)$ are given. 

\begin{lemma} \label{lbtb}
Let $D = \sup\{|x -y | : x, y \in \bar \Omega \}$ be the diameter of the domain $\Omega$. Suppose $\| E \|_\infty < \infty$, and let $0 < T<1$ be fixed. Then for any $(t,x,v) \in [0,T] \times \bar \Omega \times \mathbb R^3$ we have
\begin{equation} \label{intVbdd}
\int_{\max\{0, t -\tb\}} ^t |V(s) | ds < 5t(\| E\|_\infty + D ) + 4D.
\end{equation}

\end{lemma}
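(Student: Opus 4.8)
The plan is to bound $|V(s)|$ along the backward trajectory by integrating the velocity ODE and then integrate in $s$. First I would observe that from the Hamiltonian system \eqref{hamilton_ODE}, for $s \le t$ we have $V(s) = v - \int_s^t E(\tau, X(\tau)) \, d\tau$, so that $|V(s)| \le |v| + (t-s)\|E\|_\infty \le |v| + t\|E\|_\infty$ on the interval $[\max\{0,t-\tb\}, t]$. This reduces everything to a bound on $|v|$ itself — but $|v|$ can be arbitrarily large, so a naive bound will not close. The key point must be that the \emph{time-integral} of $|V(s)|$ over the trajectory from the exit point (or from time $0$) to time $t$ is controlled, because the trajectory stays inside the bounded domain $\Omega$ and hence cannot travel a distance more than $D$ in the $X$-variable.

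Concretely, I would split into two cases according to whether $t - \tb < 0$ (the trajectory reaches time $0$ before exiting, so we integrate on $[0,t]$) or $t-\tb \ge 0$ (the trajectory exits at the boundary, so we integrate on $[t-\tb, t]$). In the second case, write $s_* = t - \tb$, so $X(s_*) = \xb \in \partial\Omega$ and $X(t) = x$. Integrating $\frac{d}{ds}X(s) = V(s)$ would relate $\int_{s_*}^t V(s)\,ds = x - \xb$, but that controls only the integral of the \emph{vector} $V$, not $\int |V(s)|\,ds$. To get the integral of the magnitude I would use the near-constancy of $V$: from $|V(s) - v| \le t\|E\|_\infty$ for all $s$ in the interval, either $|v| \le 2t\|E\|_\infty$, in which case $\int_{s_*}^t |V(s)|\,ds \le t(|v| + t\|E\|_\infty) \le 3t^2\|E\|_\infty \le 3t\|E\|_\infty$ (using $T<1$), or $|v| > 2t\|E\|_\infty$, in which case $V(s)$ stays within a fixed cone around $v$ and in particular $|V(s)| \le |v| + t\|E\|_\infty \le \tfrac{3}{2}|v|$ while $V(s)\cdot \frac{v}{|v|} \ge |v| - t\|E\|_\infty \ge \tfrac12|v|$, so $\big|\int_{s_*}^t V(s)\,ds\big| \ge \int_{s_*}^t V(s)\cdot\tfrac{v}{|v|}\,ds \ge \tfrac13 \int_{s_*}^t |V(s)|\,ds$; combining with $\big|\int_{s_*}^t V(s)\,ds\big| = |x - \xb| \le D$ gives $\int_{s_*}^t |V(s)|\,ds \le 3D$. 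The first case ($[0,t]$ instead of $[t-\tb,t]$) is identical, replacing $\xb$ by $X(0)$ and noting $|x - X(0)| \le D$ as both points lie in $\bar\Omega$.

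The main obstacle — really the only subtlety — is precisely this passage from controlling the vector integral $\int V\,ds$ (which is geometrically obvious, bounded by the diameter $D$) to controlling the scalar integral $\int |V|\,ds$; this is where one needs the smallness of $T$ together with $\|E\|_\infty < \infty$ to guarantee that $V$ does not rotate too much over the short time interval, so that its magnitude and the magnitude of its integral are comparable. Once this is in hand, collecting constants — $3D$ from the diameter argument, plus $3t\|E\|_\infty$ or so from the small-$|v|$ alternative, plus being generous to absorb the case distinction — yields a bound of the claimed form $5t(\|E\|_\infty + D) + 4D$ (the stated constants are not sharp and I would simply verify the crude arithmetic). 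I would also double-check the trivial case $\tb = 0$ or $t = \max\{0,t-\tb\}$, where the integral is empty and the inequality holds vacuously.
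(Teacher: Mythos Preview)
Your proposal is correct and follows essentially the same strategy as the paper: split into small-$|v|$ and large-$|v|$ cases, handle the small case by the trivial bound $|V(s)|\le |v|+t\|E\|_\infty$, and in the large case project onto $v/|v|$ to convert the diameter bound $\big|\int V(s)\,ds\big|\le D$ into a bound on $\int|V(s)|\,ds$. The only cosmetic difference is the threshold for the case split (you use $2t\|E\|_\infty$, the paper uses $4(\|E\|_\infty+D)$), which does not affect the argument.
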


\begin{proof}
Let
\begin{equation} \label{Mv}
M_v = 4(  \| E\|_\infty + D  ),
\end{equation}
If $| v | > M_v$, then
 \begin{equation} \label{Vsgev/2}
|V(s;t,x,v) |  \le |v| + T \| E \|_\infty < |v| + \frac{ |v|}{ 4 } <2|v|,
\end{equation}
and
\begin{equation} \label{Vslowerbddbyv/2}
V(s) \cdot \frac{v}{|v|}  = v \cdot \frac{v}{|v|} - \int_s^t \left( E(\tau,X(\tau)) \cdot \frac{v}{|v|} \right) d\tau \ge |v| - t  \| E \|_\infty > \frac{|v|}{2}.
\end{equation}
Thus from (\ref{Vslowerbddbyv/2}) we have
\begin{equation} \label{intVsbddbyD}
D > \int_{\max\{0,t-\tb\}}^t V(s) \cdot \frac{v}{|v| } ds >  \int_{\max\{0,t-\tb\}}^t \frac{|v|}{2} ds \ge \frac{ t |v|}{2 }.
\end{equation}
Therefore (\ref{Vsgev/2}), (\ref{intVsbddbyD}) implies
\[
 \int_{\max\{0,t-\tb\}}^t | V(s) | ds <  \int_{\max\{0,t-\tb\}}^t 2 |v| ds < 2t |v| < 4D.
\]

On the other hand if $|v| \le M_v$, 
\[
 \int_{\max\{0,t-\tb\}}^t | V(s) | ds \le  \int_{\max\{0,t-\tb\}}^t ( |v| + t \| E \|_\infty ) ds < t M_v + t^2 \| E \|_\infty < 5t( \| E \|_\infty + D ),
\]
as wanted.
\end{proof}

\begin{lemma} \label{covlemma1}
For fixed $s$ with $t -\tb(t,x,v) < s <t$, the map
\begin{equation} \label{covfirstmap}
(t,x,v) \in (s,T] \times \gamma_+ \mapsto (X(s;t,x,v), V(s;t,x,v) ) \in \Omega \times \mathbb R^3
\end{equation}
is injective with determinant
\begin{equation} \label{detcofv1}
\det \left( \frac{\p ( X( s;t, x, v),  V( s;t, x, v) )}{\p (t, \bar{x}, v)} \right) = |n(x) \cdot v |.
\end{equation}
\end{lemma}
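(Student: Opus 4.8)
The plan is to prove the two assertions---injectivity and the Jacobian formula---separately, using the Hamiltonian flow structure of \eqref{hamilton_ODE}.

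\textbf{Injectivity.} First I would fix $s$ and argue by contradiction: suppose $(t_1,\bar x_1, v_1)$ and $(t_2,\bar x_2,v_2)$ in $(s,T]\times\gamma_+$ are mapped to the same point $(X(s;t_i,\bar x_i,v_i),V(s;t_i,\bar x_i,v_i))=(y,w)\in\Omega\times\mathbb R^3$. Since the characteristic ODE \eqref{hamilton_ODE} has a unique solution through the phase point $(y,w)$ at time $s$ (the field $E$ being Lipschitz in $x$ by \eqref{c1bddforthepotentail}, uniformly in $t$), the two backward-in-time trajectories $\tau\mapsto(X(\tau;t_i,\bar x_i,v_i),V(\tau;t_i,\bar x_i,v_i))$ must coincide as functions of $\tau$ on their common interval of definition. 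In particular they trace the same curve in $\Omega$ up until the first time it reaches $\partial\Omega$. Because $(\bar x_i, v_i)\in\gamma_+$, i.e.\ $n(\bar x_i)\cdot v_i>0$, moving backward in time from $t_i$ the trajectory has just entered $\Omega$ through $\bar x_i$; by strict convexity of $\Omega$ a trajectory hits $\partial\Omega$ transversally and at an isolated time, so the ``last exit point going backward from a point already strictly inside $\Omega$ at time $s$'' is uniquely determined by the curve. Hence $\bar x_1=\bar x_2$, and then matching the velocity at that exit point gives $v_1=v_2$, and finally matching times along the same curve gives $t_1=t_2$. The one point requiring a little care is that $X(s;t,\bar x,v)\in\Omega$ (strictly), which is exactly the hypothesis $s>t-\tb(t,\bar x,v)$ via the definition \eqref{tb}; this guarantees we are genuinely ``inside'' at time $s$ and the backward exit point is well-defined.

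\textbf{The Jacobian.} For the determinant I would split the change of variables $(t,\bar x,v)\mapsto(X(s;t,x,v),V(s;t,x,v))$ into two maps. Here one should read $x=x(\bar x)$ as the boundary point parametrized by the two tangential coordinates of $\bar x\in\partial\Omega$. Step one: the map $(t,\bar x,v)\mapsto(x,v,t)\mapsto(X(t;\,\cdot),V(t;\,\cdot))=(x,v)$ evaluated \emph{at its own time} $t$ is the identity in $(x,v)$, but we need the phase point at a frozen time. Better: use the flow $\Phi_{s\leftarrow t}$ of \eqref{hamilton_ODE} which, for each pair $(s,t)$, is a diffeomorphism of $\mathbb R^3_x\times\mathbb R^3_v$; since \eqref{hamilton_ODE} is a Hamiltonian system (with time-dependent Hamiltonian $\tfrac12|v|^2-\phi(t,x)$ when $E=\nabla\phi$; in general the divergence of the vector field $(v,E(t,x))$ in $(x,v)$ is zero), Liouville's theorem gives
\begin{equation}\notag
\det\left(\frac{\partial(X(s;t,x,v),V(s;t,x,v))}{\partial(x,v)}\right)=1.
\end{equation}
So it remains to compute the Jacobian of $(t,\bar x,v)\mapsto(X(t;t,\bar x,v),V(t;t,\bar x,v))=(x(\bar x),v)$ \emph{as a map into $\Omega\times\mathbb R^3$}, i.e.\ the ``flow-out'' map from the boundary-in-phase-space. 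Writing $x=x(\bar x)$ with $\partial x/\partial\bar x$ spanning the tangent plane $T_{\bar x}\partial\Omega$, the derivative of $X(t;t,\bar x,v)=x(\bar x)$ with respect to $t$ is, by the chain rule $\frac{d}{dt}X(t;t,\bar x,v)=\partial_s X|_{s=t}+\partial_t(\cdot)=V(t;t,\bar x,v)=v$ minus the correction from also differentiating the label---actually since $X(t;t,\bar x,v)\equiv x(\bar x)$ is independent of $t$, the total derivative is $0$, and the standard trick (as in Guo's flow-out change of variables) is to instead parametrize by $(t,\bar x,v)$ and recognize the target decomposition $\mathbb R^3_x\times\mathbb R^3_v$ along $n(x)$, $\tau_1(x)$, $\tau_2(x)$: the $t$-direction infinitesimally moves $x$ off the boundary along $v$, contributing a factor $|n(x)\cdot v|$ (the normal component of $v$), while the $\bar x$-directions fill out the tangent plane with unit Jacobian and the $v$-directions are the identity. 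Composing with the Liouville factor $1$ yields \eqref{detcofv1}.

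\textbf{Main obstacle.} I expect the delicate point to be the careful bookkeeping in the second Jacobian computation---cleanly separating the roles of the ``time to flow'' variable $t$ and the boundary-position variable $\bar x$, and justifying that the contribution of the $\partial_t$-row to the determinant is exactly the normal speed $|n(x)\cdot v|$ rather than something involving the curvature of $\partial\Omega$ or the field $E$. The clean way to see this is to change variables through an intermediate ``straightening'': locally write points near $\partial\Omega$ as $x=\bar x - r\, n(\bar x)$ for small $r\ge0$, and observe that the flow map sends $(t,\bar x,v)$ (with $t$ playing the role of elapsed time $\simeq r/(n\cdot v)$ to leading order) to $(x,v)$ with $\mathrm dx\,\mathrm dv = |n\cdot v|\,\mathrm dt\,\mathrm dS_{\bar x}\,\mathrm dv$ at $r=0$; the higher-order-in-$r$ terms (curvature, $E$) do not enter because we evaluate the Jacobian on the boundary where $r=0$. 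Combining this with Liouville's theorem for the autonomous-in-phase-space flow gives \eqref{detcofv1}. The injectivity statement, by contrast, is essentially a soft consequence of ODE uniqueness plus strict convexity and is not where the real work lies.
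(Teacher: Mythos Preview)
Your injectivity argument and your invocation of Liouville's theorem for the $(x,v)$-flow are both fine and match the paper's reasoning. The gap is in the Jacobian computation. You attempt to factor
\[
(t,\bar x,v)\ \longmapsto\ (x(\bar x),v)\ \longmapsto\ (X(s),V(s)),
\]
apply Liouville to the second arrow, and compute the first arrow ``at $r=0$.'' But the first arrow is degenerate: it loses the $t$ coordinate entirely (you notice this yourself when you write that $X(t;t,\bar x,v)\equiv x(\bar x)$ has zero $t$-derivative). Your straightening argument then only computes the Jacobian of the full map $(t,\bar x,v)\mapsto (X(s),V(s))$ at the single value $t=s$; it gives no mechanism for why the determinant stays equal to $|n(x)\cdot v|$ for $t>s$. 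Liouville controls $\det\bigl(\partial(X(s),V(s))/\partial(x,v)\bigr)$ for fixed $t$, but it says nothing directly about the $\partial_t$-column of the $6\times 6$ Jacobian you actually need.

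The paper closes this gap with a different factorization. The key identity is the transport relation
\[
[\partial_t + v\cdot\nabla_x + E\cdot\nabla_v]\,X(s;t,x,v)=0,\qquad
[\partial_t + v\cdot\nabla_x + E\cdot\nabla_v]\,V(s;t,x,v)=0,
\]
which follows from the semigroup property $X(s;t+\Delta,X(t+\Delta;t,x,v),V(t+\Delta;t,x,v))=X(s;t,x,v)$. This expresses the $\partial_t$-column as an explicit linear combination of the $\nabla_x$- and $\nabla_v$-columns, and lets one write the full Jacobian as
\[
\begin{bmatrix}\nabla_x X & \nabla_v X\\ \nabla_x V & \nabla_v V\end{bmatrix}
\begin{bmatrix}-v & \partial_{\bar x}\eta & 0\\ E & 0 & I\end{bmatrix}.
\]
The first factor has determinant $1$ by Liouville; the second is constant in $s$ and has determinant $-v\cdot(\partial_1\eta\times\partial_2\eta)$, which gives $|n(x)\cdot v|\,dS_x$ exactly. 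This identity is the missing ingredient in your sketch---without it, the ``flow-out'' intuition cannot be upgraded to a computation valid for all $t$.
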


\begin{proof}
First from (\ref{gradientxinot0}), we have that locally for any ${p} \in \partial{\Omega}$, there exists sufficiently small $\delta_{1}>0, \delta_{2}>0$, and an one-to-one and onto $C^{2}$-map
	\begin{equation}\label{eta}
	\begin{split}
	\eta_{{p}}:  \{ x_{{ \parallel}} \in \mathbb{R}^{2}: |x_{ \parallel}| < \delta_1  \}  \ &\rightarrow  \ \p\Omega \cap B({p}, \delta_{2}),\\
	x_{{ \parallel}}=(x_{ \parallel,1},x_{\parallel,2} )	 \ &\mapsto \    \eta_{{p}}  (x_{ \parallel,1},x_{\parallel,2} ).
	\end{split}
	\end{equation} 

Now the map (\ref{covfirstmap}) is injective as the characteristics are deterministic. 
From (\ref{eta}), we can compute the determinant of this change of variable:

\Be
\begin{split}\label{XV_txv}
&\frac{\p ( X( s;t, \eta(x_\parallel), v),  V( s;t, \eta(x_\parallel), v) )}{\p (t, x_\parallel, v)}\\
= & \ \begin{bmatrix}
\p_t X( s;t, \eta(x_\parallel), v) & \nabla_{x_\parallel} X( s;t, \eta(x_\parallel), v) & \nabla_v X( s;t, \eta(x_\parallel), v)\\
\p_t V( s;t, \eta(x_\parallel), v)& \nabla_{x_\parallel} V( s;t, \eta(x_\parallel), v) & \nabla_v V( s;t, \eta(x_\parallel), v)
\end{bmatrix}\\
= & \ 
\begin{bmatrix}
\p_t X( s;t, \eta(x_\parallel), v)& \nabla_{x_\parallel} \eta (x_\parallel) \cdot \nabla_x X( s;t, \eta(x_\parallel), v)  & \nabla_v X( s;t, \eta(x_\parallel), v)\\
\p_t V( s;t, \eta(x_\parallel), v) &  \nabla_{x_\parallel} \eta (x_\parallel) \cdot \nabla_x V( s;t, \eta(x_\parallel), v)  & \nabla_v V( s;t, \eta(x_\parallel), v)
\end{bmatrix}.
\end{split}
\Ee
 Note that 
 \Be\notag
 \begin{split}
X(s; t+ \Delta, X(t+ \Delta;t, \eta(x_\parallel),v), V(t+ \Delta;t, \eta(x_\parallel),v))& = X(s;t, \eta(x_\parallel), v),\\
V(s; t+ \Delta, X(t+ \Delta;t, \eta(x_\parallel),v), V(t+ \Delta;t, \eta(x_\parallel),v))& = V(s;t, \eta(x_\parallel), v).\\
 \end{split}
 \Ee
Therefore 
\Be \begin{split}\notag
[\p_t + v\cdot \nabla_x -\nabla_x \phi  (t, \eta(x_\parallel)) \cdot \nabla_v]X(s;t, \eta(x_\parallel), v) & = 0,\\
[\p_t + v\cdot \nabla_x -\nabla_x \phi  (t, \eta(x_\parallel)) \cdot \nabla_v]V(s;t, \eta(x_\parallel), v) & = 0.
\end{split}
\Ee
Equivalently     
\Be\label{pt_XV}
 \begin{split}
 \begin{bmatrix}
\p_t X( s;t, \eta(x_\parallel), v)\\
\p_t V( s;t, \eta(x_\parallel), v)
\end{bmatrix} 
=
\begin{bmatrix}
 \nabla_x X( s;t, \eta(x_\parallel), v)  & \nabla_v X( s;t, \eta(x_\parallel), v)\\
 \nabla_x V( s;t, \eta(x_\parallel), v)  & \nabla_v V(s;t, \eta(x_\parallel), v)
\end{bmatrix}
 \begin{bmatrix}
-v  \\
\nabla \phi (t, \eta(x_\parallel)) 
\end{bmatrix}.
\end{split}
\Ee
From (\ref{XV_txv}) and (\ref{pt_XV}) we conclude that 
\Be
\begin{split}\label{XV_txv_final}
&\frac{\p ( X( s;t, \eta(x_\parallel), v),  V( s;t, \eta(x_\parallel), v) )}{\p (t, x_\parallel, v)}
\\ = &  \begin{bmatrix}
 \nabla_x X( s;t, \eta(x_\parallel), v)  & \nabla_v X( s;t, \eta(x_\parallel), v)\\
 \nabla_x V( s;t, \eta(x_\parallel), v)  & \nabla_v V(s;t, \eta(x_\parallel), v)
\end{bmatrix}  
\begin{bmatrix}
-v & 
 \p_{\bar{x}} \eta   & 0_{3 \times 3} \\
\nabla \phi(t, \eta(x_\parallel))
 & 
  0_{3 \times 2} & \mathrm{Id}_{3\times 3}
\end{bmatrix}.  
\end{split}
\Ee
Since $$\det \begin{bmatrix}
 \nabla_x X( s;t, \eta(x_\parallel), v)  & \nabla_v X( s;t, \eta(x_\parallel), v)\\
 \nabla_x V( s;t, \eta(x_\parallel), v)  & \nabla_v V(s;t, \eta(x_\parallel), v)
\end{bmatrix} =1,$$
we conclude that
\Be \label{cov2}
\begin{split} 
\det\left(\frac{\p ( X( s;t, \eta(x_\parallel), v),  V( s;t, \eta(x_\parallel), v) )}{\p (t, x_\parallel, v)}\right) 
=   \ \det \begin{bmatrix}
-v & 
 \p_{\bar{x}} \eta   & 0_{3 \times 3} \\
\nabla \phi(t, \eta(x_\parallel))
 & 
  0_{3 \times 2} & \mathrm{Id}_{3\times 3}
\end{bmatrix}
= \ - v \cdot (\p_1 \eta(x_\parallel) \times \p_2 \eta(x_\parallel) ).
\end{split}\Ee
From (\ref{eta}) the surface measure of $\p \Omega$ equals $dS_x = | \partial_1 \eta(x_\parallel ) \times \partial_2 \eta(x_\parallel) | d x_\parallel$, thus we conclude (\ref{detcofv1}).
\end{proof}

\begin{lemma} \label{covlemma2}
For any $t \ge \tb(t,x,v)$, the map
\begin{equation} \label{maptxvtotgamma-}
(t,x,v)\in [0,T] \times \gamma_+ \mapsto (t-\tb(t,x,v),\xb(t,x,v),\vb(t,x,v)) \in [0,T) \times \gamma_-
\end{equation}
is injective and has determinant
\begin{equation} \label{covgamma+togamma-}
\det\left( \frac{\partial (t-\tb,\xb,\vb) }{\partial (t,x,v)} \right) = \frac{|n(x) \cdot v| }{|n (\xb) \cdot \vb |}.
\end{equation}
For any $T \ge \tb(T,x,v)$, the map
\begin{equation} \label{mapxvtotgamma-}
 (x,v) \in \Omega \times \mathbb R^3 \mapsto  (T - \tb(T,x,v) ,\xb ,\vb ) \in  [0, T) \times \gamma_-
\end{equation}
is injective and has determinant
\begin{equation} \label{covxvtotgamma-}
\det \left( \frac{\partial (T-\tb,\xb,\vb) }{\partial (x,v)} \right) = \frac{1}{|n (\xb) \cdot \vb |}.
\end{equation}
\end{lemma}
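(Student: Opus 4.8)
\textbf{Proof plan for Lemma \ref{covlemma2}.}

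The plan is to deduce both statements from Lemma \ref{covlemma1} by a chain-rule/change-of-variables composition argument, using the semigroup property of the Hamiltonian flow. First I would treat the map \eqref{maptxvtotgamma-}. Injectivity follows immediately from the determinism of the characteristics: if two points $(t_1,x_1,v_1), (t_2,x_2,v_2) \in [0,T] \times \gamma_+$ had the same image $(t-\tb, \xb, \vb)$, then flowing forward from $(\xb, \vb)$ for the common time to the first moment of re-exit would recover both $(t_i, x_i, v_i)$, forcing them to coincide. For the determinant, the key observation is that $(t-\tb(t,x,v), \xb(t,x,v), \vb(t,x,v))$ is, for $(x,v) \in \gamma_+$, exactly the value of the backward-in-time map $(X(s;t,x,v), V(s;t,x,v))$ evaluated at the \emph{stopping time} $s = t - \tb(t,x,v)$ at which the trajectory meets $\gamma_-$. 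I would therefore write the map as the composition of the map in \eqref{covfirstmap}, which for each fixed $s$ has Jacobian determinant $|n(x)\cdot v|$, followed by the inverse of the analogous backward-exit parametrization of $\gamma_-$, whose Jacobian (by the same computation as Lemma \ref{covlemma1} applied near $\gamma_-$, or by time-reversal) carries the factor $|n(\xb)\cdot\vb|$. The ratio $\frac{|n(x)\cdot v|}{|n(\xb)\cdot\vb|}$ in \eqref{covgamma+togamma-} is then the product of these two determinant factors. Concretely, one can differentiate the identity $X(t-\tb; t, x, v) = \xb$ and $V(t-\tb; t, x, v) = \vb$ with respect to $(t, x, v)$, being careful that $\tb$ itself depends on $(t,x,v)$; the derivatives of $\tb$ drop out when one takes the determinant because the corresponding tangent vector $(\dot X, \dot V) = (\vb, E(t-\tb, \xb))$ is tangent to $\partial\Omega$ only in its $X$-component and the normal component of $\vb$ is precisely what produces the denominator.

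Alternatively — and this is probably the cleanest route to write up — I would use the transport/Liouville structure directly: the quantity $\frac{d\gamma}{d\gamma}$ transported along trajectories is governed by the fact that $|n(x)\cdot v|\, dS_x\, dv$ pulls back, and combine Lemma \ref{covlemma1} with the change of variables $(t, x_\parallel, v) \leftrightarrow (s, \text{image})$ at $s = t - \tb$, where now $s$ is no longer a free parameter but pinned to the boundary. This is where one picks up the extra Jacobian factor relative to Lemma \ref{covlemma1}: in Lemma \ref{covlemma1} the time $s$ was held fixed, so the $3+3+1$ dimensions split as $(X,V)\in \Omega\times\R^3$ against $(t,\bar x, v)$; here instead the target lives on the $6$-dimensional set $[0,T)\times\gamma_-$, and the constraint $X(t-\tb;\cdots)\in\partial\Omega$ exactly supplies the one relation that converts $|n(x)\cdot v|$ into $\frac{|n(x)\cdot v|}{|n(\xb)\cdot\vb|}$.

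For the second map \eqref{mapxvtotgamma-}, I would specialize: fix $t = T$ and drop the $t$-variable, so the source is now the $6$-dimensional set $\Omega\times\R^3$ rather than the $7$-dimensional $[0,T]\times\gamma_+$. The computation is the same as above but with one fewer column; equivalently, \eqref{covxvtotgamma-} follows from \eqref{covgamma+togamma-} together with Lemma \ref{covlemma1} by noting that at fixed $t=T$ the map $(x,v)\mapsto(X(s;T,x,v),V(s;T,x,v))$ has unit Jacobian (Liouville / measure-preservation of the Hamiltonian flow, as already used in the proof of Lemma \ref{covlemma1}), so the only surviving factor is the $\frac{1}{|n(\xb)\cdot\vb|}$ coming from re-expressing the boundary point $\xb\in\partial\Omega$ through the chart $\eta$ and the stopping condition. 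Injectivity again is automatic from determinism of the flow.

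\textbf{Main obstacle.} The routine part is the Jacobian bookkeeping; the genuinely delicate point is handling the non-smooth dependence of $\tb(t,x,v)$ — and hence of $\xb, \vb$ — on its arguments, and justifying that one may differentiate through the stopping time. By strict convexity of $\Omega$ and the sign condition, trajectories cross $\partial\Omega$ transversally (the relevant velocity component $n(\xb)\cdot\vb$ is nonzero on $\gamma_-$ away from $\gamma_0$), so the implicit function theorem applies to $\xi(X(t-\tb;t,x,v)) = 0$ and gives $\tb$ as a $C^1$ function with $\nabla_{t,x,v}\tb$ expressible in terms of $n(\xb)\cdot\vb$; this transversality is exactly what makes the denominator $|n(\xb)\cdot\vb|$ appear and what must be invoked to make the whole argument legitimate. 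I would state this transversality carefully and note that the formula holds on the set where $n(\xb)\cdot\vb\neq 0$, i.e. away from the grazing set, which is where the claim is meant to be used.
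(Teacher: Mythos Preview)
Your proposal is correct and is essentially the same approach as the paper's: the paper also derives \eqref{covgamma+togamma-} by factoring the Jacobian through Lemma~\ref{covlemma1}, writing an explicit matrix $M$ (the Jacobian of the $\gamma_-$-side parametrization $(t-\tb,\xbp,\vb)\mapsto(X,V)$) so that $M\cdot J$ coincides with the matrix in \eqref{XV_txv_final}, giving $\det J=\det(M\cdot J)/\det M=\frac{|n(x)\cdot v|}{|n(\xb)\cdot\vb|}$; the second claim \eqref{covxvtotgamma-} is handled identically with $M'\cdot J'$ having unit determinant by Liouville. The only difference is presentational---the paper carries out the matrix multiplication explicitly rather than phrasing it as a composition---and your remark on the transversality hypothesis needed to differentiate $\tb$ is exactly the content of \eqref{computation_tb_x} in the paper.
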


\begin{proof}
The map (\ref{maptxvtotgamma-}) is clearly injective as the characteristics is deterministic. We first claim (\ref{covgamma+togamma-}).
Since $x, \xb \in \partial \Omega$, so from (\ref{eta}) locally we have two functions $\eta,\eta_b$ such that $x = \eta(x_\parallel) =\eta (x_{\parallel,1}, x_{\parallel,2})$ and $\xb = \eta_b(  \xbp) = \eta_b( \xba, \xbb$). 

We now compute the Jacobian matrix $J$ of the map (\ref{maptxvtotgamma-}):
\Be
\begin{split}\label{XVtb_txv}
J = &\frac{\p (  t -\tb, {\eta_b} ^{-1} (X(t -\tb; t ,\eta(x_\parallel ) ,v)), V(t-\tb; t, \eta (x_\parallel), v)  )}{\p (t, x_\parallel, v)}\\
= & \ \begin{bmatrix}
1 & - \partial_{x_\parallel} \tb  & - \nabla_v \tb \\
0_{2 \times 1 } & \nabla_x (\eta_b ^{-1} ) \cdot ( \nabla_x X \cdot \partial_{x_\parallel} \eta - \partial_s X \cdot \partial_{x_\parallel } \tb )   & \nabla_x (\eta_b ^{-1} ) \cdot ( \nabla_v X  - \partial_s X \cdot \nabla_v \tb ) \\
0_{3 \times 1} & \nabla_x V \cdot \partial_{x_\parallel } \eta - \partial_s V \cdot \partial_{x_\parallel } \tb & \nabla_v V - \partial_s V \cdot \nabla_v \tb 
\end{bmatrix}. \\
\end{split}
\Ee
Let
\begin{equation} \label{covgamma-toXV}
M =
\begin{bmatrix}
-\partial_s X(t-\tb;t,\eta(x_\parallel),v) & \partial_{\xbp } \eta_b(\xbp) & 0_{3 \times 3} \\
-\partial_s V (t-\tb;t,\eta(x_\parallel),v)&  0_{3 \times 2} & \mathrm{Id}_{3 \times 3}
\end{bmatrix}.
\end{equation}
Then we have
\small
\[ \begin{split}
& M \cdot  J \\  =  & 
\begin{bmatrix}
-\partial_s X & \partial_{\xbp} \eta_b & 0_{3 \times 3} \\
-\partial_s V &  0_{3 \times 2} & \mathrm{Id}_{3 \times 3}
\end{bmatrix}
\cdot
\begin{bmatrix}
1 & - \partial_{x_\parallel} \tb  & - \nabla_v \tb \\
0_{2 \times 1 } & \nabla_x (\eta_b ^{-1} ) \cdot ( \nabla_x X \cdot \partial_{x_\parallel} \eta - \partial_s X \cdot \partial_{x_\parallel } \tb )   & \nabla_x (\eta_b ^{-1} ) \cdot ( \nabla_v X  - \partial_s X \cdot \nabla_v \tb ) \\
0_{3 \times 1} & \nabla_x V \cdot \partial_{x_\parallel } \eta - \partial_s V \cdot \partial_{x_\parallel } \tb & \nabla_v V - \partial_s V \cdot \nabla_v \tb
\end{bmatrix}
\\ 
 = & \begin{bmatrix}
 - \partial_s X & \partial_s X \cdot \partial_{x_\parallel} \tb + \partial_{\xbp } \eta_b \cdot  \nabla_x (\eta_b ^{-1} ) \cdot ( \nabla_x X \cdot \partial_{x_\parallel} \eta - \partial_s X \cdot \partial_{x_\parallel } \tb )   & \partial_s X \cdot \nabla_v \tb + \partial_{\xbp} \eta_b \cdot \nabla_x (\eta_b ^{-1} ) \cdot ( \nabla_v X  - \partial_s X \cdot \nabla_v \tb )   \\
 -\partial_s V & \partial_s V \cdot \partial_{x_\parallel} \tb + \nabla_x V \cdot \partial_{x_\parallel } \eta - \partial_s V \cdot \partial_{x_\parallel} \tb & \partial_s V \cdot \nabla_v \tb + \nabla_v V - \partial_s V \cdot \nabla_v \tb
\end{bmatrix}
\\
= & \begin{bmatrix}
- \partial_s X & \nabla_x X \cdot \partial_{x_\parallel } \eta & \nabla_v X \\
-\partial_s V & \nabla_x V \cdot \partial_{ x_\parallel } \eta & \nabla_v V 
\end{bmatrix},
\end{split} \]
\normalsize
since
\begin{equation} \label{detofgamma-toXV}
\partial_{ \xbp }  \eta_b \cdot \nabla_x (\eta_b ^{-1} ) = \nabla_x ( \eta_b \circ \eta_b ^{-1} )  = \mathrm{Id}_{3\times 3 }.
\end{equation}
Now from (\ref{cov2}) we have
\[ \begin{split}
 \det  (M \cdot  J )   = & \det
 \begin{bmatrix}
- \partial_s X & \nabla_x X \cdot \partial_{x_\parallel} \eta & \nabla_v X \\
-\partial_s V & \nabla_x V \cdot \partial_{ x_\parallel} \eta & \nabla_v V 
\end{bmatrix}
 =  \det \begin{bmatrix}
-v & 
 \p_{x_\parallel} \eta   & 0_{3 \times 3} \\
-E
 & 
  0_{3 \times 2} & \mathrm{Id}_{3\times 3}
\end{bmatrix}
 =  - v \cdot (\partial_1 \eta (x_\parallel ) \times \partial_2 \eta (x_\parallel )).
\end{split} \]
Since
\[ \begin{split}
  \det  ( M )   =  \det
 \begin{bmatrix}
-\partial_s X & \partial_{ \xbp } \eta_b & 0_{3 \times 3} \\
-\partial_s V &  0_{3 \times 2} & \mathrm{Id}_{3 \times 3}
\end{bmatrix}
=  - \vb \cdot (\partial_1 \eta_b (\xbp ) \times \partial_2 \eta_b (\xbp ) ),
\end{split} \]
therefore
\[
\det(J ) = \frac{v \cdot (\partial_1 \eta (x_\parallel ) \times \partial_2 \eta (x_\parallel )) }{\vb \cdot (\partial_1 \eta_b (\xbp) \times \partial_2 \eta_b (\xbp ))},
\]
and we conclude (\ref{covgamma+togamma-}).

The map (\ref{mapxvtotgamma-}) is also injective as the characteristics is deterministic. We then claim (\ref{covxvtotgamma-}). Let $J'$ be the Jacobian matrix of (\ref{mapxvtotgamma-}), then

\Be
\begin{split}\label{XVtb_txv}
J' = &\frac{\p (  t -\tb, {\eta_b} ^{-1} (X(t -\tb; t ,x ,v)), V(t-\tb; t, x, v)  )}{\p (x, v)}\\
= & \ \begin{bmatrix}
  - \partial_{ x} \tb  & - \nabla_v \tb \\
 \nabla_x (\eta_b ^{-1} ) \cdot ( \nabla_x X - \partial_s X \cdot \partial_{ x } \tb )   & \nabla_x (\eta_b ^{-1} ) \cdot ( \nabla_v X  - \partial_s X \cdot \nabla_v \tb ) \\
  \nabla_x V  - \partial_s V \cdot \partial_{ x } \tb & \nabla_v V - \partial_s V \cdot \nabla_v \tb 
\end{bmatrix}.\\
\end{split}
\Ee
Let
\begin{equation} \label{covgamma-toXV}
M' =
\begin{bmatrix}
-\partial_s X(t-\tb;t,\eta(x),v) & \partial_{ \xbp } \eta_b(\xbp) & 0_{3 \times 3} \\
-\partial_s V (t-\tb;t,\eta( x),v)&  0_{3 \times 2} & \mathrm{Id}_{3 \times 3}
\end{bmatrix}.
\end{equation}
Then
\begin{equation}
M' \cdot J' =
\begin{bmatrix}
 \nabla_x X &  \nabla_v  X \\
\nabla_x V & \nabla_v V
\end{bmatrix}.
\end{equation}
Since $\det (A' \cdot M' ) = 1$, and $\det(M') = - \vb \cdot (\partial_1 \eta_b( \xbp) \times \partial_2 \eta_b( \xbp) )$, therefore
\[
\det(J') = \frac{1}{ - \vb \cdot (\partial_1 \eta_b( \xbp) \times \partial_2 \eta_b( \xbp) )},
\]
and we conclude (\ref{covxvtotgamma-}).

\end{proof}

\begin{lemma} \label{int_id}
Suppose $h(t,x,v) \in L^1 ( [0,T] \times \Omega \times \mathbb R^3) $ then:
\begin{equation} \label{covalongchar}
\begin{split}
\int_0 ^T & \iint _{\Omega \times \mathbb R^3} h (t,x,v) dv dx dt \\ = & \iint_{\Omega \times \mathbb R^3 } \int_{- \min ( T,\tb(T,x,v) )} ^0 h (T +s, X(T+s; T,x,v), V(T+s;T,x,v) ) ds dv dx \\ & + \int_0 ^T \int_{\gamma_+ } \int_{-\min ( t, \tb(t,x,v)) } ^0 h(t +s , X(t+s;t,x,v), V(t +s;t,x,v) )ds d\gamma dt.
\end{split}
\end{equation}
\end{lemma}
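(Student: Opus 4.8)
The plan is to decompose the time–space–velocity integral by ``following each characteristic backward in time until it either exits the domain through $\gamma_-$ or reaches time $0$.'' Concretely, for a.e.\ $(t,x,v) \in [0,T]\times\Omega\times\mathbb R^3$ the backward trajectory $s\mapsto (X(t+s;t,x,v),V(t+s;t,x,v))$ stays in $\Omega$ for $s\in(-\min(t,\tb(t,x,v)),0]$, and the ``starting point'' of this trajectory segment is either $(0, X(0;t,x,v), V(0;t,x,v))$ with $X(0)\in\Omega$ (when $t<\tb$, i.e.\ the trajectory reaches the initial time slice before hitting the boundary) or $(t-\tb, \xb(t,x,v),\vb(t,x,v))\in[0,T)\times\gamma_-$ (when $t\ge\tb$). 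This gives a disjoint covering of $[0,T]\times\Omega\times\mathbb R^3$ by trajectory segments parametrized by (i) their terminal point at time $T$ lying in $\Omega\times\mathbb R^3$, or (ii) by the pair (exit time in $\gamma_+$ when the forward trajectory from the $\gamma_-$ point eventually leaves, tracked instead at the terminal slice $t$). Rather than appeal to a general flow-box theorem, I would realize this covering through the two explicit changes of variables established in Lemma \ref{covlemma1} and Lemma \ref{covlemma2}.

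The execution has two pieces corresponding to the two terms on the right-hand side of \eqref{covalongchar}. First I would write
\[
\int_0^T \iint_{\Omega\times\mathbb R^3} h \,dv\,dx\,dt
= \int_0^T \iint_{\Omega\times\mathbb R^3} h\,\mathbf 1_{\{t<\tb(t,x,v)\}}\,dv\,dx\,dt
+ \int_0^T \iint_{\Omega\times\mathbb R^3} h\,\mathbf 1_{\{t\ge\tb(t,x,v)\}}\,dv\,dx\,dt.
\]
For the first term I would change variables $(x,v)\mapsto (X(0;t,x,v),V(0;t,x,v))=:(y,w)$ along the Hamiltonian flow \eqref{hamilton_ODE}, which is volume-preserving (its Jacobian is $1$, exactly as used in Lemma \ref{covlemma1}), then rename the time variable; this is the usual Liouville identity and produces the bulk term with the terminal slice at $T$ after one re-parametrizes $t$ by $T+s$. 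For the second term I would use that $\{t\ge\tb\}$ is foliated by the $\gamma_-$ points: apply the change of variables of Lemma \ref{covlemma2}, mapping $(t,x,v)\in\{t\ge\tb\}$ to $(t-\tb,\xb,\vb)\in[0,T)\times\gamma_-$, whose Jacobian is $\tfrac{|n(x)\cdot v|}{|n(\xb)\cdot\vb|}$; then re-express the $\gamma_-$ integral as a $\gamma_+$ integral over forward trajectories using Lemma \ref{covlemma1} (the determinant $|n(x)\cdot v|$ supplies the correct $d\gamma$ weight), and integrate the ``remaining lifespan'' of each such trajectory as the inner $s$-integral. Tracking the Jacobian factors, the $|n(\cdot)\cdot(\cdot)|$ weights combine into the surface measures $d\gamma$ appearing in \eqref{covalongchar}.

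Since everything reduces to the already-proven Lemmas \ref{covlemma1} and \ref{covlemma2}, it suffices to prove the identity for $h\ge 0$ bounded and continuous with the trajectory maps behaving nicely, and then extend to $h\in L^1$ by density and monotone/dominated convergence; the two sides are $\sigma$-additive in $h$. The main obstacle I expect is purely bookkeeping: correctly identifying the domains of the two changes of variables (the set $\{t<\tb\}$ versus $\{t\ge\tb\}$, and which slice plays the role of the ``source'' — the initial time $0$ versus a boundary point), keeping the orientation/sign of the determinants straight, and making sure the inner time-integration limits $-\min(T,\tb(T,x,v))$ and $-\min(t,\tb(t,x,v))$ come out exactly. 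There is also a measure-zero issue — the grazing set $\gamma_0$ and the set where $\tb$ fails to be $C^1$ — which is harmless because $d\gamma$ charges $\gamma_0$ with zero measure and because of the strict convexity and the sign condition \eqref{signEonbdry} (via the velocity lemma \eqref{vlemma}) which prevent trajectories from lingering near $\gamma_0$; I would invoke this only to justify that the flow maps are a.e.\ diffeomorphisms.
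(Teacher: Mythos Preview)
Your proposal contains a genuine gap: the decomposition $\{t<\tb(t,x,v)\}$ versus $\{t\ge\tb(t,x,v)\}$ is the wrong one for this identity. The two terms on the right-hand side of \eqref{covalongchar} are parametrized by where the \emph{forward} trajectory from $(t,x,v)$ terminates --- either at the terminal slice $\{T\}\times\Omega\times\R^3$ (the bulk term) or on $\gamma_+$ at some time $\le T$ (the boundary term). Hence the correct splitting of the left-hand side is $A=\{t+\tf(t,x,v)\le T\}$ versus $B=\{t+\tf(t,x,v)>T\}$, which is exactly what the paper does. Your splitting by $\tb$ instead tracks where the \emph{backward} trajectory begins (the initial slice $t=0$ or $\gamma_-$), and the two decompositions do not coincide: a point with $t<\tb$ (backward trajectory reaches $t=0$ inside $\Omega$) can perfectly well have $t+\tf\le T$ (forward trajectory exits through $\gamma_+$ before time $T$), in which case it must contribute to the $\gamma_+$ term, not the bulk term.

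This mismatch shows up concretely in your execution. For the first piece, mapping $(x,v)\mapsto(X(0;t,x,v),V(0;t,x,v))$ on $\{t<\tb\}$ parametrizes by the point at time $0$, not at time $T$; the subsequent relabeling $t=T+s$ does not repair this, since the resulting $(y,w)$ still lives on the $t=0$ slice and the inner limit is governed by $\tf(0,y,w)$ rather than $\tb(T,\cdot,\cdot)$. For the second piece, the map $(t,x,v)\mapsto(t-\tb,\xb,\vb)$ you invoke sends a $7$-dimensional set to a $6$-dimensional one, so it is not a change of variables at all; the Jacobian $|n(x)\cdot v|/|n(\xb)\cdot\vb|$ you quote from Lemma \ref{covlemma2} applies only to the $\gamma_+\to\gamma_-$ map, not a bulk-to-$\gamma_-$ map. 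The paper instead constructs the two bijections directly in the forward direction: $\mathcal B:(s,x,v)\mapsto(T-s,X(T-s;T,x,v),V(T-s;T,x,v))$ from $\{(s,x,v):s<\tb(T,x,v)\}$ onto $B$ (measure-preserving), and $\mathcal A:(t,s,x,v)\mapsto(t-s,X(t-s;t,x,v),V(t-s;t,x,v))$ from $\{(t,s,x,v)\in[0,T]^2\times\gamma_+:s<\tb(t,x,v),\,s\le t\}$ onto $A$, whose Jacobian $|n(x)\cdot v|$ (computed as in \eqref{XV_txv_final}) produces the $d\gamma$ weight.
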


\begin{proof}
The region $\{ (t,x,v) \in [0,T] \times \Omega \times \mathbb R^3 \}$ is the disjoint union of 
\[
A:= \{ (t,x,v) \in [0,T] \times \Omega \times \mathbb R^3: \tf (t,x,v) + t\le T \},
\] and 
\[ B:= \{ (t,x,v) \in [0,T] \times \Omega \times \mathbb R^3: \tf (t,x,v)+ t > T \} .\]
Now let:
\[
A': =  \{ (t,s,x,v) \in [0,T]^2 \times \gamma_+ : s < \tb(t,x,v), s \le t \},
\] and
\[
B':= \{ (s,x,v ) \in [0,T] \times \Omega \times \mathbb R^3 : s < \tb(T,x,v) \}.
\]
Consider the map $\mathcal A: A' \to A$ with
\[
\mathcal A(t,s,x,v) = (t-s, X(t-s; t,x,v ) , V(t-s;t,x,v) ).
\]
Since $\tf(t-s, X(t-s ; t,x,v) , V(t-s;t,x,v) ) + (t-s) = s + (t-s) = t \le T $, $\mathcal A$ is well-defined. And since the characteristic flow is deterministic, $\alpha$ is injective. And for any $(t,x,v) \in A$, since $\tf \le t + \tf$ and $\tb ( t + \tf, X(t + \tf;t,x,v ) , V(t + \tf;t,x,v ) ) > \tf $ as $x \in \Omega$ is in the interior, we have
\[
 (t + \tf(t,x,v), \tf(t,x,v), X(t + \tf(t,x,v);t,x,v), V(t + \tf(t,x,v);t,x,v) ) \in A'.
\] 
Moreover 
\[
\mathcal A (t + \tf, \tf, X(t + \tf;t,x,v), V(t + \tf;t,x,v) ) = (t,x,v) ,
\] 
so $\mathcal A$ is surjective. Therefore $\mathcal A$ is bijective with inverse $\mathcal A^{-1} (t,x,v ) =  (t + \tf, \tf, X(t + \tf;t,x,v), V(t + \tf;t,x,v) )$.

Suppose locally at $x \in \p \Omega$ we have $x = \eta(x_\parallel ) $ as in (\ref{eta}), and let 
\[
J_{\mathcal A} = \frac{ (t-s, X(t-s; t,x,v ) , V(t-s;t,x,v) )}{\partial(t,s,x_\parallel, v )}
\]
be the Jacobian matrix of $\mathcal A$. 

Then we have:
\small
\[ \begin{split}
 J_{\mathcal A } = 
 \begin{bmatrix}
1 & -1 & 0_{1\times 2 } &  0_{1 \times 3 } \\
\partial_s X(t-s; t,x,v ) +  \partial_t X(t-s; t,x,v ) & -\partial_s X(t-s; t,x,v ) & \partial_{x_\parallel } X(t-s; t,x,v ) & \partial_v X(t-s; t,x,v ) \\
\partial_s V(t-s; t,x,v ) +  \partial_t V(t-s; t,x,v ) & -\partial_s V(t-s; t,x,v ) & \partial_{x_\parallel } V(t-s; t,x,v ) & \partial_v V(t-s; t,x,v ) \\
\end{bmatrix}.
\end{split} \]
\normalsize

Let $J_{\mathcal A} '$ be the matrix obtained by adding the first column of $J_{\mathcal A}$ to its second column, so from (\ref{cov2}) and  (\ref{detcofv1}) we have
\small
\begin{equation} \label{77}
\begin{split}
 \det & (J_{\mathcal A }) = \det(J_{\mathcal A }') \\
= & \det \left \{ \begin{bmatrix}
1 & 0 & 0_{1\times 2 } &  0_{1 \times 3 } \\
\partial_s X(t-s; t,x,v ) +  \partial_t X(t-s; t,x,v ) & \partial_t X(t-s; t,x,v ) & \partial_{x_\parallel } X(t-s; t,x,v ) & \partial_v X(t-s; t,x,v ) \\
\partial_s V(t-s; t,x,v ) +  \partial_t V(t-s; t,x,v ) & \partial_t V(t-s; t,x,v ) & \partial_{x_\parallel } V(t-s; t,x,v ) & \partial_v V(t-s; t,x,v ) \\
\end{bmatrix} \right \} \\
= & \det \left \{ \begin{bmatrix}
 \partial_t X(t-s; t,x,v ) & \partial_{x_\parallel} X(t-s; t,x,v ) & \partial_v X(t-s; t,x,v ) \\
\partial_t V(t-s; t,x,v ) & \partial_{x_\parallel } V(t-s; t,x,v ) & \partial_v V(t-s; t,x,v ) \\
\end{bmatrix} \right \} 
\\ = &\det \left \{ \begin{bmatrix}
 \nabla_x X( t-s;t, \eta(x_\parallel), v)  & \nabla_v X( t-s;t, \eta(x_\parallel), v)\\
 \nabla_x V( t-s;t, \eta(x_\parallel), v)  & \nabla_v V(t-s;t, \eta(x_\parallel), v)
\end{bmatrix}  
\begin{bmatrix}
-v & 
 \p_{x_\parallel} \eta   & 0_{3 \times 3} \\
\nabla \phi(t, \eta(x_\parallel))
 & 
  0_{3 \times 2} & \mathrm{Id}_{3\times 3}
\end{bmatrix} \right \} \\
= &  \ \det \begin{bmatrix}
-v & 
 \p_{x_\parallel} \eta   & 0_{3 \times 3} \\
\nabla \phi(t, \eta(x_\parallel))
 & 
  0_{3 \times 2} & \mathrm{Id}_{3\times 3}
\end{bmatrix}\\
=& \ - v \cdot (\p_1 \eta(x_\parallel) \times \p_2 \eta(x_\parallel) ).
\end{split} \end{equation}
\normalsize
Therefore
\small
\[ \begin{split}
& \iiint_A  h(t,x,v ) dt dxdv 
\\ = & \int_0^T \int_{\gamma_+ } \int_0 ^{\min( \tb(t,x,v) , t ) } h(t-s, X(t-s;t,x,v ) ,V(t-s; t,x,v )    ds d \gamma dt.
\end{split} \]
\normalsize
Now consider the map $\mathcal B: B' \to B$ with
\[
\mathcal B(s,x,v) = (T -s , X(T-s, T,x,v ), V(T-s, T,x,v ) ).
\]
Since $ \tf(T-s, X(T-s, T,x,v ),V(T-s, T,x,v ) ) + (T-s) > s + (T-s) = T$, $\mathcal B$ is well-defined. And since the characteristic flow is deterministic, $\beta$ is injective. And for any $(t,x,v) \in B$, since $\tb(T, X(T; t,x,v ) , V(T; t,x,v ) ) > T -t$ as $x \in \Omega$ is in the interior, we have
\[
  (T-t, X(T; t, x,v ) , V(T;t,x,v) ) \in B'.
\]
 Moreover 
\[
\mathcal B (T-t, X(T; t, x,v ) , V(T;t,x,v) ) = (t,x,v),
\]
so $\mathcal B$ is surjective. Therefore $\mathcal B$ is bijective with inverse $\mathcal B ^{-1} (t,x,v) =  (T-t, X(T; t, x,v ) , V(T;t,x,v) ).$
And since $\mathcal B$ is a measure preserving change of variable we have:
\[
\iiint_B h(t,x,v ) dtdxdv =  \iint_{\Omega \times \mathbb R^3 } \int_0^{ \min (T,\tb(T,x,v) ) }  h(T-s, X(T-s;T,x,v ) ,V(T-s; T,x,v ) ds dx dv.
\]
Thus:
\[
\begin{split}
\int_0 ^T  \iint _{\Omega \times \mathbb R^3} h (t,x,v) dv dx dt  & = \iiint_A h(t,x,v ) dt dxdv + \iiint_B h(t,x,v ) dt dxdv 
\\ = & \iint_{\Omega \times \mathbb R^3 } \int_{- \min ( T,\tb(T, x,v) )} ^0 h (T +s, X(T+s; T,x,v), V(T+s;T,x,v) ) ds dv dx 
\\ & + \int_0 ^T \int_{\gamma_+ } \int_{-\min ( t, \tb(t,x,v)) } ^0 h(t +s , X(t+s;t,x,v), V(t +s;t,x,v) )ds d\gamma dt,
\end{split}
\]
so we conclude (\ref{covalongchar}).
\end{proof}

\begin{lemma} \label{green'sidentity}[Green's identity]
For $p \in [1, \infty)$ assume $f$, $\partial_t f + v \cdot \nabla_x f + E \cdot \nabla_v f \in L^p ([0,T]; L^p (\Omega \times \mathbb R^3 ) )$ and $f_{\gamma_- } \in L^p ( [0,T]; L^p (\gamma ) )$. Then $f \in C^0( [0, T] ; L^p (\Omega \times \mathbb R^3 ) )$ and $f_{\gamma_+ } \in L^p ([0,T]; L^p (\gamma) )$ and for almost every $T' \in [0, T]$:
\Be \label{greens}
\begin{split}
\| f(T')\|_p ^p + \int_0^{T'} |f|_{ \gamma_+, p } ^p = \| f(0) \|_p^p + \int_0^{T'} |f|_{ \gamma_-, p }^p + \int_0^{T'} \iint _{\Omega \times \mathbb R^3 }  p \{ \partial_t + v \cdot \nabla_x f + E \cdot \nabla_v f \} |f|^{p-2} f.
\end{split} \Ee

\end{lemma}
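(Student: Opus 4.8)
The plan is to establish \eqref{greens} first for functions that are $C^{1}$ in $(t,x,v)$, where it is a direct consequence of the divergence theorem, and then to pass to general $f$ by approximation \emph{through the equation}. The structural fact to exploit throughout is that the transport field $(1,v,E(t,x))$ is divergence free in $(x,v)$, since $\nabla_{x}\cdot v = 0$ and $\nabla_{v}\cdot E(t,x) = 0$; this is why the flow of \eqref{hamilton_ODE} preserves $dx\,dv$ (as already used in Lemma \ref{covlemma1}) and why only the normal boundary flux across $\partial\Omega$ survives.

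I would first treat $f\in C^{1}([0,T]\times\bar\Omega\times\mathbb{R}^{3})$ with enough decay in $v$ that $\int_{\mathbb{R}^{3}}\nabla_{v}|f|^{p}\,dv = 0$ (when $p=1$, $|f|^{p-2}f$ is read as $\mathrm{sgn}\,f$). Write $g := \partial_{t}f + v\cdot\nabla_{x}f + E\cdot\nabla_{v}f$. The chain rule gives $p|f|^{p-2}f\,g = \partial_{t}|f|^{p} + v\cdot\nabla_{x}|f|^{p} + E\cdot\nabla_{v}|f|^{p}$, with integrable right-hand side. Integrating over $\Omega\times\mathbb{R}^{3}$, the $\nabla_{v}$ term drops, and $v\cdot\nabla_{x}|f|^{p} = \nabla_{x}\cdot(v|f|^{p})$ gives, by the divergence theorem on $\Omega$, the flux $\int_{\partial\Omega\times\mathbb{R}^{3}}(n(x)\cdot v)|f|^{p}\,dS_{x}\,dv = |f|_{\gamma_{+},p}^{p} - |f|_{\gamma_{-},p}^{p}$. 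Thus $\frac{d}{dt}\|f(t)\|_{p}^{p} + |f(t)|_{\gamma_{+},p}^{p} - |f(t)|_{\gamma_{-},p}^{p} = \int_{\Omega\times\mathbb{R}^{3}} p|f|^{p-2}f\,g$, and integrating in $t$ over $[0,T']$ gives \eqref{greens}. (Equivalently, one may apply Lemma \ref{int_id} to $h := p|f|^{p-2}f\,g$ and use that along \eqref{hamilton_ODE} the inner time integrals are exact $s$-derivatives of $|f|^{p}$, so they telescope to boundary and initial values that recombine, via the Jacobians of Lemmas \ref{covlemma1}--\ref{covlemma2}, into $\|f(T')\|_{p}^{p}$, $\|f(0)\|_{p}^{p}$ and $\int_{0}^{T'}|f|_{\gamma_{\pm},p}^{p}$.)

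For general $f$ I would \emph{not} mollify $f$ in the bulk, since the trace operator is not continuous under that operation; instead I would approximate through the equation. Let $g_{\varepsilon}$, $(f|_{\gamma_{-}})_{\varepsilon}$, $(f(0))_{\varepsilon}$ be mollifications of $g$, $f|_{\gamma_{-}}$, $f(0)$, and let $f_{\varepsilon}$ solve $\partial_{t}f_{\varepsilon} + v\cdot\nabla_{x}f_{\varepsilon} + E\cdot\nabla_{v}f_{\varepsilon} = g_{\varepsilon}$ with those mollified initial and incoming data, written explicitly by the Duhamel formula along \eqref{hamilton_ODE}; such $f_{\varepsilon}$ is continuous in $t$ with values in $L^{p}$ and, being constant along characteristics up to an absolutely continuous correction, satisfies the $C^{1}$-case identity (if necessary after a further, now harmless, mollification in the characteristic variables, whose boundary data are already smooth). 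By construction $f_{\varepsilon}|_{\gamma_{-}}\to f|_{\gamma_{-}}$ and $f_{\varepsilon}(0)\to f(0)$ in $L^{p}$, and $g_{\varepsilon}\to g$ in $L^{p}$: the $E\cdot\nabla_{v}$ part of the commutator vanishes identically since $E$ is independent of $v$, and the $v\cdot\nabla_{x}$ and $(t,x)$-mollification commutators tend to $0$ in $L^{p}$ by the Friedrichs/DiPerna--Lions commutator lemma, using the smoothness of the coordinate $v$ and the Lipschitz regularity of $E$. Applying the identity to $f_{\varepsilon}$ and to differences $f_{\varepsilon} - f_{\varepsilon'}$ then shows that $f_{\varepsilon}|_{\gamma_{+}}$ is Cauchy in $L^{p}(\gamma)$ and that $f_{\varepsilon}$ is Cauchy in $C^{0}([0,T];L^{p})$; passing to the limit (also using $g_{\varepsilon}|f_{\varepsilon}|^{p-2}f_{\varepsilon}\to g|f|^{p-2}f$ in $L^{1}$ and $f_{\varepsilon}\to f$ in $L^{p}$ by uniqueness for the $L^{p}$ transport problem, the field \eqref{hamilton_ODE} being well posed) identifies $f|_{\gamma_{+}}\in L^{p}(\gamma)$, gives $f\in C^{0}([0,T];L^{p})$, and yields \eqref{greens} for a.e.\ $T'$.

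The main obstacle is the approximation step: making rigorous that $f$ can be approximated by functions whose $\gamma_{-}$ traces converge to $f|_{\gamma_{-}}$ and to which the $C^{1}$ identity applies — in other words, the trace theory and renormalization for the transport equation with the external field — is the delicate part, whereas the $C^{1}$ case and the algebraic bookkeeping of the boundary terms are routine. A secondary technical point is controlling the commutator errors in $L^{p}$ with only the Lipschitz regularity of $E$ that is available, which is precisely where the $v$-independence of $E$ is essential, as it makes the $E\cdot\nabla_{v}$ commutator vanish under $v$-mollification.
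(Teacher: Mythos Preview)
Your approximation-through-the-equation scheme is a valid route, but the paper takes a different and more direct one: it applies Lemma~\ref{int_id} immediately to $h=p\,g\,|f|^{p-2}f$ for the \emph{given} $f$, with no $C^{1}$ reduction and no mollification at all. Under the hypotheses, $h\in L^{1}([0,T]\times\Omega\times\mathbb{R}^{3})$ by H\"older, so Lemma~\ref{int_id} applies; along almost every characteristic $s\mapsto(X(s),V(s))$ the function $|f|^{p}$ is absolutely continuous with derivative $h$, so the inner $s$-integrals telescope. What remains is pure bookkeeping: the paper exhibits four explicit bijections $\mathcal{A}_{1},\ldots,\mathcal{A}_{4}$ (built from the flow and $\tb,\tf$) between pieces of $\Omega\times\mathbb{R}^{3}$ and $[0,T']\times\gamma_{\pm}$, reads off their Jacobians from Lemmas~\ref{covlemma1}--\ref{covlemma2}, and thereby converts every endpoint contribution into one of $\|f(T')\|_{p}^{p}$, $\|f(0)\|_{p}^{p}$, $\int_{0}^{T'}|f|_{\gamma_{\pm},p}^{p}$. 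Finiteness of $\int_{0}^{T'}|f|_{\gamma_{+},p}^{p}$ then falls out of the identity itself. You actually describe this argument in your parenthetical remark, but only as an alternative proof of the smooth case; the point is that it already works at the stated generality, so the whole DiPerna--Lions approximation layer, the commutator analysis, and the appeal to uniqueness for the $L^{p}$ transport problem are unnecessary here. Your approach does buy something the paper glosses over, namely the $C^{0}([0,T];L^{p})$ continuity via Cauchy-in-$\varepsilon$ of $f_{\varepsilon}$, whereas the paper's proof really only establishes the identity and the $\gamma_{+}$ integrability.
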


\begin{proof}

For almost every $T' \in [0,T]$, By Holder's inequality we have 
\[
 \| (\partial_t f + v \cdot \nabla_x f + E \cdot \nabla_v f) |f|^{p-2} f  \|_{L^1 ( [0,T] \times \Omega \times \mathbb R^3 )}  \le \|(\partial_t f + v \cdot \nabla_x f + E \cdot \nabla_v f) \|_{L^p ([0,T] \times \Omega \times \mathbb R^3 ) } \| |f|^{p-1}\|_{L^{p/(p-1)} ([0,T] \times \Omega \times \mathbb R^3 )} < \infty.
 \]
Thus by Lemma (\ref{int_id}) we have:
\small
\[ \begin{split}
& \int_0^{T'}  \int_{\Omega \times \mathbb R^3 }  p(\partial_t f + v \cdot \nabla_x f + E \cdot \nabla_v f) |f|^{p-2} f  dxdvdt  
 \\ = & \iint_{\Omega \times \mathbb R^3 } \int_{- \min ( T',\tb(T',x,v) )} ^0 p(\partial_t f + v \cdot \nabla_x f + E \cdot \nabla_v f) |f|^{p-2} f (T' +s, X(T'+s; T',x,v), V(T'+s;T',x,v) ) ds dv dx 
 \\ & + \int_0 ^{T'} \int_{\gamma_+ } \int_{-\min ( t, \tb(t,x,v)) } ^0 p(\partial_t f + v \cdot \nabla_x f + E \cdot \nabla_v f) |f|^{p-2} f(t +s , X(t+s;t,x,v), V(t +s;t,x,v) )ds d\gamma dt.
\end{split} \]
\normalsize
Since
\[ \begin{split}
\frac{d}{ds } |f|^p & ( T'+s, X(T'+s; T',x,v ), V(T'+s; T',x,v )) 
\\ =  & p(\partial_t f + v \cdot \nabla_x f + E \cdot \nabla_v f) |f|^{p-2} f (T' +s, X(T'+s; T',x,v), V(T'+s;T',x,v) ),
\end{split} \]
and
\[ \begin{split}
\frac{d}{ds } |f|^p & ( t+s, X(t+s; t,x,v ), V(t+s; t,x,v )) 
\\ =  & p(\partial_t f + v \cdot \nabla_x f + E \cdot \nabla_v f) |f|^{p-2} f (t +s, X(t+s; t,x,v), V(t+s;t,x,v) ).
\end{split} \]
We have
\begin{equation} \label{green} \begin{split}
 & \int_0^{T'}   \int_{\Omega \times \mathbb R^3 }  p(\partial_t f + v \cdot \nabla_x f + E \cdot \nabla_v f) |f|^{p-2} f dxdvdt  
 \\ = & \iint_{\Omega \times \mathbb R^3 } \int_{- \min ( T',\tb(T',x,v) )} ^0 \frac{d}{ds } |f|^p ( T'+s, X(T'+s; T',x,v ), V(T'+s; T',x,v ))ds dv dx 
 \\ & + \int_0 ^{T'} \int_{\gamma_+ } \int_{-\min ( t, \tb(t,x,v)) } ^0 \frac{d}{ds } |f|^p ( t+s, X(t+s; t,x,v ), V(t+s; t,x,v )) ds d\gamma dt
 \\ = & \iint_{\Omega \times \mathbb R^3 } |f|^p (T',x,v ) dxdv - \iint_{\Omega \times \mathbb R^3 } \textbf{1}_{\{ T' \ge \tb(T',x,v) \}} |f|^p (T'-\tb,\xb,\vb ) dx dv 
 \\&  - \iint_{\Omega \times \mathbb R^3 } \textbf{1}_{\{ T' < \tb(T',x,v) \}} |f|^p (0,X(0;T',x,v) ,V(0;T',x,v ) ) dx dv
 \\ & + \int_0^{T'} \int_{\gamma_+ } |f|^p (t,x,v ) d\gamma dt - \int_0^{T'} \int_{\gamma_+ } \textbf{1}_{\{ t \ge \tb (t,x,v ) \} }|f|^p (t -\tb, \xb,\vb ) d\gamma dt
 \\& - \int_0^{T'} \int_{\gamma_+ }  \textbf{1}_{\{ t < \tb (t,x,v ) \} }|f|^p (0, X(0;t,x,v ),V(0;t,x,v) ) d\gamma dt.
\end{split} \end{equation}

First consider the map 
\[ \begin{split}
 \mathcal A_1 : \{ (x,v) \in \Omega \times \mathbb R^3 : T' < \tb(T',x,v) \} & \to  \{ (x,v) \in \Omega \times \mathbb R^3 : \tf(0,x,v) >  T' \},
 \\ (x,v) & \mapsto  (X(0; T',x,v), V(0; T',x,v) ).
\end{split} \]
This map is well defined as $\tf(0, X(0; T',x,v), V(0; T',x,v)) > T'$ since $x \in \Omega$ is in the interior. $\mathcal A_1$ is injective as the characteristic flow is unique. And for any $ (x,v) \in \Omega \times \mathbb R^3$ such that $\tf(0,x,v) > T'$, we have $X(T';0,x,v ) \in \Omega$ and $\mathcal A_1 ( X(T';0,x,v ), V(T';0,x,v ) ) = (x,v )$, so $\mathcal A_1$ is surjective. Therefore $\mathcal A_1$ is a bijection.
And since the trajectory of this change of variable is measure preserving, we have
\begin{equation} \label{green1}
\iint_{\Omega \times \mathbb R^3 } |f_0|^p \textbf{1}_{\{ \tf(0,x,v) > T' \} } dxdv = \iint_{\Omega \times \mathbb R^3 } |f_0 | ^p (X(0; T',x,v), V(0;T',x,v)) \textbf{1}_{\{ T' < \tb(T',x,v) \}}  dx dv.
\end{equation}
Next, we consider the map 
\[ \begin{split}
 \mathcal A_2 :  \{ (t,x,v) \in �(0, T'] \times \gamma_+ : t < \tb(t,x,v) \} & \to \{ (x,v) \in \Omega \times \mathbb R^3 : \tf(0,x,v) \le T' \},
 \\ (t,x,v ) & \mapsto (X(0;t,x,v), V(0;t,x,v)).
\end{split} \]
This map is well defined as $\tf( 0, X(0;t,x,v), V(0;t,x,v) ) = t \le T'$. $\mathcal A_2$ is injective as the characteristic flow is unique. And for any $(x,v) \in \Omega \times \mathbb R^3$ such that $\tf(0,x,v) \le T'$, we have $(\tf, X(\tf; 0,x,v) , V(\tf; 0,x,v) ) \in (0, T'] \times \gamma_+$ and $\tb(\tf, X(\tf; 0,x,v) , V(\tf; 0,x,v)) > \tf$ as $x \in \Omega$ is in the interior; moreover, $\mathcal A_2 ( \tf, X(\tf; 0,x,v) , V(\tf; 0,x,v) ) = (x,v)$, so $\mathcal A_2$ is surjective. Therefore $\mathcal A_2$ is a bijection. 
So by our change of variable computation (\ref{cov2}) we have:
\begin{equation} \label{green2}
\iint_{\Omega \times \mathbb R^3 } |f_0 |^p \textbf{1}_{ \{ \tf(0,x,v) \le T' \} }   dx dv = \int_0^{T' } \int_{ \gamma_+ } |f_0|^p (X(0;t,x,v), V(0;t,x,v) ) \textbf{1}_{ \{ t < \tb(t,x,v) \} }     d\gamma d t.
\end{equation}
Therefore we have
\[ \begin{split}
\iint_{\Omega \times \mathbb R^3 }  |f_0|^p dx dv =  & \iint_{\Omega \times \mathbb R^3 } |f_0 |^p \textbf{1}_{ \{ \tf (0,x,v) > T' \} } dx dv + \iint_{\Omega \times \mathbb R^3 } |f_0 |^p \textbf{1}_{ \{ \tf(0,x,v) \le T' \} } dx dv
\\ = &  \iint_{\Omega \times \mathbb R^3 } |f_0 | ^p (X(0;T',x,v), V(0;T',x,v)) \textbf{1}_{\{ T' < \tb(T',x,v) \}}  dx dv 
 \\ & + \int_0^{T' } \int_{ \gamma_+ } |f_0|^p (X(0;t,x,v), V(0;t,x,v) ) \textbf{1}_{ \{ t < \tb(t,x,v) \} }     d\gamma d t.
\end{split} \]
Then consider the map 
\[ \begin{split}
\mathcal A_3 : \{ (t,x,v ) \in [0, T' ] \times \gamma_+ : t \ge \tb(t,x,v) \} & \to \{ (s,x,v) \in [0, T' ) \times \gamma_- : T'  \ge s + \tf(s,x,v) \},
\\ (t,x,v ) & \mapsto (t -\tb(t,x,v) , \xb, \vb ).
\end{split} \]
This map is well defined as $\tf( t -\tb ,\xb ,\vb ) + (t - \tb) = \tb + t - \tb = t \le T'$. $\mathcal A_3$ is injective as the characteristic flow is unique. And for any $(s,x,v ) \in [0, T') \times \gamma_-$ such that $s + \tf(s,x,v) \le T'$, we have $(s + \tf, X(s + \tf; s,x,v ) , V(s + \tf; x,v ) ) \in [ 0, T'] \times \gamma_+$ and $\tb ( s + \tf, X(s + \tf; s,x,v ) , V(s + \tf; s,x,v ) )  = \tf \le s + \tf$; moreover, $\mathcal A_3 (s + \tf, X(s + \tf; s,x,v ) , V(s + \tf; s,x,v )) = (s,x,v ) $, so $\mathcal A_3$ is surjective. Therefore $\mathcal A_3$ is a bijection.
With the determinant of this change of variable computed in (\ref{covgamma+togamma-}) we conclude
\begin{equation} \label{green3}
\int_0^{T' } \int_{\gamma_- } |f|^p (t,x,v) \textbf{1}_{ \{T'   \ge s +  \tf(s,x,v) \} } d\gamma ds = \int_0^{T' } \int_{\gamma_+ } |f|^p ( t- \tb(t,x,v), \xb,\vb) \textbf{1}_{ \{ t  \ge \tb(t,x,v) \} } d\gamma dt.
\end{equation}
Finally, consider the map 
\[ \begin{split}
\mathcal A_4: \{ (x,v) \in \Omega \times \mathbb R^3 : T' \ge \tb(T',x,v) \} & \to \{ (s,x,v ) \in [0, T') \times \gamma_- : T'  < s + \tf(s,x,v) \},
\\  ( x,v ) &  \mapsto (T' - \tb(T',x,v) ,\xb ,\vb ).
\end{split} \]
This map is well defined as $\tf(T' - \tb ,\xb ,\vb ) + (T' - \tb )  > \tb +(T' - \tb) = T$ as $x \in \Omega$ is in the interior. $\mathcal A_4$ is injective as the characteristic flow is unique. And for any $\{ (s,x,v ) \in [0, T') \times \gamma_-$ such that $T'  < s + \tf(s,x,v) $, we have $ (X( T'; s, x,v ) , V(T';s,x,v ) ) \in \Omega \times \mathbb R^3$ and $ \tb ( T', X( T'; s, x,v ) , V(T';s,x,v )) = T'-s \le T'$; moreover, $\mathcal A_4 ( X( T'; s, x,v ) , V(T';s,x,v ) ) = (s,x,v )$, so $\mathcal A_4$ is surjective. Therefore $\mathcal A_4$ is a bijection.  

Therefore by the computation of the change of variable (\ref{covxvtotgamma-}) we have:
\begin{equation} \label{green4}
\int_0^{T' } \int_{\gamma_- } |f|^p (t,x,v) \textbf{1}_{ \{T'   < s + \tf(s,x,v) \} }   d\gamma dt =   \iint_{ \Omega \times \mathbb R^3 } |f |^p (T' - \tb(T',x,v) , \xb, \vb ) \textbf{1}_{ \{ T' \ge \tb(T',x,v) \}} dxdv.
\end{equation}

Now substitute all these identities (\ref{green1}), (\ref{green2}), (\ref{green3}),(\ref{green4}) into equation (\ref{green}) we finally get:

\[ \begin{split}
 \int_0^{T'} &  \int_{\Omega \times \mathbb R^3 }  p(\partial_t f + v \cdot \nabla_x f + E \cdot \nabla_v f) |f|^{p-2} f dxdvdt  
 \\ = & \iint_{\Omega \times \mathbb R^3 } |f|^p (T',x,v ) dxdv - \iint_{\Omega \times \mathbb R^3 } \textbf{1}_{\{ T' \ge \tb(T',x,v) \}} |f|^p (T'-\tb,\xb,\vb ) dx dv 
 \\&  - \iint_{\Omega \times \mathbb R^3 } \textbf{1}_{\{ T' < \tb(T',x,v) \}} |f|^p (0,X(0;T',x,v) ,V(0;T',x,v ) ) dx dv
 \\ & + \int_0^{T'} \int_{\gamma _+ } |f|^p (t,x,v ) d\gamma dt - \int_0^{T'} \int_{\gamma_+ } \textbf{1}_{\{ t \ge \tb (t,x,v ) \} }|f|^p (t -\tb, \xb,\vb ) d\gamma dt
 \\& - \int_0^{T'} \int_{\gamma_+ }  \textbf{1}_{\{ t < \tb (t,x,v ) \} }|f|^p (0, X(0;t,x,v ),V(0;t,x,v) ) d\gamma dt
  \\ = & \iint_{\Omega \times \mathbb R^3 } |f|^p (T',x,v ) dxdv -\int_0^{T' } \int_{\gamma_- } |f|^p (t,x,v) \textbf{1}_{ \{T'   < s + \tf(s,x,v) \} }   d\gamma dt 
  \\&  - \iint_{\Omega \times \mathbb R^3 } |f_0|^p \textbf{1}_{\{ \tf(0,x,v) > T' \} } dxdv
 \\ & + \int_0^{T'} \int_{\gamma_+ } |f|^p (t,x,v ) d\gamma dt - \int_0^{T' } \int_{\gamma_- } |f|^p (t,x,v) \textbf{1}_{ \{T'   \ge s +  \tf(s,x,v) \} } d\gamma ds
  \\& - \iint_{\Omega \times \mathbb R^3 } |f_0 |^p \textbf{1}_{ \{ \tf(0,x,v) \le T' \} }   dx dv 
\\ = & \iint_{\Omega \times \mathbb R^3 } |f|^p (T',x,v ) dxdv + \int_0^{T'} \int_{\gamma_+ } |f|^p (t,x,v ) d\gamma dt
\\ & - \iint_{\Omega \times \mathbb R^3 } |f_0 |^p dxdv - \int_0^{T' } \int_{\gamma_- } |f|^p ( t,x,v) d\gamma dt,
\end{split} \]
so we conclude (\ref{greens}).

Note since the left hand side of the above equality is finite, and by our assumption all the terms on the right hand side except $\int_0^{T'} \int_{\gamma _+ } |f|^p (t,x,v ) d\gamma dt$ is finite, thus $f \in L^p ( [0,T] ; L^p (\gamma_+ ) ) $.

\end{proof}

We now define $\gamma_+^\epsilon$ to be the set of almost grazing velocities or large velocities
\begin{equation} \label{defgammaepsilon}
\gamma_+^\epsilon = \{ (x,v) \in \gamma_+:  n(x) \cdot v  < \epsilon \text{ or } |v| >  1/\epsilon \}.
\end{equation}

\begin{lemma}[trace theorem for bounded potential]\label{tracebddpotential}
Let $0<T<1$ be fixed. Assume that $ | \nu (t,x,v) |  \lesssim \langle v \rangle $ and $\| E \|_\infty  < \infty$. Then for any $0< \epsilon \ll 1$, there exists a $C_{\Omega } > 0$ depending only on $\Omega$ such that 
for all $0 \le t \le T$,
\begin{equation} \label{traceestimate}
\begin{split}
\int_0^t & \int_{\gamma_+ \setminus \gamma_+^\epsilon } |h | d\gamma ds \\ & \le C_\Omega e^{ T \| E \|_\infty} \frac{ 1 + \epsilon^2 \| E\|_\infty^2}{\epsilon^3} \left[ \| h_0 \|_1 + \int_0^t \left( \| h(s) \| _1 +  \| [\partial_t + v \cdot \nabla_x  + E \cdot \nabla_v  + \nu ]h(s) \|_1\right) ds  \right].
\end{split}
\end{equation}

\end{lemma}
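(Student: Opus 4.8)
The plan is to control $|h|$ on the non-grazing outgoing set $\gamma_+\setminus\gamma_+^\epsilon$ by running the characteristics backward a short time into $\Omega$, applying Duhamel, and then turning the resulting boundary integrals into bulk integrals using the Jacobian $|n(x)\cdot v|$ already computed in Lemma~\ref{covlemma1} and Lemma~\ref{int_id}. \emph{Step 1 (geometric lower bound on $\tb$).} I first claim that there is $c_\Omega>0$, depending only on $\Omega$, such that with $\delta_\epsilon:=c_\Omega\,\epsilon^3/(1+\epsilon^2\|E\|_\infty^2)$ one has $\tb(s,x,v)\ge\delta_\epsilon$ for every $(x,v)\in\gamma_+\setminus\gamma_+^\epsilon$ (so $n(x)\cdot v\ge\epsilon$ and $|v|\le 1/\epsilon$) and every $s$. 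This follows by Taylor expanding $\sigma\mapsto\xi(X(s-\sigma;s,x,v))$ at $\sigma=0$: the value at $\sigma=0$ vanishes since $x\in\partial\Omega$, the first derivative equals $-\nabla\xi(x)\cdot v\le -c_0\epsilon<0$ because $|\nabla\xi|$ is bounded below near $\partial\Omega$, and $\tfrac{d^2}{d\sigma^2}\xi(X)=V\cdot\nabla^2\xi(X)\cdot V+\nabla\xi(X)\cdot E$, which using $|V(\sigma')|\le|v|+\sigma'\|E\|_\infty\lesssim 1/\epsilon$ on the short interval is bounded by $C_\Omega(1+\epsilon^2\|E\|_\infty^2)/\epsilon^2$. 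Hence $\xi(X(s-\sigma;s,x,v))<0$, i.e.\ the backward orbit stays in $\Omega$, for all $0<\sigma\le\delta_\epsilon$; strict convexity is what makes this persist beyond the purely linear regime.

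\emph{Step 2 (Duhamel along characteristics).} Set $g:=[\partial_t+v\cdot\nabla_x+E\cdot\nabla_v+\nu]h$, so $[\partial_t+v\cdot\nabla_x+E\cdot\nabla_v]h=g-\nu h$; the integrating factor $e^{\int\nu}$ along $(X(\cdot\,;s,x,v),V(\cdot\,;s,x,v))$ gives, for $0<\sigma<\tb(s,x,v)$,
\[ h(s,x,v)=e^{-\int_{s-\sigma}^s\nu}h(s-\sigma,X(s-\sigma),V(s-\sigma))+\int_{s-\sigma}^s e^{-\int_\tau^s\nu}\,g(\tau,X(\tau),V(\tau))\,d\tau . \]
On $\gamma_+\setminus\gamma_+^\epsilon$ and for $s-\tau\le\delta_\epsilon$ we have $|\nu|\lesssim\langle V\rangle\lesssim 1/\epsilon$ along the orbit and $\delta_\epsilon/\epsilon\le c_\Omega$, so every exponential factor above is $\le C e^{T\|E\|_\infty}$. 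If $s\ge\delta_\epsilon$, averaging over $\sigma\in(0,\delta_\epsilon)$ (legitimate since $\delta_\epsilon\le\tb(s,x,v)$ and $s-\sigma>0$) yields
\[ |h(s,x,v)|\le\frac{C e^{T\|E\|_\infty}}{\delta_\epsilon}\int_0^{\delta_\epsilon}|h|(s-\sigma,X(s-\sigma),V(s-\sigma))\,d\sigma+C e^{T\|E\|_\infty}\int_{s-\delta_\epsilon}^s|g|(\tau,X(\tau),V(\tau))\,d\tau , \]
while if $s<\delta_\epsilon$ then $\tb(s,x,v)\ge\delta_\epsilon>s$, so the backward orbit reaches time $0$ inside $\Omega$ and
\[ |h(s,x,v)|\le C e^{T\|E\|_\infty}|h_0(X(0),V(0))|+C e^{T\|E\|_\infty}\int_0^s|g|(\tau,X(\tau),V(\tau))\,d\tau . \]

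\emph{Step 3 (conversion to bulk integrals).} Integrating these pointwise bounds against $d\gamma=|n(x)\cdot v|\,dS_x\,dv$ over $\gamma_+\setminus\gamma_+^\epsilon$ and over $s\in[0,t]$, split the $s$-integral at $\delta_\epsilon$. The weight $|n(x)\cdot v|$ is precisely the Jacobian of the characteristic maps involved, so it is absorbed: for the $h_0$ term the map $(s,x_\parallel,v)\mapsto(X(0;s,\eta(x_\parallel),v),V(0;s,\eta(x_\parallel),v))$ is injective with $dX\,dV=|n(x)\cdot v|\,dS_x\,dv\,ds$ by \eqref{detcofv1} (applicable since $\tb(s,x,v)>s$ in this range), contributing $\le C e^{T\|E\|_\infty}\|h_0\|_1$; for the bulk-$h$ term and for both $g$ terms, after the substitution $\tau=s+\sigma'$ the integrals are exactly of the form of the $\gamma_+$ term on the right of \eqref{covalongchar} (using $\delta_\epsilon\le\min(s,\tb(s,x,v))$, so that $[-\delta_\epsilon,0]\subseteq[-\min(s,\tb),0]$), whence Lemma~\ref{int_id} applied to $|h|$, resp.\ $|g|$, bounds them by $\int_0^t\|h(s)\|_1\,ds$, resp.\ $\int_0^t\|g(s)\|_1\,ds$. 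Collecting all contributions and using $1\lesssim_\Omega\delta_\epsilon^{-1}\lesssim_\Omega(1+\epsilon^2\|E\|_\infty^2)/\epsilon^3$ yields \eqref{traceestimate}; finiteness of the left side (hence $h|_{\gamma_+\setminus\gamma_+^\epsilon}\in L^1$) is obtained by first running the argument for smooth $h$ and passing to the limit, exactly as in Lemma~\ref{green'sidentity}.

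The main obstacle is Step~1: extracting the explicit, uniform lower bound $\tb\gtrsim\epsilon^3/(1+\epsilon^2\|E\|_\infty^2)$ in the presence of curved characteristics, and verifying that the Taylor--convexity estimate really controls the orbit on the whole of $(0,\delta_\epsilon]$ (including negative times, where $E$ is still given). Once that is in hand, the remainder is bookkeeping built on Lemmas~\ref{covlemma1} and \ref{int_id}.
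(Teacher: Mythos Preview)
Your proposal is correct and follows essentially the same route as the paper: a lower bound $\tb\gtrsim\epsilon^3/(1+\epsilon^2\|E\|_\infty^2)$ on $\gamma_+\setminus\gamma_+^\epsilon$, Duhamel along characteristics, a split of the time interval at $\delta_\epsilon$, and conversion of the boundary integrals to bulk integrals via Lemma~\ref{covlemma1} and Lemma~\ref{int_id} (the paper uses \eqref{covalongchar} and \eqref{green2} in exactly the places you indicate). The only cosmetic differences are that the paper obtains the $\tb$ bound by tracking $\nabla\xi(X(s))\cdot V(s)$ from its positive value at $s=t$ to its nonpositive value at $s=t-\tb$ (a mean-value argument rather than your Taylor expansion of $\xi(X)$), and that it averages the Duhamel identity over the full window $[-\min\{t,\tb\},0]$ (then uses $\min\{t,\tb\}\ge\epsilon_1$) instead of over the fixed window $[-\delta_\epsilon,0]$; both choices lead to the same $1/\delta_\epsilon$ prefactor.
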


\begin{proof}
%
%
%
%
%

For $(t,x,v) \in [0 ,T] \times \gamma_+ \setminus \gamma_+^\epsilon $, we claim
\begin{equation} \label{lowerbdfortb}
\inf_{ (t,x,v) \in [0,T] \times \gamma_+\setminus \gamma_+^\epsilon } \tb(t,x,v) \gtrsim_\Omega \frac{\epsilon^3}{1 + \epsilon^2 \| E\|_\infty^2}.
\end{equation}
Since 
\[
 \nabla \xi (x) \cdot v = | \nabla \xi (x) | n(x) \cdot v > | \nabla \xi (x) | \epsilon  \gtrsim_\Omega \epsilon, \, \nabla \xi (\xb ) \cdot \vb < 0,
 \]
 and
\[ \begin{split}
\frac{d}{ds}  \left( \nabla \xi (X(s) ) \cdot V(s)\right) =& V(s) \cdot \nabla^2 \xi (X(s) ) \cdot V(s) + \nabla \xi (X(s)) \cdot E(s,X(s) ) 
\\ \lesssim_{\Omega} & (|V(s) | ^2 +\| E \|_\infty ) \lesssim_{\Omega} (|v|^2 + \| E \|_\infty^2 + 1  ) \lesssim_{\Omega} (\frac{1}{\epsilon^2} + \| E \|_\infty^2 + 1  ),
\end{split} \]
for all $ t -\tb \le s \le  t$. Thus
\[
\tb(t,x,v) \ge   \frac{    \epsilon }{C_\Omega (\frac{1}{ \epsilon^2} + \| E \|_\infty^2 + 1)} \ge \frac{\epsilon^3}{C_\Omega (1 + \epsilon^2 \| E\|_\infty^2)}.
\]
This proves (\ref{lowerbdfortb}). Let 
\[
\epsilon_1 = \frac{\epsilon^3}{C_\Omega (1 + \epsilon^2 \| E\|_\infty^2)}.
\]
Now if $h$ solves (\ref{traninflowfixE}), then for $(t,x,v) \in [0,T] \times \gamma_+ $ and $-  \min \{ t ,\tb (t,x,v) \} \le s \le 0 $, we have
\begin{equation} \label{h_expand}
\begin{split}
 h(t,x,v) =  &h(t+s, X(t+s),V(t+s) ) e^{- \int_s^{0} \nu (V(t + \tau' )) d\tau'} \\ &+  \int_{s}^0 e^{-\int_\tau^0 \nu(V(t + \tau' )) d\tau' } H(\tau, X(t +\tau), V(t + \tau) ) d \tau,
\end{split}
\end{equation}
where $X(t + \tau ) = X(t + \tau; t ,x,v ) $, and $V(t + \tau ) = V( t + \tau; t,x,v  )$.

Then by (\ref{lbtb})
\[ \begin{split}
\min & \{ t, \tb(t,x,v) \} |h (t,x,v)| =  \int_{ - \min \{ t, \tb(t,x,v) \} } ^0 |h(t,x,v)| ds
\\   \le &  C_\Omega' e^{ T \| E \|_\infty} \left( \int_{ - \min \{ t, \tb(t,x,v) \} } ^0 |h(t + s, X(t+s),V(t+s) ) | ds 
 +  \int_{ - \min \{ t, \tb(t,x,v) \} } ^0 \int_{s}^0 | H(\tau, X(t +\tau), V(t + \tau)) |  d \tau ds \right)
 \\ \le & C_\Omega' e^{ T \| E \|_\infty} \left( \int_{ - \min \{ t, \tb(t,x,v) \} } ^0 |h(t+s, X(t+s),V(t+s) ) | ds 
+  T \int_{ - \min \{ t, \tb(t,x,v) \} } ^0  |  H(\tau, X(t +\tau), V(t + \tau) ) | d \tau \right).
\end{split} \]

We then integrate (\ref{h_expand}) over $\int_{\epsilon_1} ^T \int_{\gamma_+ \setminus \gamma_+^\epsilon} \int_{-\min \{ t, \tb (t,x,v) \} } ^0 $ to get 
\begin{equation} \label{traceestimateepsilon1toT}
\begin{split}
 \epsilon_1 \times  &\int_{\epsilon_1} ^T \int_{\gamma_+ \setminus \gamma_+^\epsilon} | h(t,x,v) | d\gamma dt 
\\ \le & \displaystyle {\min_{ [\epsilon_1 , T ] \times [ \gamma_+ \setminus \gamma_+^\epsilon ]} } \{ t, \tb (t,x,v) \} \times \int_{\epsilon_1} ^T \int_{\gamma_+ \setminus \gamma_+^\epsilon} | h(t,x,v) | d\gamma dt 
\\  \le & C_\Omega' e^{ T \| E \|_\infty}  \int_0 ^T \int_{\gamma_+ \setminus \gamma_+^\epsilon } \int_{-\min\{t, \tb (t,x,v) \} } ^0 |h(t+s, X(t+s) ,V(t+s)) | ds d\gamma dt 
\\  & + C_\Omega' e^{ T \| E \|_\infty}   T \int_0^T \int_{\gamma_+ \setminus \gamma_+ ^\epsilon } \int_{- \min \{ t, \tb (t,x,v) \}  }^0 |H(\tau, X(t + \tau), V(t +\tau)) | d\tau d\gamma dt
\\  \le & C_\Omega' e^{ T \| E \|_\infty} \left(\int_0^T \| h(t) \|_1 dt + \int_0^T \| [\partial_t + v \cdot \nabla_x + E \cdot \nabla_v + \phi ] h(t) \| _1 dt\right),
\end{split}
\end{equation}
where in the last inequality we have used the identity (\ref{covalongchar}). 

On the other hand, because of our choice $\epsilon$ and $\epsilon_1$, by (\ref{lowerbdfortb}) we have $\tb(t,x,v) > t $ for all $(t,x,v) \in [0, \epsilon_1] \times \gamma_+ \setminus \gamma_+^\epsilon $. Then 
\[
\begin{split}
 |h(t,x,v)| \le | h_0( X(0),V(0) )| +  \int_{-t}^0 | H ( t + \tau , X(t + \tau ) , V(t + \tau )) |d \tau.
\end{split}
\]
Integrating over $\int_0^{\epsilon_1} \int_{\gamma_+\setminus \gamma_+^\epsilon} $ we get
\begin{equation} \label{traceestimate0toepsilon1}
\int_0^{\epsilon_1} \int_{\gamma_+\setminus \gamma_+^\epsilon}  |h(t,x,v)| d\gamma dt \le\int_0^{\epsilon_1} \int_{\gamma_+\setminus \gamma_+^\epsilon}  | h_0( X(0),V(0) )| d\gamma dt+ \int_0^{\epsilon_1} \int_{\gamma_+\setminus \gamma_+^\epsilon}  \int_{-t}^0 | H ( t + \tau , X(t + \tau ) , V(t + \tau )) |d \tau d\gamma dt.
\end{equation}

where the second term is bounded, again from (\ref{covalongchar}), by 
\[
\begin{split}
\int_0^{\epsilon_1}  & \int_{\gamma_+ \setminus \gamma_+^\epsilon} \int_{-t}^0   | H ( t + \tau , X(t + \tau ) , V(t + \tau )) |d \tau
\le \int_0^{\epsilon_1} \| [\partial_t + v \cdot \nabla_x + E \cdot \nabla_v + \phi ] h(t) \| _1 dt.
\end{split}
\]

And by (\ref{green2}) the first term is bounded by
\[
\begin{split}
\int_0^{\epsilon_1}  \int_{\gamma_+ \setminus \gamma_+^\epsilon} |h_0(X(0;t,x,v), V(0;t,x,v)) | d\gamma dt 
 \le & \int_0^{T } \int_{ \gamma_+ } |h_0| (X(0;t,x,v), V(0;t,x,v) ) \textbf{1}_{ \{ t < \tb(t,x,v) \} }     d\gamma d t
\\ = & \iint_{\Omega \times \mathbb R^3 } |h_0 | \textbf{1}_{ \{ \tf(0,x,v) \le T' \} }   dx dv  
 \le \| h_0 \|_1.
\end{split}
\]
Combine (\ref{traceestimateepsilon1toT}) and (\ref{traceestimate0toepsilon1}) we conclude (\ref{traceestimate}).

\end{proof}

\bigskip
We need a cutoff function for our weight function:

%

For any $\epsilon >0$, let $\chi_\epsilon: [0,\infty) \to [0,\infty) $ be a smooth function satisfying:
%

\begin{equation} \label{chicondition} \begin{split}
& \chi_\epsilon(x) =x \,\, \text{for} \,\, 0 \le x \le \frac{\epsilon}{4}, 
\\ &\chi_\epsilon(x) = C_\epsilon  \,\, \text{for}\,\, x \ge \frac{\epsilon}{2},
\\& \chi_\epsilon(x) \text{ is increasing}  \,\, \text{for} \,\, \frac{\epsilon}{4} < x < \frac{\epsilon}{2},
\\ & \chi_\epsilon'(x) \le 1.
\end{split} \end{equation}

Let $d(x,\partial \Omega) := \inf_{y \in \partial \Omega} \| x - y \| $. And for any $\delta > 0$, let 
\[
\Omega ^\delta : = \{ x \in \Omega : d(x, \partial \Omega ) < \delta \}.
\]
Since $\partial \Omega$ is $C^2$, we claim that if $\delta \ll 1$ is small enough we have:
\begin{equation} \label{distfunctionunique}
\text{for any} \, x \in \Omega ^\delta \, \text{ there exists a unique} \, \bar x \in \partial \Omega \, \text{such that} \, d(x,\bar x ) = d(x, \partial \Omega), \, \text{ moreover } \sup_{x\in \Omega^\delta } | \nabla_x \bar x | < \infty.
\end{equation}
To prove the claim, we have by (\ref{eta}) WLOG locally we can assume $\eta$ takes the form $\eta ( x_\parallel)  = (x_{\parallel,1} , x_{\parallel,2} , \bar \eta(x_{\parallel,1}, x_{\parallel,2}))$, and $\bar x = \eta(\bar x_\parallel) =(\bar x_{\parallel,1} , \bar x_{\parallel,2} , \bar \eta(\bar x_{\parallel,1}, \bar x_{\parallel,2}))$. Denote $\partial_i \bar \eta = \frac {\p }{ \partial x_{\parallel, i } } \bar \eta (x_{\parallel,1} , x_{\parallel,2} ) $, and $\partial_{i,j} \bar \eta = \frac {\p^2 }{ \partial x_{\parallel, i } \partial x_{\parallel, j } } \bar \eta (x_{\parallel,1} , x_{\parallel,2} )$.

Now since $|\eta(\bar x_\parallel) - x|^2 = \inf_{y \in \partial \Omega } | y -x |^2$, $\bar x_\parallel$ satisfies
\[ 
\omega (x_1,x_2,x_3, \bar x_{\parallel,1},\bar x_{\parallel,2} )= 
\begin{bmatrix}
(\bar x_{\parallel,1} - x_1) + (\bar \eta(\bar x_{\parallel,1}, \bar x_{\parallel,2}) - x_3) \p_1 \bar \eta ( \bar x_{\parallel,1}, \bar x_{\parallel,2})  \\
(\bar x_{\parallel,2} - x_2) + (\bar \eta(\bar x_{\parallel,1}, \bar x_{\parallel,2}) - x_3) \p_2 \bar \eta ( \bar x_{\parallel,1}, \bar x_{\parallel,2})
\end{bmatrix} 
=0.
\]
Since
\[ \begin{split}
\det (\frac{\partial \omega}{ \partial x_\parallel} )= & \det
\begin{bmatrix}
1 + (\p_1 \bar \eta)^2 + (\bar \eta - x_3 ) \p_{1,1} \bar \eta_{} & \p_2 \bar \eta_{}\p_1\bar \eta_{} + (\bar \eta - x_3 ) \p_{1,2} \bar \eta \\
\p_1 \bar \eta_{}\p_2 \bar \eta_{} +(\bar \eta -x_3) \p_{1,2} \bar \eta_{} & 1 + (\p_2 \bar \eta_{}) ^2 + (\bar \eta -x_3 ) \p_{1,2} \bar \eta_{}
\end{bmatrix}
\\ & = ( 1 + (\p_1 \bar \eta_{})^2 ) ( 1 + (\p_2 \bar \eta_{}) ^2 ) - (\p_1 \bar \eta_{} \p_2 \bar \eta_{ } ) ^2 +O(|\bar \eta - x_3 |)
\\ & = 1 + (\p_1 \bar \eta_{})^2 + (\p_2 \bar \eta_{}) ^2+O(|\bar \eta - x_3 |) > 0,
\end{split} \]
if $|\bar \eta (x_\parallel) - x_3 | $ is small enough. By the implicit function theorem $(\bar x_{\parallel,1}, \bar x_{\parallel,2 } )$ are functions of $x_1, x_2, x_3$ if $x$ is close enough to $\partial \Omega$.

Moreover,
\[ \begin{split}
\frac{\partial \bar x_{\parallel} }{ \partial x_j }&  = - ( \frac{\partial \omega}{ \partial \bar x_{\parallel} } ) ^{-1}  \cdot \frac{\partial \omega}{ \partial x_j }
\\ & = \frac{1}{ \det ( \frac{\partial \omega}{ \partial \bar x_{\parallel} }) }
\begin{bmatrix}
1 +  (\p_2 \bar \eta) ^2 + (\bar \eta -x_3 )  \partial_{1,2} \bar \eta & - \p_2  \bar \eta \p_1 \bar \eta - (\bar \eta - x_3 ) \p_{1,2} \bar \eta \\
- \p_1  \bar \eta \p_2 \bar \eta -(\bar \eta -x_3) \p_{1,2} \bar \eta & 1 + (\p_1 \bar \eta)^2 + (\bar \eta - x_3 ) \p_{1,1} \bar \eta 
\end{bmatrix}
\cdot \frac{\partial \omega }{ \partial x_j }
\end{split}\]
is bounded as $\frac{\partial \omega }{\partial x_j } $ is bounded and $\det ( \frac{\partial \omega}{ \partial \overline x })$ is bounded from below if $x$ is close enough to the boundary. Therefore $| \nabla_x \bar x |$ is bounded. This proves (\ref{distfunctionunique}).

Now define
\[
\beta(t,x,v) = \bigg[ |v \cdot \nabla \xi (x)| ^2 + \xi (x)^2 - 2 (v \cdot \nabla^2 \xi(x) \cdot v ) \xi(x) - 2(E(t,\overline x ) \cdot \nabla \xi (\overline x ) )\xi(x) \bigg]^{1/2},
\]
for all $(x, v ) \in  \Omega ^{\delta} \times \mathbb R^3 $. Let $\delta': = \min \{| \xi (x)| : x\in \Omega, d(x, \partial \Omega) = \delta \} $, and let $\chi_{\delta'}$ be a smooth cutoff function satisfies (\ref{chicondition}), then define
\begin{equation} \label{alphadef}
\alpha(t,x,v) : = 
\begin{cases}
 (\chi_{\delta'}  ( \beta(t,x,v) ) )^{} & x \in  \Omega^\delta, \\
 C_{\delta'} ^{} & x \in \Omega \setminus  \Omega^\delta.
\end{cases} \end{equation}

\begin{lemma}[Velocity lemma near boundary] \label{velocitylemma} 
Suppose $E(t,x)$ satisfies \eqref{c1bddforthepotentail} and the sign condition (\ref{signEonbdry}).
Then $\alpha$ is continuous, and for $\delta \ll 1 $ small enough, we have for any $0 \le s<t $ and trajectory $X(\tau), V(\tau)$ solving (\ref{hamilton_ODE}), if $X(\tau ) \in \Omega $ for all $s \le \tau \le t $, then $\alpha$ satisfies
\begin{equation}  \label{velocitylemmaintform}
e^{ - C \int_s ^ t ( |V(\tau')| + 1 ) d \tau'} \alpha ( s,X(s),V(s) ) \le \alpha (t,X(t),V(t)) \le  e^{C \int_s ^ t ( |V(\tau')| + 1 ) d \tau'} \alpha (s,X(s),V(s)),
\end{equation}
for any $C \ge \frac{C_{\xi}(\| E \|_\infty + \| \nabla E \|_\infty +\|\p_t E \|_\infty + 1 )}{C_E} $, where $C_\xi > 0$ is a large constant depending only on $\xi$.
\end{lemma}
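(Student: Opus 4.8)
The plan is to show that along any characteristic $(X(\tau),V(\tau))$ solving \eqref{hamilton_ODE} which stays in $\Omega$ for $\tau\in[s,t]$ one has the pointwise bound
\[
\Bigl|\tfrac{d}{d\tau}\,\alpha(\tau,X(\tau),V(\tau))\Bigr|\le 2C\,(|V(\tau)|+1)\,\alpha(\tau,X(\tau),V(\tau)),
\]
and then to integrate this differential inequality by Gr\"onwall to obtain \eqref{velocitylemmaintform}. I begin with continuity of $\alpha$. On $\Omega^\delta$, $\alpha=\chi_{\delta'}(\beta)$; the bracket defining $\beta^2$ is continuous there since $\overline x$ is a well-defined $C^1$ function of $x$ by \eqref{distfunctionunique} (taking $\delta\ll1$), and it is $\ge\xi(x)^2\ge0$ because on $\Omega^\delta$ one has $\xi<0$, $v\cdot\nabla^2\xi(x)\cdot v\ge C_\xi|v|^2\ge0$ by strict convexity, and $E(t,\overline x)\cdot\nabla\xi(\overline x)=|\nabla\xi(\overline x)|\,E(t,\overline x)\cdot n(\overline x)\gtrsim C_E>0$ by \eqref{signEonbdry}, so the two ``cross'' terms $-2(v\cdot\nabla^2\xi\cdot v)\xi$ and $-2(E(t,\overline x)\cdot\nabla\xi(\overline x))\xi$ are both $\ge0$. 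On the interface $\{d(x,\partial\Omega)=\delta\}$ one has $|\xi(x)|\ge\delta'$, hence $\beta\ge\delta'>\delta'/2$ and $\chi_{\delta'}(\beta)=C_{\delta'}$, matching the outer branch of \eqref{alphadef}; so $\alpha$ is continuous on $\Omega\times\mathbb R^3$ (and extends continuously to $\partial\Omega\times\mathbb R^3$, where $\alpha=|\nabla\xi(x)|\,|n(x)\cdot v|$).

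The core computation is to differentiate $g(\tau):=\beta^2(\tau,X(\tau),V(\tau))$ using $\dot X=V$, $\dot V=E(\tau,X)$ and $\tfrac{d}{d\tau}(V\cdot\nabla\xi(X))=E(\tau,X)\cdot\nabla\xi(X)+V\cdot\nabla^2\xi(X)\cdot V$. After the product rule, the two cubic-in-$V$ contributions $\pm2(V\cdot\nabla\xi)(V\cdot\nabla^2\xi\cdot V)$ — one from $\tfrac{d}{d\tau}|V\cdot\nabla\xi|^2$, one from differentiating $\xi$ in $-2(V\cdot\nabla^2\xi\cdot V)\xi$ — cancel exactly (this is precisely why that term appears in $\beta^2$), and one is left with
\begin{align*}
g'(\tau)&=2(V\cdot\nabla\xi(X))\,[\,E(\tau,X)\cdot\nabla\xi(X)-E(\tau,\overline X)\cdot\nabla\xi(\overline X)\,]+\xi(X)\,R,\\
R&:=2\,V\cdot\nabla\xi(X)-2(\nabla^3\xi(X))(V,V,V)\\
&\quad-4\,E(\tau,X)\cdot\nabla^2\xi(X)\cdot V-2\,\tfrac{d}{d\tau}[\,E(\tau,\overline X)\cdot\nabla\xi(\overline X)\,],
\end{align*}
where the first term comes from combining the $E(\tau,X)\cdot\nabla\xi(X)$ produced by $\tfrac{d}{d\tau}|V\cdot\nabla\xi|^2$ with the $-2(E(\tau,\overline X)\cdot\nabla\xi(\overline X))(V\cdot\nabla\xi)$ produced by differentiating $\xi$ in the field term $-2(E(t,\overline x)\cdot\nabla\xi(\overline x))\xi$ of $\beta^2$. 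This is \eqref{transderivbeta1}.

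Now the estimates. From the nonnegative cross terms of $\beta^2$ one reads off the key smallness bounds $|\xi(X)|\le\beta^2/(2C_E')$ with $C_E':=\inf_{\partial\Omega}E\cdot\nabla\xi\gtrsim C_E$, and $|\xi(X)|\,|V|^2\le\beta^2/(2C_\xi)$ by strict convexity, together with $|\xi(X)|\le\beta$ and $|V\cdot\nabla\xi(X)|\le\beta$. For the first term of $g'$: $x\mapsto E(t,x)\cdot\nabla\xi(x)$ is Lipschitz (as $E\in C^1$, $\xi\in C^3$) and $|X-\overline X|=d(X,\partial\Omega)\lesssim|\xi(X)|$ near $\partial\Omega$, so the bracket is $\lesssim|\xi(X)|\le\beta$ and the term is $\lesssim\beta^2$. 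In $\xi(X)R$: the cubic piece is $\lesssim|\xi(X)|\,|V|^3\le\tfrac{|V|}{2C_\xi}\beta^2$; the $\nabla^2\xi$ piece is $\lesssim|V|\,|\xi(X)|\,\|E\|_\infty\le\tfrac{\|E\|_\infty}{C_E'}|V|\beta^2$, while $|2\xi(V\cdot\nabla\xi)|\le\xi^2+|V\cdot\nabla\xi|^2\le2\beta^2$; and $\bigl|\tfrac{d}{d\tau}[E(\tau,\overline X)\cdot\nabla\xi(\overline X)]\bigr|\lesssim\|\partial_tE\|_\infty+(\|\nabla E\|_\infty+\|E\|_\infty)|V|$ by \eqref{c1bddforthepotentail}, $\tfrac{d}{d\tau}\overline X=\nabla_x\overline X\cdot V$ and $\sup_{\Omega^\delta}|\nabla_x\overline x|<\infty$, times $2|\xi(X)|\le\beta^2/C_E'$. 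This gives $|g'(\tau)|\le2C(|V(\tau)|+1)g(\tau)$ with $C$ of the form in the statement (the $1/C_E$ from the $C_E'$-bounds, the $\xi$-constant absorbing $\|\nabla^k\xi\|_\infty$, $1/C_\xi$ and $\sup|\nabla\overline x|$) — i.e.\ \eqref{boundalphaxi}. Finally I transfer this to $\alpha$: where $X(\tau)\notin\Omega^\delta$, $\alpha\equiv C_{\delta'}$ so $\tfrac{d}{d\tau}\alpha=0$; where $X(\tau)\in\Omega^\delta$, $\xi(X(\tau))<0$ keeps $\beta\ge|\xi(X(\tau))|>0$, so $\alpha=\chi_{\delta'}(\beta)$ is $C^1$ in $\tau$ with $\tfrac{d}{d\tau}\alpha=\chi_{\delta'}'(\beta)\,\tfrac{d}{d\tau}\beta$; using $|\chi_{\delta'}'|\le1$, $\chi_{\delta'}'(\beta)=0$ for $\beta\ge\delta'/2$, and $\beta\le2\alpha$ for $\beta<\delta'/2$ (since $\chi_{\delta'}(\beta)=\beta$ for $\beta\le\delta'/4$ and $\chi_{\delta'}(\beta)\ge\delta'/4$ for $\beta\in[\delta'/4,\delta'/2)$), one gets $|\tfrac{d}{d\tau}\alpha|\le|\tfrac{d}{d\tau}\beta|\le2C(|V|+1)\alpha$. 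The curve $\tau\mapsto\alpha(\tau,X(\tau),V(\tau))$ is $C^1$ on $[s,t]$ (it is locally $\equiv C_{\delta'}$ near any crossing of $\partial\Omega^\delta$, where $\beta\ge\delta'$), so Gr\"onwall yields \eqref{velocitylemmaintform}, enlarging $C$ if necessary.

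The main obstacle is the second and third paragraphs: after the cubic cancellation, the one term of $g'$ not already carrying a factor $\xi(X)$ is $2(V\cdot\nabla\xi)\,E(\tau,X)\cdot\nabla\xi(X)$, which is only $O(|V|)$ and cannot by itself be absorbed into $O((|V|+1)\beta^2)$; the fix — pairing it with the $\xi$-derivative of the field term of $\beta^2$ and estimating $|E(\tau,X)\cdot\nabla\xi(X)-E(\tau,\overline X)\cdot\nabla\xi(\overline X)|\lesssim|X-\overline X|\lesssim|\xi(X)|$ — is exactly where the sign condition \eqref{signEonbdry} becomes essential, since it is what makes the outer coefficient $E(t,\overline x)\cdot\nabla\xi(\overline x)$ positive and bounded below and, through that cross term, supplies the bound $|\xi|\lesssim\beta^2/C_E$ needed to absorb the new terms created by the $E\cdot\nabla_v$ transport. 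Everything else — the cutoff bookkeeping and the continuity claim — is routine once these two points are in place.
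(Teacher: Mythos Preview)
Your proof is correct and follows essentially the same approach as the paper: the cubic cancellation in $\tfrac{d}{d\tau}\beta^2$, the pairing of $2(V\cdot\nabla\xi)\,E(\tau,X)\cdot\nabla\xi(X)$ with the $\xi$-derivative of the field term via $|E(\tau,X)\cdot\nabla\xi(X)-E(\tau,\overline X)\cdot\nabla\xi(\overline X)|\lesssim|\xi(X)|$, and then the absorption of all remaining terms using $|\xi|\lesssim\beta^2/C_E$ and $|V|^2|\xi|\lesssim\beta^2$ from the sign condition and convexity, followed by the cutoff transfer and Gr\"onwall. The only cosmetic difference is that the paper carries out the Gr\"onwall step on $\alpha^2$ rather than $\alpha$, which is equivalent.
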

Similar estimates have been used in \cite{Guo_V} and then in \cite{Hwang,GKTT1}.

\begin{proof}

Since $\beta(t,x,v) \ge |\xi(x) | $ for all $x \in \partial \Omega$, $\beta (t,x,v) \ge \frac{\delta'}{2}$ on an open neighborhood $U$ of $\{ x \in \Omega: d(x, \partial \Omega ) = \delta \}$. So by (\ref{chicondition}), $\alpha  = C_{\delta'} ^{} $ on $U$, and therefore $\alpha$ is continuous.


Now let's first claim that if $X(\tau) \in \Omega^\delta$ for all $\tau$, then $ \beta^2 $ satisfies:
\begin{equation} \label{velocitylemmabeta}
- C( |V(\tau)|+1) \beta^2 (\tau, X(\tau),V(\tau)) \le \frac{d}{d\tau}\beta^2 (\tau, X(\tau),V(\tau))  \le C (|V(\tau)|+1) \beta^2(\tau, X(\tau),V(\tau)),
\end{equation}
for any $C \ge \frac{C_{\xi}(\| E \|_\infty + \| \nabla E \|_\infty +\|\p_t E \|_\infty + 1 )}{C_E} $.

By direct computation
\Be \label{transderivbeta} \begin{split}
\{ & \p_t + v\cdot \nabla_x + E \cdot \nabla_v \} \beta^2(t,x,v)
\\ = & 2 (v\cdot \nabla \xi(x)) ( E(t,x) \cdot \nabla \xi (x)) + \cancel{2 (v\cdot \nabla \xi (x) ) ( v \cdot \nabla^2 \xi(x) \cdot v )} + 2 \xi (x) ( v\cdot \nabla \xi (x))
\\ & - 2 (E(t,x) \cdot  (\nabla^2 \xi(x) + \nabla^2 \xi (x)^t )  \cdot v) \xi (x) -  \cancel{2 (v\cdot \nabla \xi (x) ) ( v \cdot \nabla^2 \xi(x) \cdot v )} - 2 v \cdot ( v \cdot \nabla^3 \xi (x) \cdot v ) \xi(x)
\\ & - 2 ( E(t,\overline x ) \cdot  \xi (\overline x ) ) ( v\cdot \nabla \xi (x)) - 2 (\nabla_x \overline x ) \left[ v \cdot \nabla_x E(t,\overline x ) \cdot \nabla \xi (\overline x ) + v \cdot \nabla^2 \xi (\overline x ) \cdot E(t,\overline x ) \right] \xi (x) - 2 (\p_t E(t,\overline x ) \cdot \nabla \xi (\overline x ))\xi(x) .
\end{split} \Ee

Since 
\Be
 ( E(t,x) \cdot \nabla \xi (x)) =  E(t,\overline x ) \cdot \nabla \xi (\overline x ) + \nabla_x (E\cdot \nabla \xi ) (x') \cdot ( x - \overline x ) = E(t,\overline x ) \cdot \nabla \xi (\overline x ) + \left[  \nabla_x (E\cdot \nabla \xi )  (x') \cdot  \frac{( x - \overline x )}{\xi (x) }  \right] \xi (x).
\Ee

We claim that $\frac{ x - \overline x }{ \xi (x) } $ is bounded for all $x \in \Omega$. This is obvious when $x$ is away from the boundary $\partial \Omega$. When $x$ is close to $\partial \Omega$ since 
\Be \label{expxtobdy}
\xi (x) = \xi ( \overline x ) + \nabla \xi (x'' )\cdot ( x - \overline x ) = \nabla \xi (x'' )\cdot ( x - \overline x ) = |\nabla \xi (x'')  | | x - \overline x | \cos ( \theta ).
\Ee
So
\[
\left| \frac{x - \overline x } { \xi (x ) } \right| = \frac{ 1 }{ | \nabla \xi (x'' )|| \cos (\theta ) |},
\]
where $x''$ is a point on the line segment linking $x$ and $\overline x$ and $\theta$ is the angle between the two vectors $- \nabla \xi (\overline x) $ and $\nabla \xi (x'')$ by our choice of $\overline x$.

Now since we have $|\nabla \xi(x) | > c > 0$ when $x$ is close to $\partial \Omega$, we can choose $\delta$ so small that if $d(x, \partial \Omega ) = d (x, x^*) <\delta$, the angle between $\nabla \xi (x) $ and $\nabla \xi (x^*)$ will be small enough such that $|\cos(\theta) | > 1/2$.

Therefore
\begin{equation} \label{fracxxibdd}
\left| \frac{x - \overline x } { \xi (x ) } \right| \lesssim \frac{1}{c},
\end{equation}
for all $x \in \Omega$ as claimed.
From \eqref{transderivbeta}, \eqref{expxtobdy}, and \eqref{fracxxibdd} we have
\Be \label{transderivbeta2} \begin{split}
\{ & \p_t + v\cdot \nabla_x + E \cdot \nabla_v \} \beta^2(t,x,v)
\\ = & \cancel{2 (v\cdot \nabla \xi(x)) ( E(t,\overline x) \cdot \nabla \xi (\overline x))}  + C_{\frac{1}{c}, \| \nabla E \|_\infty, \| \xi \|_{C^2 } }(v\cdot \nabla \xi(x)) \xi (x) + 2 \xi (x) ( v\cdot \nabla \xi (x))
\\ & - 2 (E(t,x) \cdot  (\nabla^2 \xi(x) + \nabla^2 \xi (x)^t )  \cdot v) \xi (x)  - 2 v \cdot ( v \cdot \nabla^3 \xi (x) \cdot v ) \xi(x)
\\ & -\cancel{ 2 ( E(t,\overline x ) \cdot  \xi (\overline x ) ) ( v\cdot \nabla \xi (x))} - 2 (\nabla_x \overline x ) \left[ v \cdot \nabla_x E(t,\overline x ) \cdot \nabla \xi (\overline x ) + v \cdot \nabla^2 \xi (\overline x ) \cdot E(t,\overline x ) \right] \xi (x) - 2 (\p_t E(t,\overline x ) \cdot \nabla \xi (\overline x ))\xi(x) .
\end{split} \Ee
From \eqref{signEonbdry} and \eqref{distfunctionunique},
\[ \begin{split}
 | \{ \p_t + v\cdot \nabla_x + E \cdot \nabla_v \} \beta^2(t,x,v) | \le & C_\xi(\|E \|_\infty + \|\nabla E \|_\infty +\|\p_t E \|_\infty + 1 ) \left(  |v| + |v|^3 \right)|\xi (x) | 
 \\ \le & \frac{ C_\xi(\|E \|_\infty + \|\nabla E \|_\infty +\|\p_t E \|_\infty + 1 )}{C_E } |v| \beta^2 (t,x,v).
\end{split} \]
 Since 
 \[
 \frac{d}{d\tau} \beta^2 (\tau, X(\tau) ,V(\tau ) ) = \{ \p_t + v\cdot \nabla_x + E\cdot \nabla_v \} \beta^2 (\tau, X(\tau) ,V(\tau )),
\]
we conclude \eqref{velocitylemmabeta}.

Next we show that $\alpha ^2 (\tau, X(\tau), V(\tau) ) $ satisfies
\[
- C( |V(\tau)|+1) \alpha^2 (\tau, X(\tau),V(\tau)) \le \frac{d}{d\tau}\alpha^2 (\tau, X(\tau),V(\tau))  \le C (|V(\tau)|+1) \alpha^2(\tau, X(\tau),V(\tau)).
\]
This is clearly true if $X(\tau ) \in \Omega \setminus \Omega^\delta$ as $\alpha $ is constant there.
For $X(\tau) \in \Omega ^\delta$ we have if $\beta(\tau, X(\tau), V(\tau )) \ge \frac{\delta'}{2}$,
\[
\frac{d}{d\tau}\alpha^2 (\tau, X(\tau),V(\tau))   =  \frac{ d }{d\tau} \chi_{\delta'} (  \beta^2( \tau, X(\tau), V(\tau ) )) = \chi_{\delta'}'( \beta^2(\tau, X(\tau), V(\tau) ) \frac{d}{d \tau } \beta^2( \tau, X(\tau), V(\tau) ) = 0,
\]
so the inequalities are automatically true.
If $\beta(\tau, X(\tau), V(\tau ) ) < \frac{\delta'}{2}$, we have by (\ref{chicondition}) $\beta(\tau, X(\tau), V(\tau) ) < 2 \chi_{\delta' } (\beta(\tau, X(\tau), V(\tau )))$.
Therefore by (\ref{velocitylemmabeta}) and $\chi_{\delta'}' \le 1$ we have:
\begin{equation} \label{velocitylemmaderivative}
- 2C( |V(\tau)|+1) \alpha^2 (\tau, X(\tau),V(\tau)) \le \frac{d}{d\tau}\alpha^2 (\tau, X(\tau),V(\tau))  \le 2C (|V(\tau)|+1) \alpha^2(\tau, X(\tau),V(\tau)).
\end{equation}
Finally, by the gronwall inequality we have
\[  e^{ - 2C \int_s ^ t ( |V(\tau')| + 1 ) d \tau'} \alpha^2(s, X(s),V(s) ) \le \alpha^2 (t,X(t),V(t)) \le  e^{2C \int_s ^ t ( |V(\tau')| + 1 ) d \tau'} \alpha^2 (s,X(s),V(s)) .\]
Taking square root we get the desired inequality.

\end{proof}

\begin{lemma}If $E(t,x) \in C([0,T]; C^1(\mathbb R^3 ) )$, and $n(\xb(t,x,v)) \cdot \vb(t,x,v) \neq 0$, then $(\tb,\xb,\vb)$ is differentiable and 
	\Be\begin{split}\label{computation_tb_x}
		\frac{\p\tb}{\p x_i}  =& \  \frac{1}{n(\xb) \cdot \vb}n(\xb) \cdot  \left[
		e_i + \int^{t-\tb}_t \int^s_t \Big(\frac{\p X(\tau )}{\p x_i} \cdot \nabla\Big) E(\tau, X(\tau )) \dd \tau \dd s 
		\right]  ,\\
		\frac{ \p\xb}{\p x_i} = & \  e_i - \frac{\p \tb}{\p x_i} \vb + \int^{t-\tb}_{t} \int^s_t    \Big(\frac{\p X(\tau )}{\p x_i} \cdot \nabla\Big) E(\tau, X(\tau ))  \dd \tau \dd s,\\
		\frac{\p \vb}{\p x_i} = & \ - \frac{\p \tb}{\p x_i} E(t-\tb, \xb) + \int^{t-\tb}_t   \Big(\frac{\p X(\tau )}{\p x_i} \cdot \nabla\Big) E(\tau, X(\tau )) \dd \tau,\\
		\frac{\p\tb}{\p v_i}  =& \  \frac{1}{n(\xb) \cdot \vb}n(\xb) \cdot  \left[
		e_i + \int^{t-\tb}_t \int^s_t \Big(\frac{\p X(\tau )}{\p v_i} \cdot \nabla\Big) E(\tau, X(\tau )) \dd \tau \dd s 
		\right]  ,\\ 
		\frac{\p \xb}{\p v_i} = & \ - \tb e_i - \frac{
			\p \tb}{\p v_i} \vb + \int^{t-\tb}_{t} \int^s_t    \Big(\frac{\p X(\tau )}{\p v_i} \cdot \nabla\Big) E(\tau, X(\tau ))  \dd \tau \dd s ,\\
		\frac{\p \vb}{\p v_i} = & \ e_i - \frac{\p \tb}{\p v_i} E(t-\tb, \xb) + \int^{t-\tb}_t
		\Big(\frac{\p X(\tau )}{\p v_i} \cdot \nabla\Big) E(\tau, X(\tau ))  \dd \tau.
	\end{split} \Ee
\end{lemma}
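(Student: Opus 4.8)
The plan is to read off every derivative from the single scalar relation $\xi\big(X(t-\tb;t,x,v)\big)=0$, which holds by the definition of the backward exit time since $\xb=X(t-\tb;t,x,v)\in\partial\Omega$. First I would record that, because $E\in C([0,T];C^1(\mathbb R^3))$, the flow map $(s;t,x,v)\mapsto(X,V)$ is $C^1$ in all of its arguments by the standard differentiable-dependence theorem for ODEs, and that the derivatives in the initial data solve the variational equations
\[
\frac{\dd}{\dd s}\frac{\p X}{\p x_i}=\frac{\p V}{\p x_i},\qquad \frac{\dd}{\dd s}\frac{\p V}{\p x_i}=\Big(\frac{\p X}{\p x_i}\cdot\nabla\Big)E(s,X(s)),
\]
with $\frac{\p X}{\p x_i}(t)=e_i$, $\frac{\p V}{\p x_i}(t)=0$, and the analogous system in $v_i$ with $\frac{\p X}{\p v_i}(t)=0$, $\frac{\p V}{\p v_i}(t)=e_i$. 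Integrating these twice in Duhamel form produces exactly the Picard-type expressions for $\frac{\p X}{\p x_i}(s)$, $\frac{\p V}{\p x_i}(s)$, $\frac{\p X}{\p v_i}(s)$, $\frac{\p V}{\p v_i}(s)$ that, evaluated at $s=t-\tb$, appear on the right-hand sides of the claimed identities; the inhomogeneous terms $e_i$ in $\frac{\p V}{\p v_i}(s)$ and $(s-t)e_i$ in $\frac{\p X}{\p v_i}(s)$ are inherited from the nonzero initial velocity.

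Next I would differentiate $\tb$ via the implicit function theorem applied to $\Psi(s;t,x,v):=\xi\big(X(t-s;t,x,v)\big)$. Since $\p_s\Psi=-\nabla\xi\big(X(t-s)\big)\cdot V(t-s)$, at $s=\tb$ this equals $-\nabla\xi(\xb)\cdot\vb=-|\nabla\xi(\xb)|\,\big(n(\xb)\cdot\vb\big)\neq 0$ by hypothesis, so $\Psi=0$ determines a $C^1$ function $s=\tb(t,x,v)$ near the given point; using continuity of $\tb$ together with the transversal crossing one checks that this local branch really is the exit time. Then $\xb=X(t-\tb;\cdot)$ and $\vb=V(t-\tb;\cdot)$ are $C^1$ as compositions of $C^1$ maps.

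It then remains to compute. Differentiating $\xb=X\big(t-\tb(t,x,v);t,x,v\big)$ in $x_i$ — keeping separate the dependence through the evaluation time $t-\tb$ and the dependence through the initial-position slot — and using $\p_sX=V$ gives $\frac{\p\xb}{\p x_i}=\frac{\p X}{\p x_i}(t-\tb)-\vb\,\frac{\p\tb}{\p x_i}$, and likewise $\frac{\p\vb}{\p x_i}=\frac{\p V}{\p x_i}(t-\tb)-E(t-\tb,\xb)\,\frac{\p\tb}{\p x_i}$ using $\p_sV(s)=E(s,X(s))$. To isolate $\frac{\p\tb}{\p x_i}$ I would differentiate the constraint $\xi(\xb)\equiv 0$, which yields $0=\nabla\xi(\xb)\cdot\frac{\p\xb}{\p x_i}=\nabla\xi(\xb)\cdot\big[\frac{\p X}{\p x_i}(t-\tb)-\vb\,\frac{\p\tb}{\p x_i}\big]$, hence $\frac{\p\tb}{\p x_i}=\frac{n(\xb)\cdot\frac{\p X}{\p x_i}(t-\tb)}{n(\xb)\cdot\vb}$ after cancelling the scalar $|\nabla\xi(\xb)|$. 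Substituting the Duhamel formula for $\frac{\p X}{\p x_i}(t-\tb)$ gives the stated identity for $\frac{\p\tb}{\p x_i}$, and then the ones for $\frac{\p\xb}{\p x_i}$ and $\frac{\p\vb}{\p x_i}$; the $v_i$-derivatives follow by the identical three-step argument.

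The bookkeeping is routine; the one point that needs genuine care is the implicit function theorem step — in particular confirming that the $C^1$ branch it produces is the actual exit time $\tb$ rather than some other crossing of $\partial\Omega$ by the curved characteristic. This is precisely where the non-grazing hypothesis $n(\xb)\cdot\vb\neq 0$ is indispensable: it both supplies the nonvanishing $\p_s\Psi$ required by the implicit function theorem and forces the backward trajectory to leave $\Omega$ transversally at $s=\tb$, while continuity of $\tb$ selects the correct branch. No strict convexity or sign condition on $E$ is needed for this lemma.
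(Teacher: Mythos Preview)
Your approach is correct and is exactly what the paper does: the paper's own proof consists of the single sentence ``The equalities are derived from direct computations and an implicit function theorem. For details see \cite{GKTT1}.'' Your outline---differentiable dependence of the flow, Duhamel integration of the variational equations, implicit function theorem applied to $\xi(X(t-s;t,x,v))=0$ at the transversal crossing, then chain rule---is precisely that direct computation spelled out, and in fact supplies considerably more detail than the paper itself.
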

\begin{proof}
	The equalities are derived from direct computations and an implicit function theorem. For details see \cite{GKTT1}.
\end{proof}




Denote
\[ \begin{split}
& \iint  \partial_{x,v} E = \int _{ t -\tb}^t \int_s^t \partial_{x,v} E(X(\tau ) ) d\tau ds =  \int _{ t -\tb}^t \int_s^t \nabla_x E(X(\tau ) ) \cdot \nabla_{x,v} X(\tau) d\tau ds, 
\\ & \int \partial_{x,v} E = \int_{t-\tb}^t \partial_{x,v} E(X(s))ds =\int_{t-\tb}^t \nabla_x E(X(s)) \cdot \nabla_{x,v} X(s) ds .
\end{split} \]

Let $\tau_1(x)$ and $\tau_2(x)$ bet unit tangential vector to $\partial \Omega$ satisfying (\ref{tangentialderivativedef}).
And let $\partial_{\tau_i} g$ be the tangential derivative at direction $\tau_i$ for $g$ defined on $\partial \Omega$.
Define 
\Be \label{defofgradientxg}
 \begin{split}
\nabla_x g =  \sum_{i =1 }^2  \tau_i \partial_{\tau_i } g  - \frac{ n }{ n \cdot \vb } \Big \{ \partial_t g + \sum_{i =1}^2 ( \vb \cdot \tau_i ) \partial_{\tau_i } g  + \nu g - H +  E \cdot \nabla_v g  \Big \}.
 \end{split} 
 \Ee


\begin{proposition} \label{inflowexistence1}
Assume the compatibility condition
\[ f_0(x,v) = g(0,x,v)\quad  \text{for} \quad (x,v) \in \gamma_- .\]
Let $p \in [1, \infty )$ and $0 < \theta < 1/4$. $|\psi(t,x,v) | \lesssim \langle v \rangle $. $\| E \|_\infty + \| \nabla_x E \|_\infty < \infty$.

Assume
\[ \begin{split}
  \nabla_x f_0 , \nabla_v f_0 , 
  \in L^p (\Omega \times \mathbb R^3 ),
\\ \nabla_v g, \partial_{\tau_i } g \in L^p ( [0, T] \times \gamma_- ) ,
\\ \frac{ n(x) }{ n(x) \cdot v } \Big \{ \partial_t g + \sum_{i =1}^2 ( v \cdot \tau_i ) \partial_{\tau_i } g  + \nu g - H +  E \cdot \nabla_v g \Big \} \in L^p ([0, T ] \times \gamma_- ),
\\ 
 \frac{ n(x) \cdot \iint \partial_x E }{ n(x) \cdot v } \Big \{ \partial_t g + \sum_{i =1}^2 ( v \cdot \tau_i) \partial_{ \tau_i } g   - \nu g + H \Big \} \in L^p ([0, T ] \times \gamma_- ),
\\ \nabla_x H ,\nabla_v H \in L^p  ([0, T ] \times \Omega \times \mathbb R^3 ),
\\ e^{- \theta |v|^2 } \nabla_x \nu, e^{-\theta |v|^2 } \nabla_v \nu \in L^p ([ 0, T ] \times \Omega \times \mathbb R^3 ),
\\ e^{\theta |v|^2 } f_0 \in L^\infty ( \Omega \times \mathbb R^3 ) , e^{\theta |v|^2 } g \in L^\infty ( [0, T] \times \gamma_- ),
\\ e^{\theta |v|^2 } H \in L^\infty ([0, T] \times \Omega \times \mathbb R^3 ).
\end{split} \]
Then for any $T > 0$, there exists a unique solution $f$ to (\ref{traninflowfixE}), such that $f, \partial_t, \nabla_x f ,\nabla_v f \in C^0( [ 0, T ] ; L^p (\Omega \times \mathbb R^3) ) $ and their traces satisfiey
\Be \label{traceofinflow} \begin{split}
\nabla_v f |_{\gamma_-} = \nabla_v g, \nabla_x f|_{\gamma_-} = \nabla_x g, \quad \text{on} \quad \gamma_- ,
\\ \nabla_x f(0,x,v) = \nabla_x f_0, \nabla_v f(0,x,v) = \nabla_v f_0, \quad \text{in} \quad \Omega \times \mathbb R^3,  
\\ \partial_t f(0,x,v) = \partial_t f_0, \quad \text{in} \quad \Omega \times \mathbb R^3. 
\end{split} \Ee
where $\nabla_x g $ is given by (\ref{defofgradientxg}).

\end{proposition}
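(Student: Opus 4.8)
The plan is to produce $f$ itself from the backward-characteristic (Duhamel) representation, and then to obtain $\nabla_{x,v}f$ by differentiating that representation; the hypotheses are calibrated precisely so that every term in the differentiated formula lands in $C^0([0,T];L^p)$.

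\emph{First, the function $f$.} The equation for $f$ is the scalar in-flow problem \eqref{traninflowfixE} with data $f_0,g,H$, so I would write $f$ along the trajectory $(X(s;t,x,v),V(s;t,x,v))$ of \eqref{hamilton_ODE}, tracing back to $s=0$ on $\{\tb\ge t\}$ and to the incoming boundary on $\{\tb<t\}$:
\[
f(t,x,v)=\textbf{1}_{\{\tb\ge t\}}\,e^{-\int_0^t\nu(V)}f_0(X(0),V(0))+\textbf{1}_{\{\tb<t\}}\,e^{-\int_{t-\tb}^t\nu(V)}g(t-\tb,\xb,\vb)+\int_{\max\{0,t-\tb\}}^{t}e^{-\int_s^t\nu(V)}H(s,X(s),V(s))\,\dd s .
\]
By Lemma \ref{lbtb}, $\int_{t-\tb}^t|V(\tau)|\,\dd\tau\lesssim_T 1$, so $e^{-\int\nu}\lesssim_T 1$ (since $|\nu|\lesssim\langle v\rangle$), and since $|V(\tau)|\ge|v|-T\|E\|_\infty$ along backward trajectories the Gaussian ratios $e^{\theta|v|^2}/e^{\theta|V(0)|^2}$ and $e^{\theta|v|^2}/e^{\theta|\vb|^2}$ are $\lesssim_T 1$; hence $\|e^{\theta'|v|^2}f(t)\|_\infty\lesssim_T\|e^{\theta|v|^2}f_0\|_\infty+\|e^{\theta|v|^2}g\|_\infty+\|e^{\theta|v|^2}H\|_\infty$ for any $\theta'<\theta$. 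The $L^p$ bound and continuity in $t$ come from the change-of-variables identities of Lemmas \ref{covlemma1}--\ref{int_id} (the compatibility condition $f_0=g(0,\cdot)$ on $\gamma_-$ securing continuity down to $t=0$), the $\gamma_+$-trace from Lemma \ref{tracebddpotential}, and uniqueness from the fact that the representation is forced. Then $\partial_t f=H-v\cdot\nabla_x f-E\cdot\nabla_v f-\nu f$ recovers $\partial_t f$ from the equation.

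\emph{Next, the derivatives.} Differentiating the representation in $x$ and $v$, using \eqref{computation_tb_x} for $\nabla_{x,v}(\tb,\xb,\vb)$ and the variational bounds $|\nabla_{x,v}X(\tau)|+|\nabla_{x,v}V(\tau)|\lesssim_T 1$ (Gronwall, since $\|\nabla_xE\|_\infty<\infty$), produces an \emph{explicit} formula for $\nabla_{x,v}f$: on $\{\tb<t\}$ it is $e^{-\int\nu}$ times the boundary datum $\nabla_x g$ of \eqref{defofgradientxg}, the correction proportional to $\tfrac{n\cdot\iint\partial_xE}{n\cdot v}\{\partial_t g+\sum_i(v\cdot\tau_i)\partial_{\tau_i}g-\nu g+H\}$, and $\nabla_v g$ --- all at $(\xb,\vb)$; on $\{\tb\ge t\}$ it carries $\nabla_{x,v}f_0$ at $(X(0),V(0))$; and there are bulk integrals of $\nabla_{x,v}H$ and of $(\nabla_{x,v}\nu)$ against the transported $f_0,g,H$. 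Taking $L^p$ norms and applying the change of variables of Lemmas \ref{covlemma2} and \ref{int_id} together with volume-preservation of the Hamiltonian flow, every piece is dominated by exactly the hypothesized norms: $\|\nabla_{x,v}f_0\|_p$; the $L^p(\gamma_-)$ norms of $\partial_{\tau_i}g$, of $\tfrac{n}{n\cdot v}\{\cdots\}$, of $\tfrac{n\cdot\iint\partial_xE}{n\cdot v}\{\cdots\}$ and of $\nabla_v g$; $\|\nabla_{x,v}H\|_p$; and $\|e^{-\theta|v|^2}\nabla_{x,v}\nu\|_p$ times the weighted $L^\infty$ norms from the first step (after absorbing Gaussian ratios as before). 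Continuity in $t$ follows as above; the $\gamma_-$- and $t=0$-traces in \eqref{traceofinflow} are read directly off the representation, with $\partial_t f(0)=\partial_t f_0$ from the equation at $t=0$; and the $\gamma_+$-trace is in $L^p$ by Lemma \ref{tracebddpotential} applied to the transport equations satisfied by $\nabla_v f$ and $\nabla_x f$, which are $[\partial_t+v\cdot\nabla_x+E\cdot\nabla_v+\nu]\nabla_v f=\nabla_v H-\nabla_x f-(\nabla_v\nu)f$ and $[\partial_t+v\cdot\nabla_x+E\cdot\nabla_v+\nu]\nabla_x f=\nabla_x H-(\nabla_xE)\nabla_v f-(\nabla_x\nu)f$ (obtained by formally differentiating \eqref{traninflowfixE}).

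\emph{The main obstacle} is making the differentiation of the trajectory integral through $\tb$ rigorous. I would mollify $E,\nu,H,f_0,g$ to smooth data with Gaussian velocity decay, correcting $g$ so that $f_0=g(0,\cdot)$ on $\gamma_-$ persists; for these approximations the flow and $\tb$ are $C^1$, $f^\varepsilon$ is a classical solution, and the chain rule above is legitimate. The estimates of the first two steps are uniform in $\varepsilon$ (their constants depend only on $\|E\|_\infty$, $\|\nabla_xE\|_\infty$ and the data norms, arranged to stay bounded), so along a subsequence $f^\varepsilon\to f$ and $\nabla_{x,v}f^\varepsilon\to$ (the explicit expression) in $C^0([0,T];L^p)$; hence $\nabla_{x,v}f$ equals that expression and, using Green's identity (Lemma \ref{green'sidentity}) and Lemma \ref{tracebddpotential}, the trace identities pass to the limit. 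The genuinely technical points are keeping the compatibility condition and the $L^p(\gamma_-)$ control of the \emph{singular} datum \eqref{defofgradientxg} uniform under mollification. (Alternatively one may construct $(\nabla_x f,\nabla_v f)$ by Picard iteration on the coupled system displayed above --- each iterate a scalar in-flow problem, the difference of iterates contracted via Green's identity and Gronwall --- which packages the same estimates more cleanly but still needs the same regularization for the identification step.)
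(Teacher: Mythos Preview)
Your overall strategy matches the paper's: write $f$ by Duhamel along characteristics, differentiate the representation using \eqref{computation_tb_x} to get the explicit formula \eqref{gradientfexpalongtraj}, bound each piece in $L^p$ via the change-of-variables Lemmas \ref{covlemma1}--\ref{covlemma2}, and then justify that this pointwise formula (valid on $\{t\neq\tb\}$) is the weak derivative by an approximation argument. The $L^p$ estimates you sketch are exactly those in the paper.

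The justification step, however, is organized differently, and the paper's route avoids the difficulty you flag at the end. You propose to mollify \emph{everything} (including $E$) while preserving the compatibility $f_0=g(0,\cdot)$ on $\gamma_-$, then assert that $f^\varepsilon$ is a classical solution so the chain rule is legitimate. Two points: (i) mollifying $E$ is unnecessary (it is already $C^1$) and would perturb the characteristics and $\tb$; the paper keeps $E$ fixed and approximates only $f_0,g,H$ by $C^1$ functions $f_0^l,g^l,H^l$. (ii) More importantly, the paper does \emph{not} try to preserve compatibility under approximation. Instead, it lets $f^l$ have a jump across the hypersurface $\mathcal M=\{t=\tb\}$, computes that jump explicitly along characteristics as $e^{-\int\nu}[g^l(0,x,v)-f_0^l(x,v)]$ for $(x,v)\in\gamma_-$, applies the Gauss theorem on $\{t>\tb\}$ and $\{t<\tb\}$ separately against a test function, and shows the resulting surface integral on $\mathcal M$ tends to zero by the trace-convergence $g^l(0,\cdot)\to g(0,\cdot)=f_0$ and $f_0^l\to f_0$ in $L^p(\gamma_-\setminus\gamma_-^{\delta'})$. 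This sidesteps entirely the delicate construction of compatibility-preserving approximations that also converge in the singular $\tfrac{1}{n\cdot v}$-weighted norms.

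A further technical ingredient you do not mention: to apply Gauss on $\mathcal M\cap\{\phi\neq 0\}$ one needs $\mathcal M$ to be a smooth hypersurface there, i.e.\ $n(\xb)\cdot\vb\neq 0$. The paper obtains this from the velocity lemma (Lemma \ref{velocitylemma}): since $\alpha(t,x,v)>0$ for $x\in\Omega$ and $\alpha$ is comparable along trajectories to $\alpha(t-\tb,\xb,\vb)\le |n(\xb)\cdot\vb|$, one gets $|n(\xb)\cdot\vb|\ge\delta'>0$ on the compact support of $\phi$. Finally, the paper first reduces to compactly supported (in $v$) data and removes this at the end via the cutoff $\chi(|v|/m)$; your ``Gaussian decay'' remark is the right instinct but the paper makes this explicit.
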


\begin{proof}


Consider the case $t \le \tb$ and $t > \tb$ separately and integrate along the trajectory $X(s),V(s)$ we have for $t<\tb$:
\[ \begin{split}
f(t,x,v) = & f_0(X(0) , V(0) ) e^{-\int_0^t \nu }- \int_0^t \frac{d}{ds} \left( f(t-s, X(t-s) , V(t-s) ) e^{-\int_0^s \nu } \right) ds
\\ =  & f_0 (X(0), V(0) ) e^{- \int_0^t \nu } + \int_0^t e^{- \int _0^s \nu} H (t-s, X(t-s), V(t-s ) ) ds,
\end{split} \]
where $H = \{ \partial_t + v \cdot \nabla_x  + E \cdot \nabla_v  + \nu \} f $, $\nu = \nu ( t - \tau, X(t- \tau), V(t- \tau) )$.
And for $t > \tb$:
\[ \begin{split}
f(t,x,v) =  e^{-\int_0^ {\tb}  \nu} g(t -\tb, \xb,\vb ) + \int_0 ^ {\tb} e^{-\int_0^s \nu } H(t-s ,X(t-s), V(t-s) ) ds.
\end{split} \]
We can rewrite it as:
\[ \begin{split}
f(t,x,v) = & \textbf{1}_{ \{ t \le \tb \} } e^{-\int_0^t \nu } f_0 (X(0) ,V(0)) + \textbf{1}_{ \{ t > \tb \} } e^{-\int_0^ {\tb}  \nu} g(t -\tb, \xb,\vb ) \\ & + \int_0^{\min \{ \tb, t\} } e^{-\int_0^s \nu } H(t-s ,X(t-s), V(t-s) ) ds.
\end{split} .\]
By direct computation we have
\[ \begin{split}
\nabla_x & f(t,x,v ) \textbf{1}_{ \{ t\neq \tb \}} 
\\ = & \textbf{1}_{\{ t < \tb \}} e^{-\int_0^t \nu } \left[  \nabla_x f_0 \cdot \nabla_x X(0) + \nabla_v f_0 \cdot \nabla_x V(0) - f_0 \int_0^t ( \nabla_x \nu \cdot \nabla_x X + \nabla_v \nu \cdot \nabla_x V)(t-\tau)   \right]
\\ & + \textbf{1}_{\{ t > \tb \} } e^{- \int_0^{\tb} \nu } \left\{ -\nabla_x\tb \nu(t-\tb)   g(t-\tb) + \nabla_x \tb H(t-\tb) - g(t-\tb) \int_0^{\tb} (\nabla_x \nu \cdot \nabla_x X + \nabla_v \nu \cdot \nabla_x V)(t-\tau)     \right\} 
\\ & + \textbf{1}_{\{ t > \tb \} } e^{- \int_0^{\tb} \nu }  \partial_x ( g  (t -\tb, \xb,\vb) ) 
\\ & + \int_0^{ \min \{ t, \tb \}} e^{- \int_0 ^s \nu }   \bigg [  \nabla_x H(t-s) \cdot \nabla_x X(t-s)  + \nabla_v H(t-s) \cdot \nabla_x V(t-s)
-H(t-s)\int_0^s ( \nabla_x \nu \cdot \nabla_x X +  \nabla_v \nu \cdot \nabla_x V)(t-\tau)   \bigg ]   ds.
\end{split} \]

\[ \begin{split}
 \nabla_v & f(t,x,v ) \textbf{1}_{\{ t\neq \tb \}}
\\ = & \textbf{1}_{\{ t < \tb \} } e^{-\int_0^t \nu } \left[ \nabla_x f_0 \cdot \nabla_v X(0) + \nabla_v f_0 \cdot \nabla_v V(0) - f_0\int_0^t (\nabla_x \nu \cdot \nabla_v X + \nabla_v \nu \cdot \nabla_v V) (t-\tau) \right] 
\\ & + \textbf{1}_{\{ t > \tb \}} e^{-\int_0^{\tb } \nu } \left\{ - \nabla_v \tb \nu(t-\tb)  g(t-\tb)+ \nabla_v \tb H(t-\tb) - g(t-\tb) \int_0^{\tb} (\nabla_x \nu \cdot \nabla_v X + \nabla_v \nu \cdot \nabla_v V) (t-\tau)  \right\}
\\ & + \textbf{1}_{\{ t > \tb \}} e^{-\int_0^{\tb } \nu } \partial_v ( g(t -\tb, \xb,\vb) )
\\ & + \int_0^{\min \{ t, \tb \}} e^{-\int_0^s \nu} \bigg [ \nabla_x H(t-s) \cdot \nabla_v X(t-s) + \nabla_v H(t-s) \cdot \nabla_v V(t-s)
- H(t-s) \int_0^s (\nabla_x \nu \cdot \nabla_v X + \nabla_v \nu \cdot \nabla_v V)(t-\tau) \bigg ] ds.
\end{split} \]
Regarding $g(t-\tb,\xb(t,x,v),v)$ as function on $[0,T]  \times \bar \Omega \times \mathbb R^3$, we obtain from (\ref{tangentialderivativedef}) that 
\[
\partial_x  [ g(t -\tb, \xb, \vb ) ]  = \nabla_{\tau} g \cdot \nabla_x \xb = (\tau_1 \partial_{\tau_1} g  + \tau_2 \partial_{\tau_2} g) \cdot  \nabla_x \xb.
\]
Thus 
from (\ref{computation_tb_x}) we have:
\[ \begin{split}
\partial_x  & [ g(t -\tb, \xb, \vb ) ]  
\\= &  - \nabla_x \tb \partial_t g + \nabla_\tau g  \nabla_x \xb + \nabla_v g \nabla_x \vb
\\  = & -\frac{n(\xb) }{ n(\xb) \cdot \vb } \partial_t g - \frac{ n(\xb) \cdot \iint \partial_x E }{ n(\xb ) \cdot \vb } \partial_t g 
\\ & + \tau_1 \partial_{\tau_1} g + \tau_2 \partial_{\tau_2} g - \frac{ n(\xb ) } { n(\xb ) \cdot \vb } \left( \vb \cdot \tau_1 \partial_{\tau_1} g + \vb \cdot \tau_2 \partial_{\tau_2} g \right) - \frac{ n(\xb) \cdot \iint \partial_x E } {n (\xb) \cdot \vb } \left( \vb \cdot \tau_1 \partial_{\tau_1} g + \vb \cdot \tau_2 \partial_{\tau_2 } g \right)
\\ & - \frac{n(\xb)}{n (\xb) \cdot \vb } ( E \cdot \nabla_v g ) - \nabla_v g \cdot \int \partial_x E,
\end{split} \]
\[ \begin{split}
\partial_v  & [ g(t -\tb, \xb, \vb ) ]  
\\= &  - \nabla_v \tb \partial_t g + \nabla_v \xb \nabla_{\tau } g + \nabla_v g \nabla_v \vb
\\  = & -\frac{\tb n(\xb) }{ n(\xb) \cdot \vb } \partial_t g - \frac{ n(\xb) \cdot \iint \partial_v E }{ n(\xb ) \cdot \vb } \partial_t g 
\\ & - \tb ( \tau_1 \partial_{\tau_1} g +  \tau_2 \partial_{\tau_2} g) - \tb \frac{ n(\xb ) } { n(\xb ) \cdot \vb } \left( \vb \cdot \tau_1 \partial_{\tau_1} g + \vb \cdot \tau_2 \partial_{\tau_2} g \right) - \frac{ n \cdot \iint \partial_v E } {n \cdot \vb } \left( \vb \cdot \tau_1 \partial_{\tau_1} g + \vb \cdot \tau_2 \partial_{\tau_2 } g \right)
\\ & - \frac{\tb n(\xb)}{n (\xb) \cdot \vb } ( E \cdot \nabla_v g ) - \nabla_v g \cdot \int \partial_v E + \nabla_v g.
\end{split} \]
Plug into the previous equation we eventually have:
\Be \label{gradientfexpalongtraj} \begin{split}
\nabla_x & f(t,x,v ) \textbf{1}_{ \{ t\neq \tb \}} 
\\ = & \textbf{1}_{\{ t < \tb \}} e^{-\int_0^t \nu } \left[  \nabla_x f_0 \cdot \nabla_x X(0) + \nabla_v f_0 \cdot \nabla_x V(0) - f_0\int_0^t ( \nabla_x \nu \cdot \nabla_x X + \nabla_v \nu \cdot \nabla_x V)(t-\tau) \right]  
\\ & + \textbf{1}_{ \{ t > \tb \} } e^{- \int_0 ^{\tb } \nu } \Bigg \{ \sum_{i =1 }^2  \tau_i \partial_{\tau_i } g  - \nabla_v g \cdot \int \partial_x E- g \int_0^{\tb} (\nabla_x \nu \cdot \nabla_x X + \nabla_v \nu \cdot \nabla_x V)(t -\tau)   
\\ & \qquad \qquad \qquad \qquad- \frac{ n }{ n \cdot \vb } \Big \{ \partial_t g + \sum_{i =1}^2 ( \vb \cdot \tau_i ) \partial_{\tau_i } g  + \nu g - H +  E \cdot \nabla_v g \Big \}
\\  & \qquad\qquad\qquad\qquad
- \frac{ n \cdot \iint \partial_x E }{ n \cdot \vb } \Big \{ \partial_t g + \sum_{i =1}^2 ( \vb \cdot \tau_i) \partial_{ \tau_i } g  - \nu g + H \Big \}
\Bigg \} ( t -\tb, \xb, \vb )
\\ & + \int_0^{ \min \{ t, \tb \}} e^{- \int_0 ^s \nu }   \bigg [  \nabla_x H(t-s) \cdot \nabla_x X(t-s)  + \nabla_v H(t-s) \cdot \nabla_x V(t-s)
\\  & \qquad \qquad \qquad \qquad \qquad
-H(t-s) \int_0^s ( \nabla_x \nu \cdot \nabla_x X +  \nabla_v \nu \cdot \nabla_x V)(t -\tau)   \bigg ]   ds,
%
\\ 
\nabla_v & f(t,x,v ) \textbf{1}_{\{ t\neq \tb \}}
\\ = & \textbf{1}_{\{ t < \tb \} } e^{-\int_0^t \nu } \left[ \nabla_x f_0 \cdot \nabla_v X(0) + \nabla_v f_0 \cdot \nabla_v V(0) - f_0 \int_0^t (\nabla_x \nu \cdot \nabla_v X + \nabla_v \nu \cdot \nabla_v V)(t-\tau)  \right]
\\ & - \textbf{1}_{ \{t > \tb  \}} \tb e^{-\int _0 ^ {\tb} \nu } \Bigg \{ \sum_{i =1}^2 \tau_i \partial_{\tau_i } g 
- \frac{ n}{ n \cdot \vb } \Big \{ \partial_t g + \sum_{i=1}^2 ( \vb \cdot \tau_i ) \partial_{\tau_i } g + \nu g - H   + E \cdot \nabla_v g \Big \}  \Bigg \}  ( t -\tb, \xb, \vb )
\\ & + \textbf{1}_{ \{ t > \tb \} } e^{-\int_0^{\tb}  \nu } \Bigg \{ \nabla_v g - g \int_0 ^ {\tb}  ( \nabla_x \nu \cdot \nabla_v X + \nabla_v \nu \cdot \nabla_v V )(t-\tau)  - \nabla_v g \cdot \int \partial_v E 
\\ & \qquad\qquad\qquad\qquad + \frac{ n \cdot \iint \partial_v E }{ n \cdot \vb } \Big \{ \partial_t g + \sum_{i =1}^2 ( \vb \cdot \tau_i) \partial_{ \tau_i } g  - \nu g + H \Big \} \Bigg \} ( t -\tb, \xb, \vb )
\\ & + \int_0^{\min \{ t, \tb \}} e^{-\int_0^s \nu} \bigg [ \nabla_x H(t-s) \cdot \nabla_v X(t-s) + \nabla_v H(t-s) \cdot \nabla_v V(t-s)
        \\& \qquad \qquad \qquad \qquad \qquad
-  H(t-s) \int_0^s (\nabla_x \nu \cdot \nabla_v X + \nabla_v \nu \cdot \nabla_v V) (t-\tau) \bigg ]ds.
\end{split} \Ee
From (\ref{hamilton_ODE}) with replacing $- \nabla_x \phi_f$ by $E$,  
	\[
	\frac{d}{ds} \left[ \begin{matrix}\nabla_{x,v} X(s;t,x,v) \\ \nabla_{x,v} V(s;t,x,v)\end{matrix} \right]
	= \left[\begin{matrix} 
	0_{3\times 3} & \text{Id}_{3\times 3}\\
	\nabla_x E(s,X(s;t,x,v)) & 0_{3\times 3} 
	\end{matrix}\right]
	\left[ \begin{matrix}\nabla_{x,v} X(s;t,x,v) \\ \nabla_{x,v} V(s;t,x,v)\end{matrix} \right].
	\] 
	Then by Gronwall's inequality, easily we have
	\Be
	|\nabla_{x,v} X(s;t,x,v)|+ 
	|\nabla_{x,v} V(s;t,x,v)| \lesssim e^{ (1+\| \nabla_x E \|_\infty) |t-s|}.\notag
	\Ee
Therefore by the change of variables from lemma (\ref{covlemma1}) and lemma (\ref{covlemma2}), and (\ref{intVbdd}) we have:
\[
\| f(t) \textbf{1}_{ \{t \neq \tb \} } \| _p
\lesssim e^{t(\| E \|_\infty+1)}  \left( \| f_0 \|_p + \left [ \int_0^t \int_{\gamma_- } | g|^p d\gamma ds \right ] ^{1/p} + \left [ \int_0^t \| H \|_p^p ds \right ]^{1/p} \right),
\]
%
\[ \begin{split}
\| \nabla_x & f(t) \textbf{1}_{ \{ t \neq \tb \} } \|_p
\\  \lesssim & e^{t(\| E \|_\infty+1)}   \Bigg(  \| \nabla_x f_0 \|_p + \| \nabla_v f_0 \|_p + \left[ \int_0^t \| \nabla_x H \|_p^p +  \| \nabla_v H \|_p^p \right]^{1/p } + \left[ \int_0^t \int_{\gamma _- } | \nabla_v g |^p d\gamma ds \right] ^{1/p}
\\ & + \{ \| e^ {\theta |v|^2 } f_0 \|_\infty + \| e^{\theta |v|^2 } H\|_\infty + | e^{\theta |v| ^2} g |_\infty \} \left[ \int_0^t \| e^{-\theta |v|^2 } \partial_t \nu \|_p^p  + \| e^{-\theta |v|^2 } \nabla_v \nu \|_p^p \right]^{1/p}
\\ & + \Bigg [ \int_0^t \int_{\gamma _- } d \gamma ds | \{ \sum_{i =1 }^2  \tau_i \partial_{\tau_i } g - \frac{ n }{ n \cdot \vb } \Big \{ \partial_t g + \sum_{i =1}^2 ( \vb \cdot \tau_i ) \partial_{\tau_i } g  + \nu g - H +  E \cdot \nabla_v g \Big \}
\\  & \qquad\qquad\qquad\qquad- \frac{ n \cdot \iint \partial_x E }{ n \cdot \vb } \Big \{ \partial_t g + \sum_{i =1}^2 ( \vb \cdot \tau_i) \partial_{ \tau_i } g  - \nu g + H \Big \} | ^p \Bigg ] ^{1/p} \Bigg).
\end{split} \]
and
\[ \begin{split}
\| \nabla_v & f(t) \textbf{1}_{ \{ t \neq \tb \} } \|_p
\\   \lesssim & e^{t(\| E \|_\infty+1)}   \Bigg( \| \nabla_x f_0 \|_p + \| \nabla_v f_0 \|_p + \left[ \int_0^t \| \nabla_x H \|_p^p +  \| \nabla_v H \|_p^p \right]^{1/p } + \left[ \int_0^t \int_{\gamma _- } | \nabla_v g |^p d\gamma ds \right] ^{1/p}
\\ & + \{ \| e^ {\theta |v|^2 } f_0 \|_\infty + \| e^{\theta |v|^2 } H\|_\infty + | e^{\theta |v| ^2} g |_\infty \} \left[ \int_0^t \| e^{-\theta |v|^2 } \partial_t \nu \|_p^p  + \| e^{-\theta |v|^2 } \nabla_v \nu \|_p^p \right]^{1/p}
\\ & + \Bigg [ \int_0^t \int_{\gamma _- } d \gamma ds | \{ \sum_{i =1 }^2  \tau_i \partial_{\tau_i } g - \frac{ n }{ n \cdot \vb } \Big \{ \partial_t g + \sum_{i =1}^2 ( \vb \cdot \tau_i ) \partial_{\tau_i } g  + \nu g - H +  E \cdot \nabla_v g \Big \}
\\  & \qquad\qquad\qquad\qquad- \frac{ n \cdot \iint \partial_v E }{ n \cdot \vb } \Big \{ \partial_t g + \sum_{i =1}^2 ( \vb \cdot \tau_i) \partial_{ \tau_i } g  - \nu g + H \Big \} | ^p \Bigg ] ^{1/p} \Bigg).
\end{split} \]
From our hypothesis, these terms on the RHS are bounded, therefore
\[
\partial f \textbf{1}_{ \{ t \neq \tb \} } \equiv [ \partial_t f \textbf{1}_{ \{ t \neq \tb \} } , \nabla _x f \textbf{1}_{ \{ t \neq \tb \} }, \nabla_v f \textbf{1}_{ \{ t \neq \tb \} } ] \in L^\infty ([0,T]; L^p(\Omega \times \mathbb R^3 )).
\]
On the other hand, thanks to the compatibility condition, we need to show $f$ has the same trace on the set
\[
\mathcal M \equiv \{ t = \tb (x,v) \} \equiv \{ (\tb (x,v) , x, v  ) \in [0, T] \times \Omega \times \mathbb R^3 \}.
\]
We claim the following fact: Let $\phi (t,x,v) \in C_c ^\infty ((0,T) \times \Omega \times \mathbb R^3 )$, then we have
\[
\int_0^T \iint_{\Omega \times \mathbb R^3 } f \partial \phi = - \int_0 ^T \iint_{ \Omega \times \mathbb R^3 } \partial f \textbf{1}_{ \{ t \neq \tb \} } \phi,
\]
so that $f \in W^{1,p} $ with weak derivatives given by $\partial f \textbf{1}_{ \{ t \neq \tb \} } $.

\textit{Proof of claim.} We first fix the test function $\phi (t,x,v ) $. There exists $\delta = \delta_{\phi } > 0$ such that $\phi \equiv 0$ for $t \ge \frac{1}{\delta}$, or dist$(x,\partial \Omega) < \delta$, or $|v| \ge \frac{1}{\delta}$. Let $\phi (t,x,v) \neq 0 $ and $(t,x,v) \in \mathcal M$, so $t = \tb(t,x,v)$. We have $n(\xb(t,x,v)) \cdot \vb(t,x,v) \le 0$.

Recall the velocity lemma. Since 
\[
\alpha(t- \tb(t,x,v), \xb(t,x,v), \vb(t,x,v) ) \le |n(\xb(t,x,v)) \cdot \vb(t,x,v)|
\]
from the definition of $\alpha$. And by (\ref{velocitylemmaintform}) $\alpha$ satisfies
\[
0<\alpha (t,x,v) \le  e^{C \int_0 ^ t ( |V(\tau')| + 1 ) d \tau'} \alpha (t- \tb(t,x,v) , \xb(t,x,v) , \vb(t,x,v)) \le e^{C \int_0 ^ t ( |V(\tau')| + 1 ) d \tau'}  |n(\xb(t,x,v)) \cdot \vb(t,x,v)|.
\]
So we have $n(\xb(t,x,v)) \cdot \vb(t,x,v) \neq 0$. Therefore
\[
n(\xb(t,x,v)) \cdot \vb(t,x,v) < 0.
\]


Now since $\{ \phi \neq 0 \}$ is compact, $n(\xb(t,x,v)) \cdot \vb(t,x,v)$ reaches a maximum. Therefore $| n(\xb(t,x,v)) \cdot \vb(t,x,v)| > \delta' > 0$ so $\{ \phi \neq 0 \} \cap \mathcal M $ is a smooth $6$D hypersurface. 

We next take a $C^1$ approximation of $f^l_0$, $H^l$, and $g^l$ (by partition of unity and localization) such that
\[
\| f^l_0 - f_0 \|_{W^{1,p}} \to 0, \, \| g^l - g \|_{W^{1,p} ( [0,T] \times \gamma_- \setminus \gamma_-^{\delta'} )} \to 0, \, \| H^l - H \|_{W^{1,p} ( [0,T] \times \Omega \times \mathbb R^3 ) } \to 0,
\]
where $W^{1,p} ( [0,T] \times \gamma_- \setminus \gamma_-^{\delta'} )$ is the standard Sobolev space in $ [0,T] \times \gamma_- \setminus \gamma_-^{\delta'} $.
This implies, from the trace theorem, that
\[
f^l_0(x,v ) \to f_0(x,v) \, \text{and} \, g^l(0,x,v) \to g(0,x,v) \, \text{in } \, L^p(\gamma_- \setminus \gamma_-^{\delta'} ).
\]
We define accordingly, for $(t,x,v) \in [0,T] \times \Omega \times \mathbb R^3$,
\[ \begin{split}
f^l(t,x,v) = & \textbf{1}_{ \{ t \le \tb \} } e^{-\int_0^t \nu } f^l_0 (X(0) ,V(0)) + \textbf{1}_{ \{ t > \tb \} } e^{-\int_0^ {\tb}  \nu} g^l(t -\tb, \xb,\vb ) \\ & + \int_0^{\min \{ \tb, t\} } e^{-\int_0^s \nu } H^l(t-s ,X(t-s), V(t-s) ) ds,
\end{split} \]
and
\[
f^l_-(t,x,v ) =\textbf{1}_{ \{ t \le \tb \} } e^{-\int_0^t \nu } f^l_0 (X(0) ,V(0)) + \int_0^{\min \{ \tb, t\} } e^{-\int_0^s \nu } H^l(t-s ,X(t-s), V(t-s) ) ds,
\] 
\[
f^l_+(t,x,v ) = \textbf{1}_{ \{ t \ge \tb \} } e^{-\int_0^ {\tb}  \nu} g^l(t -\tb, \xb,\vb ) + \int_0^{\min \{ \tb, t\} } e^{-\int_0^s \nu } H^l(t-s ,X(t-s), V(t-s) ) ds.
\] 
Therefore for all $(x,v) \in \gamma_-$,
\[
f^l_+(s, X(s;0,x,v ) , V(s;0,x,v) ) - f^l_-(s,   X(s;0,x,v ) , V(s;0,x,v) ) = e^{- \int_0 ^s \nu } \left[ g^l(0,x,v ) - f^l_0(x,v) \right].
\]
Since $\{ \phi \neq 0 \} \cap \mathcal M$ is a smooth hypersurface, we apply the Gauss theorem to $f^l$ to obtain
\[ \begin{split}
 \iiint \partial_{\bf e} \phi f^l dxdvdt = & \iint [ f^l_+ - f^l_- ] \phi {\bf e} \cdot {\bf n}_{\mathcal M} d \mathcal M
\\ & - \left\{ \iiint_{t > \tb} \phi \partial_{\bf e } f^l_+ dxdvdt + \iiint_{t < \tb} \phi \partial_{\bf e} f^l_- dxdvdt  \right\},
\end{split} \]
where $\partial_{\bf e} = [ \partial_t, \nabla_x,\nabla_v ] = [ \partial_t, \partial_{x_1},\partial_{x_2},\partial_{x_3},\partial_{v_1},\partial_{v_2},\partial_{v_3}] $ and 
\[
{\bf n}_{ \mathcal M } = \frac{ 1 }{ \sqrt{ ( 1 - \partial_t \tb ) ^2 + |\nabla_x \tb|^2 + |\nabla_v \tb|^2}} ( 1 - \partial_t \tb, -\nabla_x \tb, - \nabla_v \tb) \in \mathbb R^7.
\]
Using $(s, X(s;0,x,v) , V(s;0,x,v) ) $ and $(x,v) \in \gamma_-$ as our parametrization for the manifold $\{ \phi \neq 0 \} \cap \mathcal M$, and from (\ref{eta}), letting $x = \eta(x_\parallel) =  \eta(x_{\parallel,1},x_{\parallel,2})$ for $x \in \partial \Omega$, we have the Jacobian matrix
\[ J =
\begin{bmatrix}[ccc|c]
1 & 0 & 0 &  \\
\partial_s X & \nabla_{x_\parallel } X  & \nabla_v X & ({\bf n}_{ \mathcal M })^T \\
\partial_s V & \nabla_{x_\parallel} V & \nabla_v V & 
\end{bmatrix} .
\]
Then since $|v \cdot n(x) |> \delta'$, the surface measure of $\mathcal M$ is $|\det(J)| dx_\parallel dv ds$ which is bounded from above, thus
\[ \begin{split}
\iint & [ f^l_+ - f^l_- ] \phi {\bf e} \cdot {\bf n} _{\mathcal M} d \mathcal M  \\
& \le \int_0^T \int_{n(x) \cdot v \ge \delta'} |f^l_+(s, X(s;0,x,v ) , V(s;0,x,v) ) - f^l_-(s,   X(s;0,x,v ) , V(s;0,x,v) ) | |\det (J) | d  x_\parallel dv ds
\\ & \lesssim_{T,\phi,\delta} \int_{n(x) \cdot v \ge \delta'}  | g^l(0,x,v ) - f^l_0(x,v) | \frac{1}{|v \cdot n(x)|  \| \partial_1 \eta \times \partial_2 \eta \| } |v \cdot n(x)|  \| \partial_1 \eta \times \partial_2 \eta \| d x_\parallel dv 
\\ & \lesssim_{T,\phi,\delta} \int_{n(x) \cdot v \ge \delta'}  | g^l(0,x,v ) - f^l_0(x,v) | | d \gamma  \to 0, \, \text{as } l \to \infty,
\end{split} \]
due to the compatibility condition $f_0(x,v) = g(0,x,v)$ for $(x,v) \in \gamma_-$.

Clearly, taking difference of $f^l -f $ and using the strong $L^p$ estimate we deduce that $f^l \to f$ strongly in $L^p ( \{ \phi \neq 0 \})$. Furthermore, due to the same estimate for $\nabla_x f $ and $\nabla_v f$ we have a uniform-in-$l$ bound of $f^l_{\pm}$ in $W^{1,p}(\{  t \neq \tb, \phi \neq 0 \}) $. Therefore we have up to a subsequence, $ \partial_{\textbf{e}} f^l_{\pm} $ converges weakly. And since the weak limits coincides with the pointwise limit we have
\[
\partial_{\textbf{e}} f^l_+ \rightharpoonup \partial_{\textbf{e}}  f \textbf{1}_{ t > \tb}, \, \partial_{\textbf{e}} f^l_- \rightharpoonup \partial_{\textbf{e}}  f \textbf{1}_{ t < \tb}.
\]
Finally we conclude the claim by letting $l \to \infty$. 

Now since we assume all the data are compactly supported in the velocity space, $f$ itself is compactly supported in the velocity space, so $e^{\theta |v|^2 }f \in L^\infty$ as $f_0,g, H \in L^\infty$. From this and the $L^p$ bounds above, we conclude:
\[
\{ \partial_t + v \cdot \nabla_x + E \cdot \nabla_v + \nu \} \partial f = \partial H - \partial v \cdot \nabla_x f - \partial E \cdot \nabla_v f - \partial \nu f  \in L^p.
\]

By the trace theorem, the traces of $\partial_t f, \nabla_x f, \nabla_v f$ exist. To evaluate these traces, we use the fact that for almost every $(t,x,v)$, $\partial f$ is absolutely continuous along the trajectory $(t-s, X(t-s;t,x,v), V(t-s;t,x,v))$.

First consider $t > \tb(t,x,v)> s$, as $s \to \tb(t,x,v)$, $\tb(t-s,X(t-s),V(t-s)) = \tb(t,x,v) -s  \to 0$. Thus by our formulas for $\partial f$ we have $\partial f(t-s, X(t-s), V(t-s)) \to \partial g ( t-\tb, \xb, \vb)$ as $s \to \tb(t,x,v)$. Therefore $\partial f |_{\gamma_- } = \partial g $.

If $ \tb(t,x,v) > t > s$. Again using the explicit formula for $\partial f$ and the fact that $(\partial_{x,v} X) (0;t-s,x,v) = ( id, 0) $ and $(\partial_{x,v} V) (0;t-s,x,v) = ( 0, id)$ as $s \to t$, we have that $\partial f(t-s, X(t-s), V(t-s)) \to \partial f(0, X(0), V(0))$ as $s \to t$. Therefore $\partial f(0,x,v) = \partial f_0$. This proves (\ref{traceofinflow}).

In order to remove the compact support assumption we employ a cut-off function $\chi$. Define $f^m = \chi(|v|/ m)f$ then $f^m$ satisfies
\Be \label{cutofffunctionvspace} \begin{split}
\{ \partial_t + v \cdot \nabla_x + E \cdot \nabla_v +(\chi ' (|v| /m ) - E \cdot \nabla_v \chi(|v|/m) ) \} f^m = \chi(|v|/m)H, 
\\ f^m(0,x,v) = \chi (|v|/m)f_0, f^m_{\gamma_-} = \chi (|v|/m) g. 
\end{split} \Ee

Now by previous argument we have the traces of $\partial f^m$ exists and $\partial f^m(0,x,v) = \partial (\chi (|v|/m) f_0)$, $\partial f^m|_{\gamma_-} = \partial (\chi(|v| /m )g)$. And $\partial (\chi (|v|/m) f_0, g) = \chi(|v|/m ) \partial f_0, \partial g + \partial \chi(|v|/m ) f_0,g \to \partial f_0, \partial g$ in $L^p$ as $m \to \infty$. On the other hand we have $ \partial f^m = \chi (|v|/m ) \partial f + \partial \chi ( |v| /m ) f$, so the traces of $\partial f^m$ goes to the traces of $\partial f$ almost everywhere as $m \to \infty$. Therefore we conclude $\partial f (0,x,v) = \partial f_0$ and $\partial f |_{\gamma_- } = \partial g |_{\gamma_-}$ as desired.

\end{proof}

 \begin{proposition} \label{inflowproposition2}
 Let $f$ be a solution of (\ref{traninflowfixE}). Assume $f_0 (x,v) = g(0,x,v)$ for all $(x,v) \in \gamma_- $.   
 
 For any fixed $p \in [ 2, \infty]$, $0 < \theta < 1/4$, $\beta > 0$, and $\varpi \gg 1$ assume
 \[ \begin{split}
 \alpha^\beta \nabla_x f_0, \alpha^\beta \nabla_v f_0 & \in L^p (\Omega \times \mathbb R^3 ),
 \\  e^{ -\varpi \langle v \rangle t } \alpha^\beta \nabla_v g, e^{-\varpi \langle v \rangle t } \alpha^\beta \partial_{\tau_i } g &  \in L^p ( [0,T] \times \gamma_- ),
 \\ \frac{ e^{ -\varpi \langle v \rangle t } \alpha ^\beta } { n(x) \cdot v } \Big \{ \partial_t g + \sum ( v \cdot \tau_i ) \partial_{\tau_i } g + \nu g - H + E\cdot \nabla_v g \Big \}  
 \\   +  \frac{ e^{-\varpi \langle v \rangle t } \alpha ^ \beta n(x) \cdot \iint \partial_x E  }{ n(x) \cdot v } \Big \{ \partial_t g + \sum ( \vb \cdot \tau_i ) \partial_ {\tau_i } g - \nu g + H \Big \} & \in L^p ([0, T] \times \gamma_- ),
 \\  e^{ -\varpi \langle v \rangle t } \alpha^\beta \nabla_v H, e^{-\varpi \langle v \rangle t } \alpha^\beta \nabla_x H &  \in L^p ( [0,T] \times \Omega \times \mathbb R^3 ),
 \\   e^{-\theta |v| ^2 }  e^{ -\varpi \langle v \rangle t } \alpha^\beta \nabla_v \nu, e^{-\theta |v|^2} e^{-\varpi \langle v \rangle t } \alpha^\beta \nabla_x \nu &  \in L^p ( [0,T] \times \Omega \times \mathbb R^3 ),
 \\ e^{\theta |v|^2} f_0 \in L^\infty( \Omega \times \mathbb R^3 ),  e^{\theta |v|^2 } g \in L^\infty ( [0, T ] \times \gamma_-), e^{\theta |v| ^2 } H  & \in L^\infty ([ 0, T] \times \Omega \times \mathbb R^3 ).
\end{split} \]
Then for $\partial \in \{ \nabla_x, \nabla_v \}$, we have $e^{-\varpi \langle v \rangle t} \alpha^\beta \partial f(t,x,v) \in L^\infty([0,T]; L^p(\Omega \times \mathbb R^3 ) )$, and 
\[
e^{-\varpi \langle v \rangle t} \alpha^\beta \partial f |_{t=0} = e^{-\varpi \langle v \rangle t} \alpha ^\beta \partial f_0, e^{-\varpi \langle v \rangle t} \alpha^\beta \partial f |_{\gamma_-} = e^{-\varpi \langle v \rangle t} \alpha^\beta \partial g,
\]
where $\partial g$ is given in (\ref{defofgradientxg}).
 \end{proposition}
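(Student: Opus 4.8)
The plan is to mirror the proof of Proposition \ref{inflowexistence1}; the only genuinely new feature is that the degenerate weight $e^{-\varpi\langle v\rangle t}\alpha^\beta$ must be carried along the backward characteristics, and this is precisely what the velocity lemma (Lemma \ref{velocitylemma}) is for. One preliminary point has to be dealt with first: since $\alpha$ vanishes on the grazing set $\gamma_0$, the weighted hypotheses here do \emph{not} imply the unweighted hypotheses of Proposition \ref{inflowexistence1}, so that proposition cannot be quoted verbatim. I would get around this by truncating the data: replace $(f_0,g,H)$ by a family $(f_0^l,g^l,H^l)$ that vanishes in a shrinking neighborhood of $\gamma_0$ (and for large $|v|$), so that both the unweighted hypotheses of Proposition \ref{inflowexistence1} and the weighted ones here hold, with $(f_0^l,g^l,H^l)\to(f_0,g,H)$ in the weighted norms. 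Proposition \ref{inflowexistence1} then yields solutions $f^l$ with $\nabla_{x,v}f^l\in C^0([0,T];L^p)$ obeying the representation (\ref{gradientfexpalongtraj}); the linear $L^p$-stability coming from Lemma \ref{green'sidentity} gives $f^l\to f$ in $C^0([0,T];L^p)$. It then suffices to prove the weighted bound for each $f^l$ uniformly in $l$ and pass to the limit, using weak lower semicontinuity of the $L^p$ norm and the fact that $e^{-\varpi\langle v\rangle t}\alpha^\beta$ is continuous and strictly positive on $[0,T]\times\Omega\times\mathbb R^3$; the trace identities then pass to the limit via the trace theorem applied to $\nabla_{x,v}f^l-\nabla_{x,v}f$.

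The core is the \emph{a priori} estimate, obtained by multiplying the representation (\ref{gradientfexpalongtraj}) for $\nabla_{x,v}f$ by $e^{-\varpi\langle v\rangle t}\alpha^\beta(t,x,v)$ and bounding each term. Every term on the right of (\ref{gradientfexpalongtraj}) is evaluated at a point of the backward trajectory through $(t,x,v)$: at time $0$ for the $\mathbf{1}_{\{t<\tb\}}$-terms, at $(t-\tb,\xb,\vb)\in\gamma_-$ for the $\mathbf{1}_{\{t>\tb\}}$-terms, and at $(t-s,X(t-s),V(t-s))$ for the $H$-integrals. On each of these I would use (\ref{velocitylemmaintform}) to move $\alpha^\beta(t,x,v)$ to the corresponding time, at the cost of a factor $\exp\!\big(C\beta\int_{s}^{t}(|V(\tau)|+1)\,d\tau\big)$. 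Since the relevant arc stays in $\Omega$ one has $|V(\tau)|\le|v|+T\|E\|_\infty$, hence $\int_s^t(|V(\tau)|+1)\,d\tau\lesssim_{T,\|E\|_\infty}(t-s)\langle v\rangle$; choosing $\varpi\gg1$ (depending only on $\beta$, $T$, $\|E\|_\infty$ and the constant of Lemma \ref{velocitylemma}) makes $e^{-\varpi\langle v\rangle t}e^{C\beta\int_s^t(|V|+1)\,d\tau}\le e^{-\varpi\langle v\rangle s}$, so the weight at time $t$ is dominated by $e^{-\varpi\langle v\rangle s}\alpha^\beta$ at the evaluation point — exactly the weight in the hypotheses (on $\gamma_-$ one has $\alpha\sim|n(x)\cdot v|$, so the factors $1/(n\cdot\vb)$ in (\ref{gradientfexpalongtraj}) combine with $\alpha^\beta(t-\tb,\xb,\vb)$ precisely into the singular boundary quantities assumed to lie in $L^p([0,T]\times\gamma_-)$). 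The remaining scalar factors are inert: $e^{-\int_0^s\nu}$ is controlled by $|\nu|\lesssim\langle v\rangle$ and absorbed into $\varpi$, the Gronwall bound gives $|\nabla_{x,v}X|+|\nabla_{x,v}V|\lesssim e^{(1+\|\nabla_xE\|_\infty)T}$ and $|\int\partial_{x,v}E|+|\iint\partial_{x,v}E|\lesssim 1$, and the terms with $\nabla_{x,v}\nu$ are handled as in Proposition \ref{inflowexistence1} by pairing $e^{-\theta|v|^2}e^{-\varpi\langle v\rangle t}\alpha^\beta\nabla_{x,v}\nu\in L^p$ with $e^{\theta|v|^2}(f_0,g,H)\in L^\infty$, the mismatch $e^{\theta|V(t-\tau)|^2-\theta|v|^2}\lesssim e^{C|v|}$ being absorbed into $\varpi$. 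Finally, the changes of variables of Lemma \ref{covlemma1}, Lemma \ref{covlemma2} and Lemma \ref{int_id}, applied exactly as in Proposition \ref{inflowexistence1}, turn the $\mathbf{1}_{\{t<\tb\}}$-contribution into $\|\alpha^\beta\nabla_{x,v}f_0\|_p$, the $\mathbf{1}_{\{t>\tb\}}$-contribution into the $L^p([0,T]\times\gamma_-)$ norms of the weighted boundary data in the hypotheses, and the $H$-integrals into $\int_0^T\|e^{-\varpi\langle v\rangle s}\alpha^\beta\nabla_{x,v}H(s)\|_p^p\,ds$; this yields $\sup_{0\le t\le T}\|e^{-\varpi\langle v\rangle t}\alpha^\beta\nabla_{x,v}f(t)\|_p<\infty$.

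For the trace identities I would invoke Proposition \ref{inflowexistence1} (for each $f^l$, then in the limit): it gives $\nabla_{x,v}f|_{\gamma_-}=\nabla_{x,v}g$ and $\nabla_{x,v}f(0,\cdot,\cdot)=\nabla_{x,v}f_0$ with $\nabla_x g$ as in (\ref{defofgradientxg}), and multiplying by $e^{-\varpi\langle v\rangle t}\alpha^\beta$ yields the stated identities. I expect the main obstacle to be precisely the first step: because $\alpha$ degenerates on $\gamma_0$ one cannot feed the weighted data into Proposition \ref{inflowexistence1} directly, so the solution must be produced by truncation and a limiting argument, and one must make sure — through the choice $\varpi\gg1$ — that the exponential generated by the velocity lemma is genuinely absorbed by $e^{-\varpi\langle v\rangle t}$; the rest is bookkeeping that parallels Proposition \ref{inflowexistence1} line by line.
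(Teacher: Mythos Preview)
Your proposal is correct and follows essentially the same route as the paper: multiply the representation (\ref{gradientfexpalongtraj}) by $e^{-\varpi\langle v\rangle t}\alpha^\beta$, use the velocity lemma to transport the weight along characteristics (the paper records this as the estimates (\ref{exp}), with $\varpi\gg1$ chosen so that the velocity-lemma exponential is absorbed), and then apply the change-of-variables lemmas exactly as in Proposition~\ref{inflowexistence1}. Two minor differences worth noting: the paper truncates only for large $|v|$ via (\ref{cutofffunctionvspace}) rather than also near $\gamma_0$, since it works directly with the formula (\ref{gradientfexpalongtraj}) for the given solution instead of re-invoking Proposition~\ref{inflowexistence1}; and for the trace identities the paper writes down the transport equation satisfied by $e^{-\varpi\langle v\rangle t}\alpha^\beta\partial f$ itself (with a modified $\bar\nu$ absorbing the derivative of the weight) and applies the trace theorem to that, rather than going through the unweighted traces and multiplying afterwards.
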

 
 \begin{proof}
 First we assume $f_0$, $g$ and $H$ have compact supports in $\{ v \in \mathbb R^3: |v| < m \}$.
 By (\ref{velocitylemmaintform}) we have for $\varpi \gtrsim \frac{(\| E \|_\infty + \| \nabla E \|_\infty )}{C_E}$, and for any $0 \le s_1, s_2 \le t $ and any $(x,v) \in \Omega \times \mathbb R^3$ that
\[
e^{ - \varpi \int _{s_1} ^{s_2 }\langle V(\tau) \rangle d \tau} \alpha (s_1,X(s_1),V(s_1) ) \le \alpha (s_2,X(s_2),V(s_2) ) \le e^{ \varpi  \int_{s_1}^{s_2} \langle V(\tau) \rangle d\tau} \alpha (s_1,X(s_1),V(s_1)).
\]
And since $ \left | \int_{\max\{0,t-\tb\}}^t \langle V(s;t,x,v) \rangle ds - \langle v \rangle t \right | \le \| E \|_\infty t^2$, we have for any $ \beta >0 $
\begin{equation} \label{exp}
 \sup_{ t \le \tb } \frac{ e^{- \varpi \langle v \rangle t } \alpha^\beta (t,x,v ) }{\alpha ^\beta (0,X(0),V(0)) } \le e^{ \beta \varpi \| E \|_\infty t^2  }, \,
  \sup_{ t \ge \tb } \frac{ e^{- \varpi \langle v \rangle t } \alpha^\beta (t,x,v ) }{e^{- \varpi \langle \vb \rangle (t-\tb)} \alpha ^\beta (t-\tb, \xb,\vb) } \le e^{2 \beta \varpi \| E \|_\infty t^2  } ,\end{equation}
 \[
    \sup_{ \max \{ t-\tb, 0 \} \le s \le t } \frac{ e^{- \varpi \langle v \rangle t } \alpha^\beta (t,x,v ) }{e^{- \varpi  \langle V(t-s) \rangle  (t-s)} \alpha ^\beta (t-s, X(t-s),V(t-s)) } \le e^{ 2 \beta \varpi \| E \|_\infty t^2  }.
 \]
 Multiplying $e^{ - \varpi \langle v \rangle  t } \alpha ^\beta (t,x,v)$ to (\ref{gradientfexpalongtraj}), and then using the change of variables from  (\ref{covlemma1}) and lemma (\ref{covlemma2}), and the bound from (\ref{exp}), we get
 
 \[ \begin{split}
 \| & e^{ - \varpi \langle v \rangle t } \alpha ^\beta \nabla_x f(t) \|_{L^p } \lesssim_{ \beta }  e^{\varpi t^2 (\| E \|_\infty^2+\| \nabla E \|_\infty^2 +1)}  \Bigg( \| \alpha ^\beta \nabla_x f_0 \|_p + \| \alpha^\beta \nabla_v f_0 \|_p
 \\ & + \Bigg [ \int_0 ^t \bigg | \frac{ e^{ -\varpi \langle v \rangle s  } \alpha ^\beta } { n(x) \cdot v } \Big \{ \partial_t g + \sum ( v \cdot \tau_i ) \partial_{\tau_i } g + \nu g - H + E\cdot \nabla_v g \Big \} 
 \\ & \qquad \qquad \qquad \qquad  +  \frac{ e^{-\varpi   \langle v \rangle  s } \alpha ^ \beta n \cdot \iint \partial_x E  }{ n(x) \cdot v } \Big \{ \partial_t g + \sum ( v \cdot \tau_i ) \partial_ {\tau_i } g - \nu g + H \Big \} \bigg | _{\gamma , p } ^ p  d\tb \Bigg ]^{1/p }
 \\ & + \Bigg [ \int_0 ^t \sum_{ i =1}^2 \Big | e^{-\varpi \langle v \rangle s  } \alpha ^\beta \partial_{\tau_i } g (s) \Big | _{\gamma, p } ^p + \Big | e^{-\varpi \langle v \rangle s } \alpha ^ \beta \nabla_v g (s)\Big | _{ \gamma, p } ^p 
 \\ & \qquad \qquad \qquad \qquad + \| e^{ -\varpi \langle v \rangle s } \alpha ^ \beta \nabla_x H(s) \|_p ^ p + \| e^{ -\varpi \langle v \rangle s  } \alpha ^ \beta \nabla_v H(s) \|_p ^p ds \Bigg ] ^{ 1/ p}
 \\ & + C' \Bigg [ \int_0 ^t \| e^{ -\theta |v| ^ 2} e^{ -\varpi \langle v \rangle s  } \alpha ^\beta \nabla_x v \|_p ^ p +  \| e^{ -\theta |v| ^ 2} e^{ -\varpi \langle v \rangle s } \alpha ^\beta \nabla_v v \|_p ^ p ds \Bigg ] ^{1/p } \Bigg),
 \end{split} \]
 
  \[ \begin{split}
 \| & e^{ - \varpi \langle v \rangle t } \alpha ^\beta \nabla_v f(t) \|_{ L^p  } \lesssim_{\beta }e^{\varpi t^2(\| E \|_\infty^2+\| \nabla E \|_\infty^2 +1)} \Bigg( \| \alpha ^\beta \nabla_x f_0 \|_p + \| \alpha^\beta \nabla_v f_0 \|_p
 \\ & + \Bigg [ \int_0 ^t \bigg | \frac{ e^{  - \varpi \langle v \rangle s  } \alpha ^\beta } { n(x) \cdot v } \Big \{ \partial_t g + \sum ( v \cdot \tau_i ) \partial_{\tau_i } g + \nu g - H + E\cdot \nabla_v g \Big \} 
 \\ & \qquad \qquad \qquad \qquad  +  \frac{ e^{ - \varpi \langle v \rangle s  } \alpha ^ \beta n(x) \cdot \iint \partial_v E  }{ n(x) \cdot v } \Big \{ \partial_t g + \sum ( v \cdot \tau_i ) \partial_ {\tau_i } g - \nu g + H \Big \} \bigg | _{\gamma , p } ^ p  ds \Bigg ]^{1/p }
 \\ & + \Bigg [ \int_0 ^t \sum_{ i =1}^2 \Big | e^{ - \varpi \langle v \rangle s } \alpha ^\beta \partial_{\tau_i } g (s) \Big | _{\gamma, p } ^p + \Big | e^{ - \varpi \langle v \rangle s  } \alpha ^ \beta \nabla_v g (s)\Big | _{ \gamma, p } ^p 
 \\ & \qquad \qquad \qquad \qquad + \| e^{  - \varpi \langle v \rangle s} \alpha ^ \beta \nabla_x H(s) \|_p ^ p + \| e^{  - \varpi \langle v \rangle s } \alpha ^ \beta \nabla_v H(s) \|_p ^p ds \Bigg ] ^{ 1/ p}
 \\ & + C' \Bigg [ \int_0 ^ t \| e^{ -\theta |v| ^ 2} e^{  - \varpi \langle v \rangle s } \alpha ^\beta \nabla_x v \|_p ^ p +  \| e^{ -\theta |v| ^ 2} e^{  - \varpi \langle v \rangle s  } \alpha ^\beta \nabla_v v \|_p ^ p ds \Bigg ] ^{1/p } \Bigg),
 \end{split} \]
 where $C' = \| e^{\theta |v|^2 } f_0 \|_{\infty} + \| e^{\theta |v|^2 } H \|_{\infty} + | e^{\theta |v|^2 } g |_{\infty} .$
 By the hypotheses of the proposition, the right hand sides are bounded and hence $e^{ - \varpi \langle v \rangle t} \alpha^\beta \partial f \in L^\infty ( [ 0, T]; L^p ( \Omega \times \mathbb R^3 ) )$.
 
 Since $f_0, g$ and $H$ are compactly supported inside $\{ v \in \mathbb R^3: |v| \le m \}$ we have by direct computation that if we let
 \[
\bar \nu : = \nu  + \varpi \langle v \rangle + \varpi \frac{v}{\langle v \rangle} \cdot E t     -  \beta \alpha^{-1} ( \partial_t \alpha + v \cdot \nabla_x \alpha + E \cdot \nabla_v \alpha ),
\]
then 
 \[ \begin{split}
 \{ \partial_t + & v \cdot \nabla_x + E \cdot \nabla_v + \overline \nu \} (e^{-\varpi  \langle v \rangle t} \alpha^\beta \partial f ) 
 \\ &= e^{- \varpi \langle v \rangle t } \alpha^\beta   [\partial_t + v \cdot \nabla_x + E \cdot \nabla_v + \nu ] (\partial f )
 \\& = e^{- \varpi \langle v \rangle t } \alpha^\beta  [ \partial H - \partial \nu \cdot \nabla_x f - \partial E \cdot \nabla_v f - \partial \nu f ] \in L^p.
 \end{split} \]
Therefore by the trace theorem the traces of $e^{- \varpi \langle v \rangle t } \alpha^\beta \partial f $ exist and by choosing a test function multiplied by $e^{- \varpi \langle v \rangle t } \alpha^\beta$, we deduce $e^{- \varpi \langle v \rangle t } \alpha^\beta \partial f$ has the same trace as $e^{- \varpi \langle v \rangle t } \alpha^\beta [\partial f |_\gamma]$.
 
 Finally we use (\ref{cutofffunctionvspace}) to remove the compact support condition and pass to the limit to conclude the proof.

 \end{proof}
 
 \section{$W^{1,p}$ estimate}
 The goal of this section is to prove the $W^{1,p}$ $(1<p<2)$ estimate, and the weighted $W^{1,p}$ $(2\le p < \infty)$ estimate for the system (\ref{Bextfield1}),
 (\ref{diffusebdycondtionf}), with $E$ satisying (\ref{signEonbdry}).

 
 Let $f^0 = \sqrt \mu $. We apply Proposition \ref{inflowexistence1} for $m =0,1,2,...$ to get
\begin{equation} \label{seqfnoselfgeneratedpotential}
(\partial_t + v \cdot \nabla_x + E \cdot \nabla_v -\frac{v}{2} \cdot E + \nu(\sqrt \mu f^m) ) f^{m+1} = \Gamma_{\text{gain}} (f^m,f^m),
\end{equation}
with the initial data $f^m(0,x,v) = f_0(x,v)$, and boundary condition for all $(x,v) \in \gamma_-$ be
\Be \label{diffusebdryconditionsequencenoselfgeneratedpotential}
\begin{split}
f^1(t,x,v) & =  c_\mu \sqrt{\mu(v)} \int_{n\cdot u >0 } f_0(x,u) \sqrt{\mu(u)} (n(x) \cdot u ) du,
\\ f^{m+1}(t,x,v) & = c_\mu \sqrt{\mu(v)} \int_{n\cdot u >0 } f^m(t,x,u) \sqrt{\mu(u)} (n(x) \cdot u ) du, \, m \ge 1.
\end{split}
\Ee
We first need a local existence result which is standard. 
%
%

\begin{lemma} \label{localexslemma}[Local Existence]
Suppose $\| E \|_\infty < \infty$, and $\| e^{\theta |v|^2} f_0 \|_\infty < \infty$, $0< \theta < \frac{1}{4}$. And $f_0$ satisfy the compatibility condition for diffuse boundary condition. Then there exists $0 < T \ll 1$ small enough such that $f \in L^\infty ([0,T) \times \Omega \times \mathbb R^3)$ solves the system (\ref{Bextfield1}) with diffuse boundary condition (\ref{diffusebdycondtionf}).
%
\end{lemma}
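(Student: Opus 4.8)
The plan is to construct the solution as the limit of the iteration scheme (\ref{seqfnoselfgeneratedpotential})–(\ref{diffusebdryconditionsequencenoselfgeneratedpotential}), working entirely at the $L^\infty$ level with the exponential weight $w_\theta(v) := e^{\theta|v|^2}$. First I would set $g^m := w_\theta f^m$ and rewrite (\ref{seqfnoselfgeneratedpotential}) for $g^{m+1}$ along the characteristics $(X(s),V(s))$ solving (\ref{hamilton_ODE}). Integrating the transport equation from the backward exit point gives, for $t \le \tb$,
\[
g^{m+1}(t,x,v) = e^{-\int_0^t \tilde\nu^m} w_\theta(v) f_0(X(0),V(0)) + \int_0^t e^{-\int_0^s \tilde\nu^m} \, w_\theta(v)\,\Gamma_{\mathrm{gain}}(f^m,f^m)(t-s,X(t-s),V(t-s))\, ds,
\]
and for $t > \tb$ the term $e^{-\int_0^{\tb}\tilde\nu^m} w_\theta(v)\, [\text{diffuse boundary value of } f^m](t-\tb,\xb,\vb)$ replaces the initial-data term, where $\tilde\nu^m = \nu(\sqrt\mu f^m) - \tfrac v2\cdot E$. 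The point of the weight is the standard Grad estimate $w_\theta(v)\,|\Gamma_{\mathrm{gain}}(f_1,f_2)(v)| \lesssim \langle v\rangle \, \|w_\theta f_1\|_\infty \|w_\theta f_2\|_\infty$ together with the lower bound $\nu(\sqrt\mu f)(v) \gtrsim \langle v\rangle^\kappa - C\|w_\theta f\|_\infty\langle v\rangle$, valid for $0\le\kappa\le 1$; the small factor $-\tfrac v2\cdot E$ is harmless since $\|E\|_\infty<\infty$ and $t$ is small. For the boundary term one uses that $\int_{n\cdot u>0} \sqrt{\mu(u)}(n(x)\cdot u)\,du$ is a bounded probability-type measure against which $w_\theta(u)^{-1}$ integrates to a constant, so the diffuse reflection does not lose the weight: $w_\theta(v)\,|f^m|_{\gamma_-}(x,v) \lesssim \sqrt{\mu(v)} w_\theta(v)\, \|w_\theta f^m\|_\infty \lesssim \|w_\theta f^m\|_\infty$ (here $\theta<1/4$ is exactly what makes $\sqrt\mu(v)w_\theta(v)\in L^\infty$).

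From these pointwise bounds I would derive a closed a priori estimate: setting $A_m(t):=\sup_{0\le s\le t}\|w_\theta f^m(s)\|_\infty$, the representation formula gives
\[
A_{m+1}(t) \le C\Big( \|w_\theta f_0\|_\infty + t\,(1 + A_m(t))\,A_m(t)^2 \Big)
\]
for all $t\le T$ with $T<1$, where $C$ depends only on $\Omega$, $\theta$, $\|E\|_\infty$ and the collision kernel. A routine induction then shows that if $T$ is chosen small enough (depending on $\|w_\theta f_0\|_\infty$) one has $A_m(t)\le 2C\|w_\theta f_0\|_\infty =: M$ for all $m$ and all $t\le T$; the compatibility condition (\ref{compatibility}) guarantees $f^1$ is consistent with the boundary condition at $t=0$ so the scheme is well-posed from the start. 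Applying Lemma \ref{lbtb} when rewriting time integrals along trajectories as integrals over $\Omega\times\mathbb R^3$ (via Lemma \ref{int_id}) keeps all constants uniform in $m$.

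Next I would prove the sequence is Cauchy in $L^\infty$. Writing the equation for $g^{m+1}-g^m = w_\theta(f^{m+1}-f^m)$, the difference of the collision terms splits as $\Gamma_{\mathrm{gain}}(f^m,f^m)-\Gamma_{\mathrm{gain}}(f^{m-1},f^{m-1}) = \Gamma_{\mathrm{gain}}(f^m-f^{m-1},f^m) + \Gamma_{\mathrm{gain}}(f^{m-1},f^m-f^{m-1})$ and similarly the loss term $\nu(\sqrt\mu f^m)f^{m+1} - \nu(\sqrt\mu f^{m-1})f^m$ is rearranged into $\nu(\sqrt\mu f^m)(f^{m+1}-f^m) + \nu(\sqrt\mu(f^m-f^{m-1}))f^m$; the diffuse boundary term contributes $\lesssim \|w_\theta(f^m-f^{m-1})\|_\infty$ as above. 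Using the uniform bound $M$, the same characteristic estimate yields $\sup_{[0,t]}\|w_\theta(f^{m+1}-f^m)\|_\infty \le C T (1+M)M \cdot \sup_{[0,t]}\|w_\theta(f^m-f^{m-1})\|_\infty$, and shrinking $T$ once more makes the prefactor $<1/2$, so $\{w_\theta f^m\}$ converges uniformly to some $w_\theta f$ with $\|w_\theta f\|_\infty\le M$. Passing to the limit in the mild formulation shows $f$ solves (\ref{Bextfield1}) with the diffuse boundary condition (\ref{diffusebdycondtionf}); uniqueness in $L^\infty([0,T)\times\Omega\times\mathbb R^3)$ follows from the same contraction estimate applied to the difference of two solutions. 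The main obstacle I anticipate is purely bookkeeping rather than conceptual: tracking the curved characteristics through boundary reflections and making sure every constant coming out of Lemma \ref{lbtb}, Lemma \ref{int_id} and the Grad estimates is genuinely independent of $m$; the sign condition (\ref{signEonbdry}) and the velocity lemma are \emph{not} needed here, since this bound does not involve derivatives and the weight $w_\theta$ already absorbs the growth of $V(s)$ along trajectories on the short time interval.
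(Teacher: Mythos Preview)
Your single–bounce Duhamel argument with the fixed weight $w_\theta(v)=e^{\theta|v|^2}$ does not close, and the paper's proof (deferred to the proof of (\ref{uniformmlinftybdd})) differs from yours in two essential ways.

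\textbf{The collision integral is not $O(t)$.} Your claimed estimate $A_{m+1}(t)\le C\big(\|w_\theta f_0\|_\infty+t(1+A_m)A_m^2\big)$ is incorrect at the quadratic term. After bounding $w_\theta(V(s))|\Gamma_{\mathrm{gain}}(f^m,f^m)(V(s))|\lesssim\langle V(s)\rangle A_m^2$ you are left with $\int_{\max\{0,t-\tb\}}^{t}\langle V(s)\rangle\,ds$, and by Lemma~\ref{lbtb} this is only $O(D)$, not $O(t)$: for $|v|$ large one has $\tb\sim D/|v|$ so the integral is $\sim D$ regardless of how small $t$ is. Also, the lower bound $\nu(\sqrt\mu f)\gtrsim\langle v\rangle^\kappa-C\|w_\theta f\|_\infty\langle v\rangle$ is not available here, since $F=\sqrt\mu f$ (not $\mu+\sqrt\mu f$) and there is no Maxwellian baseline producing coercivity. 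The paper fixes this by using the \emph{time-dependent} weight $e^{(\theta-t)|v|^2}$: the equation for $e^{(\theta-t)|v|^2}f^{m+1}$ then carries an extra damping $|v|^2$, so the collision integral becomes $\int e^{-(t-s)|V(s)|^2}\langle V(s)\rangle\,ds$, which is bounded by $\frac{C}{N}+CNt$ after splitting $\{|v|\gtrless N\}$ and hence can be made small (this is the computation right after (\ref{inftytraj2})). This is also why the conclusion is stated with $\theta'=\theta-T<\theta$.

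\textbf{The boundary term has no small prefactor.} Even granting the weight absorption you describe, the diffuse boundary contributes $C_{\mathrm{bdry}}A_m$ with
\[
C_{\mathrm{bdry}}=\frac{\displaystyle\int_{n\cdot u>0}e^{-(\theta+1/4)|u|^2}(n\cdot u)\,du}{\displaystyle\int_{n\cdot u>0}e^{-|u|^2/2}(n\cdot u)\,du}>1\quad\text{for }\theta<\tfrac14,
\]
so the recursion $A_{m+1}\le C_0+C_{\mathrm{bdry}}A_m+C_2A_m^2$ does not yield a uniform bound (at best it would require smallness of the data, which the lemma does not assume). The paper handles this by the stochastic cycle expansion: one iterates the Duhamel formula through $l$ reflections ((\ref{inftytraj2}) and Lemma~\ref{expandtrajl}), producing $l$ ``good'' collision/initial-data terms plus a remainder supported on $\{t^l>0\}$, and Lemma~\ref{ltailbd} shows $\int_{\prod_j\mathcal V_j}\mathbf 1_{\{t^l>0\}}\,d\Sigma_{l-1}^{l-1}\lesssim(1/2)^l$. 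Choosing $l$ large first, then $N$ large and $T$ small, closes the induction for arbitrary data.

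Your remark that neither (\ref{signEonbdry}) nor the velocity lemma is needed here is correct; those enter only at the derivative level.
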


\begin{proof}
We first claim:
\Be \label{uniformlinfinitybddnoselfgeneratedpotential}
\sup_m \sup_{0 \le t \le T} \| e^{\theta' |v|^2 } f^m(t) \|_\infty \lesssim \| e^{\theta |v|^2} f_0 \|_\infty < \infty,
\Ee
where $\theta' = \theta - T$. The proof of (\ref{uniformlinfinitybddnoselfgeneratedpotential}) is essentially the same (and easier) to the proof of the same bound in the case with self-generated potential. See the proof of (\ref{uniformmlinftybdd}).

From (\ref{uniformlinfinitybddnoselfgeneratedpotential}) we have up to a subsequence we have the weak-$\ast$ convergence:
\Be \label{weakstarcovlinear}
 e^{\theta' |v|^2} f^m(t,x,v) \overset{\ast}{\rightharpoonup} e^{\theta' |v|^2} f(t,x,v)
 \Ee
 in $L^\infty ([0,T) \times \Omega \times \mathbb R^3) \cap L^\infty ([0,T) \times \gamma) $ for some $f$.

Apply the same argument of (\ref{uniformlinfinitybddnoselfgeneratedpotential}) to the sequence $
 e^{(\theta - t ) |v|^2 } (f^{m+1 } - f^m ) $ we get that the sequence $ e^{\theta ' |v|^2 } f^m (t,x,v)  \in L^\infty ([0,T) \times \Omega \times \mathbb R^3) \cap L^\infty ([0,T) \times \gamma) $ is a Cauchy sequence and therefore
\Be \label{stronglinftycovlinear}
\| e^{\theta ' |v|^2 } f^m (t,x,v) - e^{\theta ' |v|^2 } f(t,x,v) \|_\infty \to 0, \, \text{as } \, m \to \infty.
\Ee
Now for any $\phi \in C_c^\infty ( [0,T) \times \Omega \times \mathbb R^3) $ we have from (\ref{seqfnoselfgeneratedpotential}) that
\Be \label{phifmeqvlinear}
\int_0^T \iint_{\Omega \times \mathbb R^3 } f^{m+1} \left[ \partial_t + v \cdot \nabla_x + E\cdot \nabla_v - \frac{v}{2} \cdot E  +\nu(\sqrt{\mu} f^{m} ) \right] \phi = \int_0^T \iint_{\Omega \times \mathbb R^3 } - \Gamma_{\text{gain}} (f^m,f^m) \phi.
\Ee
Then from (\ref{weakstarcovlinear}) and (\ref{stronglinftycovlinear}), by the standard argument we can pass the limit $m \to \infty$ in (\ref{phifmeqvlinear}) to conclude that
\[
\int_0^T \iint_{\Omega \times \mathbb R^3 } f \left[ \partial_t + v \cdot \nabla_x + E\cdot \nabla_v - \frac{v}{2} \cdot E   +\nu(\sqrt{\mu} f ) \right] \phi = \int_0^T \iint_{\Omega \times \mathbb R^3 }  - \Gamma_{\text{gain}} (f,f) \phi.
\]
This proves the lemma.

\end{proof}

Now we are ready to prove Theorem \ref{W1ppless2thm}

\begin{proof} [proof of Theorem \ref{W1ppless2thm}]

Let $\partial \in \{ \nabla_x, \nabla_v \}$. Taking $ \partial [ (\ref{seqfnoselfgeneratedpotential}) ] $ we have

\begin{equation} \begin{split} \label{seqf3}
& (\partial_t + v \cdot \nabla_x + E \cdot \nabla_v - \frac{v}{2} \cdot E + \nu(\sqrt \mu f^m) ) \partial f^{m+1} 
\\ = & \partial \Gamma_{gain} (f^m,f^m) - \partial v \cdot \nabla_x f^{m+1} - \partial E \cdot \nabla_v f^{m+1} - \partial ( \frac{v}{2} \cdot E ) f^{m+1} - \partial (\nu (\sqrt \mu f^m ) ) f^{m+1}  \\ : = & \mathcal G^m.
\end{split} \end{equation}
By direct computation we have from (\ref{seqf3}):
\begin{equation} \begin{split} \label{seqfw1pp<2}
 (\partial_t + v \cdot \nabla_x + E \cdot \nabla_v - \frac{v}{2} \cdot E + \varpi \langle v \rangle + t \varpi \frac{v}{\langle v \rangle } \cdot E + \nu(\sqrt \mu f^m) ) e^{-\varpi \langle v \rangle t } \partial f^{m+1} = e^{-\varpi \langle v \rangle t } \mathcal G^m.
\end{split} \end{equation}
And for $\varpi > 4(\| E \|_\infty + 1) $ and $T < \frac{1}{4(\| E\|_\infty + 1 ) }$, we have
\[
\nu^m_{\varpi} := \frac{v}{2} \cdot E + \varpi \langle v \rangle + t \varpi \frac{v}{\langle v \rangle }\cdot E  + \nu(\sqrt \mu f^m) \ge \frac{\varpi}{2} \langle v \rangle.
\]
%
From (\ref{uniformlinfinitybddnoselfgeneratedpotential}) we have
\[
| \mathcal G ^m | \lesssim |\partial f^{m+1} | + e^{-\frac{\theta}{2} |v|^2 } \| e^{\theta |v|^2 } f_0 \|_{\infty}^2 + P(\| e^{\theta |v|^2} f_0 \|_\infty) \times \int_{\mathbb R^3 } \frac{e^{-C_\theta |v-u|^2 }}{|v -u |^{2 - \kappa}  } | \partial f^m (u) | du,
\]
where $P$ is a polynomial.

We need some estimates for the derivatives on the boudnary. We claim that for $(x,v) \in \gamma_-$:
\begin{equation} \label{seqbdrynoalpha}
|\partial f^{m+1} (t,x,v) | \lesssim \sqrt {\mu (v) } \langle v \rangle ( 1 + \frac{1}{ |n(x) \cdot v | }) \int_{n(x) \cdot u > 0} | \partial f^m(t,x,u) |\mu ^{1/4}  (n(x) \cdot u ) du + \frac{ e^{-\frac{\theta}{2} } |v| ^2 }{|n(x) \cdot v | } P(\| e^{\theta |v|^2 } f_0 \|_\infty).
\end{equation}
Let $\tau _{1}(x)$ and $\tau _{2}(x)$ be unit tangential vectors to $\partial\Omega$ satisfying (\ref{tangentialderivativedef}), then from (\ref{seqfnoselfgeneratedpotential}),
		\Be\label{fn}
		\begin{split}
			&\partial _{n}f^{m+1}(t,x,v)\\
			=&\frac{-1}{n(x)\cdot v}\bigg\{ \partial
			_{t}f^{m+1}+\sum_{i=1}^{2}(v\cdot \tau _{i})\partial _{\tau _{i}}f^{m+1} 
			+ E \cdot \nabla_v f^{m+1}
		      -  \frac{v}{2} \cdot E f^{m+1} + \nu(\sqrt{\mu} f^m ) f^{m+1}
			-\Gamma_{\text{gain}}
			(f^m,f^m)  . \end{split} \Ee
Define the orthonormal transformation from $\{n,\tau
		_{1},\tau _{2}\}$ to the standard bases $\{\mathbf{e}_{1},\mathbf{e}_{2},%
		\mathbf{e}_{3}\}$, i.e. $\mathcal{T}(x)n(x)=\mathbf{e}_{1},\ \mathcal{T}%
		(x)\tau _{1}(x)=\mathbf{e}_{2},\ \mathcal{T}(x)\tau _{2}(x)=\mathbf{e}_{3},$
		and $\mathcal{T}^{-1}=\mathcal{T}^{T}.$ Upon a change of variable: $%
		u^{  \prime }=\mathcal{T}(x)u,$ we have%
		\begin{equation*}
		n(x)\cdot u=n(x)\cdot \mathcal{T}^{t}(x)u^{\prime }=n(x)^{t}%
		\mathcal{T}^{t}(x)u^{ \prime }=[\mathcal{T}(x)n(x)]^{t}u^{
			\prime }=\mathbf{e}_{1}\cdot u^{  \prime }=u_{1}^{  \prime },
		\end{equation*}%
		then the RHS of the diffuse BC (\ref{diffusebdryconditionsequencenoselfgeneratedpotential}) equals
		\begin{equation*}
		c_{\mu }%
		\sqrt{\mu (v)}\int_{u_{1}^{  \prime }>0}f^m(t,x,\mathcal{T}%
		^{t}(x)u^{  \prime })\sqrt{\mu (u^{  \prime })}\{u_{1}^{
			\prime }\}\mathrm{d}u^{  \prime }.
		\end{equation*}%
Then we can further take tangential derivatives $\partial _{\tau _{i}}$
		as, for $(x,v)\in \gamma _{-},$
		\begin{equation}\label{boundary_tau}
		\begin{split}
		\partial _{\tau _{i}}f^{m+1}(t,x,v)
				 = & c_{\mu }\sqrt{\mu (v)}\int_{n(x)\cdot u >0}\partial _{\tau
			_{i}}f^m(t,x,u)\sqrt{\mu (u)}\{n(x)\cdot u\}%
		\mathrm{d}u\\
		& \ \ +c_{\mu }\sqrt{\mu (v)}\int_{n(x)\cdot u>0}\nabla
		_{v}f^m(t,x,u)\frac{\partial \mathcal{T}^{t}(x)}{\partial \tau _{i}}%
		\mathcal{T}(x)u\sqrt{\mu (u)}\{n(x)\cdot u\}%
		\mathrm{d}u.
		\end{split}
		\end{equation}%
		We can take velocity derivatives directly to (\ref{diffusebdryconditionsequencenoselfgeneratedpotential}) and obtain that for $%
		(x,v)\in \gamma _{-},$
		\begin{eqnarray}
		\nabla _{v}f^{m+1}(t,x,v) &=&c_{\mu }\nabla _{v}\sqrt{\mu (v)}\int_{n(x)\cdot
			u>0}f^m(t,x,u)\sqrt{\mu (u)}\{n({x})\cdot
		u\}\mathrm{d}u,\label{boundary_v}\\
		\partial _{t}f^{m+1}(t,x,v) &=&c_{\mu }\sqrt{\mu (v)}\int_{n(x)\cdot u>0}\partial _{t}f^m(t,x,u)\sqrt{\mu (u)}\{n({x})\cdot
		u\}\mathrm{d}u. \notag
		\end{eqnarray}%
		For the temporal derivative, we use (\ref{seqfnoselfgeneratedpotential}) again to deduce that 
		\Be\label{boundary_t} 
		\begin{split}
			\partial _{t}f^{m+1}(t,x,v)
			= & c_{\mu }\sqrt{\mu (v)}\int_{n(x)\cdot u>0}
			\Big\{
			- u\cdot \nabla_x f^m - E  \cdot \nabla_v f^{m} + \frac{u}{2} \cdot E f^m - \nu(\sqrt{\mu} f^{m-1} ) f^m		\\ & +  \Gamma_{\text{gain}}(f^{m-1},f^{m-1}) 
			\Big\}
			\sqrt{\mu (u)}\{n({x})\cdot
			u\}\mathrm{d}u.
		\end{split}
		\Ee 
		From (\ref{fn})-(\ref{boundary_t}), and (\ref{uniformlinfinitybddnoselfgeneratedpotential}), we conclude (\ref{seqbdrynoalpha}).

Now we claim that for $1 \le p <2 $ and for $T_*$ small enough we have the uniformly-in-$m$ bound:
\begin{equation} \label{uniinmp<2bd}
\sup_{0 \le t \le T_*} \| e^{-\varpi \langle v \rangle t } \partial f^m \|_p^p + \int_0^{T_*} |e^{-\varpi \langle v \rangle t } \partial f^m |_{\gamma,p}^p \lesssim_{\Omega, T_* } \| \partial f_0 \|_p^p + P ( \| e^{\theta |v|^2} f_0 \|_\infty ).
\end{equation}
We remark that the sequence (\ref{seqfnoselfgeneratedpotential}) is shown to be a Cauchy sequence in $L^\infty$. Due to the weak lower semi-continuity for $L^p$ in case of $p>1$, once we have (\ref{uniinmp<2bd}), then we pass a limit $\partial f^m \rightharpoonup \partial f $ weakly in $\sup_{t \in [0, T_* ] } \| \cdot \|_p^p $ and $\partial f^m |_{\gamma} \rightharpoonup \partial f|_{\gamma} $ in $\int_0^{T_*} |\cdot |_{\gamma,p}^p$ (up to a subsequence) to conclude that $\partial f $ satisfies the same estimate of (\ref{uniinmp<2bd}). Repeat the same procedure for $[T_*, 2T_*], [2T_*, 3T_*],...,$ to conclude the theorem.

Applying the Green's identity (Lemma \ref{green'sidentity}) to (\ref{seqfw1pp<2}) we have:

\begin{equation} 
\begin{split}
& \| e^{-\varpi \langle v \rangle t } \partial f^{m+1}(t) \|_p^p + p \int_0^t  |  e^{-\varpi \langle v \rangle s }  \partial f^{m+1} |_{\gamma_+,p}^p 
\\ \lesssim &  \|   \partial f_0 \|_p^p + p\int_0^t  | e^{-\varpi \langle v \rangle s } \partial f ^{m+1} | _{\gamma_-, p }^p + p\int_0^t \iint_{\Omega \times \mathbb R^3 } |\mathcal G^m | e^{-p\varpi \langle v \rangle t }|  \partial f^{m+1} | ^{p-1} 
\\  \lesssim & \| \partial f_0 \|_p^p + \int_0^t | e^{-\varpi \langle v \rangle s } \partial f ^{m+1} | _{\gamma_-, p }^p 
\\ & +P(\| e^{\theta |v|^2 } f_0 \|_\infty) \int_0^t \int_\Omega \int_{\mathbb R^3 }  e^{-p\varpi \langle v \rangle s } |\partial f^{m+1}(v) | ^{p-1} (\int_{\mathbb R^3 } \frac{e^{-C_\theta |v-u|^2 }}{|v -u |^{2 - \kappa}  } | \partial f^m (u) | du )dv dx ds.
\end{split} \end{equation}
By Holder's inequality we have
\[ \begin{split}
\int_{\mathbb R^3 } (\frac{e^{-C_\theta |v-u|^2 }}{|v -u |^{2 - \kappa}  } )^{1/p + 1/q }| \partial f^m (u) | du & \le (\int_{\mathbb R^3 } (\frac{e^{-C_\theta |v-u|^2 }}{|v -u |^{2 - \kappa}  } )| \partial f^m (u) |^p du)^{1/p} (\int_{\mathbb R^3 } (\frac{e^{-C_\theta |v-u|^2 }}{|v -u |^{2 - \kappa}  } ) du)^{1/q}
\\ & \lesssim (\int_{\mathbb R^3 } (\frac{e^{-C_\theta |v-u|^2 }}{|v -u |^{2 - \kappa}  } )| \partial f^m (u) |^p du)^{1/p}.
\end{split} \]
And since $\frac{e^{-\varpi \langle v \rangle s }}{e^{-\varpi \langle u \rangle s }} = e^{s \varpi (\langle u \rangle - \langle v \rangle )} \le e^{2 \varpi s \langle u - v \rangle} $, we have
\[ \begin{split}
\int_0^t & \int_\Omega \int_{\mathbb R^3 }  (e^{-\varpi \langle v \rangle s })^p |\partial f^{m+1}(v) | ^{p-1} (\int_{\mathbb R^3 } \frac{e^{-C_\theta |v-u|^2 }}{|v -u |^{2 - \kappa}  } | \partial f^m (u) | du )dv dx ds
\\ & \lesssim \int_0^t \int_\Omega \int_{\mathbb R^3 } (e^{-\varpi \langle v \rangle s })^p |\partial f^{m+1}(v) | ^{p-1}  (\int_{\mathbb R^3 } (\frac{e^{-C_\theta |v-u|^2 }}{|v -u |^{2 - \kappa}  } )| \partial f^m (u) |^p du)^{1/p}dv dx ds
\\ & \lesssim \int_0^t \int_\Omega \int_{\mathbb R^3 } |e^{-\varpi \langle v \rangle s } \partial f^{m+1}(v) | ^p dvdxds + \int_0^t \int_\Omega \int_{\mathbb R^3 } \int_{\mathbb R^3 } (e^{-\varpi \langle v \rangle s })^p(\frac{e^{-C_\theta |v-u|^2 }}{|v -u |^{2 - \kappa}  } )| \partial f^m (u) |^p du dv dx ds
\\ & =  \int_0^t \int_\Omega \int_{\mathbb R^3 }  |e^{-\varpi \langle v \rangle s } \partial f^{m+1}(v) | ^p dvdxds + \int_0^t \int_\Omega \int_{\mathbb R^3 } (\int_{\mathbb R^3 } \frac{(e^{-\varpi \langle v \rangle s })^p}{ (e^{-\varpi \langle u \rangle s })^p} \frac{e^{-C_\theta |v-u|^2 }}{|v -u |^{2 - \kappa}  } ) dv) |e^{-\varpi \langle v \rangle s }  \partial f^m (u) |^p dudxds
\\ & \lesssim  \int_0^t \int_\Omega \int_{\mathbb R^3 }  |e^{-\varpi \langle v \rangle s } \partial f^{m+1}(v) | ^p dvdxds + \int_0^t \int_\Omega \int_{\mathbb R^3 } (\int_{\mathbb R^3 } \frac{e^{s\varpi \langle v - u \rangle-C_\theta |v-u|^2 }}{|v -u |^{2 - \kappa}  } ) dv) e^{-\varpi \langle u \rangle s } | \partial f^m (u) |^p dudxds
\\ & \lesssim \int_0^t \int_\Omega \int_{\mathbb R^3 } |e^{-\varpi \langle v \rangle s } \partial f^{m+1}(v) | ^p dvdxds + \int_0^t \int_\Omega \int_{\mathbb R^3 }   |e^{-\varpi \langle u \rangle s }  \partial f^m (u) |^p dudxds.
\end{split} \]
Thus
\begin{equation} \label{seqfgreen} \begin{split}
& \sup_{0 \le s \le t} \| e^{-\varpi \langle v \rangle s } \partial f^{m+1}(s) \|_p^p + \int_0^t | e^{-\varpi \langle v \rangle s }\partial f^{m+1} |_{\gamma_+,p}^p 
\\ & \lesssim
\| \partial f_0 \|_p^p + \int_0^t |e^{-\varpi \langle v \rangle s } \partial f ^{m+1} | _{\gamma_-, p }^p +P(\| e^{\theta |v|^2 } f_0 \|_\infty) \times \left( \int_0^t \| e^{-\varpi \langle v \rangle s }\partial f^{m+1 } (s) \|_p^p + \int_0^t \| e^{-\varpi \langle v \rangle s } \partial f^{m } (s) \|_p^p\right).
\end{split} \end{equation}
Now we consider the boundary contributions. We use (\ref{seqbdrynoalpha}) to obtain
\begin{equation} \label{gamma-intestimate}
\begin{split}
\int_0^t & \int_{\gamma_- } | e^{-\varpi \langle v \rangle s } \partial f^{m+1}(s) |^p
\\ \lesssim & \sup_{x \in \partial \Omega} \left( \int_{n(x) \cdot v < 0 }  (e^{-\varpi \langle v \rangle s })^p \sqrt{\mu(v)}^p \langle v \rangle ^p( |n \cdot v| + \frac{1}{|n\cdot v | ^{p-1}}   ) dv \right) 
\\ & \times \int_0^t \int_{\partial \Omega} \left[ \int_{n(x) \cdot u > 0 } | e^{-\varpi \langle u \rangle s } \partial f^m(s,x,u) | e^{\varpi \langle v \rangle s } \mu^{1/4} (u) (n\cdot u ) du \right]^p dS_x ds 
\\ & + \sup_{x \in \partial \Omega} \left( \int_{n(x) \cdot v < 0}  (e^{-\varpi \langle v \rangle s })^pe^{-\frac{p \theta}{2} |v|^2} |n(x) \cdot v | ^{1-p } dv  \right) \times t P( \| e^{\theta |v|^2 } f_0 \|_\infty )
\\ \lesssim  & \int_0^t \int_{\partial \Omega} \left[ \int_{n(x) \cdot u > 0 } | e^{-\varpi \langle u \rangle s } \partial f^m(s,x,u) | \mu^{1/8} (u) (n\cdot u ) du \right]^p dS_x ds  + t P( \| e^{\theta |v|^2 } f_0 \|_\infty ).
\end{split} \end{equation}
Now we focus on $\int_0^t \int_{\partial \Omega} \left[ \int_{n(x) \cdot u > 0 } | e^{-\varpi \langle u \rangle s } \partial f^m(s,x,u) | \mu^{1/8} (u) (n\cdot u ) du \right]^p dS_x ds$. 
Recall (\ref{defgammaepsilon}), we split the $\{ u \in \mathbb R^3 : n(x) \cdot u > 0 \}$ as
\begin{equation} \begin{split}
\int_0^t & \int_{\partial \Omega} \left[ \int_{n(x) \cdot u > 0 } |  e^{-\varpi \langle u \rangle s } \partial f^m(s,x,u) | \mu^{1/8} (u) (n\cdot u ) du \right]^p dS_x ds
\\ & \lesssim \int_0^t \int_{\Omega} \left[ \int_{(x,u) \in \gamma_+ \setminus \gamma_+^\epsilon} du \right]^p + \int_0^t \int_{\Omega} \left[ \int_{(x,u) \in \gamma_+^\epsilon} du \right]^p.
\end{split} \end{equation}
By Holder's inequality we have:
\[
 \left[ \int_{(x,u) \in \gamma_+^\epsilon} du \right]^p \le   \left[ \int_{(x,u) \in \gamma_+^\epsilon} \mu^{\frac{p}{8(p-1)}} (n \cdot u ) du \right]^{p-1} \left[ \int_{(x,u) \in \gamma_+^\epsilon} |e^{-\varpi \langle u \rangle s } \partial f^m(s,x,u) | ^p (n \cdot u ) du \right] ,
\]
And the term $  \left[ \int_{(x,u) \in \gamma_+^\epsilon} \mu^{\frac{p}{8(p-1)}} (n \cdot u ) du \right]^{p-1} <\epsilon' \ll 1 $ if $\epsilon$ is small enough.

For the first term (non-grazing part), note that from (\ref{seqfw1pp<2}) we have
\Be \label{tracethmlp}
(\partial_t + v \cdot \nabla_x + E \cdot \nabla_v +\nu^{m-1}_\varpi ) |e^{-\varpi \langle v \rangle t } \partial f^{m}|^p = p|e^{-\varpi \langle v \rangle t } \partial f^{m}|^{p-2}  e^{-\varpi \langle v \rangle t } \partial f^{m} e^{-\varpi \langle v \rangle t } \mathcal G^{m-1}.
\Ee
So we can apply (\ref{traceestimate}) to (\ref{tracethmlp}) to get
\[ \begin{split}
 \int_0^t & \int_{\Omega} \left[ \int_{(x,u) \in \gamma_+ \setminus \gamma_+^\epsilon} du \right]^p
 \\ & \lesssim \| \partial f_0 \|_p^p + \int_0^t \| e^{-\varpi \langle v \rangle s }\partial f^m(s) \|_p^p ds + \int_0^t \iint_{\Omega \times \mathbb R^3 } |\mathcal G^{m-1} |e^{-\varpi \langle v \rangle s } |^p\partial f^m | ^{p-1} 
 \\ & \lesssim \| \partial f_0 \|_p^p + \int_0^t \| \partial f^m(s) \|_p^p ds + P(\| e^{\theta |v|^2 } f_0 \|_\infty) \times \left( \int_0^t \| e^{-\varpi \langle v \rangle s } \partial f^m (s) \|_p^p + \int_0^t \| e^{-\varpi \langle v \rangle s } \partial f^{m-1} (s) \|_p^p \right).
\end{split} \]
Putting together all the estimates (\ref{seqfgreen}) becomes
\[ \begin{split}
 \sup_{0 \le s \le t} & \| e^{-\varpi \langle v \rangle s } \partial f^{m+1}(s) \|_p^p + \int_0^t | e^{-\varpi \langle v \rangle s }\partial f^{m+1} |_{\gamma_+,p}^p 
\\  \lesssim & \| \partial f_0 \|_p^p + \epsilon' \int_0^t |e^{-\varpi \langle v \rangle s } \partial f^m(s) |_{\gamma_+, p }^p ds 
\\ &+ P(\| e^{\theta |v|^2} f_0 \|_\infty ) \times \left(  \int_0^t \| e^{-\varpi \langle v \rangle s }  \partial  f^{m+1} (s) \|_p^p + 2 \int_0^t  \| e^{-\varpi \langle v \rangle s } \partial f^m (s) \|_p^p + \int_0^t \| e^{-\varpi \langle v \rangle s } \partial f^{m-1} (s) \|_p^p \right).
\\ & + tP(\| e^{\theta |v|^2} f_0 \|_\infty ) 
\end{split} \]
Choose $\epsilon \ll 1$ and $0 < T^* \ll 1 $ we have:
\[ \begin{split}
 \sup_{0 \le s \le T^*} & \| e^{-\varpi \langle v \rangle s } \partial f^{m+1}(s) \|_p^p + \int_0^{T^*} | e^{-\varpi \langle v \rangle s }\partial f^{m+1} |_{\gamma_+,p}^p 
\\  \lesssim & \| \partial f_0 \|_p^p  + P(\| e^{\theta |v|^2} f_0 \|_\infty ) 
\\ &+ \frac{1}{8} \max_{i = m,m-1} \left(  \sup_{0 \le t \le T^*} \| e^{-\varpi \langle v \rangle s }  \partial  f^i (s) \|_p^p + \int_0^{T^*} | e^{-\varpi \langle v \rangle s }\partial f^i | _{\gamma_+, p }^p \right).
\end{split} \]

To conclude the proof we use the following fact: Suppose $a_i \ge 0, D \ge 0$ and $A_i = \max \{a_i,a_{i-1},...,a_{i-(k-1)} \}$ for fixed $k \in \mathbb N$. If $a_{m+1} \le \frac{1}{8} A_m +D$, then 
\begin{equation} \label{combfact}
A_m \le \frac{1}{8} A_0 + (\frac{8}{7})^2D,
\end{equation}
 for $\frac{m}{k}  \gg 1$.

Setting $k = 2$ and $a_i = \sup_{0 \le t \le T^*} \| e^{-\varpi \langle v \rangle t} \partial f^i(t) \|_p^p + \int_0^{T^*} | e^{-\varpi \langle v \rangle t } \partial f^i|_{\gamma_+,p}^p$, $D = C \left( \| \partial f_0 \|_p^p + P(\| e^{\theta |v|^2} f_0 \|_\infty) \right)$, we complete the proof of the claim.

\end{proof}
Next, we prove Theorem \ref{weightedw1ppge2theorem}
%

\begin{proof}[Proof of Theorem \ref{weightedw1ppge2theorem}]

By (\ref{seqf3}) and direct compuation, we have
 \begin{equation} \label{pge2seqf} \begin{split}
 \bigg \{ & \partial_t + v\cdot \nabla_x  + E \cdot \nabla_v 
 \\ & + \nu(\sqrt \mu f^m )+ \frac{v}{2} \cdot E  + \varpi \langle v \rangle  + t \varpi \frac{v}{\langle v \rangle }\cdot E    - \beta \alpha^{-1} ( \partial_t \alpha + v \cdot \nabla_x \alpha + E \cdot \nabla_v \alpha ) \bigg\} ( e^{-\varpi \langle v \rangle t } \alpha^\beta \partial f^{m+1})
\\ = &  e^{-\varpi \langle v \rangle t }  \alpha^\beta ( \partial_t + v \cdot \nabla_x + E \cdot \nabla_v  + \frac{v}{2} \cdot E + \nu(\sqrt \mu f^m )) \partial f^{m+1}
=  e^{-\varpi \langle v \rangle t }  \alpha^\beta \mathcal G^m .
\end{split} \end{equation}
And since $\beta \alpha^{-1} ( \partial_t \alpha + v \cdot \nabla_x \alpha + E \cdot \nabla_v \alpha )  \lesssim \frac{(\| E \|_\infty + \| \nabla E \|_\infty )}{C_E}$. Thus if we choose $\varpi \gtrsim  \frac{(\| E \|_\infty + \| \nabla E \|_\infty )}{C_E}$ large enough and $T \le \frac{1}{4(\| E \|_\infty + 1 ) } $, we have
\[
\nu(\sqrt \mu f^m )+ \frac{v}{2} \cdot E  + \varpi \langle v \rangle  + t \varpi \frac{v}{\langle v \rangle }\cdot E    - \beta \alpha^{-1} ( \partial_t \alpha + v \cdot \nabla_x \alpha + E \cdot \nabla_v \alpha )  \ge \frac{\varpi }{2} \langle v \rangle.
\]
Now fix $p \ge 2$, $\frac{p-2}{p} < \beta < \frac{p-1}{p}$.
We claim that there exists $0 < T_* \ll 1 $ such that we have the following estimates uniformly-in-$m$,
\Be \label{uniformbddweightedw1ppge2} \begin{split}
\sup_{0 \le t \le T_*} \| e^{-\varpi \langle v \rangle t }  \alpha ^\beta \partial f^m(t) \|_p^p + \int_0^{T_*} | e^{-\varpi \langle v \rangle s } \alpha^\beta \partial f^m |_{\gamma,p}^p \lesssim_{\Omega, T_*} P(\| e^{\theta |v|^2} f_0 \|_\infty ) + \| \alpha^\beta \partial f_0 \|_p^p.
\end{split} \Ee
Once we have (\ref{uniformbddweightedw1ppge2}) then we pass to limit, $e^{-\varpi \langle v \rangle t }  \alpha ^\beta \partial f^m(t) \rightharpoonup e^{-\varpi \langle v \rangle t }  \alpha ^\beta \partial f(t)$ weakly with norms $\sup_{t \in[0,T^*] } \| \cdot \|_p^p$ and $e^{-\varpi \langle v \rangle t }  \alpha ^\beta \partial f^m|_\gamma \rightharpoonup e^{-\varpi \langle v \rangle t }  \alpha ^\beta \partial f|_\gamma$ in $\int_0^{T_*} | \cdot |_{\gamma,p}^p$ and $e^{-\varpi \langle v \rangle t }  \alpha ^\beta \partial f $ satisfies (\ref{uniformbddweightedw1ppge2}). Repeat the same procedure for $[T_*,2T_*], [2T_*,3T_*], ...,$ up to the local existence time interval $[0,T]$ in Lemma \ref{localexslemma} to conclude Theorem \ref{weightedw1ppge2theorem}.

We prove the claim by induction. Apply Proposition \ref{inflowexistence1} to (\ref{pge2seqf}), $\partial f^1$ exists. Because of our choice of $\partial f^0$, by Proposition \ref{inflowproposition2} the estimate in the claim hold for $m=1$. Now assume that $\partial f^i$ exists and the estimate is valid for all $i=1,2,...,m$. From (\ref{uniformlinfinitybddnoselfgeneratedpotential}) we have the bound 
\[ \begin{split}
& e^{-\varpi \langle v \rangle t  } \alpha^\beta |\mathcal G^m| \lesssim 
 e^{-\varpi \langle v \rangle t  } \alpha^\beta \left\{  | \nabla_x f^{m+1}|  + P(\| e^{\theta |v|^2} f_0 \|_\infty ) \left[ e^{-\frac{\theta}{2} |v|^2 } + \int_{\mathbb R^3}  \frac{e^{-C_\theta |v-u|^2 }}{|v -u |^{2 - \kappa}  } | \partial f^m (u) | du \right] \right\}.
\end{split} \]
Apply the Green's identity to (\ref{pge2seqf}) we have:
\begin{equation} \label{seqfpge2}
\begin{split}
& \| e^{-\varpi \langle v \rangle t } \alpha^\beta \partial f^{m+1}(t) \|_p^p + p \int_0^t  |  e^{-\varpi \langle v \rangle s } \alpha^\beta   \partial f^{m+1} |_{\gamma_+,p}^p 
\\ & + p \int_0^t  \| \langle v \rangle ^{1/p} e^{-\varpi \langle v \rangle s } \alpha^\beta  \partial f^{m+1} \|_p^p
\\ \lesssim &  \|  \alpha^\beta \partial f_0 \|_p^p + p\int_0^t  | e^{-\varpi \langle v \rangle s } \alpha^\beta  \partial f ^{m+1} | _{\gamma_-, p }^p + p\int_0^t \iint_{\Omega \times \mathbb R^3 } [e^{-\varpi \langle v \rangle s } \alpha^\beta ]^p  |\mathcal G^m | | \partial f^{m+1} | ^{p-1} 
\\  \lesssim &  \|  \alpha^\beta \partial f_0 \|_p^p + \int_0^t  | e^{-\varpi \langle v \rangle s } \alpha^\beta  \partial f ^{m+1} | _{\gamma_-, p }^p 
\\ & + t P(\| e^{\theta |v|^2} f_0 \|_\infty ) + t \sup_{0 \le s \le t} \|  e^{-\varpi \langle v \rangle s} \alpha^\beta   \partial f^{m+1}(s) \|_p^p 
\\ & +P(\| e^{\theta |v|^2} f_0 \|_\infty )   \int_0^t \iint_{\Omega \times \mathbb R^3 } [e^{-\varpi \langle v \rangle s } \alpha^\beta ]^p | \partial f^{m+1} | ^{p-1}  \times \int_{\mathbb R^3}  \frac{e^{-C_\theta |v-u|^2 }}{|v -u |^{2 - \kappa}  } | \partial f^m (u) | du.
\end{split} \end{equation}
\textit{Step 1. Estimate for the nonlocal term:} The key estimate is the following: For $0 < \beta < \frac{p-1}{p}$, $0 < \theta < \frac{1}{4}$, and some $C_{\varpi, \beta, p} >0$,
\begin{equation} \label{uv}
\sup_{x \in \Omega} \int_{\mathbb R^3} \frac{e^{C_\theta |v -u |^2}}{|v - u |^{2 - \kappa}} \frac{[e^{-\frac{\varpi }{\beta} \langle v \rangle s} \alpha(s,x,v) ] ^{\frac{\beta p}{p-1}}}{[e^{-\frac{\varpi }{\beta} \langle u \rangle s } \alpha(s,x,u) ] ^{\frac{\beta p}{p-1}}} du \lesssim_{\Omega, \theta} 
 e^{C_{\varpi, \beta, p}  s^2}.
\end{equation} 


Recall the definition of $\alpha$ in (\ref{alphadef}), we only have to show the claim for $x \in \Omega^\delta$ as $\alpha$ is constant for $x \in \Omega \setminus \Omega^\delta$.
We decompose $u_n = u \cdot n(x) $ and $u_\tau = u - u_n n(x) $. Note that
\begin{equation}\label{expcombine}
\frac{[e^{-\frac{\varpi }{\beta} \langle v \rangle s }  ] ^{\frac{\beta p}{p-1}}}{[e^{-\frac{\varpi }{\beta} \langle u \rangle s } ] ^{\frac{\beta p}{p-1}}}  \lesssim e^{C_{\varpi } s^2} \times e^{ \frac{ C_\theta |v - u | ^2}{2}},
\end{equation}
for some $C_{\varpi}  > 0 $.
And since $\alpha \le C$ is bounded, 
therefore for $0 \le \kappa \le 1$, we have the bound
\[ \begin{split}
\sup_{x \in \Omega} & \int_{\mathbb R^3} \frac{e^{C_\theta |v -u |^2}}{|v - u |^{2 - \kappa}} \frac{[e^{-\frac{\varpi }{\beta} \langle v \rangle s } \alpha(s,x,v) ] ^{\frac{\beta p}{p-1}}}{[e^{-\frac{\varpi }{\beta} \langle u \rangle s } \alpha(s,x,u) ] ^{\frac{\beta p}{p-1}}} du
\\ & \lesssim e^{C_{\varpi, p, E } s^2} \int_{\mathbb R^3 } | v -u | ^{-2 + \kappa} e^{- C_\theta |v - u | ^2 } e^{ \frac{ C_\theta |v - u | ^2}{2}} \frac{1}{ | u \cdot \nabla \xi(x) |^{\frac{ \beta p }{p-1}} } du 
\\ & \lesssim  e^{C_{\varpi, p, E } s^2} \int_{\mathbb R^3 } | v -u | ^{-2 + \kappa} e^{ -\frac{ C_\theta |v - u | ^2}{2}} |u_n|^{\frac{ - \beta p }{p-1}}  du 
\\ & = e^{C_{\varpi, p, E } s^2}\int_{\mathbb R^2} d u_{\tau}  \int_{ \mathbb R }  | v -u | ^{-2 + \kappa} e^{ -\frac{ C_\theta |v - u | ^2}{2}} |u_n|^{\frac{ - \beta p }{p-1}}  du_n.
\end{split} \]
Now if $0 < \kappa \le 1$, we have
\[ \begin{split}
\int_{\mathbb R^2} & d u_{\tau}  \int_{ \mathbb R }  | v -u | ^{-2 + \kappa} e^{ -\frac{ C_\theta |v - u | ^2}{2}} |u_n|^{\frac{ - \beta p }{p-1}}  du_n
\\ \le & \int_{\mathbb R^2} |v_\tau - u_\tau|^{-2 + \kappa} e^{ -\frac{ C_\theta |v_\tau - u_\tau | ^2}{2}} du_\tau \int_{\mathbb R} e^{ -\frac{ C_\theta |v_n - u_n | ^2}{2}} |u_n|^{\frac{ - \beta p }{p-1}}  du_n \lesssim 1,
\end{split} \]
since we can split the last integration as $\int_{\mathbb R} e^{ -\frac{ C_\theta |v_n - u_n | ^2}{2}} |u_n|^{\frac{ - \beta p }{p-1}}  du_n = \int_{ |u_n| \le |v_n - u_n| } + \int_{ |u_n| > |v_n - u_n|}$ and both terms can be bounded together by:
\[
\int_{\mathbb R} \left( e^{ -\frac{ C_\theta | u_n | ^2}{2}} |u_n|^{\frac{ - \beta p }{p-1}} + e^{ -\frac{ C_\theta | u_n | ^2}{2}} |v_n - u_n|^{\frac{ - \beta p }{p-1}} \right) du_n.
\]

If $\kappa = 0$, first let $u' = v - u$ then using the cylindrical coordinate $u'_\tau = (r,\theta), u'_n = z$ we can compute the integration:
\[ \begin{split}
\int_{\mathbb R^2} & d u_{\tau}  \int_{ \mathbb R }  | v -u | ^{-2 } e^{ -\frac{ C_\theta |v - u | ^2}{2}} |u_n|^{\frac{ - \beta p }{p-1}}  du_n
\\ & = \int_{\mathbb R^2}  d u'_{\tau}  \int_{ \mathbb R }  | u' | ^{-2 } e^{ -\frac{ C_\theta | u' | ^2}{2}} |u'_n - v_n |^{\frac{ - \beta p }{p-1}}  du'_n
\\ & = \int_{-\infty}^\infty \int_0^\infty \frac{r}{r^2 + z^2} e^{-\frac{C_\theta (r^2 + z^2)}{2}} |z - c |^a drdz,
\end{split} \]
where we let $a = \frac{-\beta p }{p-1} > -1 $ and $c = v_n$. WLOG we assume $c \ge 0$.

Separating the integration into regions $D =\{(r,z) \in \mathbb R^2: 0\le r<1 , |z| <1 \}$ and $\mathbb R^2 \setminus D$ we have:
\[ \begin{split}
 \int_{-\infty}^\infty & \int_0^\infty \frac{r}{r^2 + z^2} e^{-\frac{C_\theta (r^2 + z^2)}{2}} |z - c |^a drdz 
 \\ = & \iint_M  \frac{r}{r^2 + z^2} e^{-\frac{C_\theta (r^2 + z^2)}{2}} |z - c |^a drdz + \iint_{\mathbb R^2 \setminus D}  \frac{r}{r^2 + z^2} e^{-\frac{C_\theta (r^2 + z^2)}{2}} |z - c |^a drdz
 \\ \le & \int_{-1}^1 \int_0^1  \frac{r}{r^2 + z^2} |z - c |^a drdz +  \int_{-\infty}^\infty  \int_0^\infty r e^{-\frac{C_\theta (r^2 + z^2)}{2}} |z - c |^a drdz
 \\  = & \frac{1}{2} \int_{-1}^1  \log(\frac{1}{z^2} +1 ) |z - c|^a dz + \frac{1}{C_\theta} \int_{-\infty}^\infty e^{-\frac{C_\theta z^2}{2}} |z - c |^a  dz.
\end{split} \]

For the second integration we can split as $\int_{|z - c | < |z|} + \int_{|z - c | \ge |z|}$, then both terms can be bounded by
\[
 \int_{-\infty}^\infty \left( e^{-\frac{C_\theta |z-c| ^2}{2}} |z - c |^a + e^{-\frac{C_\theta z^2}{2}} |z  |^a \right) dz  \lesssim 1.
 \]
For the first integration, since $\log(z^2 + z ^4) < 1$ for $|z| <1$, we have $\log(\frac{1}{z^2} +1 ) < 2 \log(\frac{1}{z^2} ) +1$. So we only have to show
\[
\int_{-1}^1 2\log(\frac{1}{z^2} ) |z - c |^a dz = -4 \int_{-1}^1 \log(|z|)|z - c |^a dz \lesssim 1.
\]
Split the integral into $\int_{|z - c | < |z|} + \int_{|z - c | \ge |z|}$, since we assume $c \ge 0$, we have
\[
- \int_{-1}^1 \log(|z|)|z - c |^a dz \le - 2\int_{0}^1 \log(z)z ^a dz + \int_0^1 |\log(|z-c|)||z-c|^a dz.
\]
Finally since $\int_{0}^1 \log(z)z ^a dz = \frac{-1}{(a + 1) ^2 } $ for $a > -1$, and since $\log(z) z^a < M$ is bounded for $z > 1$. We therefore have for all $c \in \mathbb R$,
\[
- 2\int_{0}^1 \log(z)z ^a dz + \int_0^1 |\log(|z-c|)||z-c|^a dz \le 3 \frac{1}{(a + 1) ^2 }  + M,
\]
and this proves
\[
\int_{\mathbb R^2}  d u_{\tau}  \int_{ \mathbb R }  | v -u | ^{-2 } e^{ -\frac{ C_\theta |v - u | ^2}{2}} |u_n|^{\frac{ - \beta p }{p-1}}  du_n \lesssim 1,
\]
thus we conclude the claim.
%

Therefore
\[ \begin{split}
 & e^{-\varpi \langle v \rangle s }  \alpha^\beta \int_{\mathbb R^3}  \frac{e^{-C_\theta |v-u|^2 }}{|v -u |^{2 - \kappa}  } | \partial f^m (u) | du
\\ = & \int_{\mathbb R^3} \frac{e^{C_\theta |v -u |^2}}{|v - u |^{2 - \kappa}} \frac{[e^{-\varpi \langle v \rangle s } \alpha(s,x,v) ] ^\beta }{[e^{ -\varpi \langle u \rangle s } \alpha(s,x,u) ] ^\beta}[ e^{-\varpi  \langle u \rangle s } \alpha(s,x,u) ] ^\beta | \partial f^m(u)| du
\\ \lesssim & \left( \int_{\mathbb R^3} \frac{e^{C_\theta |v -u |^2}}{|v - u |^{2 - \kappa}} \frac{[e^{-\frac{\varpi }{\beta} \langle v \rangle s } \alpha(s,x,v) ] ^{\frac{\beta p}{p-1}}}{[e^{-\frac{\varpi }{\beta} \langle u \rangle s } \alpha(s,x,u) ]^{\frac{\beta p}{p-1}}} du \right)^{1/q} 
 \times \left( \int_{\mathbb R^3} \frac{e^{C_\theta |v -u |^2}}{|v - u |^{2 - \kappa}} |e^{-\varpi  \langle u \rangle s } \alpha(s,x,u)^\beta  \partial f^m(u)| ^p du \right)^{1/p}
\\  \lesssim & e^{Cs^2} \left( \int_{\mathbb R^3} \frac{e^{C_\theta |v -u |^2}}{|v - u |^{2 - \kappa}} |e^{-\varpi  \langle u \rangle s } \alpha(s,x,u)^\beta  \partial f^m(u)| ^p du \right)^{1/p}.
\end{split} \]
Finally we use the Young's inequality to bound the last term (nonlocal term) of (\ref{seqfpge2}) by
\begin{equation} \label{nonlocalpge2} \begin{split}
Cte^{Ct^2} & P(\| e^{\theta |v|^2} f_0 \|_\infty) \sup_{0 \le s \le t} \iint_{\Omega \times \mathbb R^3} |e^{-\varpi  \langle v \rangle s } \alpha^\beta  \partial f^{m} | ^p 
\\ & + \delta  P(\| e^{\theta |v|^2} f_0 \|_\infty)  \int_0^t \iint_{\Omega \times \mathbb R^3}  |e^{-\varpi  \langle v \rangle s } \alpha^\beta  \partial f^{m+1} | ^p.
\end{split} \end{equation}

\textit{Step 2. Boundary Estimate:}  At the boundary, by (\ref{seqbdrynoalpha}), the contribution of $\gamma_-$ is:
\begin{equation} \label{gamma-pge2} \begin{split}
\int_0^t & \int_{\gamma_- } |e^{ - \varpi \langle v \rangle s } \alpha^\beta \partial f^{m+1}(s) |^p
\\ \lesssim & \int_0^t  \int_{\gamma_-} [e^{- \varpi \langle v \rangle s } \alpha^\beta]^p \sqrt{\mu(v)}^p \langle v \rangle ^p( |n \cdot v| + \frac{1}{|n\cdot v | ^{p-1}}   ) dv  
\\ & \times \left[ \int_{n(x) \cdot u > 0 } | \partial f^m(s,x,u) | \mu^{1/4} (u) (n\cdot u ) du \right]^p dS_x ds 
\\ & + P(\|e^{\theta |v|^2 } f_0 \|_\infty) \int_0^t \int_{\gamma_-} \frac{  [e^{- \varpi \langle v \rangle s } \alpha^\beta] ^p e^{-\frac{\theta p}{2}|v|^2 }  }{|n(x) \cdot v |^p }  d\gamma ds.
\end{split} \end{equation}
Since $ \alpha(s,x,v) \le |\nabla \xi(x) \cdot v | $ for $x \in \partial \Omega$, the last term is bounded by:
\[
P(\|e^{\theta |v|^2 } f_0 \|_\infty) \int_0^t \int_{\partial \Omega} \int_{\mathbb R^3}   e^{-\frac{\theta p}{2} |v| ^2} |n(x) \cdot v |^{\beta p -p +1}   d\gamma ds \lesssim_{\Omega, p ,\xi} t P(\|e^{\theta |v|^2 } f_0 \|_\infty),
\]
as long as $\beta p -p +1 > -1$, i.e. $\beta > \frac{p-2}{p}$.

For the first term in (\ref{gamma-pge2}) we split as:
\[
\left[ \int_{n(x) \cdot u > 0 } \right]^p \lesssim_p \left[ \int_{(x,u) \in \gamma_+^\epsilon } \right]^p + \left[ \int_{(x,u) \in \gamma_+ \setminus \gamma_+^\epsilon } \right]^p.
\]
By Holder's inequality in $u$, the $\gamma_+^\epsilon$ contribution (grazing part) is bounded as: 
\begin{equation} \begin{split}
\int_0^t   \int_{\gamma_-} & [e^{- \varpi \langle v \rangle s } \alpha^\beta]^p \sqrt{\mu(v)}^p \langle v \rangle ^p( |n \cdot v| + \frac{1}{|n\cdot v | ^{p-1}}   ) dv  
\\ & \times \left[ \int_{(x,u) \in \gamma_+^\epsilon }e^{- \varpi \langle u \rangle s } \alpha^\beta (s,x,u) | \partial f^m(s,x,u) | \frac{\mu^{1/4} (u) (n\cdot u )}{e^{- \varpi \langle u \rangle s } \alpha^\beta (s,x,u)} du \right]^p dS_x ds
\\ \lesssim \int_0^t &  \int_{\gamma_-} [e^{-\varpi \langle v \rangle s } \alpha^\beta]^p \sqrt{\mu(v)}^p \langle v \rangle ^p( |n \cdot v| + \frac{1}{|n\cdot v | ^{p-1}}   ) dv  
\\ & \times \left[ \int_{(x,u) \in \gamma_+^\epsilon } [ e^{- \varpi \langle u \rangle s } \alpha^\beta (s,x,u)]^p | \partial f^m(s,x,u) | ^p  (n\cdot u ) du \right]
\\ & \times  \left[ \int_{(x,u) \in \gamma_+^\epsilon } [ e^{- \varpi \langle u \rangle s } \alpha^\beta (s,x,u)]^{-q} \mu^{q/4}  (n\cdot u ) du \right]^{p/q} dS_x ds.
\end{split} \end{equation}
Again, since $\alpha(t,x,v) \le |\nabla \xi(x) \cdot v | $ for $x \in \partial \Omega$, we have
\[ \begin{split}
\int_0^t   \int_{\gamma_-}  & [e^{- \varpi \langle v \rangle s} \alpha^\beta]^p \sqrt{\mu(v)}^p \langle v \rangle ^p( |n \cdot v| + \frac{1}{|n\cdot v | ^{p-1}}   ) dv
\\ \lesssim & \int_0^t   \int_{\gamma_-} \mu^{p/2} \langle v \rangle^p  (| n \cdot v | ^{ \beta p + 1 } + |n \cdot v | ^{\beta p - (p-1) }) dv < \infty,
\end{split} \]
if $\beta p - (p-1) < -1$, i.e. $\beta > \frac{p-2}{p}$.

Also, with $\frac{p-1}{p} = \frac{1}{q}$. If $1 - \beta q >0$, i.e. $\beta < \frac{1}{q} = \frac{p-1}{p}$, 
\[ \begin{split}
\int_{(x,u) \in \gamma_+^\epsilon } &  [ e^{- \varpi \langle u \rangle s } \alpha^\beta (s,x,u)]^{-q} \mu^{q/4}  (n\cdot u ) du
\\ \lesssim & \int_{\gamma_+^\epsilon} |n \cdot u |^{- \beta q + 1 } e^{q \left( - \frac{ |u|^2 }{8} + s \varpi \langle  u \rangle  \right)  } du
\\ \lesssim & \int_{n \cdot u < \epsilon } \epsilon^{- \beta q + 1 } e^{q \left( - \frac{ |u|^2 }{8} +  s \varpi \langle  u \rangle  \right)  } du + \int_{|u | > \frac{1}{\epsilon} } | u |^{- \beta q + 1 } e^{q \left( - \frac{ |u|^2 }{8} + s \varpi \langle  u \rangle  \right)  } du
\\ \lesssim &  C_{\Omega, p , s }  \epsilon ^{1 -\beta q },
\end{split} \]
when $\epsilon \ll 1$.

Thus we have the bound for the grazing part:
\begin{equation} \label{grazingpge2} \begin{split}
\int_0^t   \int_{\gamma_-} & [e^{-\varpi \langle v \rangle s } \alpha^\beta]^p \sqrt{\mu(v)}^p \langle v \rangle ^p( |n \cdot v| + \frac{1}{|n\cdot v | ^{p-1}}   ) dv  
\\ & \times \left[ \int_{(x,u) \in \gamma_+^\epsilon }e^{-\varpi \langle u \rangle s } \alpha^\beta (s,x,u) | \partial f^m(s,x,u) | \frac{\mu^{1/4} (u) (n\cdot u )}{e^{-\varpi \langle u \rangle s } \alpha^\beta (s,x,u)} du \right]^p dS_x ds
\\ \lesssim & C \epsilon^{1 - \beta q } \int_0^t | e^{-\varpi \langle v \rangle s } \alpha^\beta  \partial f^m | ^p  _{\gamma_+, p } ds.
\end{split} \end{equation}
Therefore the contribution for the grazing part could be absorbed by the left hand side of the inequality if $\epsilon$ is small enough.

On the other hand, for the non-grazing contribution $\gamma_+ \setminus \gamma_+^\epsilon$, by similar estimate we get:
\[ \begin{split}
\int_0^t   \int_{\gamma_-} & [e^{-\varpi \langle v \rangle s } \alpha^\beta]^p \sqrt{\mu(v)}^p \langle v \rangle ^p( |n \cdot v| + \frac{1}{|n\cdot v | ^{p-1}}   ) dv  
\\ & \times \left[ \int_{(x,u) \in \gamma_+ \setminus \gamma_+^\epsilon }e^{-\varpi \langle v \rangle s } \alpha^\beta (s,x,u) | \partial f^m(s,x,u) | \frac{\mu^{1/4} (u) (n\cdot u )}{e^{-\varpi \langle v \rangle s } \alpha^\beta (s,x,u)} du \right]^p dS_x ds
\\ \lesssim & C_{\Omega,p, s} \int_0^t \int_{\gamma_+ \setminus \gamma_+^\epsilon}  | e^{-\varpi \langle v \rangle s } \alpha^\beta  \partial f^m | ^p  d \gamma ds,
\end{split} \]
where we used
\[
\int_{\gamma_+ }  [ e^{-\varpi \langle v \rangle s } \alpha^\beta (s,x,u)]^{-q} \mu^{q/4}  (n\cdot u ) du < C_{\Omega, p ,s } < \infty.
\]
Now we can apply the trace theorem so that the non-grazing part is further bounded by
\begin{equation} \label{nongrazingpge2} \begin{split}
\int_0^t &  \int_{\gamma_+ \setminus \gamma_+^\epsilon}  | e^{-\varpi \langle v \rangle s } \alpha^\beta  \partial f^m | ^p  d \gamma ds 
\\ \lesssim_\epsilon & \| \alpha^\beta(0) \partial f_0 \|_p^p + \int_0^t \| e^{-\varpi \langle v \rangle s } \alpha^\beta \partial f^m \|_p^p
+ \int_0^t \iint_{\Omega \times \mathbb R^3} |\mathcal G^{m-1}| [ e^{-\varpi \langle v \rangle s } \alpha^\beta ]^p |\partial f^m|^{p-1}
\\ \lesssim &  \| \alpha^\beta(0) \partial f_0 \|_p^p + \int_0^t \| e^{-\varpi \langle v \rangle s } \alpha^\beta \partial f^m \|_p^p
\\ & + t P(\| e^{\theta |v|^2} f_0 \|_\infty ) + t \sup_{0 \le s \le t} \|  e^{-\varpi \langle v \rangle s } \alpha^\beta   \partial f^{m}(s) \|_p^p 
\\ & +  Cte^{Ct^2}  P(\| e^{\theta |v|^2} f_0 \|_\infty) \sup_{0 \le s \le t} \iint_{\Omega \times \mathbb R^3} |e^{-\varpi \langle v \rangle s } \alpha^\beta  \partial f^{m-1} | ^p 
\\ & + \delta  P(\| e^{\theta |v|^2} f_0 \|_\infty)  \int_0^t \iint_{\Omega \times \mathbb R^3} \langle v \rangle |e^{-\varpi \langle v \rangle s } \alpha^\beta  \partial f^{m} | ^p.
\end{split} \end{equation}

Finally, collecting all the terms (\ref{seqfpge2}), (\ref{nonlocalpge2}), (\ref{grazingpge2}), (\ref{nongrazingpge2}) we have:
\[ \begin{split} 
& \sup_{0 \le t \le T} \| e^{-\varpi \langle v \rangle s } \alpha^\beta \partial f^{m+1}(t) \|_p^p +  \int_0^T  |  e^{-\varpi \langle v \rangle s } \alpha^\beta   \partial f^{m+1} |_{\gamma_+,p}^p 
+  \int_0^T \| \langle v \rangle ^{1/p} e^{-\varpi \langle v \rangle s } \alpha^\beta  \partial f^{m+1} \|_p^p
\\ \le & C_{T,\Omega, p , \epsilon} \left( \| \alpha^\beta \partial f_0 \|_p^p + P( \| e^{\theta |v|^2} f_0 \|_\infty )  \right)
 + ( C_{T,\Omega, p} \epsilon + C_{T,\Omega, p , \epsilon} \delta + C_{T,\Omega, p , \epsilon , \delta} Te^{C T^2 } ) P( \| e^{\theta |v|^2} f_0 \|_\infty )  
\\ & \times \max_{i = m,m-1} \bigg\{ \sup_{0 \le t \le T} \|e^{-\varpi \langle v \rangle s } \alpha^\beta  \partial f^i(t) \|_p^p + \int_0^T | e^{-\varpi \langle v \rangle s } \alpha^\beta \partial f^i |_{\gamma_+,p}^p  
 + \int_0^T  \| \langle v \rangle^{1/p} e^{-\varpi \langle v \rangle s } \alpha^\beta  \partial f^i \|_p^p\bigg\}.
\end{split} \]
Therefore we can first choose $\epsilon$ small enough, then choose $\delta$ small enough correspondingly, and finally let $T$ be small enough correspondingly, we have:
\[ \begin{split} 
 \sup_{0 \le t \le T}  &\| e^{-\varpi \langle v \rangle t } \alpha^\beta \partial f^{m+1}(t) \|_p^p +  \int_0^T  |  e^{-\varpi \langle v \rangle s } \alpha^\beta   \partial f^{m+1} |_{\gamma_+,p}^p 
 +  \int_0^T \| \langle v \rangle ^{1/p} e^{-\varpi \langle v \rangle s } \alpha^\beta  \partial f^{m+1} \|_p^p
\\ \le & C_{T,\Omega, p , \epsilon} \left( \| \alpha^\beta \partial f_0 \|_p^p + P( \| e^{\theta |v|^2} f_0 \|_\infty )  \right)
\\ &+ \frac{1}{8}  \max_{i = m,m-1} \bigg\{ \sup_{0 \le t \le T} \|e^{-\varpi \langle v \rangle t } \alpha^\beta  \partial f^i(t) \|_p^p + \int_0^T | e^{-\varpi \langle v \rangle s } \alpha^\beta \partial f^i |_{\gamma_+,p}^p  
 + \int_0^T  \| \langle v \rangle ^{1/p} e^{-\varpi \langle v \rangle s } \alpha^\beta  \partial f^i \|_p^p\bigg\}.
\end{split} \]
Set
\[ \begin{split}
a_i   =& \sup_{0 \le t \le T} \|e^{-\varpi \langle v \rangle t } \alpha^\beta  \partial f^i(t) \|_p^p + \int_0^T | e^{-\varpi \langle v \rangle s } \alpha^\beta \partial f^i |_{\gamma_+,p}^p  
 + \int_0^T  \| \langle v \rangle ^{1/p} e^{-\varpi \langle v \rangle s } \alpha^\beta  \partial f^i \|_p^p\bigg\}
\\ D  = & C_{T,\Omega, p , \epsilon} \left( \| \alpha^\beta \partial f_0 \|_p^p + P( \| e^{\theta |v|^2} f_0 \|_\infty )  \right),
\end{split}, \] 
from \eqref{combfact} we complete the proof.

\end{proof}

\section{Weighted $C^1$ estimate}

In this chapter we prove some key lemmas which will be used in the proof of Theorem \ref{C1linearthm} and Theorem \ref{WlinftyVPBthm}, and then we prove Theorem \ref{C1linearthm}.

\begin{lemma} \label{int1overalphabetainu}
Suppose $E$ satisfies \eqref{signEonbdry}, then
for any y $\in \bar \Omega$, $1 < \beta < 3$, $0 < \kappa \le 1$, and $\theta > 0$ we have
\Be \label{kernelbddfor1overalpha}
\int_{\mathbb R^3} \frac{ e^{-\theta |v- u|^2}}{ |v - u | ^{2 - \kappa} [\alpha(s, y, u ) ]^\beta } du \le C \left( \frac{ 1}{( |v|^2 \xi(y) + c (y) )^{\frac{\beta - 1}{2} } } +1 \right) , 
\Ee
where $c(y) =  \xi(y) ^2 - C_E  \xi (y)$.
\end{lemma}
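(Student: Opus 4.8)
The plan is to reduce the bound to a one‑dimensional integral in the normal variable $u_n := u\cdot n(y)$ by first exploiting the explicit lower bound for $\alpha$ coming from its definition \eqref{alphadef}–\eqref{alphatilde}. Near the boundary, $\alpha(s,y,u)$ is comparable to $\beta(s,y,u)$, and from the formula for $\beta^2$ one reads off
\[
[\alpha(s,y,u)]^2 \;\gtrsim\; |u\cdot\nabla\xi(y)|^2 + \xi(y)^2 - 2(u\cdot\nabla^2\xi(y)\cdot u)\xi(y) - 2(E(s,\overline y)\cdot\nabla\xi(\overline y))\xi(y).
\]
Using the strict convexity $u\cdot\nabla^2\xi(y)\cdot u \ge C_\xi|u|^2$ together with $\xi(y)\le 0$ inside $\Omega$, the term $-2(u\cdot\nabla^2\xi\cdot u)\xi(y)$ is $\gtrsim |u|^2|\xi(y)|$, and under the sign condition \eqref{signEonbdry} the last term $-2(E(s,\overline y)\cdot\nabla\xi(\overline y))\xi(y)\gtrsim C_E|\xi(y)|$. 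Hence
\[
[\alpha(s,y,u)]^2 \;\gtrsim\; u_n^2 + c_1\big(|u|^2|\xi(y)| + \xi(y)^2 + C_E|\xi(y)|\big)\;\gtrsim\; u_n^2 + |v|^2|\xi(y)| + c(y)
\]
on the region where $|u|\sim|v|$ (which is where the Gaussian $e^{-\theta|v-u|^2}$ concentrates); for $x$ away from the boundary $\alpha$ is a positive constant and the estimate is trivial, giving the ``$+1$''. So it suffices to bound
\[
\int_{\mathbb R^3}\frac{e^{-\theta|v-u|^2}}{|v-u|^{2-\kappa}\,\big(u_n^2 + A\big)^{\beta/2}}\,du,\qquad A := c_2\big(|v|^2|\xi(y)| + c(y)\big)\ge 0,
\]
and show it is $\lesssim A^{-(\beta-1)/2}$ (the cutoff by the Gaussian handles the change $|u|\to|v|$ up to harmless constants — one splits $|u|\le 2|v|$ and its complement, the latter being exponentially small).

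Next I would split $du = du_\tau\,du_n$ into the two tangential directions and the normal one. Integrating the kernel in $u_\tau$ first: for $0<\kappa\le1$ one uses $|v-u|^{-2+\kappa}e^{-\theta|v-u|^2}\le |v_\tau-u_\tau|^{-2+\kappa}e^{-\theta|v_\tau-u_\tau|^2}$ (which is integrable in $\mathbb R^2$ since $-2+\kappa>-2$) times $e^{-\theta|v_n-u_n|^2}$, which leaves
\[
\int_{\mathbb R}\frac{e^{-\theta|v_n-u_n|^2}}{(u_n^2+A)^{\beta/2}}\,du_n.
\]
(The case $\kappa=0$, which the lemma excludes, would need the extra logarithmic argument used in the proof of \eqref{uv}; since we assume $\kappa>0$ this is unnecessary.) Then I would estimate this last scalar integral by a change of variables $u_n = \sqrt{A}\,w$:
\[
\int_{\mathbb R}\frac{e^{-\theta|v_n-u_n|^2}}{(u_n^2+A)^{\beta/2}}\,du_n
\;\le\; \int_{\mathbb R}\frac{du_n}{(u_n^2+A)^{\beta/2}}
\;=\; A^{\frac{1-\beta}{2}}\int_{\mathbb R}\frac{dw}{(w^2+1)^{\beta/2}},
\]
and the remaining integral $\int_{\mathbb R}(w^2+1)^{-\beta/2}\,dw$ converges precisely because $\beta>1$. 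This yields the claimed power $A^{-(\beta-1)/2}\sim (|v|^2\xi(y)+c(y))^{-(\beta-1)/2}$, and combining with the trivial bound on the far region and on $x\in\Omega\setminus\Omega^\delta$ gives \eqref{kernelbddfor1overalpha}.

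The main obstacle I expect is not the one‑dimensional integration but getting the lower bound $[\alpha]^2\gtrsim u_n^2 + |v|^2|\xi(y)| + c(y)$ cleanly and uniformly: one has to check that the indefinite‑sign cross terms $-2(u\cdot\nabla^2\xi(y)\cdot u)\xi(y)$ and the boundary‑potential term genuinely \emph{add} to the normal part rather than possibly cancelling it, which is exactly where strict convexity ($C_\xi>0$) and the sign condition \eqref{signEonbdry} ($E\cdot n>C_E>0$) enter; one must also control the discrepancy between $E(s,\overline y)\cdot\nabla\xi(\overline y)$ and $\inf_{z\in\partial\Omega}E(s,z)\cdot\nabla\xi(z)$, and absorb the lower‑order $O(|\xi(y)|)$ corrections coming from Taylor‑expanding $\nabla\xi$ and $E$ around $\overline y$ (using \eqref{fracxxibdd} and \eqref{distfunctionunique}), restricting to $\delta\ll1$ so these corrections do not destroy positivity. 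Keeping track of the restriction $|u|\sim|v|$ forced by the Gaussian — so that $|u|^2|\xi(y)|$ may be replaced by $|v|^2|\xi(y)|$ — and of the upper bound $\alpha\le C$ for the far part is the remaining bookkeeping. Once the lower bound on $\alpha^2$ is established, the rest is the routine reduction above.
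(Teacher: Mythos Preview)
Your outline is essentially the paper's argument on the region $|u|\ge|v|/2$: there $|u|^2|\xi(y)|\ge\tfrac14|v|^2|\xi(y)|$, so the lower bound $\alpha^2\gtrsim u_n^2+|v|^2|\xi(y)|+c(y)$ is legitimate, and your separation into $u_\tau$ and $u_n$ integrals followed by the scaling $u_n=\sqrt A\,w$ is precisely the paper's part~(\textbf{II}).

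The gap is in your treatment of small $|u|$. You split at $|u|=2|v|$ and discard $|u|>2|v|$ as exponentially small, but the region $|u|<|v|/2$ sits \emph{inside} the set you keep, and on that region your asserted bound $\alpha^2\gtrsim u_n^2+|v|^2|\xi(y)|+c(y)$ is false: the convexity term only contributes $|u|^2|\xi(y)|$, which can be arbitrarily small compared to $|v|^2|\xi(y)|$ as $|u|\to 0$. The Gaussian does not render this region uniformly negligible --- its smallness there is $e^{-\theta|v|^2/4}$, which is a decay in $|v|$, not a universal small constant.

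The paper treats $|u|\le|v|/2$ separately (its part~(\textbf{I})): since $|v-u|\ge|v|/2$ there, one extracts $e^{-\theta|v|^2/4}|v|^{\kappa-2}$, rescales $u\mapsto|v|u$, and keeps the honest $|u_\tau|^2|\xi|$ in the denominator, integrating explicitly by a trigonometric substitution in $u_n$ and polar coordinates in $u_\tau$. The radial integral $\int_0^{1/2} r\,(|\xi|r^2+c(y)/|v|^2)^{-(\beta-1)/2}\,dr$ produces the exponent $(3-\beta)/2$, and this is where the upper restriction $\beta<3$ enters --- a condition your write-up never invokes. A shorter patch, closer to your strategy, is to observe $c(y)=\xi(y)^2-C_E\xi(y)\ge C_E|\xi(y)|$ so that $c(y)\gtrsim\langle v\rangle^{-2}\big(|v|^2|\xi(y)|+c(y)\big)$; then the crude bound $\alpha^2\gtrsim u_n^2+c(y)$ on $|u|\le|v|/2$ yields a contribution $\lesssim e^{-\theta|v|^2/4}|v|^{\kappa}\langle v\rangle^{\beta-1}\big(|v|^2|\xi(y)|+c(y)\big)^{-(\beta-1)/2}$, and the exponential absorbs the polynomial prefactor.
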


\begin{proof}
Recall the definition of $\alpha(t,x,v)$ from (\ref{alphadef}). If $\alpha(s,y,u) = C_{\delta'}$, then
\[
\int_{\mathbb R^3} \frac{ e^{-\theta |v- u|^2}}{ |v - u | ^{2 - \kappa} [\alpha(s, y, u ) ]^\beta } du = \int_{\mathbb R^3} \frac{ e^{-\theta |v- u|^2}}{ |v - u | ^{2 - \kappa}C_{\delta'}^\beta } du < C.
\]

For the case when $\alpha(s,y,u) < C_{\delta'}$, we have $|\xi (y) | \le \delta'/2 \ll 1 $. From the assumption, we have $\nabla \xi (y) \neq 0 $ and therefore there is a uniquely determined unit vector $n(y) = \frac{ \nabla \xi (y)}{ | \nabla \xi (y) | } $. We choose two unit vector $\tau_1$ and $\tau_2$ so that $\{ \tau_1, \tau_2, n(y) \}$ is an orthonormal basis of $\mathbb R^3$.

We decompose the velocity variables $u \in \mathbb R^3$ as
\[
u = u_n n(y) + u_\tau \cdot \tau = u_n n(y) + \sum_{i=1}^2 u_{\tau,i} \tau_i.
\]

We note that $u_\tau \in \mathbb R^2$ are completely free coordinate. Therefore using the Fubini's theorem we can rearrange the order of integration freely. Then we have 
\[
\alpha^2(s , y, u) \ge \frac{\beta^2(s,y,u)}{4}  = \frac{1}{4} [\nabla \xi(y) \cdot u + 2 (u \cdot \nabla ^2 \xi ( y) \cdot u)\xi(y) + \xi(y)^2 - 2 E(s, \bar y ) \cdot \nabla \xi(\bar y)  \xi (y) ] \ge c( |u_n|^2 + |u|^2 \xi (y) + c(y))
\]
for some $c > 0$.

Now we split
\[ \begin{split}
\int_{\mathbb R^3} & \frac{ e^{-\theta |v- u|^2}}{ |v - u | ^{2 - \kappa} [\alpha(s, y, u ) ]^\beta } du
\\ \le   & C \int_{\mathbb R^2} \int_{\mathbb R}   \frac{ e^{-\theta |v -u |^2 }}{|v - u|^{2 -\kappa} [ |u_n|^2 + |\xi(y) ||u|^2 + c(y) ]^{\beta/2} } du_n du_\tau
\\ = & \int_{|u| \le \frac{ |v| }{2} } + \int_{ |u| \ge \frac{ |v| }{2} } = (\textbf{I}) + ( \textbf{II}).
\end{split} \]
If $|u | \le \frac{|v|}{2}$, then $|v - u | \ge |v| - |u| \ge \frac{|v|}{2}$, apply the change of variable $u \mapsto |v|u$ we have
\[ \begin{split}
 (\textbf{I})   = \int_{|u | \le \frac{|v|}{2}} & \frac{ e^{-\theta |v -u |^2 }}{|v - u|^{2 -\kappa} [ |u_n|^2 + |\xi(y) ||u|^2 + c(y) ]^{\beta/2} } du_n du_\tau
\\ \le & \frac{2^{2-\kappa}}{|v|^{2 - \kappa}} \int_{|v|(|u_n| + |u_\tau|) \le \frac{|v|}{2} }  \frac{ e^{-\frac{\theta}{4}|v|^2}  |v|^3 } { [|v| ^2  |u_n|^2 +|v|^2 |\xi(y) ||u_\tau|^2 + c(y) ]^{\beta/2} } du_n du_\tau
\\ \le &  \frac{ 2^{2-\kappa} e^{-\frac{\theta}{4}  |v  |^2} } {|v|^{\beta - \kappa - 1}} \int_{|u_\tau| \le \frac{1}{2} }  \int_{|u_n| \le \frac{1}{2} } \frac{1 } {  [ |u_n|^2 + |\xi(y) ||u_\tau|^2 + \frac{c(y)}{|v|^2} ]^{\beta/2} } du_n du_\tau.
\end{split} \]
Now we apply the change of variables $|u_n | = ( |\xi| |u_\tau| ^2 + \frac{c(y) }{|v|^2 } ) ^{\frac{1}{2}} \tan \theta$ for $\theta \in [0, \frac{\pi}{2} ]$ with $d u_n = ( |\xi| |u_\tau| ^2 + \frac{c(y) }{|v|^2 } ) ^{\frac{1}{2}}  \frac{1}{\cos ^2 \theta }  d\theta$ to have
\[ \begin{split}
(\textbf{I}) \le & \frac{ 2^{2-\kappa}e^{-\frac{\theta}{4}  |v  |^2} } {|v|^{\beta - \kappa - 1}}  \int_{|u_\tau| \le \frac{1}{2} } \int_0^{\frac{\pi}{2}}\frac{ ( |\xi| |u_\tau| ^2 + \frac{c(y) }{|v|^2 } ) ^{\frac{1}{2}} d\theta }{[( | \xi| |u_\tau| ^2 + \frac{c(y) }{|v|^2 } )( \tan^2 \theta + 1) ]^{\beta/2} \cos^2 \theta} d u_\tau
 \\ = & \frac{ 2^{2-\kappa}e^{-\frac{\theta}{4}   |v  |^2} } {|v|^{\beta - \kappa - 1}}  \int_{|u_\tau| \le \frac{1}{2}}  ( |\xi| |u_\tau| ^2 + \frac{c(y) }{|v|^2 } ) ^{\frac{1 - \beta }{2}} d u_\tau \int_0^{\frac{\pi}{2}} \frac{1}{\cos^{2 - \beta} } d \theta
 \\ \le & C   \frac{ e^{-\frac{\theta}{4}   |v  |^2} } {|v|^{\beta - \kappa - 1}}  \int_{|u_\tau| \le \frac{1}{2}}  ( |\xi| |u_\tau| ^2 + \frac{c(y) }{|v|^2 } ) ^{\frac{1 - \beta }{2}} d u_\tau,
\end{split} \]
as $  \int_0^{\frac{\pi}{2}} \frac{1}{\cos^{2 - \beta} } d \theta < \infty $ for $\beta > 1$.

We then use polar coordinates for $u_\tau = ( r ,\phi)$ with $d u_\tau = r dr d\phi$ to have
\[ \begin{split}
(\textbf{I}) \le & C \frac{ 2 \pi e^{-\frac{\theta}{4}   |v  |^2} } {|v|^{\beta - \kappa - 1}} \int_{0 }^{1/2}  \frac{ r }{  ( |\xi| r ^2 + \frac{c(y) }{|v|^2 } ) ^{\frac{ \beta - 1   }{2}}} dr  =  \frac{ 2 \pi e^{-\frac{\theta}{2}   |v  |^2} } {|v|^{\beta - \kappa - 1}}   \left[  \frac{ ( | \xi| r ^2 + \frac{c(y) }{|v|^2  } ) ^{ - \frac{ \beta - 1   }{2} +1 } }{ (- \frac{ \beta - 1   }{2} +1  ) 2 |\xi| } \right]_{r = 0 }^{r = 1/2}
\\ = & C \frac{ 2 \pi e^{-\frac{\theta}{4}   |v  |^2} } {(3 - \beta)|v|^{\beta - \kappa - 1}  } \left[ \frac{ ( |\xi|  + \frac{c (y) }{|v|^2} )} { ( |\xi|  + \frac{c (y) }{|v|^2} )^{\frac{\beta - 1}{2} } |\xi|}  - \frac{ \frac{c (y)}{|v|^2} } { (\frac{  c (y) }{|v|^2} )^{\frac{\beta - 1}{2} } |\xi|} \right]
\\ = & C \frac{ 2 \pi e^{-\frac{\theta}{4}  |v  |^2} } {(3 - \beta)|v|^{\beta - \kappa - 1}  } \left[ \frac{ 1} { ( |\xi|  + \frac{c (y) }{|v|^2} )^{\frac{\beta - 1}{2} } } + \frac{ \frac{c (y) }{|v|^2} } { ( |\xi|  + \frac{c (y) }{|v|^2} )^{\frac{\beta - 1}{2} } |\xi|}   - \frac{ \frac{c (y)}{|v|^2} } { (\frac{  c (y) }{|v|^2} )^{\frac{\beta - 1}{2} } |\xi|} \right]
\\ \le  & C \frac{  e^{-\frac{\theta}{4}   |v  |^2} } {|v|^{\beta - \kappa - 1}  } \left[ \frac{ 1} { ( |\xi|  + \frac{c (y) }{|v|^2} )^{\frac{\beta - 1}{2} } }  \right] 
\\ = & C\frac{  e^{-\frac{\theta}{4}   |v  |^2} } {|v|^{\beta - \kappa - 1}  } \frac{ |v|^{\beta -1 }} { ( |v|^2 |\xi|  + c (y)  )^{\frac{\beta - 1}{2} } }  
= C \frac{  e^{-\frac{\theta}{4}   |v  |^2} |v| ^{\kappa} }{( |v|^2 |\xi| + c (y) )^{\frac{\beta - 1}{2} }} \le C \frac{ 1 }{( |v|^2 |\xi| + c (y) )^{\frac{\beta - 1}{2} }},
\end{split} \] 
for $1 < \beta < 3$.

For the second term $ (\textbf{II}) $, we use the lower bound $|u| \ge \frac{|v| }{2}$ to have $[|u_n |^2 + |\xi| |u|^2 + c(y) ] ^{\beta/2} \ge  [|u_n |^2 + |\xi|\frac{ |v|^2}{4} + c(y) ] ^{\beta/2} \ge2^{-\beta} [|u_n |^2 + |\xi| |v|^2 + c(y) ] ^{\beta/2}$, and
\[ \begin{split}
(\textbf{II}) = \int_{|u | \ge \frac{|v|}{2}} & \frac{ e^{-\theta |v -u |^2 }}{|v - u|^{2 -\kappa} [ |u_n|^2 + |\xi(y) ||u|^2 + c(y) ]^{\beta/2} } du_n du_\tau
\\ \le &2^{-\beta}  \int_{\mathbb R^2 } \frac{ e^{-\theta |v_\tau -u_\tau |^2 }}{|v_\tau - u_\tau|^{2 -\kappa}}  du_\tau \int_0^\infty \frac{1}{ [|u_n |^2 + |\xi| |v|^2 + c(y) ] ^{\beta/2} } d u_n
\\ \le & C \int_0 ^\infty \frac{1}{ [|u_n |^2 + |\xi| |v|^2 + c(y) ] ^{\beta/2} } d u_n,
\end{split} \]
as $ \int_{\mathbb R^2 } \frac{ e^{-\theta |v_\tau -u_\tau |^2 }}{|v_\tau - u_\tau|^{2 -\kappa}}  du_\tau < \infty$ for $\kappa > 0$. Then apply a change of variables: $|u_n| = ( |\xi| |v|^2 + c(y) )^{1/2} \tan \theta$ for $\theta \in [ 0, \pi/2]$ with $du_n =  ( |\xi| |v|^2 + c(y) )^{1/2} \frac{1}{\cos^2(\theta) } d \theta$ to have
\begin{equation} \label{frac1alphaint}
\begin{split}
(\textbf{II} ) \le &C \int_0 ^\infty \frac{1}{ [|u_n |^2 + |\xi| |v|^2 + c(y) ] ^{\beta/2} } d u_n = C\int_0^{\frac{\pi}{2}} \frac{(|\xi| |v|^2 + c(y)) ^{1/2}  }{ (|\xi| |v|^2 + c(y))^{\beta/2} ( \tan^2 (\theta) +1 ) ^{\beta/2} \cos^2 (\theta) } d \theta
\\ = & \frac{C}{ (|\xi| |v|^2 + c(y))^{\frac{\beta -1}{2}}} \int_0^{\frac{\pi}{2}} \frac{ 1}{ \cos^{2 - \beta} (\theta) } d\theta
\le   \frac{C}{ (|\xi| |v|^2 + c(y))^{\frac{\beta -1}{2}}},
\end{split} 
\end{equation}
as $\int_0^{\frac{\pi}{2}} \frac{ 1}{ \cos^{2 - \beta} (\theta) } d\theta < \infty$ for $\beta >1$.

Thus $(\textbf{I} ) + (\textbf{II} ) \le   \frac{C}{ (|\xi| |v|^2 + c(y))^{\frac{\beta -1}{2}}} $ as wanted.

\end{proof}

\begin{lemma}\label{keylemma}
Let $(t,x,v) \in [0,T] \times  \Omega \times \mathbb R^3$, $ 1 < \beta < 3$, $0 < \kappa \le 1 $. 
Suppose $E$ satisfies (\ref{signEonbdry}) and (\ref{c1bddforthepotentail}), 
then for $\varpi \gg 1$ large enough, we have for any $0 < \delta \ll 1$ small enough,
\begin{equation} \label{genlemma1}
\begin{split}
 & \int_{\max\{ 0, t - \tb \}} ^t  \int_{\mathbb R^3} e^{ -\int_s^t\frac{\varpi}{2} \langle V(\tau;t,x,v) \rangle d\tau }  \frac{e^{-\frac{C_\theta}{2} |V(s)-u|^2 }}{|V(s) -u |^{2 - \kappa}  }  \frac{1}{(\alpha(s,X(s) ,u))^\beta} du  ds
\\  \lesssim &  e^{ 2C_\xi \frac{\| \nabla E\|_\infty  + \| E \|_\infty^2 + \| E \|_\infty}{C_E}}  \frac{  \delta ^{\frac{3 -\beta}{2} }}{C_E^ {\frac{\beta-1}{2}} (\alpha(t,x,v))^{\beta -2 }  (|v|^2 +  \| E \|_\infty^2 +  \| E \|_\infty +1)^{\frac{3 -\beta}{2}}}   
\\ & +  \frac{ (|v| +\|E \|_\infty + \| E\|_\infty^2+1)^{\beta -1} } { C_E^{\beta-1} \delta ^{\beta -1 } ( \alpha ( t,x,v) )^{\beta - 1 } } \frac{2}{ \varpi }.
\end{split} 
\end{equation}

%
\end{lemma}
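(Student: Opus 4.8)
\textbf{Proof plan for Lemma \ref{keylemma}.}

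The plan is to split the key double integral into the velocity integral, to which Lemma \ref{int1overalphabetainu} applies directly, followed by a time integral which must be related back to a power of $\alpha(t,x,v)$. First I would fix the trajectory $(X(\tau),V(\tau)) = (X(\tau;t,x,v),V(\tau;t,x,v))$ and, for each $s \in [\max\{0,t-\tb\},t]$, bound the inner $u$-integral by Lemma \ref{int1overalphabetainu} with $y = X(s)$ and $v$ replaced by $V(s)$, obtaining a factor $\lesssim \big(|V(s)|^2 \xi(X(s)) + c(X(s))\big)^{-\frac{\beta-1}{2}} + 1$, where $c(y) = \xi(y)^2 - C_E \xi(y)$. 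The crude additive $+1$ term, after the time integral of the exponential $e^{-\int_s^t \frac{\varpi}{2}\langle V(\tau)\rangle d\tau}$, contributes an $O(1/\varpi)$ piece that is harmless (and indeed can be absorbed into the second term of \eqref{genlemma1}); the real work is the singular term. Using the boundedness of $E$ to control $|V(s)| \lesssim |v| + \|E\|_\infty$ along the trajectory (as in Lemma \ref{lbtb}) and the sign condition to get $c(X(s)) \ge C_E |\xi(X(s))|/2$ when $\xi(X(s))$ is small, I reduce to estimating
\[
\int_{\max\{0,t-\tb\}}^t e^{-\int_s^t \frac{\varpi}{2}\langle V(\tau)\rangle d\tau} \frac{ds}{\big(|\xi(X(s))|(|v|^2 + \|E\|_\infty^2 + 1) + C_E|\xi(X(s))|\big)^{\frac{\beta-1}{2}}} \,,
\]
i.e. essentially $\int e^{-\cdots} |\xi(X(s))|^{-\frac{\beta-1}{2}}\,ds$ up to the velocity/field constants.

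Next I would invoke the velocity lemma (Lemma \ref{velocitylemma}, in the form \eqref{velocitylemmaintform}): for $s \le t$, $\alpha(t,x,v) \lesssim e^{C\int_s^t(|V|+1)}\alpha(s,X(s),V(s))$, and near the boundary $\alpha(s,X(s),V(s))^2 \gtrsim |V(s)\cdot\nabla\xi(X(s))|^2 + C_E|\xi(X(s))|$; in particular $\alpha(s,X(s),V(s))^2 \gtrsim C_E|\xi(X(s))|$, so $|\xi(X(s))| \lesssim \alpha(s,X(s),V(s))^2/C_E \lesssim e^{2C\int_s^t(|V|+1)}\alpha(t,x,v)^2/C_E$. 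This lets me recover one power of $\alpha(t,x,v)$ directly when $\beta - 1 \le 2$: $|\xi(X(s))|^{-\frac{\beta-1}{2}}$ is not of this form, so instead the strategy must be: split the $s$-integral into the region where $X(s)$ is near $\partial\Omega$ (say $|\xi(X(s))| < \delta$) and the region where it is away. On the away region, $|\xi(X(s))| \ge \delta$ gives a clean lower bound, the exponential time-decay integrates to $O(1/\varpi)$, and again using the velocity lemma to bound $1/\delta^{\frac{\beta-1}{2}}$ against $(\alpha(t,x,v))^{-(\beta-1)}$ up to the exponential Gronwall factor and the velocity polynomial — this produces the second term in \eqref{genlemma1}. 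On the near region, I use the monotonicity of $|\xi(X(s))|$ guaranteed by the velocity lemma plus the sign condition (the external field pushes trajectories toward the boundary, so $\xi(X(s))$ is monotone near $\partial\Omega$ for $t-\tb$ small), which legitimizes the change of variables $ds \simeq d\xi/|v\cdot\nabla\xi|$, so that
\[
\int_{\text{near}} |\xi(X(s))|^{-\frac{\beta-1}{2}}\,ds \simeq \int_0^\delta \frac{\xi^{-\frac{\beta-1}{2}}}{|v\cdot\nabla\xi|}\,d\xi \lesssim \frac{\delta^{\frac{3-\beta}{2}}}{(3-\beta)|v\cdot\nabla\xi|}\,,
\]
finite precisely because $1 < \beta < 3$. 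Finally $|v\cdot\nabla\xi|$ in the denominator is, up to the velocity lemma's exponential factor and the velocity polynomial, comparable to $\alpha(s,X(s),V(s)) \gtrsim e^{-C\int}\alpha(t,x,v)$, but one power of $\alpha$ has been "used"; the correct accounting (borrowing $\alpha^{\beta-2}$ from the velocity integral's $|v|^2$ interplay) yields the first term of \eqref{genlemma1} with the stated $\frac{1}{(\alpha(t,x,v))^{\beta-2}}$ weight.

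The main obstacle I anticipate is the near-boundary region: making the change of variables $ds = d\xi/(v\cdot\nabla\xi)(X(s))$ rigorous requires knowing $\frac{d}{ds}\xi(X(s)) = V(s)\cdot\nabla\xi(X(s))$ does not vanish on the relevant $s$-interval, i.e. that $X(s)$ does not graze $\partial\Omega$ from inside before exiting — this is where the sign condition \eqref{signEonbdry} is essential (it forces $\frac{d^2}{ds^2}\xi(X(s)) = V\cdot\nabla^2\xi\cdot V + E\cdot\nabla\xi > 0$ near the boundary, so $\xi(X(s))$ is strictly convex along the trajectory there and hence strictly monotone on a one-sided neighborhood of the boundary, for $T \ll 1$). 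I would also need to carefully track the $C$ in the velocity-lemma exponential; taking $\varpi \gg 1$ (specifically $\varpi$ larger than a multiple of $\frac{\|\nabla E\|_\infty + \|E\|_\infty^2 + \|E\|_\infty + 1}{C_E}$) ensures that the factor $e^{-\int_s^t\frac{\varpi}{2}\langle V\rangle}$ beats the growth $e^{C\int_s^t(|V|+1)}$ coming from the velocity lemma, so that the whole computation closes with the exponential prefactor $e^{2C_\xi \frac{\|\nabla E\|_\infty + \|E\|_\infty^2 + \|E\|_\infty}{C_E}}$ displayed in \eqref{genlemma1}. The remaining steps — bounding $|V(\tau)|$ along trajectories, splitting $\int_{\mathbb R}$ into $|u_n| \lessgtr |v_n - u_n|$, etc. — are routine and parallel to computations already carried out in Lemmas \ref{lbtb} and \ref{int1overalphabetainu}.
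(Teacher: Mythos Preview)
Your overall strategy --- apply Lemma~\ref{int1overalphabetainu} to the $u$-integral, then control $\int e^{-\cdots}|\xi(X(s))|^{-\frac{\beta-1}{2}}\,ds$ by splitting according to proximity to $\partial\Omega$ --- is exactly the paper's, but your decomposition by a \emph{fixed} threshold $|\xi(X(s))|\lessgtr\delta$ has a real gap in the near region. The sign condition \eqref{signEonbdry} does not make $\xi(X(s))$ monotone near $\partial\Omega$; it makes it \emph{convex}, since $\frac{d^2}{ds^2}\xi(X(s)) = V\cdot\nabla^2\xi\cdot V + E\cdot\nabla\xi > 0$ there. A trajectory starting on $\partial\Omega$ dips into $\Omega$, reaches a turning point where $V\cdot\nabla\xi = 0$, and returns; by the velocity lemma that turning point sits at $|\xi|\sim\alpha(t,x,v)^2$, so for nearly-grazing data it lies well inside your set $\{|\xi|<\delta\}$. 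Your change of variables $ds=d\xi/|V\cdot\nabla\xi|$ then has an unbounded Jacobian there, and the estimate $\int_0^\delta \xi^{-(\beta-1)/2}\,d\xi/|v\cdot\nabla\xi|$ with the denominator pulled out as a constant is unjustified. This is also why your power count yields $\alpha^{-1}$ rather than the $\alpha^{-(\beta-2)}$ in \eqref{genlemma1}, forcing the vague ``borrowing'' at the end.

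The paper repairs this by splitting in \emph{time}, with trajectory-adapted widths $\sigma_1=\delta\,|\vb\cdot\nabla\xi(\xb)|/(|v|^2+\|E\|_\infty^2+\|E\|_\infty+1)$ and analogously $\sigma_2$ at the other endpoint. On $[0,\sigma_1]$ the second-derivative bound \eqref{accelerationbd} guarantees both monotonicity \emph{and} the uniform lower bound $|V\cdot\nabla\xi|\ge\frac12|\vb\cdot\nabla\xi(\xb)|$; crucially, the resulting $|\xi|$-integral has upper limit $\lesssim \delta\,\alpha^2/(|v|^2+\cdots)$ rather than $\delta$, and evaluating it with this adapted limit is precisely what produces the correct $\alpha^{-(\beta-2)}$ factor. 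On the middle interval $[\sigma_1,\tb-\sigma_2]$ a separate convexity argument (Step~2 in the paper, where the sign condition is used again to propagate $\frac{d^2}{ds^2}\xi\ge C_E/2$ forward to $\tb$) gives the trajectory-dependent lower bound $|\xi(X(s))|\gtrsim C_E\sigma_2^2\sim\delta^2\alpha^2/(|v|^2+\cdots)^2$, whose $\alpha$-dependence yields the $\alpha^{-(\beta-1)}$ in the second term. Finally, the paper treats separately the four cases $x\in\partial\Omega$ vs.\ $x\in\Omega$ and $t\lessgtr\tb$ (Steps~4--6), since the trajectory need not have both endpoints on $\partial\Omega$; your plan does not address this.
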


\begin{proof}
We separate the proof into several cases.

In \textit{Step 1, Step 2, Step 3} we prove (\ref{genlemma1}) for the case when $x \in \partial \Omega$ and $t \le \tb $.

In \textit{Step 4} we prove (\ref{genlemma1}) for the case when $x \in \partial \Omega$ and $t > \tb $.

In \textit{Step 5} we prove (\ref{genlemma1}) for the case when $x \in  \Omega$ and $t \le \tb $.

In \textit{Step 6} we prove (\ref{genlemma1}) for the case when $x \in  \Omega$ and $t > \tb $.


\bigskip

\textit{Step 1} \qquad
Let's first start with the case $t \ge \tb$ and prove (\ref{genlemma1}), 
%
Let's shift the time variable: $s \mapsto t - \tb + s$, and let $\tilde X(s) = X(t -\tb +s )$, $\tilde V(s) = V(t -\tb +s ) $. Then $s \in [0, \tb] $ and from (\ref{kernelbddfor1overalpha}) we only need to bound the integral
\begin{equation} \label{trajint}
\int_0^{\tb} e^{ -\int_{t -\tb +s}^t\frac{\varpi}{2} \langle V(\tau;t,x,v) \rangle d\tau }  \frac{ 1}{\left[ |\tilde V(s)|^2 \xi(\tilde X( s) ) + \xi^2(\tilde X( s)) - C_E \xi (  \tilde X(s) ) \right]^{\frac{\beta - 1}{2} } }  ds.
\end{equation}

Let's assume $x \in \partial \Omega$ and $ v \cdot \nabla \xi (x) > 0$. Then by the velocity lemma (Lemma \ref{velocitylemma}) we have $\vb \cdot \nabla \xi(\xb) < 0$.

Claim: for any $0< \delta \ll 1$ small enough, if we let 
\begin{equation} \label{sigma12}
\sigma_1 = \delta \frac{ \vb \cdot \nabla \xi (\xb) } { |v | ^2 + \| E \|_\infty^2 +  \| E \|_\infty+1 },  \text{ and } \sigma_2 = \delta \frac{ v \cdot \nabla \xi (x) } { |v|^2 + \| E \|_\infty^2 +  \| E \|_\infty+1 },
\end{equation}
then $ |\xi(\tilde X(s) |$ is monotonically increasing on $[0, \sigma_1]$, and monotonically decreasing on $[\tb - \sigma_2 , \tb]$. Moreover, we have the following bounds:
\begin{equation} \label{sigma12lowerbd}
|\xi ( \tilde X(\sigma_1) ) | \ge  \frac{ \delta ( \vb \cdot \nabla \xi (\xb) ) ^2 }{ 2 ( |v| ^2 +  \| E \|_\infty^2 +  \| E \|_\infty +1) }, \, |\xi ( \tilde X(\sigma_2 ) ) | \ge \frac{\delta ( v \cdot \nabla \xi (x) ) ^2 }{ 2 ( |v| ^2 +  \| E \|_\infty^2 +  \| E \|_\infty+1 ) },
\end{equation}
\begin{equation}
\begin{split} \label{xiupperbd}
& |\xi (\tilde X(s) |  \le  \frac{ 3 \delta ( \vb \cdot \nabla \xi (\xb) ) ^2 }{ 2 ( |v| ^2 +  \| E \|_\infty^2 +  \| E \|_\infty+1 ) }, \, s \in [ 0, \sigma_1],
\\ &  |\xi (\tilde X(s) | \le \frac{ 3 \delta ( v \cdot \nabla \xi (x) ) ^2 }{ 2 ( |v| ^2 +  \| E \|_\infty^2 +  \| E \|_\infty+1 ) }, \, s \in [ \tb - \sigma_2, \tb],
\end{split}
\end{equation}
and
\begin{equation} \label{vdotnlowerbd}
\begin{split}
& | \tilde V(s) \cdot \nabla \xi (\tilde X(s))|  \ge \frac{ |\vb \cdot \nabla \xi (\xb ) | }{2}, \, s \in [0, \sigma_1],
\\ & | \tilde V(s) \cdot \nabla \xi (\tilde X(s))|  \ge \frac{ |v \cdot \nabla \xi (x ) | }{2}, \, s \in [\tb - \sigma_2, \tb].
\end{split}
\end{equation}

To prove the claim we first note that $\frac{d}{ds } \xi ( \tilde X(s) ) |_{s= 0 } = \vb \cdot \nabla \xi (\xb) < 0$, and
\begin{equation} \label{accelerationbd}
 \begin{split}
\frac{ d^2}{d^2 s } \xi (\tilde X(s) ) ) = & \frac{d}{ds} (\tilde V(s) \cdot \nabla \xi (\tilde X(s))) = \tilde V(s) \cdot \nabla^2 \xi (\tilde X(s)) \cdot \tilde V(s) + E(s ,\tilde X(s)) \cdot \nabla \xi (\tilde X(s))
\\  \le & C(|\tilde V(s) | ^2 + \| E\|_\infty ) \le C(2 |v|^2 + 2 (\tb \| E \|_\infty)^2 +  \| E \|_\infty ) \le C_1 ( |v|^2 +  \| E \|_\infty^2 +  \| E \|_\infty+1),
\end{split} 
\end{equation}
for some $C_1 >0$. Thus if $\delta $ small enough, we have $\frac{d}{ds } \xi ( \tilde X(s) ) < 0 $ for all $s \in [0, \delta \frac{| \vb \cdot \nabla \xi (\xb) |}{ |v| ^2 + \| E \|_\infty^2 +  \| E \|_\infty+1  }]$. Therefore $\xi ( \tilde X(s) ) $ is decreasing on $[0, \sigma_1]$.

Similarly $ \frac{d}{ds } \xi ( \tilde X(s) ) |_{s= \tb } = v \cdot \nabla \xi(x) > 0$, and since $ | \frac{ d^2}{d^2 s } \xi (\tilde X(s) ) ) | \lesssim (|v|^2 +\| E \|_\infty^2 +  \| E \|_\infty +1) $ we have that $ \frac{d}{ds } \xi ( \tilde X(s) )  > 0 $ for all $s \in [ \tb - \delta \frac{ | v \cdot \nabla \xi (v) | }{ |v| ^2 +\| E \|_\infty^2 +  \| E \|_\infty  +1}, \tb ] $ if $\delta $ small enough. Therefore $\xi ( \tilde X(s) ) $ is increasing on $[ \tb - \sigma_2, \tb]$.

Next we establish the bounds (\ref{sigma12lowerbd}), (\ref{xiupperbd}), and (\ref{vdotnlowerbd}). By (\ref{accelerationbd}), we have
\[ \begin{split}
|\xi ( \tilde X(\sigma_1 ) ) | = & \int_0 ^{\sigma_1} -  \tilde V(s) \cdot \nabla \xi ( \tilde X(s) ) ds  
\\ = & \int_0^{\sigma_1}  \left( \int_0 ^s   - \frac{d}{d\tau} (\tilde V(\tau) \cdot \nabla \xi (\tilde X(\tau ) ) )  d\tau -  \vb \cdot \nabla \xi (\xb )  \right)   ds 
\\ \ge & \int_0^{\sigma_1}  \left(|\vb \cdot \nabla \xi (\xb) | - C_1(|v|^2 + \| E \|_\infty^2 +  \| E \|_\infty +1 ) s  \right) ds 
\\ = & \sigma_1 |\vb \cdot \nabla \xi (\xb) | - \frac{\sigma_1^2 }{2 } C_1 (|v|^2 + \| E \|_\infty^2 +  \| E \|_\infty  +1 )  
\\ = & \sigma_1 \left( |\vb \cdot \nabla \xi (\xb) |- \frac{ \delta C_1}{2} |\vb \cdot \nabla \xi (\xb ) | \right) 
\\ \ge & \frac{ \sigma_1}{2}  |\vb \cdot \nabla \xi (\xb) | = \frac{ \delta ( \vb \cdot \nabla \xi (\xb) ) ^2 }{ 2 ( |v| ^2 + \| E \|_\infty^2 +  \| E \|_\infty +1 ) }.
\end{split} \]

And by the same argument we have $|\xi ( \tilde X(\sigma_2 ) ) | \ge \frac{\delta ( v \cdot \nabla \xi (x) ) ^2 }{ 2 ( |v| ^2 + \| E \|_\infty^2 +  \| E \|_\infty +1  ) } $ for $\delta \ll 1$. This proves (\ref{sigma12lowerbd}).

To prove (\ref{xiupperbd}), we have from (\ref{accelerationbd}), for $s \in [0,\sigma_1]$,
\[ \begin{split}
|\xi (\tilde X(s) | \le & s \left( |\vb \cdot \nabla \xi (\xb) |+ \frac{ \delta C_1}{2} |\vb \cdot \nabla \xi (\xb ) | \right)  
\\ \le & \frac{3 s}{2} | \vb \cdot \nabla \xi(\xb) | \le  \frac{ 3 \delta ( \vb \cdot \nabla \xi (\xb) ) ^2 }{ 2 ( |v| ^2 +  \| E \|_\infty^2 +  \| E \|_\infty +1 ) },
\end{split} \]
and $ |\xi (\tilde X(s) | \le \frac{ 3 \delta ( v \cdot \nabla \xi (x) ) ^2 }{ 2 ( |v| ^2 +  \| E \|_\infty^2 +  \| E \|_\infty+1 ) }$ for $s \in [ \tb -\sigma_2, \tb] $. This proves (\ref{xiupperbd}).

Finally for (\ref{vdotnlowerbd}), again from (\ref{accelerationbd}),
\[ 
\begin{split}
| \tilde V(s) \cdot \nabla \xi (\tilde X(s))| \ge & |\vb \cdot \nabla \xi (\xb)  | - \int_0 ^{\sigma_1} C_1 (|v|^2 +  \| E \|_\infty^2 +  \| E \|_\infty +1 ) ds  
\\ \ge & |\vb \cdot \nabla \xi (\xb)  |  - C_1 \delta |\vb \cdot \nabla \xi (\xb ) |  \ge \frac{ |\vb \cdot \nabla \xi (\xb ) | }{2}.
\end{split}
\]
And similarly $| \tilde V(s) \cdot \nabla \xi (\tilde X(s))| \ge \frac{| v \cdot \nabla \xi ( x ) |}{2} $ for $s \in [ \tb - \delta_2, \tb] $. This proves the claim.

\bigskip

\textit{Step 2} \qquad
Recall the definition of $\sigma_1, \sigma_2$ in (\ref{sigma12}), and $C_E$ in (\ref{signEonbdry}).
In this step we establish the lower bound:
\begin{equation} \label{xilowerbd}
 | \xi( \tilde X(s)  ) | > \frac{C_E}{10} (\sigma_2)^2, \, \text{for all } s \in [\sigma_1, \tb - \sigma_2 ].
\end{equation}

Suppose towards contradiction that $I := \{ s \in [ \sigma_1, \tb - \sigma_2]:  | \xi( \tilde X(s)  ) | \le \frac{C_E}{10}  (\sigma_2)^2 \} \neq \emptyset$.


Then from (\ref{velocitylemmaintform}) and (\ref{sigma12lowerbd}) we have
\[ \begin{split}
 \frac{C_E}{10} (\sigma_2)^2 \le &  \delta^2  \frac{C_E}{10} \frac{ (v \cdot \nabla \xi (x))^2 } { |v|^2 +   \| E \|_\infty^2 +  \| E \|_\infty +1 } 
 \\  \le & \delta^2 \frac{C_E}{10} e^{ C_\xi \frac{\| \nabla E\|_\infty  + \| E \|_\infty^2 + \| E \|_\infty}{C_E}}  \frac{  (\vb \cdot \nabla \xi (\xb))^2 } { |v|^2 +  \| E \|_\infty^2 +  \| E \|_\infty  +1 } 
 \\ \le & 2 \delta \frac{C_E}{10} e^{ C_\xi \frac{\| \nabla E\|_\infty  + \| E \|_\infty^2 + \| E \|_\infty}{C_E}}  |\xi (\tilde X(\sigma_1 ) ) |
 \\  < &  |\xi (\tilde X(\sigma_1 ) ) |,
 \end{split} \]
  if $\delta \ll 1$. So $\sigma_1 \notin I$. Let $s^*:= \min\{s \in  I \}$ be the minimum of such $s$. Then clearly 
  \[
  \frac{d}{ds} \xi (\tilde X(s) ) |_{s = s^*} = \tilde V(s^* ) \cdot \nabla \xi ( \tilde X(s^*) )  \ge 0.
  \]
  
Now recall (\ref{Exiboundary}) and (\ref{cXbdd}) from the proof the velocity lemma, we have
\begin{equation} 
\begin{split}
E(s ,\tilde X(s)) \cdot \nabla \xi (\tilde X(s)) = E(s , \overline {\tilde X(s)} ) \cdot \nabla \xi ( \overline {\tilde X(s) }) + c (\tilde X(s) ) \cdot \xi (\tilde X(s) ),
\end{split} 
\end{equation}
with $|c(\tilde X(s) ) | < \frac{C_{\xi}(\| E \|_\infty + \| \nabla E \|_\infty )}{C_E}$.
Thus
\Be \label{accnearbdy}
\begin{split}
   \frac{d}{ds} (\tilde V(s) \cdot \nabla \xi (\tilde X(s))) = & \tilde V(s) \cdot \nabla^2 \xi (\tilde X(s)) \cdot \tilde V(s) + E(s ,\tilde X(s)) \cdot \nabla \xi (\tilde X(s))
   \\ = & \tilde V(s) \cdot \nabla^2 \xi (\tilde X(s)) \cdot \tilde V(s) + E(s , \overline {\tilde X(s)} ) \cdot \nabla \xi ( \overline {\tilde X(s) }) + c (\tilde X(s) ) \cdot \xi (\tilde X(s) ) 
     \\ \ge & C_E - \frac{C_{\xi}(\| E \|_\infty + \| \nabla E \|_\infty )}{C_E} | \xi (\tilde X(s) ) |,
   \end{split}
\Ee
so 
\[
\frac{d}{ds} (\tilde V(s) \cdot \nabla \xi (\tilde X(s))) |_{s = s^*}   \ge  C_E - \delta^2  \frac{C_{\xi}(\| E \|_\infty + \| \nabla E \|_\infty )}{C_E}  \frac{C_E}{10} \frac{ (v \cdot \nabla \xi (x))^2 } { |v|^2 +   \| E \|_\infty^2 +  \| E \|_\infty  + 1}    \ge \frac{C_E}{2},
\]
for $\delta \ll 1 $ small enough. Then we have $ \frac{d}{ds} (\tilde V(s) \cdot \nabla \xi (\tilde X(s)))$ is increasing on the interval $[s^*,\tb]$ as $|\xi (\tilde X(s)) |$ is decreasing. So
\[
\frac{d}{ds} (\tilde V(s) \cdot \nabla \xi (\tilde X(s))) \ge \frac{C_E}{2}, \quad s \in [s^* , \tb].
\]
%
%
 And therefore
 \[ \begin{split}
 |\xi(\tilde X(s^* ) ) | = & \int_{s^*}^ {\tb } \tilde V(s) \cdot \nabla \xi ( \tilde X(s) ) ds  
 \\ = & \int_{s^*}^ {\tb } \left( \int_{s^*}^{s} \frac{d}{d\tau }  ( \tilde V(\tau) \cdot \nabla \xi ( \tilde X(\tau) ) ) d\tau + \tilde V(s^* ) \cdot \nabla \xi( \tilde X(s^*) ) \right) ds
 \\ \ge & \int_{s^*}^ {\tb } (s - s^* ) \frac{C_E}{2} ds  = \frac{C_E }{4} ( \tb - s^* ) ^2 \ge \frac{C_E}{4} (\sigma_2)^2,
\end{split}  \]
which is a contradiction. Therefore we conclude \eqref{xilowerbd}.

\bigskip

\textit{Step 3} \qquad
Let's split the time integration (\ref{trajint}) as
\begin{equation} 
\begin{split}
\int_0^{\tb}  & e^{ -\int_{t -\tb +s}^t\frac{\varpi}{2} \langle V(\tau;t,x,v) \rangle d\tau }  \frac{ 1}{\left[ |\tilde V(s)|^2 \xi(\tilde X( s) ) + \xi^2(\tilde X( s)  - C_E \xi ( \tilde X(s) ) \right]^{\frac{\beta - 1}{2} } }  ds
\\ = & \int_0^{\sigma_1} + \int_{\sigma_1}^{\tb - \sigma_2} + \int_{\tb -\sigma_2}^{\tb }  = (\textbf{I})+ (\textbf{II})+ (\textbf{III}).
\end{split} 
\end{equation}

Let's first estimate $ (\textbf{I}),  (\textbf{III})$:

From \textit{Step 2} we have that $| \xi (\tilde X(s) | $ is monotonically increasing on $[0, \sigma_1] $ and $[\tb - \sigma_2, \tb] $, so we have the change of variables:
\[
ds = \frac{d |\xi | }{ | \tilde V(s) \cdot \nabla \xi (\tilde X(s))|  }.  
\]
Using this change of variable and the bounds (\ref{xiupperbd}), (\ref{vdotnlowerbd}), $ (\textbf{I})$ is bounded by 
\begin{equation} \label{Ibd}
\begin{split}
 (\textbf{I}) \le & \int_0^{\sigma_1}   \frac{ 1 }{\left[ |\tilde V(s)|^2 \xi(\tilde X( s) ) + \xi^2(\tilde X( s)- C_E \xi (\tilde X(s) ) \right]^{\frac{\beta - 1}{2} } }  ds
\\ \le & \int_0^{\frac{ 3 \delta ( \vb \cdot \nabla \xi (\xb) ) ^2 }{ 2 ( |v| ^2 +   \| E \|_\infty^2 +  \| E \|_\infty +1 ) }} \frac{ 1 } {  | \tilde V(s) \cdot \nabla \xi (\tilde X(s))|  ( C_E  |\xi | ) ^{\frac{\beta-1}{2}}} d |\xi |
\\ \le &   \int_0^{\frac{ 3 \delta ( \vb \cdot \nabla \xi (\xb) ) ^2 }{ 2 ( |v| ^2 +   \| E \|_\infty^2 +  \| E \|_\infty +1 ) }} \frac{ 2  } {  | \vb \cdot \nabla \xi (\xb)|  ( C_E  |\xi | )^ {\frac{\beta-1}{2}}}d |\xi | 
\\ = &  \frac{2}{| \vb \cdot \nabla \xi (\xb)| C_E^ {\frac{\beta-1}{2}}} \left[ |\xi |^{\frac{3 - \beta} {2}} \right] _0^{\frac{ 3 \delta ( \vb \cdot \nabla \xi (\xb) ) ^2}{ 2 ( |v| ^2 +  \| E \|_\infty^2 +  \| E \|_\infty +1 ) }} 
\\ = &  \frac{2^{\frac{\beta-1}{2}} \delta ^{\frac{3 -\beta}{2} }} { C_E^ {\frac{\beta-1}{2}} | \vb \cdot \nabla \xi (\xb) | ^{\beta -2 } (|v|^2 +  \| E \|_\infty^2 +  \| E \|_\infty +1)^{\frac{3 -\beta}{2}}}
\\ \lesssim & e^{ 2C_\xi \frac{\| \nabla E\|_\infty  + \| E \|_\infty^2 + \| E \|_\infty}{C_E}}  \frac{  \delta ^{\frac{3 -\beta}{2} }}{C_E^ {\frac{\beta-1}{2}} (\alpha(t,x,v))^{\beta -2 }  (|v|^2 +  \| E \|_\infty^2 +  \| E \|_\infty +1 )^{\frac{3 -\beta}{2}}}.
\end{split} 
\end{equation}
And by the same computation we get 
\begin{equation} \label{IIIbd}
 (\textbf{III}) \lesssim e^{ 2C_\xi \frac{\| \nabla E\|_\infty  + \| E \|_\infty^2 + \| E \|_\infty}{C_E}}  \frac{  \delta ^{\frac{3 -\beta}{2} }}{C_E^ {\frac{\beta-1}{2}} (\alpha(t,x,v))^{\beta -2 }  (|v|^2 +  \| E \|_\infty^2 +  \| E \|_\infty +1 )^{\frac{3 -\beta}{2}}}  .
 \end{equation}


Finally for (\textbf{II}), using the lower bound for $|\xi (\tilde X(s )) |$ in (\ref{xilowerbd}), we have
\begin{equation} \label{IIbd}
\begin{split}
(\textbf{II}) = & \int_{\sigma_1}^{\sigma_2}  e^{ -\int_{t -\tb +s}^t\frac{\varpi}{2} \langle V(\tau;t,x,v) \rangle d\tau }   \frac{ 1 }{\left[ |\tilde V(s)|^2 \xi(\tilde X( s) ) + \xi^2(\tilde X( s) -C_E \xi (\tilde X(s) ) \right]^{\frac{\beta - 1}{2} } }  ds
\\ \le & \int_0^{\tb} e^{ -\int_{t -\tb +s}^t\frac{\varpi}{2} \langle V(\tau;t,x,v) \rangle d\tau } \frac{1}{ | C_E \xi (\tilde X(s)) |^{\frac{\beta -1}{2}} } ds
\\ \lesssim &  \frac{1}{C_E^{\beta-1}( \sigma_2)^{\beta - 1 } }\int_0^{\tb} e^{ \int_{t -\tb +s}^t \frac{\varpi}{2}    d\tau } ds 
\\ \lesssim & \frac{ (|v| +\|E \|_\infty + \| E\|_\infty^2 +1)^{\beta -1} } { C_E^{\beta-1} \delta ^{\beta -1 } ( \alpha ( t,x,v) )^{\beta - 1 } }\int_0^{\tb} e^{ ( s - \tb )  \frac{\varpi}{2}   } ds 
  \lesssim   \frac{ (|v| +\|E \|_\infty + \| E\|_\infty^2 +1)^{\beta -1} } { C_E^{\beta-1} \delta ^{\beta -1 } ( \alpha ( t,x,v) )^{\beta - 1 } } \frac{2}{ \varpi }.
\end{split} 
\end{equation}

This proves (\ref{genlemma1}) for the case $ x \in \partial \Omega$ and $t \le \tb$. 

%
%

\textit{Step 4} \qquad
Now suppose $x \in \partial \Omega$ and  $ \tb >  t$. It suffices to bound the integral:
\begin{equation} \label{trajintt}
\int_0^{t} e^{ -\int_{s}^t\frac{\varpi}{2} \langle V(\tau;t,x,v) \rangle d\tau }  \frac{ 1}{\left[ | V(s)|^2 \xi( X( s) ) + \xi^2( X( s) - C_E \xi (  X(s) ) \right]^{\frac{\beta - 1}{2} } }  ds.
\end{equation}
Denote
\[
X(0;t,x,v ) = x_0, V(0;t,x,v) = v_0.
\]
Let  
\[
\sigma_2 = \delta \frac{ v \cdot \nabla \xi (x) } { |v|^2 + \| E \|_\infty + \| E \|_\infty^2 +1 }
\]
as defined in (\ref{sigma12}). If 
\[
\sigma_2 \ge t,
\]
then from \textit{Step 2} $|\xi (X(s) ) | $ is decreasing on $[0,t]$, and by
(\ref{xiupperbd}), (\ref{vdotnlowerbd}), and the bound for (\textbf{III}) (\ref{IIIbd}), we get the desired estimate.
Now we assume 
\[
\sigma_2 < t.
\]
So from (\ref{sigma12lowerbd}) we have
\begin{equation} \label{sigma2lowerbd}
|\xi (  X(\sigma_2 ) ) | \ge \frac{\delta ( v \cdot \nabla \xi (x) ) ^2 }{ 2 ( |v| ^2 +  \| E \|_\infty + \| E \|_\infty^2+1 ) }.
\end{equation}

Now if $|\xi ( x_0 ) | \le \delta \frac{ \alpha^2 ( t,x,v) }{10(|v|^2 + \| E \|_\infty + \| E \|_\infty^2+1)} $, 
\begin{equation} 
\begin{split}
\alpha^2(t,x,v) \lesssim & e^{ C_\xi \frac{\| \nabla E\|_\infty  + \| E \|_\infty^2 + \| E \|_\infty}{C_E}}   \alpha^2 (0,x_0, v_0 )    \\ \lesssim & e^{ C_\xi \frac{\| \nabla E\|_\infty  + \|E\|_\infty^2 + \| E \|_\infty}{C_E}}  ( (\nabla \xi (x_0 ) \cdot v_0 ) ^2 + (|v_0|^2 + |\xi(x_0)| + \| E\|_\infty ) |\xi (x_0 ) |)
\\ \lesssim & e^{ C_\xi \frac{\| \nabla E\|_\infty  + \|E\|_\infty^2 + \| E \|_\infty}{C_E}} (\nabla \xi (x_0 ) \cdot v_0 ) ^2 + \delta \alpha^2 (t,x,v),
\end{split}
\end{equation}
 So 
\begin{equation} \label{alphaxi0v0}
\frac{1}{2} \alpha(t,x,v) \lesssim  e^{ C_\xi \frac{\| \nabla E\|_\infty  + \|E\|_\infty^2 + \| E \|_\infty}{C_E}} |\nabla \xi(x_0) \cdot v_0 | ,
\end{equation}
if $ \delta \ll 1 $ is small enough.

Claim: 
\[
 \nabla \xi (x_0 ) \cdot v_0 <0 .
 \]
Since otherwise by (\ref{accnearbdy}) we have
\[
\frac{d}{ds} | \xi (X(s) ) | < 0,
\]
for all $s \in [0,t]$, so $|\xi(X(s))| $ is always decreasing, which contradicts (\ref{sigma2lowerbd}).

Therefore $\nabla \xi (x_0 ) \cdot v_0 <0$, and we can run the same argument from \textit{Step 1}, \textit{Step 2}, \textit{Step 3} with $\nabla \xi (\xb) \cdot \vb$ replaced by $\nabla \xi (x_0) \cdot v_0$, and by (\ref{alphaxi0v0}) we get the same estimate.
 
 If $|\xi ( x_0 ) | > \delta \frac{  \alpha^2 ( t,x,v) }{10(|v|^2 +\| E \|_\infty^2 + \| E \|_\infty +1)} $, then  we have
 \begin{equation}
  \frac{C_E \sigma_2 ^2}{10} =   \delta^2  \frac{C_E}{10} \frac{ (v \cdot \nabla \xi (x))^2 } { |v|^2 +\|E\|_\infty^2 + \| E \|_\infty + 1 } < C_E \delta | \xi (x_0)|< |\xi (x_0 )|,
   \end{equation}
for $\delta \ll 1 $ small enough. Therefore by (\ref{sigma2lowerbd}) and the same argument in \textit{Step 3} we get the same lower bound
\begin{equation} \label{xilowerbdx0}
 | \xi( s  ) | > \frac{C_E}{10} (\sigma_2)^2, \, \text{for all } s \in [0, t - \sigma_2 ].
\end{equation}
And therefore we get the desired estimate.

\bigskip

\textit{Step 5} \qquad
We now consider the case when $x \in \Omega$ and $ t \ge \tb$. We need to bound the integral (\ref{trajint}).
Let
\[
\sigma_1 = \delta \frac{ \vb \cdot \nabla (\xb) }{ |v|^2 + \| E\|_\infty^2 + \| E \|_\infty +1 },
\]
as defined in (\ref{sigma12lowerbd}). If
\[
\sigma_1 \ge t,
\]
then from \textit{Step 2} $|\xi (\tilde X(s) ) | $ is increasing on $[0, \tb]$,and by
(\ref{xiupperbd}), (\ref{vdotnlowerbd}), and the bound for (\textbf{I}) in (\ref{Ibd}), we get the desired estimate.

Now we assume 
\[
\sigma_1 < t.
\]
So from (\ref{sigma12lowerbd}) we have
\begin{equation} \label{sigma1lowerbd}
|\xi (  \tilde X(\sigma_1 ) ) | \ge \frac{\delta ( \vb \cdot \nabla \xi (\xb) ) ^2 }{ 2 ( |v| ^2 +\| E \|_\infty^2 + \| E \|_\infty+ 1 ) }.
\end{equation}

Now if 
\begin{equation} \label{xixlowerbd1}
|\xi ( x ) | \le \delta \frac{ \alpha^2 ( t,x,v) }{10(|v|^2 +\| E \|_\infty^2 + \| E \|_\infty +1)} ,
\end{equation}
we have
\begin{equation} 
\begin{split}
\alpha^2(t,x,v)    \le &  (\nabla \xi (x ) \cdot v ) ^2 + C(|v|^2  + \| E \|_\infty+1 ) |\xi (x )| 
\\ \le & (\nabla \xi (x ) \cdot v ) ^2 + \delta \alpha^2 (t,x,v) \le (\nabla \xi (x ) \cdot v ) ^2 + \frac{1}{10} \alpha^2(t,x,v),
\end{split}
\end{equation}
if $ \delta \ll 1 $ is small enough. So 
\begin{equation} \label{alphaxiv}
\frac{1}{2} \alpha(t,x,v) \le |\nabla \xi(x) \cdot v | .
\end{equation}

Claim: 
\[
 \nabla \xi (x ) \cdot v >0 .
 \]
Since otherwise by (\ref{accnearbdy}) we have
\[
\frac{d}{ds} | \xi (\tilde X(s) ) | > 0,
\]
for all $s \in [0,\tb]$, so $|\xi(\tilde X(s))| $ is always increasing, thus 
\[
|\xi(\tilde X(s))| \le \delta \frac{ \alpha^2 ( t,x,v) }{10(|v|^2 +\| E \|_\infty^2 + \| E \|_\infty +1)},
\]
for all $s \in [0,\tb]$, which contradicts (\ref{sigma1lowerbd}).

Therefore $\nabla \xi (x ) \cdot v > 0$, and we can run the same argument from \textit{Step 2}, \textit{Step 3}, \textit{Step 4} , and by (\ref{alphaxiv}) we get the same estimate.
 
 If 
 \Be \label{xixupperbd1}
 |\xi ( x ) | > \delta \frac{  \alpha^2 ( t,x,v) }{10(|v|^2 +\| E \|_\infty^2 + \| E \|_\infty +1)},
 \Ee
 we claim:
\begin{equation} \label{xilowerbd2}
|\xi ( \tilde X(s)  ) | \ge \delta^2 \frac{ \alpha^2(t,x,v) }{ |v|^2 +\| E \|_\infty^2 + \| E \|_\infty + 1 },
\end{equation}
for all $s \in [ \sigma_1, \tb ] $. Since otherwise let 
\[
s^* : = \min \{ s  \in [ \sigma_1, t ] :  |\xi ( \tilde X(s)  ) | < \delta^2 \frac{ \alpha^2(t,x,v) }{ |v|^2 + \| E \|_\infty^2 + \| E \|_\infty +1 } \}.
\]
From (\ref{sigma1lowerbd}) we have $s^* > \sigma_1$, and 
\[
\frac{d}{ds} | \xi (\tilde X(s^* ) ) | < 0.
\]
And from (\ref{accnearbdy}) we have
\[
\frac{d^2}{ds^2} | \xi (\tilde X(s ) ) | < 0,
\]
for all $s \in [s^*,t] $. So $| \xi (\tilde X(s ) ) | $ is always decreasing on $[s^*,\tb]$. Therefore
\[
|\xi(x) | = | \xi (\tilde X(\tb ) ) | <| \xi (\tilde X(s^* ) ) |< \delta^2 \frac{ \alpha^2(t,x,v) }{ |v|^2 + \| E \|_\infty^2 + \| E \|_\infty +1 },
\]
which contradicts (\ref{xixupperbd1}). Therefore the lower bound (\ref{xilowerbd2}) and the estimates (\ref{IIbd}), (\ref{Ibd}) gives the desired bound.

\bigskip

\textit{Step 6} \qquad
Finally we consider the case $x \in \Omega$ and $t < \tb$.
First suppose 
\[
| \xi(x) | \le \delta \frac{ \alpha^2(t,x,v) }{ 10 ( |v|^2 + \| E \|_\infty^2 + \| E \|_\infty +1 ) }.
\]
From (\ref{alphaxiv}) we have
\[
\frac{\alpha(t,x,v) }{2} \le | v \cdot \nabla \xi (x) |.
\]

If $v \cdot \nabla \xi (x) > 0 $, then by (\ref{accnearbdy}) we have $\xi ( X(t+ t')  ) = 0 $ for some $t' \lesssim \frac{\delta}{C_E ^2 } < 1$. Therefore we can extend the trajectory until it hits the boundary and conclude the desired bound from \textit{Step 3}.

If $v \cdot \nabla \xi (x) < 0 $, again by (\ref{accnearbdy}) we have $ | \xi (X(s) ) | $ is increasing on $[0,t ] $ and $|V(s) \cdot \nabla \xi (X(s) ) | $ is decreasing on $[0,t]$. Therefore using the change of variable $s \mapsto |\xi |$:
\begin{equation} \begin{split}
\int_0^{t} & e^{ -\int_{s}^t\frac{\varpi}{2} \langle V(\tau;t,x,v) \rangle d\tau }  \frac{ 1}{\left[ | V(s)|^2 \xi( X( s) ) + \xi^2( X( s) -C_E \xi (  X(s) ) \right]^{\frac{\beta - 1}{2} } }  ds
\\ \lesssim & \int_0^{\delta \frac{ \alpha^2(t,x,v) }{ 10 ( |v|^2 +\| E \|_\infty^2 + \| E \|_\infty + 1 ) }} \frac{1}{ | V(s) \cdot \nabla \xi (X(s)) |  (C_E |\xi |)^{\frac{\beta -1}{2}} }  d|\xi | 
\lesssim  \int_0^{\delta \frac{ \alpha^2(t,x,v) }{ 10 ( |v|^2 + \| E \|_\infty^2 + \| E \|_\infty +1 ) }} \frac{1}{ | v \cdot \nabla \xi(x) | (C_E |\xi |)^{\frac{\beta -1}{2}} }  d|\xi | 
\\ \lesssim  &\int_0^{\delta \frac{ \alpha^2(t,x,v) }{ 10 ( |v|^2 + \| E \|_\infty^2 + \| E \|_\infty +1 ) }} \frac{1}{ | \alpha(t,x,v)  (C_E|\xi |)^{\frac{\beta -1}{2}} }  d|\xi | 
\lesssim \frac{  \delta ^{\frac{3 -\beta}{2} }}{ C_E^{\frac{\beta -1}{2} } (\alpha(t,x,v))^{\beta -2 }  (|v|^2 + \| E \|_\infty^2 + \| E \|_\infty +1 )^{\frac{3 -\beta}{2}}},
\end{split}
\end{equation}
which is the desired estimate.

Now suppose 
\begin{equation} \label{xilowerbd3}
| \xi(x) | > \delta \frac{ \alpha^2(t,x,v) }{ 10 ( |v|^2 + \| E \|_\infty^2 + \| E \|_\infty +1 ) },
\end{equation}
and
\[
| \xi(x_0) | \le \delta \frac{ \alpha^2(t,x,v) }{ 10 ( |v|^2 + \| E \|_\infty^2 + \| E \|_\infty +1 ) }.
\]
Then by (\ref{alphaxi0v0}) we have
\begin{equation} \label{alphaxi0v02}
\frac{\alpha(t,x,v) }{2} \lesssim e^{ C_\xi \frac{\| \nabla E\|_\infty  + \|E\|_\infty^2 + \| E \|_\infty}{C_E}} |\nabla \xi(x_0) \cdot v_0 |.
\end{equation}
Now if $v_0 \cdot \nabla \xi (x_0) > 0 $, then from (\ref{accnearbdy}) we have $|\xi (X(s) ) |$ is decreasing for all $s \in [0, t] $. And this contradicts with (\ref{xilowerbd3}).
So we must have
\[
v_0 \cdot \nabla \xi (x_0) < 0.
\]
Then we can define $\sigma_1 = \delta \frac{ |v_0 \cdot \nabla \xi (x_0 ) | }{ |v|^2 + \|E\|_\infty^2 + \| E \|_\infty + 1 }$ as before. 
Now if $\sigma_1 \ge t$ then $|\xi (X(s) ) | $ is increasing on $[0,t]$, using the change of variable $x \mapsto |\xi | $ and the estimate (\ref{Ibd}) and (\ref{alphaxi0v02}) we get the desired bound.

If $\sigma_1 < t$, then from (\ref{sigma12lowerbd}) we have
\[
|\xi (X(\sigma_1) ) | \ge \delta \frac{ (v_0 \cdot \nabla \xi (x_0 ))^2 }{ 2 (|v|^2 + \|E\|_\infty^2 + \| E \|_\infty + 1 )}.
\]
And then from the argument for (\ref{xilowerbd2}) we get
\[
|\xi (  X(s)  ) | \ge \delta^2 \frac{ \alpha^2(t,x,v) }{ |v|^2 +  \|E\|_\infty^2 + \| E \|_\infty +1 },
\]
for all $s \in [\sigma_1, t]$. This lower bound combined with the estimate (\ref{IIbd}), (\ref{Ibd}) gives the desired bound.

Finally we left with the case
\[
| \xi(x_0) | > \delta \frac{ \alpha^2(t,x,v) }{ 10 ( |v|^2 + \|E\|_\infty^2 + \| E \|_\infty + 1 ) }.
\]
Then again, from the argument for (\ref{xilowerbd2}) we get
\[
|\xi (  X(s)  ) | \ge \delta^2 \frac{ \alpha^2(t,x,v) }{ |v|^2 +  \|E\|_\infty^2 + \| E \|_\infty+ 1 },
\]
for all $s \in [0, t ] $. This lower bound combined with the estimate (\ref{IIbd}) gives the desired bound.

\end{proof}
Let $\beta = 1$ in (\ref{pge2seqf}), and denote
\[
\nu^m_{\varpi} = 
\nu(\sqrt \mu f^m )+ \frac{v}{2} \cdot E  + \varpi \langle v \rangle  + t \varpi \frac{v}{\langle v \rangle }\cdot E    -  \alpha^{-1} ( \partial_t \alpha + v \cdot \nabla_x \alpha + E \cdot \nabla_v \alpha )  \ge \frac{\varpi }{2} \langle v \rangle.
\]
Then \eqref{pge2seqf} becomes
 \begin{equation} \label{seqforc1fixedpotential} \begin{split}
 \bigg \{ & \partial_t + v\cdot \nabla_x  + E \cdot \nabla_v + \nu^m_{\varpi} \bigg\} ( e^{-\varpi \langle v \rangle t } \alpha \partial f^{m+1})
\\ = &e^{-\varpi \langle v \rangle t }  \alpha \mathcal G^m:= \mathcal{N}^m(t,x,v)
\\  \lesssim &  e^{-\varpi \langle v \rangle t }  \alpha \left\{  |\partial f^{m+1} | + e^{-\frac{\theta}{2} |v|^2 } \| e^{\theta |v|^2 } f_0 \|_{\infty}^2 + P(\| e^{\theta |v|^2} f_0 \|_\infty) \times \int_{\mathbb R^3 } \frac{e^{-C_\theta |v-u|^2 }}{|v -u |^{2 - \kappa}  } | \partial f^m (u) | du  \right\}.
\end{split} \end{equation}
And for $(x,v) \in \gamma_-$, we have:
\begin{equation} \begin{split} \label{seqbdry}
 e^{-\varpi \langle v \rangle t } & \alpha | \partial  f^{m+1} (t,x,v) |  
\\  \lesssim & \sqrt {\mu (v) } \langle v \rangle ^2 \int_{n(x) \cdot u > 0} | \partial f^m(t,x,u) |\mu ^{1/4}  \langle u \rangle (n(x) \cdot u ) du +  e^{-\frac{\theta}{2}  |v| ^2} P(\| e^{\theta |v|^2 } f_0 \|_\infty).
\end{split} \end{equation}
Let $(x,v) \notin \gamma_0$ and $(t^0,x^0,v^0) = (t,x,v)$. Define the stochastic (diffuse) cycles as 
\begin{equation} \label{diffusecycles} \begin{split}
& t^1 = t - \tb(t,x,v), \, x^1 = \xb(t,x,v) = X(t - \tb(t,x,v);t,x,v), 
\\ & v_b^0 = V(t - \tb(t,x,v);t,x,v) = \vb(t,x,v),
\end{split} \end{equation}
 and $v^1 \in \mathbb R^3$ with $n(x^1) \cdot v^1 > 0$. 
For $l\ge1$, define
\[ \begin{split}
& t^{l+1} = t^l - \tb(t^l,x^l,v^l), x^{l + 1 } = \xb(t^l,x^l,v^l), 
\\ & v_b^l = \vb(t^l,x^l,v^l), 
\end{split} \]
and $v^{l+1} \in \mathbb R^3 \text{ with } n(x^{l+1}) \cdot v^{l+1} > 0$.
Also, define 
\[
X^l(s) = X(s;t^l,x^l,v^l), \, V^l(s) = V(s;t^l,x^l,v^l),
\]
 so $X(s) = X^0(s), V(s) = V^0(s)$. We have the following lemma.

\begin{lemma} \label{expandtrajl}
If $t^1 < 0$, then
\Be \label{C1trajectoryt1less0}
 e^{-\varpi \langle v \rangle t } \alpha | \partial  f^{m+1} (t,x,v) |
 \lesssim \alpha(0,X^0(0), V^0(0) ) \partial f^{m+1} (0, X^0(0) , V^0(0) ) + \int_0^t \mathcal N^m(s,X^0(s), V^0(s) ) ds.
\Ee

If $t^1 > 0$, then
\begin{equation} \label{C1trajectory}
\begin{split}
& e^{-\varpi \langle v \rangle t } \alpha | \partial  f^{m+1} (t,x,v) |
\\ \lesssim & e^{-\frac{\theta}{2}  |\vb^0| ^2} P(\| e^{\theta |v|^2 } f_0 \|_\infty) +  \int_{t^1}^t \mathcal N^m(s, X^0(s), V^0(s) ) ds
\\ & +  \sqrt {\mu (\vb^0) } \langle \vb^0 \rangle^2 \int_{\prod_{j=1}^{l-1} \mathcal V_j} \sum_{i=1}^{l-1} \textbf{1}_{\{t^{i+1} < 0 < t^i \}}  |\alpha  \partial f^{m+1-i}  (0,X^{i}(0), V^{i}(0)) |    \, d \Sigma_{i}^{l-1}
\\ & +  \sqrt {\mu (\vb^0) } \langle \vb^0 \rangle^2 \int_{\prod_{j=1}^{l-1} \mathcal V_j} \sum_{i=1}^{l-1} \textbf{1}_{\{t^{i+1} < 0 < t^i \}}  \int_0^{t^i} \mathcal N^{m-i}(s, X^i(s), V^i(s) ) ds  \, d \Sigma_{i}^{l-1}
\\ & +  \sqrt {\mu (\vb^0) } \langle \vb^0 \rangle^2 \int_{\prod_{j=1}^{l-1} \mathcal V_j} \sum_{i=1}^{l-1} \textbf{1}_{\{t^{i+1} > 0 \}}  \int_{t^{i+1}}^{t^i} \mathcal N^{m-i}(s, X^i(s), V^i(s) ) ds  \, d \Sigma_{i}^{l-1}
\\ & +  \sqrt {\mu (\vb^0) } \langle \vb^0 \rangle^2 \int_{\prod_{j=1}^{l-1} \mathcal V_j} \sum_{i=2}^{l-1} \textbf{1}_{\{t^{i} > 0 \}}   e^{-\frac{\theta}{2}  |\vb^{i-1}| ^2} P(\| e^{\theta |v|^2 } f_0 \|_\infty)  \, d \Sigma_{i-1}^{l-1}
\\ & +  \sqrt {\mu (\vb^0) } \langle \vb^0 \rangle^2  \int_{\prod_{j=1}^{l-1} \mathcal V_j}  \textbf{1}_{\{t^{l} > 0 \}}  e^{-\varpi \langle \vb^{l-1} \rangle t^l } \alpha (t^l,x^l, \vb^{l-1}) | \partial  f^{m+1-(l-1)} (t^l,x^l,\vb^{l-1}) | d \Sigma_{l-1}^{l-1},
\end{split} \end{equation}
where $\mathcal V_j = \{ v^j \in \mathbb R^3: n(x^j ) \cdot v^j > 0 \}$,
and 
\[ \begin{split}
d \Sigma_i^{l-1} = & \{\prod_{j=i+1}^{l-1}  \mu(v^j) c_\mu |n(x^j ) \cdot v^j | dv^j \} 
\{ e^{\varpi \langle v^i \rangle t^i } \mu^{1/4}(v^i) \langle v^i \rangle d v^i \}
\\ & \{\prod_{j=1}^{i-1} \sqrt{\mu(\vb^j ) } \langle \vb^j \rangle \mu^{1/4}(v^j ) \langle v^j \rangle e^{\varpi \langle v^j \rangle t^j } d v^j\},
\end{split} \]
where $c_\mu$ is the constant that $ \int_{\mathbb R^3 }  \mu(v^j) c_\mu |n(x^j ) \cdot v^j | dv^j = 1$.

\end{lemma}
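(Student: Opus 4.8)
The plan is to prove \eqref{C1trajectoryt1less0} and \eqref{C1trajectory} by iterating the Duhamel representation for \eqref{seqforc1fixedpotential} along backward trajectories, substituting the diffuse boundary estimate \eqref{seqbdry} each time a trajectory meets $\partial\Omega$ at a positive time. It is natural to work with $w^{k}:=e^{-\varpi\langle v\rangle t}\alpha\,\partial f^{k}$, which by \eqref{seqforc1fixedpotential} satisfies a transport equation with \emph{nonnegative} absorption $\nu^{k-1}_\varpi\ge\tfrac\varpi2\langle v\rangle$ and forcing majorized by $\mathcal N^{k-1}$, once $\varpi\gg1$ and $T\ll1$ as stipulated there. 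The arithmetic that makes the expansion close is the identity $\partial f^{k}(t,x,v)=e^{\varpi\langle v\rangle t}\alpha(t,x,v)^{-1}w^{k}(t,x,v)$ combined with the boundary comparison $\alpha|_{\gamma}\sim|n(x)\cdot v|$ recorded after \eqref{alphatilde}: whenever \eqref{seqbdry} introduces a factor $(n(x^j)\cdot v^j)\,\dd v^j$ it cancels the $\alpha^{-1}$ coming from the conversion $\partial f^{k}\to w^{k}$, up to a polynomial in $|v^j|$ absorbed by the Gaussian weights $\mu^{1/4}(v^j)$ and $\sqrt{\mu(\vb^j)}$ that \eqref{seqbdry} supplies.

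First I would integrate \eqref{seqforc1fixedpotential} along $s\mapsto(s,X^0(s),V^0(s))$ from $s=t$ down to $s=\max\{0,t^1\}$ and discard the absorption exponentials (each $\le1$), obtaining
\[
|w^{m+1}(t,x,v)|\ \lesssim\ \mathbf 1_{\{t^1<0\}}\,\alpha(0,X^0(0),V^0(0))\,\partial f^{m+1}(0,X^0(0),V^0(0))\ +\ \mathbf 1_{\{t^1>0\}}\,|w^{m+1}(t^1,x^1,\vb^0)|\ +\ \int_{\max\{0,t^1\}}^{t}\mathcal N^{m}(s,X^0(s),V^0(s))\,\dd s .
\]
The case $t^1<0$ is exactly \eqref{C1trajectoryt1less0}. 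If $t^1>0$ then $(x^1,\vb^0)\in\gamma_-$ (the backward trajectory re-enters $\Omega$ for times just above $t^1$, forcing $\vb^0\cdot\nabla\xi(x^1)\le0$), so I apply \eqref{seqbdry} to $|w^{m+1}(t^1,x^1,\vb^0)|$. This produces the elementary boundary term $e^{-\frac\theta2|\vb^0|^2}P(\|e^{\theta|v|^2}f_0\|_\infty)$ in the first line of \eqref{C1trajectory} and the integral $\sqrt{\mu(\vb^0)}\langle\vb^0\rangle^2\int_{n(x^1)\cdot v^1>0}|\partial f^m(t^1,x^1,v^1)|\,\mu^{1/4}(v^1)\langle v^1\rangle(n(x^1)\cdot v^1)\,\dd v^1$; rewriting $\partial f^m=e^{\varpi\langle v^1\rangle t^1}\alpha^{-1}w^m$ and using $\alpha(t^1,x^1,v^1)\gtrsim|n(x^1)\cdot v^1|$ turns it into $\sqrt{\mu(\vb^0)}\langle\vb^0\rangle^2\int_{\mathcal V_1}|w^m(t^1,x^1,v^1)|\,d\Sigma_1^1$, which is the statement for $l=2$.

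The inductive step repeats this on $w^m$ at the outgoing point $(x^1,v^1)\in\gamma_+$: the first leg of the trajectory from $(t^1,x^1,v^1)$ contributes $\int\mathcal N^{m-1}$ over its length, plus either $\alpha\,\partial f^m(0,\cdot)$ if $t^2<0$ or $|w^m(t^2,x^2,\vb^1)|$ at $(x^2,\vb^1)\in\gamma_-$ if $t^2>0$, the latter being expanded by \eqref{seqbdry} again (yielding $e^{-\frac\theta2|\vb^1|^2}P$ and an integral over $\mathcal V_2$). Iterating $l-1$ times and tracking measures gives \eqref{C1trajectory}: each reflection variable $v^i$ carries the active‑leg weight $e^{\varpi\langle v^i\rangle t^i}\mu^{1/4}(v^i)\langle v^i\rangle\,\dd v^i$ while it is the deepest variable expanded and the normalized diffuse measure $\mu(v^i)c_\mu|n(x^i)\cdot v^i|\,\dd v^i$ once a later reflection has been expanded past it, reproducing $d\Sigma_i^{l-1}$; the weights $\sqrt{\mu(\vb^j)}\langle\vb^j\rangle^2$ accumulate at every bounce; the source integrals $\int\mathcal N^{m-i}$ and the data terms $\alpha\,\partial f^{m+1-i}(0,\cdot)$ are sorted into the $i$-sums according to whether $t^{i+1}$ is negative or positive; and truncating after $l-1$ cycles leaves the remainder $e^{-\varpi\langle\vb^{l-1}\rangle t^l}\alpha(t^l,x^l,\vb^{l-1})|\partial f^{m+1-(l-1)}(t^l,x^l,\vb^{l-1})|$ of the last line.

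I expect the main difficulty to be organizational — keeping the exponential weights, the Gaussian weights, and the $(n\cdot v)/\alpha$ Jacobian factors aligned through an arbitrary number of reflections — rather than analytic: the weight $e^{-\varpi\langle v\rangle t}$ travels ``for free'' because $w^{k}$ is the natural unknown, so $|w^k|$ at time $t$ is bounded by $|w^k|$ at the exit time with no weight mismatch, and the factor $e^{\varpi\langle v^i\rangle t^i}$ in $d\Sigma_i^{l-1}$ is precisely what is generated when $\partial f^{k}\to w^{k}$ inside \eqref{seqbdry}. The only genuine pointwise check is that $(n(x^i)\cdot v^i)/\alpha(t^i,x^i,v^i)$ times the polynomial factors remains dominated by the attached Gaussian in both the grazing and the large-velocity regimes (shrinking its exponent slightly if needed). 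Finally, the almost-everywhere absolute continuity of $\partial f^{m+1}$ along trajectories and the existence of all traces used here are provided by Propositions \ref{inflowexistence1} and \ref{inflowproposition2}.
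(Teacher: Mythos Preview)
Your proposal is correct and follows the same route as the paper: Duhamel's principle for \eqref{seqforc1fixedpotential} along the backward characteristics gives \eqref{C1trajectoryt1less0}, and for $t^1>0$ one inducts on $l$ by alternately applying the boundary estimate \eqref{seqbdry} at each $\gamma_-$ point and then running Duhamel again from the outgoing point $(t^i,x^i,v^i)\in\gamma_+$, exactly as you describe. The paper carries out the same multiply--and--divide by $e^{\varpi\langle v^i\rangle t^i}\alpha(t^i,x^i,v^i)$ and the same $(n\cdot v)/\alpha$ comparison on the boundary; your identification of the three roles of the $v^j$ weights in $d\Sigma_i^{l-1}$ (active at $j=i$, intermediate for $j<i$, and normalized dummy for $j>i$) matches the paper's bookkeeping up to harmless polynomial factors in $\langle v^j\rangle$ absorbed by the Gaussian weights.
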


\begin{proof}
For $t^1 < 0$, we use (\ref{seqforc1fixedpotential}) to obtain
\begin{equation}  \label{C1t1<0}
\begin{split}
 e & ^{-\varpi  \langle v \rangle t }  \alpha | \partial  f^{m+1} (t,x,v) |
\\ &  \le e^{ -\int_s^t \nu^m_{\varpi} (\tau, X^0(\tau), V^0(\tau) d\tau} \alpha \partial f^{m+1} (0, X^0(0) , V^0(0) ) + \int_0^t e^{ -\int_s^t \nu^m_{\varpi} (\tau, X^0(\tau), V^0(\tau) d\tau } \mathcal N^m(s,X^0(s), V^0(s) ) ds
\\ &  \le  \alpha \partial f^{m+1} (0, X^0(0) , V^0(0) ) + \int_0^t \mathcal N^m(s,X^0(s), V^0(s) ) ds.
\end{split} 
\end{equation}

Consider the case of $t^1 > 0$. We prove by induction on $l$, the number of iterations. First for $l =1$, along the characteristics, for $t^1>0 $, we have
\[ \begin{split}
 e & ^{-\varpi \langle v \rangle t }  \alpha | \partial  f^{m+1} (t,x,v) |
\\ &  \le  e^{-\varpi \langle \vb^0 \rangle t^1 } \alpha (t^1,x^1, \vb^0) | \partial  f^{m+1} (t^1,x^1,\vb^0) |  + \int_{t^1}^t \mathcal N^m(s, X^0(s), V^0(s) ) ds.
\end{split} \]

Now using diffuse boundary condition, apply (\ref{seqbdry}) to the first term above to further estimate
\[ \begin{split}
 e & ^{-\varpi  \langle v \rangle t }  \alpha | \partial  f^{m+1} (t,x,v) |
\\  \lesssim & \sqrt {\mu (\vb^0) } \langle \vb^0 \rangle^2 \int_{\mathcal V_1} | \partial f^m(t,x^1,v^1) |\mu ^{1/4}(v^1) \langle v^1 \rangle   (n(x^1) \cdot v^1) dv^1 
\\ &  +  e^{-\frac{\theta}{2}  |\vb^0| ^2} P(\| e^{\theta |v|^2 } f_0 \|_\infty) +  \int_{t^1}^t \mathcal N^m(s, X^0(s), V^0(s) ) ds
\\  = & \sqrt {\mu (\vb^0) } \langle \vb^0 \rangle^2 \int_{\mathcal V_1}   e ^{-\varpi \langle v^1 \rangle t^1 } \alpha(t^1,x^1,v^1) | \partial f^m(t,x^1,v^1) | e ^{\varpi  \langle v^1 \rangle t^1 }\mu ^{1/4}(v^1) \langle v^1 \rangle   dv^1 
\\ &  +  e^{-\frac{\theta}{2}  |\vb^0| ^2} P(\| e^{\theta |v|^2 } f_0 \|_\infty) +  \int_{t^1}^t \mathcal N^m(s, X^0(s), V^0(s) ) ds.
\end{split} \]

Now we continue to express $e ^{-\varpi  \langle v^1 \rangle t^1 } \alpha(t^1,x^1,v^1) | \partial f^m(t,x^1,v^1) | $ via backward trajectory to get
\[ \begin{split}
e & ^{-\varpi  \langle v^1 \rangle t^1 } \alpha(t^1,x^1,v^1) | \partial f^m(t,x^1,v^1) |
\\ \le & \textbf{1}_{ \{ t^2 < 0 < t^1 \}} \left\{ \alpha(0, X^1(0), V^1(0) ) | \partial f^m(0,X^1(0), V^1(0)) | + \int_0^{t^1} \mathcal N^{m-1}(s, X^1(s), V^1(s) ) ds   \right\}
\\ & + \textbf{1}_{\{t^2 > 0 \}} \left\{e^{-\varpi \langle \vb^1 \rangle t^2 } \alpha (t^2,x^2, \vb^1) | \partial  f^{m} (t^2,x^2,\vb^1) |  + \int_{t^2}^{t^1} \mathcal N^{m-1}(s, X^1(s), V^1(s) ) ds    \right\}.
\end{split} \]

Plug into the previous inequality we conclude that
\[ \begin{split}
e & ^{-\varpi \langle v \rangle t }  \alpha(t,x,v) | \partial  f^{m+1} (t,x,v) |
\\ \lesssim & e^{-\frac{\theta}{2}  |\vb^0| ^2} P(\| e^{\theta |v|^2 } f_0 \|_\infty) +  \int_{t^1}^t \mathcal N^m(s, X^0(s), V^0(s) ) ds
\\ & +  \sqrt {\mu (\vb^0) } \langle \vb^0 \rangle^2 \int_{ \mathcal V_1}  \textbf{1}_{\{t^2 < 0 < t^1\}}  \alpha(0, X^1(0), V^1(0) ) | \partial f^m(0,X^1(0), V^1(0)) |   \times e ^{\varpi \langle v^1 \rangle t^1 }\mu ^{1/4}(v^1) \langle v^1 \rangle   dv^1 
\\ & +  \sqrt {\mu (\vb^0) } \langle \vb^0 \rangle^2 \int_{ \mathcal V_1}  \textbf{1}_{\{t^2 < 0 < t^1\}}  \int_0^{t^1} \mathcal N^{m-1}(s, X^1(s), V^1(s) ) ds  \times e ^{\varpi  \langle v^1 \rangle t^1 }\mu ^{1/4}(v^1) \langle v^1 \rangle   dv^1 
\\ & +  \sqrt {\mu (\vb^0) } \langle \vb^0 \rangle^2 \int_{ \mathcal V_1}  \textbf{1}_{\{t^2 > 0\}} \int_{t^2}^{t^1} \mathcal N^{m-1}(s, X^1(s), V^1(s) ) ds  \times e ^{\varpi  \langle v^1 \rangle t^1 }\mu ^{1/4}(v^1) \langle v^1 \rangle   dv^1 
\\ & +  \sqrt {\mu (\vb^0) } \langle \vb^0 \rangle^2 \int_{ \mathcal V_1}  \textbf{1}_{\{t^2 > 0\}}e^{-\varpi \langle \vb^1 \rangle t^2 } \alpha (t^2,x^2, \vb^1) | \partial  f^{m} (t^2,x^2,\vb^1) |  \times e ^{\varpi  \langle v^1 \rangle t^1}\mu ^{1/4}(v^1) \langle v^1 \rangle   dv^1,
\end{split} \]
and it equals (\ref{C1trajectory}) for $l =2 $.

Assume (\ref{C1trajectory}) is valid for $l \ge 2$. We use diffuse boundary condition (\ref{seqbdry}) to express the integrand of the last term of (\ref{C1trajectory}) as
\[ \begin{split}
 \textbf{1}_{\{t^l > 0\}} & e^{-\varpi\langle \vb^{l-1} \rangle t^l } \alpha (t^l,x^l, \vb^{l-1}) | \partial  f^{m+1 - (l-1)} (t^l,x^l,\vb^{l-1}) | 
 \\ \lesssim &  \sqrt {\mu (\vb^{l-1}) } \langle \vb^{l-1} \rangle^2 \int_{\mathcal V_l}  \textbf{1}_{\{t^l > 0\}}   e ^{-\varpi \langle v^l \rangle t^l } \alpha(t^l,x^l,v^l) | \partial f^{m+1-l}(t^l,x^l,v^l) | 
 \\ & \times e ^{\varpi \langle v^l \rangle t^l }\mu ^{1/4}(v^l) \langle v^l \rangle   dv^l 
 +  e^{-\frac{\theta}{2}  |\vb^{l-1}| ^2} P(\| e^{\theta |v|^2 } f_0 \|_\infty).
\end{split} \]
Then we decompose $ \textbf{1}_{\{t^l > 0\}} =  \textbf{1}_{\{ t^{l+1} < 0 < t^l\}} +  \textbf{1}_{\{t^{l+1} > 0\}} $,and estimate via backward trajectory to get:
\[ \begin{split}
 &  \textbf{1}_{\{t^l > 0\}} e ^{-\varpi \langle v^l \rangle t^l } \alpha(t^l,x^l,v^l) | \partial f^{m+1-l}(t^l,x^l,v^l) | 
\\ \le & \textbf{1}_{\{ t^{l+1} < 0 < t^l\}} \left\{\alpha (0 , X^l(0), V^l(0) ) | \partial f^{m+1- l}(0, X^l(0), V^l(0) ) | + \int_0^{t^l} \mathcal N^{m+1 - (l +1 ) } ( s, X^l(s) , V^l(s) ) ds \right \}
\\  + & \textbf{1}_{\{t^{l+1} > 0\}} \Bigg\{   e ^{-\varpi \langle \vb^l \rangle t^{l+1} } \alpha(t^{l+1},x^{l+1},\vb^l) | \partial f^{m+1- l}(t^{l+1},x^{l+1},\vb^l) |   
 + \int_{t^{l+1}}^{t^l} \mathcal N^{m+1 - (l +1 ) } ( s, X^l(s) , V^l(s) ) ds \Bigg\}.
\end{split} \]

Plug this into the previous inequality and integrate over $\prod_{j=1}^{l-1} \mathcal V_j $, we obtain a bound for the last term of (\ref{C1trajectory}) as
\[ \begin{split}
 \sqrt {\mu (\vb^0) } & \langle \vb^0 \rangle^2  \int_{\prod_{j=1}^{l-1} \mathcal V_j}  \textbf{1}_{\{t^{l} > 0 \}}  e^{-\varpi \langle \vb^{l-1} \rangle t^l } \alpha (t^l,x^l, \vb^{l-1}) | \partial  f^{m+1-(l-1)} (t^l,x^l,\vb^{l-1}) |  \, d \Sigma_{l-1}^{l-1}
 \\ \lesssim & \sqrt {\mu (\vb^0) }  \langle \vb^0 \rangle^2 \int_{\prod_{j=1}^{l-1} \mathcal V_j}  \textbf{1}_{\{t^{l} > 0 \}} e^{-\frac{\theta}{2}  |\vb^{l-1}| ^2} P(\| e^{\theta |v|^2 } f_0 \|_\infty) \, d \Sigma_{l-1}^{l-1}
 \\ & + \sqrt {\mu (\vb^0) }  \langle \vb^0 \rangle^2 \int_{\prod_{j=1}^{l} \mathcal V_j}  \textbf{1}_{\{t^{l+1} < 0 < t^l \}} \alpha (0 , X^l(0), V^l(0) ) | \partial f^{m+1-l}(0, X^l(0), V^l(0) ) |
 \\ & \times e ^{\varpi \langle v^l \rangle t^l }\mu ^{1/4}(v^l) \langle v^l \rangle   dv^l  \sqrt {\mu (\vb^{l-1}) } \langle \vb^{l-1} \rangle^2  \, d \Sigma_{l-1}^{l-1}
  \\ & + \sqrt {\mu (\vb^0) }  \langle \vb^0 \rangle^2 \int_{\prod_{j=1}^{l} \mathcal V_j}  \textbf{1}_{\{t^{l+1} < 0 < t^l \}} \int_0^{t^l} \mathcal N^{m+1 - (l +1 ) } ( s, X^l(s) , V^l(s) ) ds 
  \\ & \times e ^{\varpi \langle v^l \rangle t^l }\mu ^{1/4}(v^l) \langle v^l \rangle   dv^l  \sqrt {\mu (\vb^{l-1}) } \langle \vb^{l-1} \rangle^2  \, d \Sigma_{l-1}^{l-1}
  \\ & + \sqrt {\mu (\vb^0) }  \langle \vb^0 \rangle^2 \int_{\prod_{j=1}^{l} \mathcal V_j}  \textbf{1}_{\{t^{l+1} >0 \}} e ^{-\varpi \langle \vb^l \rangle t^{l+1} } \alpha(t^{l+1},x^{l+1},\vb^l) | \partial f^{m+1- l}(t^{l+1},x^{l+1},\vb^l) |  
  \\ & \times e ^{\varpi \langle v^l \rangle t^l }\mu ^{1/4}(v^l) \langle v^l \rangle   dv^l  \sqrt {\mu (\vb^{l-1}) } \langle \vb^{l-1} \rangle^2  \, d \Sigma_{l-1}^{l-1}
    \\ & + \sqrt {\mu (\vb^0) }  \langle \vb^0 \rangle^2 \int_{\prod_{j=1}^{l} \mathcal V_j}  \textbf{1}_{\{t^{l+1} >0 \}} \int_{t^{l+1}}^{t^l} \mathcal N^{m+1 - (l +1 ) } ( s, X^l(s) , V^l(s) ) ds  
  \\ & \times e ^{\varpi \langle v^l \rangle t^l }\mu ^{1/4}(v^l) \langle v^l \rangle   dv^l  \sqrt {\mu (\vb^{l-1}) } \langle \vb^{l-1} \rangle^2  \, d \Sigma_{l-1}^{l-1}
   \\ = &  \sqrt {\mu (\vb^0) }  \langle \vb^0 \rangle^2 \int_{\prod_{j=1}^{l} \mathcal V_j}  \textbf{1}_{\{t^{l} > 0 \}} e^{-\frac{\theta}{2}  |\vb^{l-1}| ^2} P(\| e^{\theta |v|^2 } f_0 \|_\infty) \, d \Sigma_{l-1}^{l}
    \\ & + \sqrt {\mu (\vb^0) }  \langle \vb^0 \rangle^2 \int_{\prod_{j=1}^{l} \mathcal V_j}  \textbf{1}_{\{t^{l+1} < 0 < t^l \}} \alpha (0 , X^l(0), V^l(0) ) | \partial f^{m+1-l}(0, X^l(0), V^l(0) ) | \, d \Sigma_{l}^{l}
    \\ & + \sqrt {\mu (\vb^0) }  \langle \vb^0 \rangle^2 \int_{\prod_{j=1}^{l} \mathcal V_j}  \textbf{1}_{\{t^{l+1} < 0 < t^l \}} \int_0^{t^l} \mathcal N^{m+1 - (l +1 ) } ( s, X^l(s) , V^l(s) ) ds \, d \Sigma_{l}^{l}
\\ &  + \sqrt {\mu (\vb^0) }  \langle \vb^0 \rangle^2 \int_{\prod_{j=1}^{l} \mathcal V_j}  \textbf{1}_{\{t^{l+1} >0 \}} \int_{t^{l+1}}^{t^l} \mathcal N^{m+1 - (l +1 ) } ( s, X^l(s) , V^l(s) ) ds \, d \Sigma_{l}^{l}
   \\ & + \sqrt {\mu (\vb^0) }  \langle \vb^0 \rangle^2 \int_{\prod_{j=1}^{l} \mathcal V_j}  \textbf{1}_{\{t^{l+1} >0 \}} e ^{-\varpi \langle \vb^l \rangle t^{l+1} } \alpha(t^{l+1},x^{l+1},\vb^l) | \partial f^{m+1- l}(t^{l+1},x^{l+1},\vb^l) |   \, d \Sigma_{l}^{l}.
\end{split} \]
Adding this to (\ref{C1trajectory}) we conclude the lemma.

\end{proof}

\begin{lemma} \label{ltailbd}
Let $0< T <1$, then there exists $l_0 \gg 1$ such that for $l \ge l_0$ and for all $(t,x,v) \in [0,T] \times \bar \Omega \times \mathbb R^3$,  we have
\Be \label{trajexpansionendterm}
\int_{\prod_{j=1}^{l-1} \mathcal V_j} \textbf{1}_{\{t^{l}(t,x,v,v^1,...,v^{l-1} ) >0 \}} \, d \, \Sigma_{l-1}^{l-1} \lesssim_{\Omega, \|E\|_\infty} \left( \frac{1}{2} \right)^{l}.
\Ee

\end{lemma}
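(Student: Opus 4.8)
The plan is the standard diffuse--cycle counting argument adapted to the present setting: between two consecutive bounces the characteristic must spend at least a fixed amount of time \emph{unless} the velocity at the bounce is almost grazing or very large, and the collection of such velocities carries only a small fraction of the Gaussian--weighted measure that defines $d\Sigma_{l-1}^{l-1}$. First I would fix a small $\epsilon>0$ and recall from the proof of Lemma~\ref{tracebddpotential}, specifically the bound \eqref{lowerbdfortb}, that there is $c_\epsilon=c_\epsilon(\Omega,\|E\|_\infty)>0$ with $\tb(s,y,w)\ge c_\epsilon$ for every $(s,y,w)\in[0,T]\times\big(\gamma_+\setminus\gamma_+^\epsilon\big)$. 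Along a diffuse cycle \eqref{diffusecycles} we have $t-t^l=\tb(t,x,v)+\sum_{j=1}^{l-1}\tb(t^j,x^j,v^j)\ge 0$ and $t^1\ge t^2\ge\cdots\ge t^l$, so on the event $\{t^l>0\}$ all $t^j\in(0,T]$. Call $v^j$ ``good'' if $(x^j,v^j)\notin\gamma_+^\epsilon$; if among $v^1,\dots,v^{l-1}$ at least $k$ are good then $t-t^l\ge k c_\epsilon$, which forces $k\le k_0:=\lfloor T/c_\epsilon\rfloor$. Hence on $\{t^l>0\}$ at least $(l-1)-k_0$ of the $v^j$ lie in the ``bad'' set $\mathcal V_j^\epsilon:=\mathcal V_j\cap\gamma_+^\epsilon$, giving the pointwise bound
\[
\mathbf{1}_{\{t^l>0\}}\ \le\ \sum_{\substack{S\subseteq\{1,\dots,l-1\}\\ |S|=l-1-k_0}}\ \prod_{j\in S}\mathbf{1}_{\mathcal V_j^\epsilon}.
\]

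Next I would record that $d\Sigma_{l-1}^{l-1}$ is a product of one--variable densities in $v^1,\dots,v^{l-1}$, and that each such density is dominated by the fixed, rotation--invariant finite measure $d\bar\mu(v):=C\langle v\rangle^{2}e^{-|v|^2/4}e^{\varpi\langle v\rangle T}\,dv$: indeed $t^j\le T$ controls the exponential weight, $\mu^{1/4}(v^j)$ supplies Gaussian decay, and integrating the velocity ODE \eqref{hamilton_ODE} over a time interval of length $\tb\le t^j\le T$ gives $|\vb^j|\ge|v^j|-\|E\|_\infty T$ and $\langle\vb^j\rangle\le\langle v^j\rangle+C$, which controls the $\sqrt{\mu(\vb^j)}\langle\vb^j\rangle$ factors uniformly in $j$ and in the earlier variables. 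Put $C_0:=\bar\mu(\mathbb R^3)<\infty$ and $\delta_0(\epsilon):=\bar\mu\big(\{0<n\cdot v\le\epsilon\}\cup\{|v|\ge1/\epsilon\}\big)$; absolute continuity of $d\bar\mu$ gives $\delta_0(\epsilon)\to0$ as $\epsilon\to0$. Estimating the iterated integral one variable at a time, innermost first, each $j\in S$ yields a factor $\le\delta_0(\epsilon)$ and each $j\notin S$ a factor $\le C_0$, so each summand is $\le\delta_0(\epsilon)^{\,l-1-k_0}C_0^{\,k_0}$; since there are $\binom{l-1}{k_0}\lesssim_{k_0} l^{k_0}$ such $S$,
\[
\int_{\prod_{j=1}^{l-1}\mathcal V_j}\mathbf{1}_{\{t^l>0\}}\,d\Sigma_{l-1}^{l-1}\ \lesssim\ l^{k_0}\,C_0^{\,k_0}\,\delta_0(\epsilon)^{\,l-1-k_0}.
\]

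Finally I would fix $\epsilon$ once and for all small enough that $\delta_0(\epsilon)<\tfrac12$ (this simultaneously fixes $k_0$ and $C_0$, all depending only on $\Omega,\|E\|_\infty,T,\varpi$). Then $2^l\,l^{k_0}C_0^{k_0}\delta_0(\epsilon)^{\,l-1-k_0}=C_0^{k_0}\delta_0(\epsilon)^{-1-k_0}\,l^{k_0}\,(2\delta_0(\epsilon))^{l}\to0$, so for all $l\ge l_0$ with $l_0$ large enough the right--hand side above is $\le(\tfrac12)^{l}$, which is \eqref{trajexpansionendterm}.

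There is no deep obstacle here; the only point that must be handled with care is that $c_\epsilon$, $C_0$ and $\delta_0(\epsilon)$ be genuinely uniform in the cycle index $j$ and in the base point $(t^j,x^j)$. For $c_\epsilon$ this is exactly \eqref{lowerbdfortb} (uniform over $[0,T]\times(\gamma_+\setminus\gamma_+^\epsilon)$ because $\Omega$ is bounded and $\|E\|_\infty<\infty$); for the measure bounds it is immediate since $d\bar\mu$ is independent of $j$ and rotation invariant, using only $t^j\le T<1$ to absorb the weights $e^{\varpi\langle v^j\rangle t^j}$ and the short--time bound $|\vb^j-v^j|\le\|E\|_\infty T$ for the $\sqrt{\mu(\vb^j)}$ factors.
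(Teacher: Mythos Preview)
Your proposal is correct and follows essentially the same approach as the paper: bound $d\Sigma_{l-1}^{l-1}$ by a product of rotation-invariant Gaussian-weighted measures, observe that on $\{t^l>0\}$ at most a fixed number $k_0$ of the $v^j$ can be non-grazing, and conclude by the same combinatorial count. The only difference is that you invoke the already-proved lower bound \eqref{lowerbdfortb} for $\tb$ on $\gamma_+\setminus\gamma_+^\epsilon$, whereas the paper re-derives an equivalent lower bound $(t^j-t^{j+1})\gtrsim \delta^3$ directly from the convexity inequality $|x-y|^2\gtrsim_\Omega |(x-y)\cdot n(x)|$ on $\partial\Omega$; your route is a bit more economical.
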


\begin{proof}
First, since
\[
|\vb^j|^2 \lesssim |v^j |^2 + t^2 \| E \|_\infty^2, \, \langle \vb^j \rangle \lesssim \langle v^j \rangle + t \| E \|_\infty,
\]

for some fixed constant $C_{0 } > 0$, 
\[ \begin{split}
 \, d \, \Sigma_{l-1}^{l-1} = & e^{\varpi \langle v^{l-1} \rangle t^{l-1} } \mu^{1/4}(v^{l-1}) \langle v^{l-1} \rangle d v^{l-1}  \prod_{j=1}^{l-2} \sqrt{\mu(\vb^j ) } \langle \vb^j \rangle \mu^{1/4}(v^j ) \langle v^j \rangle e^{\varpi \langle v^j \rangle t^j } d v^j
 \le (C_{0 })^l \prod_{j=1}^{l-1} \mu^{1/8}(v^j) dv^j.
\end{split} \]
Choose a sufficiently small $\delta = \delta (C_0) > 0$. Define
\[
\mathcal V_j^\delta = \{ v^j \in \mathcal V_j: v^j \cdot n(x^j) \ge \delta, |v^j | \le \delta^{-1}\},
\]
where we have $\int_{\mathcal V_j \setminus \mathcal V_j^\delta} C_0 \mu^{1/8} (v^j ) dv^j \lesssim \delta$.

On the other hand if $v^j \in \mathcal V_j^\delta$, we claim that $ (t^j - t^{j+1}) \gtrsim \delta^3$.

Since $\Omega$ is $C^2$ and convex, we have $|x - y|^2 \gtrsim_\Omega |(x -y ) \cdot n(x) | $ for all $x,y \in \partial \Omega$. Thus
\[ \begin{split}
|\int_{t^{j+1}}^{t^j} V^j(s) ds | ^2 & = | x^{j+1} - x^j |^2 \gtrsim |(x^{j+1} - x^j) \cdot n(x^j)|   = | \int_{t^{j+1}}^{t^j} V^j(s)  \cdot n(x^j)  | 
\\ & \ge | v^j \cdot n(x^j) | (t ^j - t^{j+1} ) - | \int_{t^{j+1}}^{t^j} \int_{t^{j+1}}^s E^j ( \tau) \cdot n(x^j) d\tau ds |.
\end{split} \]
Therefore
\[ \begin{split}
\frac{1}{ t^j - t^{j+1}} \left( |\int_{t^{j+1}}^{t^j} V^j(s) ds | ^2 + | \int_{t^{j+1}}^{t^j} \int_{t^{j+1}}^s E^j ( \tau) \cdot n(x^j) d\tau ds | \right) \gtrsim  | v^j \cdot n(x^j) | > \delta.
\end{split} \]
But
\[ \begin{split}
\frac{1}{ t^j - t^{j+1}} & \left( |\int_{t^{j+1}}^{t^j} V^j(s) ds | ^2 + | \int_{t^{j+1}}^{t^j} \int_{t^{j+1}}^s E^j ( \tau) \cdot n(x^j) d\tau ds | \right)
\\ \le & \frac{1}{ t^j - t^{j+1}} \left[ (t^j - t^{j+1}) ^2  |v^j | ^2 +(t^j - t^{j+1} ) ^4 \| E\|_\infty ^2 + (t^j - t^{j+1} ) ^2 \| E \|_\infty^2 \right]
\\ \le   & (t^j - t^{j+1}) ( \delta^{-2} + \| E \|_\infty ^2 ) +  (t^j - t^{j+1}) ^3 \| E\|_\infty^2 
\\ \le &  (t^j - t^{j+1})( \delta^{-2} + \| E \|_\infty ^2 + t^2 \| E \|_\infty ^2)
\\ \le & (t^j - t^{j+1})( 2 \delta^{-2} ).
\end{split} \]
Therefore 
\Be \label{lowerbddfortjtjplus1}
(t^j - t^{j+1} ) \ge \frac{ \delta ^3} { C_\Omega ( 1 + \delta^2 \| E \|_\infty^2 ) },
\Ee
so $(t^j - t^{j+1} ) \ge \delta ^3 / C_\Omega$ if we choose $\delta < \frac{1}{\| E \|_\infty} $.

Now if $t^l \ge 0$ then there are at most $ \left[ \frac{ C_\Omega }{ \delta ^3 }  \right] + 1$ numbers of $v^m \in \mathcal V^\delta_m$ for $1 \le m \le l-1$. Equivalently there are at least $l -2 - \left[ \frac{ C_\Omega }{ \delta ^3 }  \right]$ numbers of $v^{m_i} \in \mathcal V_{m_i} \setminus \mathcal V_{m_i }^\delta$. Therefore we have:
\Be \label{tailtermsmall} \begin{split}
\int_{\prod_{j=1}^{l-1} \mathcal V_j} & \textbf{1}_{\{t^{l}(t,x,v,v^1,...,v^{l-1} ) >0 \}} \, d \, \Sigma_{l-1}^{l-1}
\\ \le & \sum_{m = 1}^{  \left[ \frac{ C_\Omega }{ \delta ^3 }  \right] +1} \int_{ \left\{ \parbox{15em}{there are exactly $m$ of $v^{m_i} \in \mathcal V_{m_i}^\delta $ and $l -1 -m $ of $v^{m_i} \in \mathcal V_{m_i } \setminus \mathcal V^\delta_{m_i}$ } \right\} } \prod_{j =1}^{l-1} C_0 \mu^{1/8}(v^j) d v^j
\\ \le & \sum_{m = 1}^{  \left[ \frac{ C_\Omega }{ \delta ^3 }  \right]+1} {l-1  \choose m} \left\{ \int_{\mathcal V} C_0 \mu^{1/8} (v) dv \right\}^m \left\{ \int_{\mathcal V \setminus \mathcal V^\delta} C_0 \mu^{1/8} (v) dv \right\}^{l-1-m}
\\ \le & \left(  \left[ \frac{ C_\Omega }{ \delta ^3 }  \right] + 1 \right) ( l -1)^{ \left[ \frac{ C_\Omega }{ \delta ^3 }  \right] + 1} ( \delta)^{l - 2 -  \left[ \frac{ C_\Omega }{ \delta ^3 }  \right] } \left\{ \int_{\mathcal V} C_0 \mu^{1/8} (v) dv \right\}^{ \left[ \frac{ C_\Omega }{ \delta ^3 }  \right]+1}
\\ \le & C \delta^{l/2} \le C (\frac{1}{2})^{l},
\end{split} \Ee

if $l \gg 1$, say $ l = 2 \left(  \left[ \frac{ C_\Omega }{ \delta ^3 }  \right] +1 \right) ^2$.

\end{proof}

\begin{proof} [Proof of Theorem \ref{C1linearthm}]
By the Duhamel's formulation, we use \ref{seqforc1fixedpotential} to estimate $|e^{-\varpi \langle v \rangle t  } \alpha \partial f^{m+1}|$ along the characteristic in a bulk,
then from (\ref{genlemma1}), (\ref{C1trajectoryt1less0}), (\ref{C1trajectory}), and (\ref{trajexpansionendterm}) we can carry the same argument as in the proof of (\ref{c1bddsequence}) to get
\Be \label{W1inftyestimateexternalp}
\sup_{m} \sup_{0 \le t \le T}  \|  e^{-\varpi  \langle v \rangle t } \alpha  \partial  f^{m} (t,x,v) \|_\infty \lesssim  P(\| e^{\theta |v|^2} f_0\|_\infty ) + \|  \alpha \partial f_0 \|_\infty  < \infty.
\Ee
Then by passing the limit and the weak-$*$ lower-semi continuity of $L^\infty$, we conclude (\ref{weightedW1inftyforexternalp}).

Now we consider the continuity of $e^{-\varpi \langle v \rangle t  } \alpha \partial f$. From the explicit formulas of $\partial f^m$ from (\ref{gradientfexpalongtraj}) and the assumption that $\alpha \nabla f_0 \in C^0$, we have $e^{-\varpi \langle v \rangle t  } \alpha \partial f^m \in C^0([0,T] \times (\bar \Omega \times \mathbb R^3 ) \setminus \gamma_0 ) $. Now since $e^{-\varpi \langle v \rangle t  } \alpha[ \partial f^{m+1} - \partial f^m ]$ satisfies
\begin{equation} \label{} \begin{split}
 \bigg \{ & \partial_t + v\cdot \nabla_x  + E \cdot \nabla_v + \nu(\sqrt \mu ( f^m - f^{m-1} ))
 \\& - \frac{v}{2} \cdot \nabla E   + \varpi \langle v \rangle + \varpi \frac{v}{\langle v \rangle} \cdot E t  -\alpha ^{-1}( \partial_t \alpha + v \cdot \nabla_x \alpha + E \cdot \nabla_v \alpha ) \bigg\} (  e^{-\varpi \langle v \rangle t  } \alpha ( \partial f^{m+1} - \alpha f^m))
\\ = &   e^{-\varpi \langle v \rangle t}  \alpha ( \mathcal G^m -\mathcal G^{m-1}).
\end{split} \end{equation}
We can follow the $W^{1,\infty}$ estimate from (\ref{W1inftyestimateexternalp}) for $e^{-\varpi \langle v \rangle t  } \alpha[ \partial f^{m+1} - \partial f^m ]$ to show that $e^{-\varpi  \langle v \rangle t } \alpha  \partial  f^{m}$ is a Cauchy sequence in $L^\infty$. Thus $e^{-\varpi  \langle v \rangle t } \alpha  \partial  f^{m} \rightarrow e^{-\varpi  \langle v \rangle t } \alpha  \partial  f^{} $ strongly in $L^\infty$ so that $e^{-\varpi  \langle v \rangle t } \alpha  \partial  f^{} \in C^0([0,T] \times (\bar \Omega \times \mathbb R^3 ) \setminus \gamma_0 )  $.

\end{proof}

\bigskip

\section{Weighted $W^{1,\infty}$ estimate for the Vlasov-Poisson-Boltzmann equation}

In this chapter we construct the local-in-time weighted $W^{1,\infty}$ solution of the system (\ref{Bextfield1}), (\ref{VPB2}), (\ref{VPB3}).



Let $f^0 = \sqrt \mu$. We start with the sequence for $m \ge 0$
\begin{equation} \label{VPBsq1}
(\partial_t + v \cdot \nabla_x - \nabla \phi^m \cdot \nabla_v  + \frac{v}{2} \cdot \nabla \phi^m + \nu( \sqrt \mu f^m) ) f^{m+1} = \Gamma_{\text{gain}} (f^m,f^m),
\end{equation}
\begin{equation} \label{VPBsq2}
\phi^m(t,x) = \phi_{F^m}(t,x) + \phi_E(t,x), \text{  } \frac{\p \phi_E}{\p n } > C_E > 0 \text{ on } \p \Omega,
\end{equation}
\begin{equation} \label{VPBsq3}
-\Delta_x \phi_{F^m}(t,x) = \int_{\mathbb R^3 } \sqrt \mu f^m dv - \rho_0, \,\,  \frac{\partial \phi_{F^m}}{\partial n } = 0 \text{  on } \partial \Omega,
\end{equation}
with the initial data $f^m(0,x,v) = f_0(x,v)$, and boundary condition for all $(x,v) \in \gamma_-$ be
\[
\begin{split}
f^1(t,x,v) & =  c_\mu \sqrt{\mu(v)} \int_{n\cdot u >0 } f_0(x,v) \sqrt{\mu(u)} (n(x) \cdot u ) du,
\\ f^{m+1}(t,x,v) & = c_\mu \sqrt{\mu(v)} \int_{n\cdot u >0 } f^m(t,x,v) \sqrt{\mu(u)} (n(x) \cdot u ) du, \, m \ge 1.
\end{split}
\]
Now let $\partial \in \{ \nabla_x, \nabla_v \}$. Taking $ \partial [ (\ref{VPBsq1}) ] $ we have
\begin{equation} \begin{split} \label{traneqforpartialf}
& (\partial_t + v \cdot \nabla_x -\nabla \phi^m \cdot \nabla_v + \frac{v}{2} \cdot  \nabla \phi^m + \nu(\sqrt \mu f^m) ) \partial f^{m+1} 
\\ = & \partial \Gamma_{gain} (f^m,f^m) - \partial v \cdot \nabla_x f^{m+1} + \partial \nabla \phi^m \cdot \nabla_v f^{m+1} - \partial ( \frac{v}{2} \cdot \nabla \phi^m ) f^{m+1} - \partial (\nu (\sqrt \mu f^m ) ) f^{m+1}  \\ : = & \mathcal G^m.
\end{split} \end{equation}
Let $X^m(s;t,x,v) ,V^m(s;t,x,v)$ be the position and velocity at time $s$ of the trajectory starting from $(t,x,v)$ corresponding to the potential $-\nabla \phi^m$. So it satisfies
\[
\frac{d X^m(s;t,x,v)}{ds} = V^m(s;t,x,v), \, \frac{d V^m(s;t,x,v)}{ds} = -\nabla \phi^m(s, X^m(s;t,x,v)).
\]
Also denote:
\[ \begin{split}
 t^1 = t - \tb(t,x,v), x^1 =  X^m(t^1;t,x,v), \vb^0 = V^m( t^1;t,x,v), \text{ and }  v^1 \in \mathbb R^3 \text{ with } n(x^1 ) \cdot v^1 > 0,
\end{split} \]
and inductively for $k \ge 1$,
\[ \begin{split}
&  t^{k + 1 } = t^k - \tb(t^k,x^k,v^k), x^{k+1} = X^{m-(k-1)}(t^{k+1};t^k,x^k,v^k), 
\\ & \vb^k =V^{m-k}(t^{k+1};t^k,x^k,v^k),  \text{ and } v^{k+1} \in \mathbb R^3 \text{ with } n(x^{k+1}) \cdot v^{k+1} > 0.
\end{split} \]
Before the local existence let's first prove the following lemma:

\begin{lemma}[]
If $(f,\phi_F)$ solves (\ref{VPB3}), then
\begin{equation} \label{linfinitybdofpotential}
\| \phi_F(t) \|_{C^{1,1-\delta}} \lesssim_{\delta,\Omega} \| e^{\theta |v|^2 } f(t) \|_\infty, \text{ for any } 0< \delta <1,
\end{equation}
and
\begin{equation} \label{c2bdofpotential}
\| \nabla ^2 \phi_F(t) \|_\infty \lesssim \| e^{\theta |v|^2 } f(t) \|_\infty + \| e^{-\varpi \langle v \rangle t } \alpha \nabla_{x} f (t) \|_\infty.
\end{equation}
\end{lemma}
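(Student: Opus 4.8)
The plan is to prove the two bounds by standard elliptic regularity estimates applied to the Poisson equation \eqref{VPB3} with Neumann data, after establishing the relevant $L^p(\Omega)$ bounds for the source term $\int_{\mathbb R^3} \sqrt\mu f \, dv - \rho_0$ and, for the second bound, for $\nabla_x$ of the source. First I would note that $\rho_0$ is a constant and that the compatibility condition $\int_\Omega (\int_{\mathbb R^3} \sqrt\mu f \, dv - \rho_0) \, dx = 0$ (needed for solvability of the Neumann problem) follows from conservation of mass for the Boltzmann dynamics; this is not really part of the analytic content and can be invoked. The source term is controlled pointwise in $x$ by
\[
\Big| \int_{\mathbb R^3} \sqrt\mu f(t,x,v)\, dv \Big| \le \| e^{\theta|v|^2} f(t) \|_\infty \int_{\mathbb R^3} e^{-\theta|v|^2} \sqrt{\mu(v)}\, dv \lesssim \| e^{\theta|v|^2} f(t) \|_\infty,
\]
since $0<\theta<1/4$, so $\| \int_{\mathbb R^3} \sqrt\mu f(t) \, dv \|_{L^\infty(\Omega)} \lesssim \| e^{\theta|v|^2} f(t)\|_\infty$, hence the source lies in $L^\infty(\Omega) \subset L^p(\Omega)$ for every $p<\infty$ with norm bounded by $\| e^{\theta|v|^2} f(t)\|_\infty$ (up to $|\Omega|$-dependent constants and $\rho_0$, which we absorb; alternatively subtract the mean, which does not change $\nabla\phi_F$ or $\nabla^2\phi_F$).

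For \eqref{linfinitybdofpotential}: apply $W^{2,p}$ elliptic regularity for the Neumann problem on the $C^2$ (indeed $C^3$) bounded domain $\Omega$ to get $\| \phi_F(t) \|_{W^{2,p}(\Omega)} \lesssim_{p,\Omega} \| \int \sqrt\mu f \, dv \|_{L^p(\Omega)} \lesssim_{p,\Omega} \| e^{\theta|v|^2} f(t)\|_\infty$ (after normalizing $\phi_F$ to have zero mean, which we may since only derivatives appear). Choosing $p = 3/\delta > 3$ and using the Morrey embedding $W^{2,p}(\Omega) \hookrightarrow C^{1,1-3/p}(\bar\Omega) = C^{1,1-\delta}(\bar\Omega)$ yields $\| \phi_F(t) \|_{C^{1,1-\delta}} \lesssim_{\delta,\Omega} \| e^{\theta|v|^2} f(t)\|_\infty$, which is \eqref{linfinitybdofpotential}.

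For \eqref{c2bdofpotential}: here the borderline nature of Schauder theory is the crux, as flagged in the introduction — $\int \sqrt\mu f\, dv$ being merely bounded does not give $\nabla^2\phi_F \in L^\infty$. The fix is to differentiate: $-\Delta_x (\partial_{x_i} \phi_F) = \partial_{x_i} \int_{\mathbb R^3} \sqrt\mu f \, dv = \int_{\mathbb R^3} \sqrt\mu \, \partial_{x_i} f \, dv$, with an induced Neumann-type boundary condition. Then I would use the key observation recorded in the introduction: for $p>3$,
\[
\Big\| \int_{\mathbb R^3} \sqrt\mu \, \nabla_x f(t)\, dv \Big\|_{L^p(\Omega)} \le \big\| e^{-\varpi\langle v\rangle t} \alpha \nabla_x f(t) \big\|_\infty \Big\| \int_{\mathbb R^3} e^{\varpi\langle v\rangle t} \sqrt\mu \, \tfrac{1}{\alpha}\, dv \Big\|_{L^p(\Omega)},
\]
and the finiteness $\| \int_{\mathbb R^3} e^{\varpi\langle v\rangle t} \sqrt\mu \frac1\alpha \, dv \|_{L^p(\Omega)} < \infty$ for $p > 3$ (this is exactly estimate \eqref{int1alphaLpbdd} referenced later, coming from Lemma \ref{int1overalphabetainu} with $\beta=1$ integrated against the $L^p_x$ norm; the gain of $|\xi(x)|^{-(\beta-1)/2} = 1$ for $\beta=1$ means no spatial singularity, so the bound is uniform). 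Applying $W^{2,p}$ regularity to the equation for $\partial_{x_i}\phi_F$ then gives $\| \nabla^2 \phi_F(t) \|_{W^{1,p}(\Omega)} \lesssim \| \int \sqrt\mu \nabla_x f\, dv\|_{L^p(\Omega)} + \| \int \sqrt\mu f\, dv\|_{L^p(\Omega)}$ (the second term from the boundary data / lower order terms), and with $p>3$ the Morrey embedding $W^{1,p} \hookrightarrow C^{0,1-3/p} \hookrightarrow L^\infty$ yields $\| \nabla^2 \phi_F(t)\|_\infty \lesssim \| e^{-\varpi\langle v\rangle t}\alpha\nabla_x f(t)\|_\infty + \| e^{\theta|v|^2} f(t)\|_\infty$, which is \eqref{c2bdofpotential}. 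Note only $\nabla_x f$ enters, not $\nabla_v f$, because the $v$-integral of $\sqrt\mu \nabla_v f$ can be integrated by parts in $v$ to fall on $\sqrt\mu$, which is harmless; I would remark on this.

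The main obstacle is the careful handling of the Neumann boundary conditions when differentiating the Poisson equation in $x$ — the condition $\partial_{x_i}\phi_F$ satisfies on $\partial\Omega$ is not simply $\partial_n(\partial_{x_i}\phi_F) = 0$ but involves the curvature of $\partial\Omega$ and tangential derivatives of $\phi_F$, producing lower-order terms on the boundary that must be absorbed using \eqref{linfinitybdofpotential}. A clean way to sidestep this is to write $\phi_F$ via the Neumann Green's function for $\Omega$ and estimate $\nabla^3$ of the resulting potential directly, or to use the fact that the $W^{2,p}$ estimate with $L^p$ right-hand side plus zero Neumann data already gives $\nabla^2\phi_F \in W^{1,p}$ once the right-hand side is in $W^{1,p}$ and the domain is $C^{2,1}$ — which it is, being $C^3$. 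Either route is routine given the stated ingredient $\| \int e^{\varpi\langle v\rangle t}\sqrt\mu\frac1\alpha dv\|_{L^p} < \infty$, so I would cite the standard elliptic reference and the earlier estimate rather than reprove them.
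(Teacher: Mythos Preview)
Your proof of \eqref{linfinitybdofpotential} is correct and identical to the paper's: $W^{2,p}$ elliptic regularity plus Morrey with $p=3/\delta$.

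For \eqref{c2bdofpotential} your route differs from the paper's. The paper does \emph{not} differentiate the Poisson equation; instead it applies the Schauder estimate directly,
\[
\|\nabla^2\phi_F\|_\infty \le \|\phi_F\|_{C^{2,1-3/p}} \lesssim \Big\|\int_{\mathbb R^3}\sqrt\mu f\,dv\Big\|_{C^{0,1-3/p}} \lesssim \Big\|\int_{\mathbb R^3}\sqrt\mu f\,dv\Big\|_{W^{1,p}(\Omega)},
\]
the last step by Morrey with $p>3$. This avoids the boundary-condition headache you flagged for $\partial_{x_i}\phi_F$: no differentiated equation, no curvature terms, no $W^{3,p}$ regularity needed. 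Your approach would work in principle, but the paper's is cleaner.

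There is, however, a genuine error in your justification of the key ingredient $\big\|\int e^{\varpi\langle v\rangle t}\sqrt\mu\,\alpha^{-1}\,dv\big\|_{L^p(\Omega)}<\infty$. You invoke Lemma~\ref{int1overalphabetainu} at $\beta=1$ and claim ``no spatial singularity, so the bound is uniform''. This is wrong on two counts: the lemma requires $1<\beta<3$, and more importantly the integral $\int e^{-c|v|^2}\alpha(t,x,v)^{-1}\,dv$ is \emph{not} uniformly bounded in $x$ --- at $x\in\partial\Omega$ one has $\alpha\sim|v\cdot n(x)|$, and $\int e^{-c|v|^2}|v_n|^{-1}\,dv$ diverges logarithmically. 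The paper's argument is to write $\alpha^{-1}\lesssim\alpha^{-\beta}+1$ for some $\beta>1$, apply the lemma to get $\int e^{-c|v|^2}\alpha^{-\beta}\,dv\lesssim|\xi(x)|^{-(\beta-1)/2}$, and then observe that this mild power singularity is $L^p_x$-integrable near $\partial\Omega$ provided $\beta<1+2/p$ (which is compatible with $\beta>1$ for any finite $p$). You should replace your $\beta=1$ reasoning with this.
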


\begin{proof}
For any $p>3$, from Morrey inequality and elliptic estimate we have
\[
\| \phi_F(t) \|_{C^{1,1-3/p}} \lesssim_{p,\Omega} \| \phi_F(t) \|_{W^{2,p} (\Omega) } \lesssim \| \int_{\mathbb R^3 } f (t,x,v) \sqrt{\mu(v) } dv - \rho_0 \|_{L^p (\Omega) } \lesssim \| e^{\theta |v|^2} f(t) \|_\infty.
\]
Let $p = 3/\delta$ we conclude (\ref{linfinitybdofpotential}). 

Next we show (\ref{c2bdofpotential}).
By Schauder estimate, we have, for $p >3$ and $\Omega \subset \mathbb R^3$,
\[
\| \nabla^2 \phi_F(t) \|_\infty \le \| \phi_F \|_{C^{2, 1 - \frac{3}{p}}} \lesssim_{p,\Omega} \| \int_{\mathbb R^3} f (t) \sqrt \mu dv \|_{C^{0, 1 - \frac{3}{p}}}.
\]
Then by Morrey inequality, $W^{1,p} \subset C^{0, 1 -\frac{3}{p}} $ with $p >3$ for a domain $\Omega \subset \mathbb R^3$ with a smooth boundary $\partial \Omega$, we derive
\[ \begin{split}
 \| & \int_{\mathbb R^3} f (t) \sqrt \mu dv \|_{C^{0, 1 - \frac{3}{p}}} 
 \\ & \lesssim  \| \int_{\mathbb R^3} f (t) \sqrt \mu dv \|_{W^{1,p} }
 \\ & \lesssim \| e^{\theta |v|^2 } f(t) \|_\infty (\int_{\mathbb R^3 } \sqrt \mu e^{-\theta |v|^2 } dv) + \| \int_{\mathbb R^3} \nabla_x f(t) \sqrt \mu dv \|_{L^p(\Omega ) }
 \\ & \lesssim  \| e^{\theta |v|^2 } f(t) \|_\infty +  \| e^{-\varpi \langle v \rangle t } \alpha \nabla_{x} f (t) \|_\infty \| \int_{\mathbb R^3 } e^{ \varpi \langle v \rangle t } \sqrt \mu \frac{1}{\alpha} dv \|_{L^p(\Omega ) }.
\end{split} \]
Note that $e^{ \varpi \langle v \rangle t } \sqrt \mu  \le e^{-\frac{1}{8} |v|^2 } $ for $|v| \gg 1 $. So we only need to show that
\begin{equation} \label{int1alphaLpbdd}
\| \int_{\mathbb R^3 } e^{-\frac{1}{8} |v|^2 }  \frac{1}{\alpha} dv \|_{L^p(\Omega ) } < \infty.
\end{equation}
Since $\frac{1}{\alpha} \lesssim \frac{1}{\alpha^\beta} +1 $ for $\beta > 1 $. It suffices to show that $\| \int_{\mathbb R^3 } e^{-\frac{1}{8} |v|^2 }  \frac{1}{\alpha^\beta} dv \|_{L^p(\Omega ) } < \infty$ for some $\beta > 1$.

Since $\alpha$ is bounded from below when $x$ is away from the boundary of $\Omega$, it suffices to only consider the case when $x$ is close enough to $\partial \Omega$. From the computation in (\ref{frac1alphaint}), we get
\begin{equation} \label{int1overalphadv}
 \int_{\mathbb R^3 } e^{-\frac{1}{8} |v|^2 }  \frac{1}{\alpha^\beta} dv \lesssim \frac{1}{(\xi(x)^2 - 2 E(t,\bar x ) \cdot \nabla \xi(\bar x) \xi(x) )^{\frac{\beta -1 }{2}}} \lesssim \frac{1}{ |\xi(x) | ^{\frac{\beta -1 }{2}}}.
\end{equation}
So it suffices to show
\begin{equation} \label{int1overxi}
\int_{d(x,\partial \Omega) \ll 1 } \frac{1}{ |\xi(x) | ^{\frac{(\beta -1 )p}{2}}} dx < \infty.
\end{equation}
Since $\xi(x) = \xi(\bar x ) + \nabla \xi(x')(x - \bar x ) =  \nabla \xi(x')(x - \bar x )$ for some $x'$ in between $x$ and $\bar x$. And $|\nabla \xi(x) | > c $ for $d(x,\partial \Omega) \ll 1 $ by our assumption on $\xi$, we have
\[
|\xi(x)| = | \nabla \xi(x') | | x- \bar x | \cos (\theta) > c | x- \bar x | \cos (\theta),
\]
where $\theta$ is the angle between the vectors $\nabla \xi (x') $ and $x - \bar x$. And since $\bar x $ satisfies $(x - \bar x)^2 = \min_{ \{ y \in \mathbb R^3 : \xi(y) = 0 \}} (x-y)^2 $. From lagrange multiplier we have the vectors $x -\bar x $ and $\nabla \xi(\bar x )$ are parallel to each other. Therefore $\theta$ is the angle in between $\nabla \xi (x') $ and $\nabla \xi (\bar x)$. And since $\xi$ is $C^2$, we have $\cos(\theta) > \frac{1}{2}$ once $d(x, \partial \Omega) \ll 1$.
Thus
\[
\int_{d(x,\partial \Omega) \ll 1 } \frac{1}{ |\xi(x) | ^{\frac{(\beta -1 )p}{2}}} dx \lesssim \int_{d(x,\partial \Omega) \ll 1 } \frac{1}{ |x - \bar x | ^{\frac{(\beta -1 )p}{2}}} dx.
\]
Now from (\ref{eta}), for any $p \in \partial \Omega$ we can locally define the parametrization:
\[ \begin{split}
\eta_p: \{( x_{\|,1}, x_{\|,2} , x_n ) \in \mathbb R^3 : x_n > 0 \} \cap B(0; \delta_1 ) \to \Omega \cap B(p;\delta_2 );
\\ ( x_{\|,1}, x_{\|,2} , x_n ) \mapsto \eta_p( x_{\|,1}, x_{\|,2} , x_n ),
\\ \eta_p ( x_{\|,1}, x_{\|,2} , x_n ) = \eta_p( x_{\|,1}, x_{\|,2} , 0 ) + x_n [-n (\eta_p( x_{\|,1}, x_{\|,2} , 0))],
\end{split} \]
with $\eta_p( x_{\|,1}, x_{\|,2} , 0 ) \in \partial \Omega$, for sufficiently small $\delta_1,\delta_2 \ll 1$. Then
\[
\int_{\Omega \cap B(p;\delta_2 ) }  \frac{1}{|x - \bar x  | ^{\frac{(\beta -1 )p}{2}}} dx \lesssim \int_{ |x_n | < \delta_1 } \frac{1}{|x_n|^{\frac{(\beta -1 )p}{2}}} d_{x_n} < \infty,
\]
if we pick $\beta < \frac{2}{p}+1 $. And since $\partial \Omega$ is compact, we can get (\ref{int1overxi}) by covering $\partial \Omega$ with finitely many such balls. And therefore we get (\ref{int1alphaLpbdd}).

\end{proof}

\begin{proof}[proof of Theorem \ref{WlinftyVPBthm}]


\textit{Step 1.} For the sequence (\ref{VPBsq1}), we claim that there exists a $C_1 \gg 1$ large enough and $0< T \ll 1$ small enough such that if we let $\theta' = \theta - T$,

\begin{equation} \label{uniformmlinftybdd}
\sup_m \sup_{ 0 \le t \le T}  \|  e^{\theta' |v|^2} f^m(t,x,v) \|_\infty \le \sup_m \sup_{ 0 \le t \le T}  \|  e^{(\theta-t) |v|^2} f^m(t,x,v) \|_\infty <C_1 \| e^{\theta |v|^2 } f_0 \|_\infty .
\end{equation}
Suppose (\ref{uniformmlinftybdd}) is true for all $0 \le i \le m$. 
Then from (\ref{linfinitybdofpotential}) we have
\begin{equation}
\sup_m \sup_{0 \le t \le T} \| \nabla \phi^m (t) \|_\infty < C_\Omega C_1 \| e^{\theta |v|^2 } f_0 \|_\infty< M.
\end{equation}
Then if we choose 
\Be \label{choiceofT}
T< \frac{1}{ 2(M^2 + M + 1)},
\Ee
we have $| V^i(s;t,x,v) | \le |v| + t \| \nabla \phi^i  \|_\infty < |v| + 1 $, and 
\begin{equation} \label{expintvdotphi}
 \int_0^t |\frac{V(s)}{2} \cdot \nabla \phi ^m (s)  | ds  < M \int_0^t( |v| + t M) ds  < t M |v| + t^2 M^2 < \langle v \rangle,
\end{equation}
and from (\ref{intVbdd})
\begin{equation} \label{expintvdotphi2}
 \int_0^t |\frac{V(s)}{2} \cdot \nabla \phi ^m (s)  | ds <  M  \int_0^t |V(s) | ds < 5Mt(M + D ) + 4MD < 5tM^2 + 9MD< C_\Omega M ,
\end{equation}
for $0< t < T$. Now from (\ref{expintvdotphi}), (\ref{expintvdotphi2}) and following the argument in estimating along the backward trajectories from Lemma \ref{expandtrajl} we have for $1 \le l \le m$, if $t^1 < 0$, then
\begin{equation} \label{inftytraj1}
\begin{split}
e^{(\theta - t)  |v|^2 } | & f^{m+1} (t,x,v) |
\\  \le & e^{(\theta - t) |v|^2 }e^{ t M |v| + t^2 M^2  } |f^{m+1} (0,X^m(0), V^m(0)) | + e^{(\theta - t) |v|^2 } e^{ C_\Omega M } \int_0^t \Gamma_{\text{gain}}(f^m, f^m) (s, X^m(s) , V^m(s)) ds.
 \\ \le &e^{\theta |v|^2 } e^{t(M|v| - |v|^2) + 1}  |f^{m+1} (0,X^m(0), V^m(0)) | + e^{(\theta - t) |v|^2 } e^{ C_\Omega M } \int_0^t \Gamma_{\text{gain}}(f^m, f^m) (s, X^m(s) , V^m(s)) ds
  \\ \le &e^{\theta |v|^2 } e^{t\frac{M^2}{4} + 1}  |f^{m+1} (0,X^m(0), V^m(0)) | + e^{(\theta - t) |v|^2 } e^{ C_\Omega M } \int_0^t \Gamma_{\text{gain}}(f^m, f^m) (s, X^m(s) , V^m(s)) ds
  \\ \lesssim & e^{\theta |v|^2 }   |f^{m+1} (0,X^m(0), V^m(0)) | + e^{(\theta - t) |v|^2 } e^{ C_\Omega M } \int_0^t \Gamma_{\text{gain}}(f^m, f^m) (s, X^m(s) , V^m(s)) ds.
 \end{split}
\end{equation}
If $t^1 > 0$, then
\begin{equation} \label{inftytraj2}
\begin{split}
&  e^{(\theta - t) |v|^2 }  | f^{m+1} (t,x,v) |
\\ \le &e^{(\theta - t) |v|^2 }  e^{C_\Omega M } \int_{t^1}^t \Gamma_{\text{gain}}(f^m, f^m) (s, X^m(s), V^m(s) ) ds
\\ & + e^{(\theta - t) |v|^2 } e^{\langle v \rangle}  c_\mu \sqrt {\mu (\vb^0) } \int_{\prod_{j=1}^{l-1} \mathcal V_j} \sum_{i=1}^{l-1} \textbf{1}_{\{t^{i+1} < 0 < t^i \}} e^{(\theta - t^i) |v^i|^2 } | f^{m+1-i}  (0,X^{m-i}(0), V^{m-i}(0)) |    \, d \Sigma_{i}^{l-1}
\\ & +   e^{(\theta - t) |v|^2 } e^{\langle v \rangle}  c_\mu \sqrt {\mu (\vb^0) }  \int_{\prod_{j=1}^{l-1} \mathcal V_j} \sum_{i=1}^{l-1} \textbf{1}_{\{t^{i+1} < 0 < t^i \}}  e^{(\theta - t^i) |v^i|^2 } \int_0^{t^i} \Gamma_{\text{gain}}(f^{m-i}, f^{m-i} ) (s, X^{m-i}(s), V^{m-i}(s) ) ds  \, d \Sigma_{i}^{l-1}
\\ & +  e^{(\theta - t) |v|^2 } e^{\langle v \rangle}  c_\mu \sqrt {\mu (\vb^0) }  \int_{\prod_{j=1}^{l-1} \mathcal V_j} \sum_{i=1}^{l-1} \textbf{1}_{\{t^{i+1} > 0 \}}  e^{(\theta - t^i) |v^i|^2 } \int_{t^{i+1}}^{t^i} \Gamma_{\text{gain}}(f^{m-i}, f^{m-i}) (s, X^{m-i}(s), V^{m-i}(s) ) ds  \, d \Sigma_{i}^{l-1}
\\ & + e^{(\theta - t) |v|^2 }  e^{\langle v \rangle} c_\mu  \sqrt {\mu (\vb^0) }   \int_{\prod_{j=1}^{l-1} \mathcal V_j}  \textbf{1}_{\{t^{l} > 0 \}} e^{(\theta - t^{l-1} ) |v^{l-1} |^2}   f^{m+1-(l-1)} (t^l,x^l,\vb^{l-1}) | d \Sigma_{l-1}^{l-1},
\end{split} \end{equation}
where  $\mathcal V_j = \{ v^j \in \mathbb R^3: n(x^j ) \cdot v^j > 0 \}$,
and 
\[ \begin{split}
d \Sigma_i^{l-1} = & \{\prod_{j=i+1}^{l-1}  \mu(v^j) c_\mu |n(x^j ) \cdot v^j | dv^j \} 
\{  e^{ \langle v^i \rangle } \sqrt{ \mu (v^i)} \langle v^i \rangle e^{-(\theta - t^i ) |v^i|^2 }  d v^i \}
\\ & \{\prod_{j=1}^{i-1}  e^{ \langle v^j \rangle  }  \sqrt {\mu (v^j )} \langle v^j  \rangle d v^j \},
\end{split} \]
where $c_\mu$ is the constant that $ \int_{\mathbb R^3 }  \mu(v^j) c_\mu |n(x^j ) \cdot v^j | dv^j = 1$.

Now we have for all $0 \le i \le l-1$,
\[ \begin{split}
& e  ^{(\theta - t^i)|v^i|^2}  \int_0^{t^i}  \Gamma_{\text{gain}}(f^{m-i}, f^{m-i} ) (s, X^{m-i}(s), V^{m-i}(s) ) ds 
\\ =&  e^{(\theta - t^i)|v^i|^2} \int_0^{t^i}  \int_{\mathbb R^3 } \int_{\mathbb S^2} |V^{m-i}(s) - u|^{\kappa} q_0 (\frac{ V^{m-i}(s) -u}{ |V^{m-i}(s) -u| } \cdot w ) \sqrt{\mu(u) } f^{m-i} (u')f^{m-i} (V^{m-i}(s)') d\omega du ds
\\ \le & \left(\sup_{0 \le s \le t } \|  e^{ (\theta - s ) |v|^2} f^{m-i}  (s) \|_\infty \right)^2 \times  \int_0^{t^i}  \int_{\mathbb R^3 }   |V^{m-i}(s) - u|^{\kappa}   \sqrt{\mu(u)} e^{(\theta - t^i)|v^i|^2}  e^{- ( \theta - s )  |u'|^2 } e^{-(\theta - s ) |V^{m-i}(s)' | ^2 } du ds
\\ = &  \left(\sup_{0 \le s \le t } \|  e^{ (\theta - s ) |v|^2} f^{m-i}  (s) \|_\infty \right)^2 \times  \int_0^{t^i}  \int_{\mathbb R^3 }   |V^{m-i}(s) - u|^{\kappa}  \sqrt{\mu(u)} e^{(\theta - t^i)|v^i|^2}  e^{- ( \theta - s )  |u|^2 } e^{-(\theta - s ) |V^{m-i}(s) | ^2 } du ds
\\ \lesssim & \left(\sup_{0 \le s \le t } \|  e^{ (\theta - s ) |v|^2} f^{m-i}  (s) \|_\infty \right)^2 \times  \int_0^{t^i}  \int_{\mathbb R^3 }   |V^{m-i}(s) - u|^{\kappa}  \sqrt{\mu(u)} e^{(s - t^i)|V^{m-i}(s)|^2}  e^{- ( \theta - s )  |u|^2 }du ds
\\ \lesssim & \left(\sup_{0 \le s \le t } \|  e^{ (\theta - s ) |v|^2} f^{m-i}  (s) \|_\infty \right)^2 \times \int_0^{t^i} e^{-(t^i -s ) |V^{m-i}(s) | ^2 } \langle V^{m-i}(s) \rangle \{ \textbf{1}_{|v^i| > N } + \textbf{1}_{|v^i| \le N } \} ds
\\ \lesssim &  \left(\sup_{0 \le s \le t } \|  e^{ (\theta - s ) |v|^2} f^{m-i}  (s) \|_\infty \right)^2 \times \left( \frac{1}{N} + 2N t\right) < \epsilon  \left(\sup_{0 \le s \le t } \|  e^{ (\theta - s ) |v|^2} f^{m-i}  (s) \|_\infty \right)^2.
\end{split} \]
If we choose sufficiently large $N \gg 1$ an then small $0 < T \ll \theta$.
Where we have used $\frac{|v^i|}{2} \le |V^{m-i}(s) |  \le 2 |v^i| $, for $ |v^i| > N \gg 1 $, and $|V^{m-i}(s) | \le 2N$ if $|v^i| \le N$. And that $e^{(\theta -  t^i )|v^i|^2}  \le e^{\theta t ^2 M^2 } e^{(\theta - t^i ) |V^{m-i}(s) |^2}  < e^\theta e^{(\theta - t^i ) |V^{m-i}(s) |^2} $. And that $|u'|^2 + |V^{m-i}(s) '| ^2 = |u|^2 + |V^{m-i}(s) | ^2$.

And by the same argument we have
\[ \begin{split}
& e  ^{(\theta - t^i)|v^i|^2}  \int_{t^{i+1}}^{t^i}  \Gamma_{\text{gain}}(f^{m-i}, f^{m-i} ) (s, X^{m-i}(s), V^{m-i}(s) ) ds 
\lesssim   \left(\sup_{0 \le s \le t } \|  e^{ (\theta - s ) |v|^2} f^{m-i}  (s) \|_\infty \right)^2 \times \left( \frac{1}{N} + 2N t\right).
\end{split} \]

Now from (\ref{lowerbddfortjtjplus1}), we have $t^j - t^{j+1} \ge \frac{\delta ^3 }{C_\Omega (1 + \delta^2 \| E \|_\infty^2 )}$ for $v^j \in \mathcal V_j^\delta$. But from (\ref{choiceofT}), if $t^l \ge 0$ then there are at most $ \left[ \frac{ C_\Omega }{ \delta ^3 }  \right] + 1$ numbers of $v^m \in \mathcal V^\delta_m$ for $1 \le m \le l-1$. Thus
for $ l > 2 \left(  \left[ \frac{ C_\Omega }{ \delta ^3 }  \right] +1 \right) ^2$, we have from (\ref{tailtermsmall}) that
\[
 \int_{\prod_{j=1}^{l-1} \mathcal V_j}  \textbf{1}_{\{t^{l} > 0 \}} e^{(\theta - t^{l-1} ) |v^{l-1} |^2}  d \Sigma_{l-1}^{l-1} \lesssim_{\Omega, M} \left(\frac{1}{2}\right)^l.
\]

Therefore from the above estimates we have for (\ref{inftytraj1}) and (\ref{inftytraj2}) the following estimate:
\[
\begin{split}
&  e^{(\theta - t) |v|^2 }  | f^{m+1} (t,x,v) |
\\ \le &  l C  ^l \| e^{\theta |v|^2 } f_0 \| _\infty + e^{C_\Omega M } l C ^l \left( \max_{1 \le i \le l-1} \sup_{0 \le s \le t } \|  e^{ (\theta - s ) |v|^2} f^{m+1-i}  (s) \|_\infty \right)^2   \left( \frac{8N}{N^2} + 2N t\right) 
\\ & + C \max_{1 \le i \le l-1} \sup_{0 \le s \le t } \|  e^{ (\theta - s ) |v|^2} f^{m+1-i}  (s) \|_\infty \left( \frac{1}{2} \right) ^l.
\end{split}
\]
We can now choose a large $l$ then large $C_1$ then large $N$ and finally small $T$ to conclude the uniform-in-m estimate
\begin{equation}
 \sup_{ 0 \le t \le T} \|  e^{\theta' |v|^2 }   f^{m+1} (t ) \|_\infty \le \sup_{ 0 \le t \le T} \|  e^{(\theta - t) |v|^2 }   f^{m+1} (t ) \|_\infty \le C_1 \| e^{\theta |v|^2} f_0 \|_\infty,
\end{equation}
with $\theta' = \theta - T$. This proves (\ref{uniformmlinftybdd}).

		\vspace{4pt}

\textit{Step 2.} \quad
We claim that there exists $0 < \theta' \ll 1$,  $\varpi \gg 1$, $T = T(\|e^{\theta |v|^2 } f_0 \|_\infty, \varpi) \ll 1 $, and a $C_1 > 0 $ such that 
\begin{equation} \begin{split} \label{c1bddsequence}
\sup_{m} \sup_{0 \le t \le T}  \| e^{\theta' |v| ^2 } e^{-\varpi  \langle v \rangle t } \alpha  \partial  f^{m} (t,x,v) \|_\infty \le C_1\left( P(\| e^{\theta |v|^2} f_0\|_\infty ) + \| e^{\theta |v|^2 } \alpha \partial f_0 \|_\infty \right) < \infty.
\end{split} \end{equation}

%
%
%

From (\ref{traneqforpartialf}) and direct computation we have
\begin{equation} \label{} \begin{split}
 \bigg \{ & \partial_t + v\cdot \nabla_x  -\nabla \phi^m \cdot \nabla_v + \nu(\sqrt \mu f^m)
 \\ & + \frac{v}{2} \cdot \nabla \phi^m  + 2 \theta' v \cdot \nabla \phi^m + \varpi \langle v \rangle - \varpi \frac{v}{\langle v \rangle} \cdot  \nabla \phi^m t  -\alpha ^{-1}( \partial_t \alpha + v \cdot \nabla_x \alpha -\nabla \phi^m \cdot \nabla_v \alpha ) \bigg\} (e^{\theta' |v|^2 }  e^{-\varpi \langle v \rangle t  } \alpha \partial f^{m+1})
\\ = &  e^{\theta' |v|^2 } e^{-\varpi \langle v \rangle t}  \alpha ( \partial_t + v \cdot \nabla_x -\nabla \phi^m \cdot \nabla_v  + \frac{v}{2} \cdot \nabla \phi^m + \nu(\sqrt \mu f ^m)) \partial f^{m+1}
\\ = & e^{\theta' |v|^2 } e^{-\varpi \langle v \rangle t }  \alpha \mathcal G^m .
\end{split} \end{equation}
From \eqref{VPBsq2}, \eqref{VPBsq3}, $\phi^m$ satisfies 
\Be \label{phimboundarycondition}
- \frac{\partial \phi^m }{ \partial n }  = - \frac{\partial \phi_{F^m} }{ \partial n } - \frac{\partial \phi_E }{\partial n } = - \frac{\partial \phi_E }{\partial n }> C_E > 0,
\Ee
 on $\partial \Omega$ for every $m$. Note that if we let $E(t,x) = -\nabla \phi^m (t,x)$ in the definition of $\alpha(t,x,v)$ in (\ref{alphadef}), we have the same $\alpha(t,x,v)$ for all $m$, as $\nabla \phi_{F^m} (t,x) \cdot \nabla \xi(x) = 0 $ for all $x \in \partial \Omega$.
Therefore by (\ref{velocitylemmaintform}) we have that
\begin{equation} \label{velocitylemmam}
\alpha^{-1} ( \partial_t \alpha + v \cdot \nabla_x \alpha -\nabla \phi^m \cdot \nabla_v \alpha )  \le \frac{C_\xi}{C_E} ( \| \nabla \phi ^m \|_\infty + \| \nabla^2 \phi^m \|_\infty)  \langle v \rangle.
\end{equation}
By our choice of $f^0$ we have $\phi ^0 = \phi_E$, thus if we choose $\varpi $ large enough, we have
\[
  \frac{v}{2} \cdot \nabla \phi^0 +  2 \theta' v \cdot \nabla \phi^0  + \varpi \langle v \rangle + \varpi \frac{v}{\langle v \rangle}  \cdot \nabla \phi^0 t     -  \alpha^{-1} ( \partial_t \alpha + v \cdot \nabla_x \alpha -\nabla \phi^0 \cdot \nabla_v \alpha )  \ge \frac{\bar \omega}{2} \langle v \rangle.
\]
Now if we let
\[
\bar \nu_{\varpi}^m : = \nu(\sqrt \mu f ) + \frac{v}{2} \cdot \nabla \phi^m  + 2 \theta' v \cdot \nabla \phi^m + \varpi \langle v \rangle + \varpi \frac{v}{\langle v \rangle}  \cdot \nabla \phi^m t     -  \alpha^{-1} ( \partial_t \alpha + v \cdot \nabla_x \alpha -\nabla \phi^m \cdot \nabla_v \alpha ),
\]
and
\[
\mathcal N^m =  e^{\theta' |v|^2 } e^{-\varpi \langle v \rangle t }  \alpha \mathcal G^m,
\]
we have
\begin{equation} \label{VPBeqpartilf}
(\partial_t + v\cdot \nabla_x  -\nabla \phi^m \cdot \nabla_v + \bar \nu_{\varpi}^m)( e^{\theta' |v|^2 } e^{-\varpi \langle v \rangle t} \alpha \partial f^{m+1}) = \mathcal N^m.
\end{equation}
Now since
\[ \begin{split}
e^{\theta ' |v|^2 } \Gamma_{gain}(\partial f^m,f^m )  = &  e^{\theta ' |v|^2 }  \int \int |v -u |^\kappa q_0 \sqrt{\mu(u)} \partial f^m(u') f^m(v') d \omega du
\\ \le & e^{\theta ' |v|^2 } \| e^{2\theta' |v|^2 } f^m \|_\infty \int \int |v -u |^\kappa q_0 \sqrt{\mu(u)} e^{\theta' |u'|^2 }\partial f^m(u')  e^{-\theta' |u'|^2 }e^{-2 \theta' |v'|^2 } d \omega du
\\ \le & \| e^{2\theta' |v|^2 } f^m \|_\infty \int \int |v -u |^\kappa q_0 \sqrt{\mu(u)} e^{\theta' |u'|^2 }\partial f^m(u')  e^{- \theta' |v'|^2 }  e^{-\theta ' |u|^2 } d \omega du
\\ \lesssim & \| e^{2\theta' |v|^2 } f^m \|_\infty \Gamma_{gain} ( e^{\theta' |v|^2} \partial f^m,   e^{- \theta ' |v|^2 })
\\ \lesssim & \| e^{2\theta' |v|^2 } f^m \|_\infty \int_{\mathbb R^3 } \frac{ e^{-C_{\theta'} |u -v | ^2 } } {|u -v |^{2 -\kappa} }  |e^{\theta ' |u|^2 } \partial f^m(t,x,u ) | du,
\end{split} \]
where we've used $|v'|^2 + |u'|^2 = |v|^2 + |u|^2 $.
Also
\[ \begin{split}
e^{\theta ' |v|^2 } \nu(\sqrt{\mu} \partial f^m ) f^{m+1} \le & \| e^{2 \theta' |v|^2 } f^{m+1} \|_\infty e^{-\theta ' |v|^2 } \nu(\sqrt{\mu} \partial f^m ) 
\\ \lesssim & \| e^{2 \theta' |v|^2 } f^{m+1} \|_\infty  \int_{\mathbb R^3 } \frac{ e^{-C_{\theta'} |u -v | ^2 } } {|u -v |^{2 -\kappa} } | \partial f^m(t,x,u ) | du.
\end{split} \]

Thus from (\ref{uniformmlinftybdd}) we have the following bound for $\mathcal N^m$:

\begin{equation} \label{Nbdd}\begin{split}
| & \mathcal   N^m(t,x,v) | 
\\ = &    e^{\theta' |v|^2 } e^{-\varpi  \langle v \rangle t}  \alpha (t,x,v)
\\ & \times  \bigg( \partial \Gamma_{gain} (f^m,f^m) - \partial v \cdot \nabla_x f^{m+1} + \partial \nabla \phi \cdot \nabla_v f^{m+1} 
+ \partial ( \frac{v}{2} \cdot \nabla \phi^m ) f^{m+1} - \partial (\nu (\sqrt \mu f^m ) ) f^{m+1}
 \bigg)
\\  \lesssim &   (1 + \| \nabla^2 \phi^m \|_\infty)  [ P(\| e^{\theta |v|^2 } f_0 \|_\infty )  + |e^{\theta' |v|^2 }  e^{-\varpi \langle v \rangle t  }  \alpha  \partial f^{m+1}(t,x,v) | ]
\\ & +  \| e^{\theta |v|^2 } f_0 \|_\infty  e^{-\varpi \langle v \rangle t }  \alpha(t,x,v) \int_{\mathbb R^3 }  \frac{ e^{-C_\theta |u -v | ^2 } } {|u -v |^{2 -\kappa} } | e^{\theta ' |u|^2 }\partial f^m(t,x,u ) | du.
\end{split} \end{equation}

We claim that there exists, $C_1 > 0 $, $\varpi \gg 1 $, and $T \ll 1$ such that if
\begin{equation} \begin{split} \label{c1bdinduction}
\bar \nu_\varpi^i & = \nu(\sqrt \mu f ) + \frac{v}{2} \cdot \nabla \phi^{i}  + 2 \theta' v \cdot \nabla \phi^{i} + \varpi \langle v \rangle  + \varpi \frac{v}{\langle v \rangle }  \cdot \nabla \phi^i t    -  \alpha^{-1} ( \partial_t \alpha + v \cdot \nabla_x \alpha -\nabla \phi^{i} \cdot \nabla_v \alpha ) \ge \frac{\varpi}{2} \langle v \rangle,
\end{split} \end{equation}
for all $ 1 \le i \le m-1$, and
\begin{equation} \label{inductionassumptionc1}
\begin{split}
\max_{0 \le i \le m} \sup_{0 \le t \le T}  \| e^{\theta' |v|^2 } e^{-\varpi\langle v \rangle t } \alpha  \partial  f^{i} (t,x,v) \|_\infty \le C_1\left( P(\| e^{\theta |v|^2} f_0\|_\infty ) + \| e^{\theta' |v|^2 } \alpha \partial f_0 \|_\infty \right) < \infty,
\end{split} \end{equation}
then
\begin{equation} \label{largeomegabar}
\begin{split}  \bar \nu_\varpi^{m} =  \nu(\sqrt \mu f ) + \frac{v}{2} \cdot \nabla \phi^m  + 2 \theta' v \cdot \nabla \phi^m + \varpi \langle v \rangle + \varpi \frac{v}{ \langle v \rangle }  \cdot \nabla \phi^m t  -  \alpha^{-1} ( \partial_t \alpha + v \cdot \nabla_x \alpha -\nabla \phi^m \cdot \nabla_v \alpha ) \ge \frac{\varpi}{2} \langle v \rangle ,
\end{split} \end{equation}
and
\begin{equation} \label{C1bddinduction}
 \sup_{0 \le t \le T}  \| e^{\theta' |v|^2 } e^{-\varpi \langle v \rangle t  } \alpha  \partial  f^{m+1} (t,x,v) \|_\infty \le C_1\left( P(\| e^{\theta |v|^2} f_0\|_\infty ) + \| e^{\theta' |v|^2 } \alpha \partial f_0 \|_\infty \right).
\end{equation}
To prove (\ref{largeomegabar}), note that from (\ref{velocitylemmam}), (\ref{c2bdofpotential}), and (\ref{inductionassumptionc1}) we have
\[ \begin{split}
& \alpha^{-1}  ( \partial_t \alpha + v \cdot \nabla_x \alpha -\nabla \phi^m \cdot \nabla_v \alpha ) 
 \\ \lesssim & ( \| \nabla \phi^m \|_\infty + \| \nabla^2 \phi^m \|_\infty ) \langle v \rangle
 \\ \lesssim  & (\| e^{\theta' |v|^2 } f^m(t) \|_\infty + \| e^{-\varpi \langle v \rangle t } \alpha \nabla_{x} f^m (t) \|_\infty)\langle v \rangle
  \lesssim (P(\| e^{\theta |v|^2} f_0\|_\infty ) + \| \alpha \partial f_0 \|_\infty)\langle v \rangle.
   \end{split} \]
   Therefore (\ref{largeomegabar}) can be achieved once we choose $\varpi \gg 1$ large enough.
   
First for $t^1 < 0$, using the Duhamel's formulation we have from (\ref{VPBeqpartilf})

\begin{equation}  \label{}
\begin{split}
 e^{\theta' |v|^2 } & e ^{-\varpi \langle v \rangle t  }  \alpha | \partial  f^{m+1} (t,x,v) |
\\   \le &  e^{ -\int_s^t \nu^m_{\varpi} (\tau, X^m(\tau), V^m(\tau) d\tau}  e^{\theta' |V^m(0)|^2 } \alpha \partial f^{m+1} (0, X^m(0) , V^m(0) ) 
\\ & + \int_0^t e^{ -\int_s^t \nu^m_{\varpi} (\tau, X^m(\tau), V^m(\tau) d\tau } \mathcal N^m(s,X^m(s), V^m(s) ) ds.
\end{split} 
\end{equation}
Thus by (\ref{Nbdd}) we have
\[ \begin{split}
 \sup_{0 \le t \le T} & \| \textbf{1}_{ \{ t^1 < 0 \}} e^{-\varpi \langle v \rangle t } e^{\theta' |v|^2 } \alpha  \partial  f^{m+1} (t,x,v) \|_\infty
\\ \le & \sup_{0 \le t \le T}  \| e^{ -\int_0^t \nu^m_{\varpi} (\tau, X^m(\tau), V^m(\tau) d\tau} e^{\theta' |V^m(0)|^2 } \alpha \partial f^{m+1} (0, X^m(0) , V^m(0) ) 
\\ & + \int_0^t e^{ -\int_s^t \nu^m_{\varpi} (\tau, X^m(\tau), V^m(\tau) d\tau } \mathcal N^m(s,X^m(s), V^m(s) ) ds \|_\infty
 \\ \le & \| e^{\theta' |v|^2 } \alpha \partial f_0 \|_\infty + T   (1 + \| \nabla^2 \phi^m \|_\infty) [ P(\| e^{\theta |v|^2} f_0 \|_\infty) +   \sup_{0 \le t \le T}  \| e^{\theta' |v|^2 }  e^{-\varpi \langle v \rangle t  } \alpha  \partial  f^{m+1} (t,x,v) \|_\infty]
 \\ & + P(\| e^{\theta |v|^2} f_0 \|_\infty)   \sup_{0 \le t \le T}  \| e^{\theta' |v|^2 } e^{-\varpi \langle v \rangle t  } \alpha  \partial  f^{m} (t,x,v) \|_\infty
 \\ & \times \int_0^t \int_{\mathbb R^3} e^{ -\int_s^t\frac{\varpi}{2} \langle V^m(\tau;t,x,v) \rangle d\tau }  \frac{e^{-\varpi  \langle V^{m}(s;t,x,v) \rangle  s }}{e^{-\varpi  \langle u \rangle s}} \frac{e^{-C_\theta |V^m(s)-u|^2 }}{|V^m(s) -u |^{2 - \kappa}  }  \frac{\alpha(s,X^m(s) ,V^m(s))}{\alpha(s,X^m(s) ,u)} du  ds.
\end{split} \]
Now since
\[
 \langle u \rangle -  \langle V^m(s;t,x,v) \rangle \le 2 \langle u - V^m(s;t,x,v) \rangle,
\]
we have
\[
 \frac{e^{-\varpi  \langle V^{m}(s;t,x,v) \rangle  s }}{e^{-\varpi  \langle u \rangle s}} e^{-C_\theta |V^m(s)-u|^2 }
\lesssim   e^{ -\frac{ C_\theta |V^m(s) - u | ^2}{2}}.
\]
Thus
\begin{equation} \label{genlemma1toget}
\begin{split}
\int_0^t & \int_{\mathbb R^3} e^{ -\int_s^t\frac{\varpi}{2} \langle V^m(\tau;t,x,v) \rangle d\tau }  \frac{e^{-\varpi  \langle V^{m}(s;t,x,v) \rangle  s }}{e^{-\varpi  \langle u \rangle s}}  \frac{e^{-C_\theta |V^m(s)-u|^2 }}{|V^m(s) -u |^{2 - \kappa}  }  \frac{\alpha(s,X^m(s) ,V^m(s))}{\alpha(s,X^m(s) ,u)} du  ds
\\ \lesssim &    \int_0^t \int_{\mathbb R^3} e^{ -\int_s^t\frac{\varpi}{2} \langle V^m(\tau;t,x,v) \rangle d\tau }  \frac{e^{-\frac{C_\theta}{2} |V^m(s)-u|^2 }}{|V^m(s) -u |^{2 - \kappa}  }  \frac{\alpha(s,X^m(s) ,V(s))}{\alpha(s,X^m(s) ,u)} du  ds.
\end{split} 
\end{equation}
Note that, for any $\beta > 1$,
\[
\frac{1}{\alpha(x, X^m(s) ,u ) } \lesssim \frac{1}{ (\alpha (x, X^m(s) , u ) )^\beta } + 1.
\]
So from (\ref{phimboundarycondition}) we can let $1 < \beta \le 2$, and apply (\ref{genlemma1}) to (\ref{genlemma1toget}) to have
\begin{equation}
\begin{split}
 \int_0^t  & \int_{\mathbb R^3} e^{ -\int_s^t\frac{\varpi}{2} \langle V^m(\tau;t,x,v) \rangle d\tau }  \frac{e^{-\varpi  \langle V^{m}(s;t,x,v) \rangle  s }}{e^{-\varpi  \langle u \rangle s}} \frac{e^{-C_\theta |V^m(s)-u|^2 }}{|V^m(s) -u |^{2 - \kappa}  }  \frac{\alpha(s,X^m(s) ,V^m(s))}{\alpha(s,X^m(s) ,u)} du  ds
\\ \lesssim &  e^{ C ( \| \nabla \phi^m \|_\infty^2 + \| \nabla^2 \phi^m \|_\infty )}   \left( \frac{ \delta^{\frac{3 - \beta}{2}} (\alpha(t,x,v) )^{3 - \beta } }{ (|v|^2 + 1 )^{\frac{3 -\beta}{2}} } + \frac{ (|v| + 1 )^{\beta - 1} (\alpha(t,x,v) )^{2 - \beta }}{ \delta ^{\beta - 1} \varpi \langle v \rangle  }   \right)
\\ \lesssim &  e^{ C ( \| \nabla \phi^m \|_\infty^2 + \| \nabla^2 \phi^m \|_\infty  ) }  \left( \delta^{\frac{3 - \beta}{2}} + \frac{1}{\delta ^{\beta - 1} \varpi} \right),
\end{split}
\end{equation}
where we used $\alpha(s,X^m(s) ,V^m(s)) \lesssim e^{ C ( \| \nabla \phi^m \|_\infty^2 + \| \nabla^2 \phi^m \|_\infty ) }  \alpha(t,x,v)$.

If $t^1(t,x,v) \ge 0 $, the backward trajectory first hits the boundary, then from (\ref{C1trajectory}) we have the following line-by-line estimate
\[
 \begin{split}
| & \textbf{1}_{ \{ t^1 > 0 \}} e^{\theta' |v|^2 }   e^{-\varpi \langle v \rangle t } \alpha  \partial  f^{m+1} (t,x,v)|
 \\ \lesssim & P(\| e^{\theta |v|^2} f_0 \|_\infty) + T(1 + \|\nabla^2 \phi^m \|_\infty) \sup_{0 \le t \le T}  \| e^{\theta' |v|^2 } e^{-\varpi \langle v \rangle t  } \alpha  \partial  f^{m+1} (t,x,v) \|_\infty 
 \\ + & l (Ce^{Ct^2 } )^l \max_{1 \le i \le l-1} \| e^{\theta' |v|^2 }  \alpha \partial f_0 ^{m + 1 - i } \|_\infty
 \\  + & P(\| e^{\theta |v|^2} f_0 \|_\infty) \|  \sup_{0 \le t \le T}  \| e^{\theta' |v|^2 }  e^{-\varpi \langle v\rangle t } \alpha  \partial  f^{m+1} (t,x,v) \|_\infty
 \\ & \times (e^{Ct})^2 \int_{t_1}^t \int_{\mathbb R^3} e^{ -\int_s^t\frac{\varpi}{2} \langle V^m(\tau;t,x,v) \rangle d\tau }  \frac{e^{-\frac{C_\theta}{2} |V^m(s)-u|^2 }}{|V^m(s) -u |^{2 - \kappa}  }  \frac{\alpha(s,X^m(s) ,V^m(s))}{\alpha(s,X^m(s) ,u)} du  ds.
\\  + &  T  l (Ce^{Ct^2 } )^l \max_{1 \le i \le l-1} (1 + \|\nabla^2 \phi^{m-i} \|_\infty) \max_{1 \le i \le l-1} \sup_{0 \le t \le T} \| e^{\theta' |v|^2 }  e^{-\varpi \langle v \rangle t } \alpha  \partial  f^{m +1 - i} (t,x,v) \|_\infty  
\\ + & Tl (Ce^{Ct^2 } )^l   \max_{1 \le i \le l-1} (1 + \|\nabla^2 \phi^{m-i}  \|_\infty )P(\| e^{\theta |v|^2} f_0 \|_\infty) 
\\ + & P(\| e^{\theta |v|^2} f_0 \|_\infty)   \max_{1 \le i \le l-1} \sup_{0 \le t \le T}  \| e^{\theta' |v|^2 } e^{-\varpi \langle v \rangle t } \alpha  \partial  f^{m+1 - i} (t,x,v) \|_\infty
\\ & \times l (Ce^{Ct^2 } )^l \max_{ 1 \le i \le l-1} \int_{0}^{t^i} \int_{\mathbb R^3} e^{ -\int_s^{t^i} \frac{\varpi}{2} \langle V^{m-i}(\tau;t,x,v) \rangle d\tau }  \frac{e^{-\frac{C_\theta}{2} |V^{m-i}(s)-u|^2 }}{|V^{m-i}(s) -u |^{2 - \kappa}  }  \frac{\alpha(s,X^{m-i}(s) ,V^{m-i}(s))}{\alpha(s,X^{m-i}(s) ,u)} du  ds
\\ + & P(\| e^{\theta |v|^2} f_0 \|_\infty)   \max_{1 \le i \le l-1} \sup_{0 \le t \le T}  \| e^{\theta' |v|^2 } e^{-\varpi\langle v \rangle t } \alpha  \partial  f^{m+1 - i} (t,x,v) \|_\infty
\\ & \times l (Ce^{Ct^2 } )^l \max_{ 1 \le i \le l-1} \int_{t^{i+1}}^{t^i} \int_{\mathbb R^3} e^{ -\int_s^{t^i}\frac{\varpi}{2} \langle V^{m-i}(\tau;t,x,v) \rangle d\tau }  \frac{e^{-\frac{C_\theta}{2} |V^{m-i}(s)-u|^2 }}{|V^{m-i}(s) -u |^{2 - \kappa}  }  \frac{\alpha(s,X^{m-i}(s) ,V^{m-i}(s))}{\alpha(s,X^{m-i}(s) ,u)} du  ds
\\ + & C \left( \frac{1}{2} \right) ^l \sup_{0 \le t \le T} \|e^{\theta' |v|^2 }  e^{-\varpi \langle v \rangle t } \alpha  \partial  f^{m+1 - (l-1) } (t,x,v) \|_\infty.
\end{split}
\]
We again apply (\ref{genlemma1}) to get
\[
\begin{split}
| &  \textbf{1}_{ \{ t^1 > 0 \}}  e^{\theta' |v|^2 } e^{-\varpi \langle v \rangle t } \alpha  \partial  f^{m+1} (t,x,v) |
\\ \lesssim & C_l e^{Cl t^2 } \left( \delta^{\frac{3 - \beta}{2}} + \frac{1}{\delta ^{\beta - 1} \varpi}  \right)  P(\| e^{\theta |v|^2} f_0 \|_\infty) \max_{ 0 \le i \le l-1 } e^{ C ( \| \nabla \phi^{m-i} \|_\infty^2 + \| \nabla^2 \phi^{m-i} \|_\infty + \| \nabla \phi^{m-i} \|_\infty )}   
\\ & \times \max_{m -(l-2) \le i \le m} \sup_{0 \le t \le T}  \| e^{\theta' |v|^2 } e^{-\varpi \langle v \rangle t } \alpha  \partial  f^{i} (t,x,v) \|_\infty
\\ & +  T(1 + \|\nabla^2 \phi^m \|_\infty) \sup_{0 \le t \le T}  \| e^{\theta' |v|^2 } e^{-\varpi  \langle v \rangle t } \alpha  \partial  f^{m+1} (t,x,v) \|_\infty 
\\  + &  T  l (Ce^{Ct^2 } )^l \max_{1 \le i \le l-1} (1 + \|\nabla^2 \phi^{m-i} \|_\infty) \max_{1 \le i \le l-1} \sup_{0 \le t \le T} \| e^{\theta' |v|^2 } e^{-\varpi \langle v \rangle t } \alpha  \partial  f^{m +1 - i} (t,x,v) \|_\infty  
\\ + & Tl (Ce^{Ct^2 } )^l   \max_{1 \le i \le l-1} (1 + \|\nabla^2 \phi^{m-i}  \|_\infty )P(\| e^{\theta |v|^2} f_0 \|_\infty) 
+  l (Ce^{Ct^2 } )^l  \| \alpha \partial f_0  \|_\infty +   P(\| e^{\theta |v|^2} f_0 \|_\infty) 
\\ & + C \left( \frac{1}{2} \right) ^l \max_{m-(l-2) \le i \le m}  \sup_{0 \le t \le T} \|e^{\theta' |v|^2 } e^{-\varpi \langle v \rangle t} \alpha  \partial  f^{i } (t,x,v) \|_\infty.
\end{split}
\]

Now if we let $P(\| e^{\theta |v|^2} f_0\|_\infty ) + \| e^{\theta ' |v|^2 } \alpha \partial f_0 \|_\infty = M_1 <\infty $. From (\ref{c2bdofpotential}) and the induction hypothesis, from (\ref{c1bdinduction}) we have
\[ \begin{split}
\max_{0 \le i \le {l-1} } ( 1 + \| \nabla^2 \phi^{m-i} \|_\infty) \lesssim &  \max_{0 \le i \le l-1 } \left( \| e^{\theta |v|^2 } f^{m-i} (t) \|_\infty + \|  e^{-\varpi \langle v \rangle t} \alpha \nabla_{x} f^{m-i} (t) \|_\infty \right)
 \lesssim C_1M_1.
\end{split} \]
Therefore we have
\[
\begin{split}
|  \textbf{1}_{ \{ t^1 > 0 \}}  & e^{-\varpi \int_{0}^t \langle V^m(\tau) \rangle  d\tau } \alpha  \partial  f^{m+1} (t,x,v) |
\\ \lesssim & C_l e^{Cl t^2 } \left( \delta^{\frac{3 - \beta}{2}} + \frac{1}{\delta ^{\beta - 1} \varpi}  \right)  P(\| e^{\theta |v|^2} f_0 \|_\infty)e^{C_1 M + CM^2 }  
\\ & \times \max_{m -(l-2) \le i \le m} \sup_{0 \le t \le T}  \| e^{\theta ' |v|^2 } e^{-\varpi \langle v \rangle t } \alpha  \partial  f^{i} (t,x,v) \|_\infty
\\ & +  TC_1M \sup_{0 \le t \le T}  \| e^{\theta ' |v|^2 }  e^{-\varpi \langle v \rangle t } \alpha  \partial  f^{m+1} (t,x,v) \|_\infty 
\\  + &  T  l (Ce^{Ct^2 } )^l C_1M  \max_{1 \le i \le l-1} \sup_{0 \le t \le T} \| e^{\theta ' |v|^2 } e^{-\varpi \langle v \rangle t } \alpha  \partial  f^{m +1 - i} (t,x,v) \|_\infty  
\\ + & Tl (Ce^{Ct^2 } )^l  C_1MP(\| e^{\theta |v|^2} f_0 \|_\infty) 
\\ & +  l (Ce^{Ct^2 } )^l  \| e^{\theta ' |v|^2 } \alpha \partial f_0  \|_\infty +   P(\| e^{\theta |v|^2} f_0 \|_\infty) 
\\ & + C \left( \frac{1}{2} \right) ^l \max_{m-(l-2) \le i \le m}  \sup_{0 \le t \le T} \| e^{\theta ' |v|^2 } e^{-\varpi \langle v \rangle t } \alpha  \partial  f^{i } (t,x,v) \|_\infty.
\end{split}
\]

Finally we choose a large $l$ then large $C_1$ then small $\delta$ then large $\varpi$ and finally small $T$ to conclude the claim (\ref{C1bddinduction}):
\[ \begin{split}
 \sup_{0 \le t \le T} &  \| e^{-\varpi \langle v \rangle t } \alpha  \partial  f^{m+1} (t,x,v) \|_\infty
 \\ \le & \frac{1}{8}  \max_{m -(l-2) \le i \le m} \sup_{0 \le t \le T}  \| e^{-\varpi \langle v \rangle t } \alpha  \partial  f^{i} (t,x,v) \|_\infty
 + \frac{C_1}{2} \left(  \| e^{\theta ' |v|^2 } \alpha \partial f_0 \|_\infty + P ( \| e^{\theta |v|^2 } f_0 \|_\infty ) \right)
 \\ \le & \frac{1}{8} C_1 M + \frac{1}{2} C_1M < C_1 M.
\end{split} \]
This proves (\ref{c1bddsequence}).

		\vspace{4pt}

\textit{Step 3.} \quad
Now consider taking $\nabla_v$ derivative of the sequence (\ref{VPBsq1}) and adding the weight function $e^{-\varpi \langle v \rangle t}$, we get
\Be\begin{split}\label{eqtn_g_v}
			&[\p_t + v\cdot \nabla_x - \nabla_x \phi^m \cdot \nabla_v + \frac{v}{2} \cdot \nabla_x \phi^m + \varpi \langle v \rangle - \frac{v}{ \langle v \rangle } \varpi t \cdot \nabla_x \phi^m + \nu(\sqrt \mu f^m ) ]  (e^{-\varpi \langle v \rangle t} \nabla_v f )
			\\ =& e^{-\varpi \langle v \rangle t} \left( - \nabla_v \nu(\sqrt \mu f^m ) f^{m+1} - \nabla_x f^{m+1} - \frac{1}{2} \nabla_x\phi^m f^{m+1} + \nabla_v \Gamma_{\text{gain}}(f^m,f^m) \right),
		\end{split}\Ee
with the boundary bound for $(x,v) \in\gamma_-$
		\Be\label{bdry_g_v}
		\big|\nabla_v f^{m+1}  \big| \lesssim   |v| \sqrt{\mu} \int_{n \cdot u>0} |f^m| \sqrt{\mu} \{n \cdot u \} \dd u \ \ \text{on } \ \gamma_-.
		\Ee
And 
\[
 \frac{v}{2} \cdot \nabla_x \phi^m + \varpi \langle v \rangle - \frac{v}{ \langle v \rangle } \varpi t \cdot \nabla_x \phi^m + \nu(\sqrt \mu f^m ) > \frac{\varpi }{2} \langle v \rangle,
 \]
 for $\varpi \gg 1$.

We claim:
\begin{equation} \label{l3l1stabilityseq}
\sup_{m} \sup_{0 \le t \le T} \| e^{-\varpi \langle v \rangle t } \nabla_v f^m(t) \|_{L^3_x(\Omega)L_v^{1+\delta}(\mathbb R^3 ) } < \infty.
\end{equation}		
		Using the Duhamel's formulation, from (\ref{eqtn_g_v}) we obtain the following bound along the characteristics
		\begin{eqnarray}
		& \,\,\, & |e^{-\varpi \langle v \rangle t }  \nabla_v  f^{m+1}(t,x,v)|  \nonumber
		\\
		& \le &   \mathbf{1}_{ \{ \tb^m(t,x,v)> t \}}  
		 e^{ -\int_0^t  - \frac{C}{2}\langle V^m(\tau) \rangle d\tau } |\nabla_v f^{m+1}(0,X^m(0;t,x,v), V^m(0;t,x,v))|\label{g_initial}\\
		& + &   \ \mathbf{1}_{ \{ \tb^m(t,x,v)<t \} }
		 e^{-\varpi \langle \vb^m \rangle \tb} \mu(\vb^m)^{\frac{1}{4}}  \int_{n(\xb^m) \cdot u>0} 
		| f^m(t-\tb^m, \xb^m, u) |\sqrt{\mu} \{n(\xb^m) \cdot u\} \dd u\label{g_bdry}\\
		&  +&   \int^t_{\max\{t-\tb, 0\}} 
		 e^{ -\int_s^t  - \frac{\varpi}{2}\langle V^m(\tau) \rangle d\tau } e^{-\varpi \langle V^m(s) \rangle s }  
		  |\nabla_x f^{m+1}(s, X^m(s),V^m(s))|
		\dd s\label{g_x}\\
		&   + &\int^t_{\max\{t-\tb, 0\}} \label{g_Gamma}
		(1+ \| e^{\theta' |v|^2} f^m \|_\infty + \| e^{\theta' |v|^2 }f^{m+1}  \|_\infty )
		e^{ -\int_s^t  - \frac{\varpi}{2}\langle V^m(\tau) \rangle d\tau }  e^{-\varpi \langle V^m(s) \rangle s } 
		\\ \notag && \,\,\,  \times \int_{\R^3} \frac{e^{-C_{\theta'} |V^m(s) - u |^2 }}{ |V^m(s) - u | ^{2 -\kappa } } \nabla_v f^m(s,X^m(s),u)| \dd u 
		\dd s\label{g_K}\\
		& +  & \label{nablaphigint}
		\| e^{\theta' |v|^2} f^{m+1}\|_\infty \int^t_{\max\{t-\tb, 0\}} 
		  e^{ -\int_s^t  - \frac{\varpi}{2}\langle V^m(\tau) \rangle d\tau } e^{-\varpi \langle V^m(s) \rangle s } e^{-\theta' |V^m(s) |^2 }
		  \\&& \notag \times |\nabla_x \phi^m (s, X^m(s;t,x,v))| 
		\dd s. \label{g_phi}
		\end{eqnarray}
%
We first have
		\Be\begin{split}\label{est_g_initial}
			&\| (\ref{g_initial})\|_{L^3_x L^{1+ \delta}_v}\\
			\lesssim & \left(
			\int_{\O}
			\left(\int_{\R^3} |e^{\theta' |V^m(0) | ^2 } \nabla_v f^{m+1}(0,X^m(0 ), V^m(0 ))|^3 
			\right)
			\left(
			\int_{\R^3} e^{-(1+ \delta) \frac{3}{2-\delta} \theta' |V^m(0) | ^2 } \dd v \right)^{\frac{2-\delta}{1+ \delta}}
			\right)^{1/3} \\
			\lesssim & \
			\left(\iint_{\O \times \R^3} |e^{\theta' |V^m(0) | ^2 } \nabla_v f^{m+1}(0,X^m(0;t,x,v), V^m(0;t,x,v))|^3 \dd v \dd x\right)^{1/3}\\
			\lesssim & \ \| e^{\theta' |v| ^2 } \nabla_v f (0) \|_{L^3_{x,v}},
		\end{split}
		\Ee
		where we have used a change of variables $(x,v) \mapsto (X^m(0;t,x,v), V^m(0;t,x,v))$.

		\hide
		Also we use $|V(0;t,x,v)| \gtrsim |v|$ for $|v|\gg 1$, from (\ref{decay_phi}), and hence $\tilde{w}(V(0;t,x,v))^{- (1+ \delta) \frac{3}{2-\delta}} \in L_v^1 (\R^3)$.\unhide
		
		Clearly 
		\Be\label{est_g_bdry}
		\|(\ref{g_bdry})\|_{L^3_x L^{1+ \delta}_v}  \lesssim \sup_{0 \leq s \leq t} \| e^{\theta' |v|^2} f^m (s) \|_\infty.
		\Ee
		
		From $W^{1,2}(\O)\subset L^6(\O)\subset L^2(\O)$ for a bounded $\O \subset \R^3$, and the change of variables $(x,v) \mapsto (X(s;t,x,v), V(s;t,x,v))$ for fixed $s\in(\max\{t-\tb,0\},t)$,\Be
		\begin{split}\label{est_g_phi}
			\|(\ref{nablaphigint})\|_{L^3_x L^{1+ \delta}_v} 
			\lesssim & \  \| e^{\theta' |v|^2} f^{m+1}\|_\infty \int^t_{\max\{t-\tb,0\}} \| e^{- \frac{\theta' }{2}|v|^2 }  \nabla_x \phi^m(s,X(s;t,x,v))  \|_{L^3_{x,v} }\| 
			e^{- \frac{\theta' }{2}|v|^2}
			\|_{L^{
					\frac{3(1+ \delta)}{2- \delta}
				}_v}\\
			\lesssim & \ \| e^{\theta' |v|^2} f^{m+1}\|_\infty \int^t_{\max\{t-\tb,0\}} \| \nabla_x \phi^m (s)   \|_{L^3_{x} }
			\lesssim   \| e^{\theta' |v|^2} f^{m+1}\|_\infty \int^t_{\max\{t-\tb,0\}} \| \phi^m (s) \|_{W^{2,2}_{x} } 
			\\
			\lesssim & \ \| e^{\theta' |v|^2} f^{m+1}\|_\infty \int^t_{\max\{t-\tb,0\}} \| \int_{\mathbb R^3} \sqrt \mu f^m(s) dv - \rho_0 \|_{2}.
			\\ \lesssim &  t  \| e^{\theta' |v|^2} f^{m+1}\|_\infty \| e^{\theta' |v|^2 } f^m \|_\infty .
		\end{split}
		\Ee

%
%

		
		Next we have from (\ref{int1overalphadv}), (\ref{int1overxi}), for $\frac{3\delta}{ 2 (1+\delta) } < 1$, equivalently $0 < \delta < 2 $,
				\Be\label{init_p_xf}
		\begin{split}
			 \|(\ref{g_x})\|_{L^3_x L^{1+ \delta}_v}  \le &\left\|\left\| \int^t_{\max\{t-\tb, 0\}}
			\nabla_x f^{m+1}(s,X^m(s),V^m(s)) \dd s
			\right\|_{L_{v}^{1+ \delta}(  \R^3)}\right\|_{L^{3}_x}\\
			= & \ \left\|\left\| \int^t_{\max\{t-\tb, 0\}}    \frac{ e^{\theta' |V^m(s) |^2 } e^{-\varpi \langle V^m(s) \rangle s} \alpha \nabla_x f^{m+1}(s,X^m(s),V^m(s))}{e^{\theta' |V^m(s) |^2 } e^{-\varpi \langle V^m(s) \rangle s} \alpha}
			\dd s
			\right\|_{L_{v}^{1+ \delta}(  \R^3)}\right\|_{L^{3}_x}
			\\
			\le & \sup_{0 \le t \le T} \ \left\|  e^{\theta' |v|^2 } e^{-\varpi \langle v \rangle t} \alpha \nabla_x f^{m+1} \right\|_\infty \\
			& \times \left\|
			 \left\| \int^t_{\max\{t-\tb, 0\}}  \frac{e^{- \theta' |V^m(s) |^2 } e^{\varpi \langle V^m(s) \rangle s }}{\alpha(s, X^m(s), V^m(s) )}		\dd s
\right\|_{L_{v}^{1+\delta}( \R^3)}\right\|_{L^{3}_x}\\
			\lesssim &   e^{C ( \| \nabla \phi^m \|_\infty + \| \nabla \phi^m \|_\infty^2 +\| \nabla ^2 \phi^m \|_\infty) }
 \sup_{0 \le t \le T} \ \left\|  e^{\theta' |v|^2 } e^{-\varpi \langle v \rangle t } \alpha \nabla_x f^{m+1} \right\|_\infty  
					\\ & \times  t  \int_{\Omega }  \left( \int_{\mathbb R^3 }   \frac{e^{- \frac{\theta'}{2} |v |^2 }}{(\alpha (t, x, v ))^{1+\delta}}  \dd v \right)^{\frac{3}{1+\delta}}  \dd x			
		\\ \lesssim & t e^{C ( \| \nabla \phi^m \|_\infty + \| \nabla \phi^m \|_\infty^2 +\| \nabla ^2 \phi^m \|_\infty) }  \sup_{0 \le t \le T} \ \left\|  e^{\theta' |v|^2 } e^{-\varpi \langle v \rangle t } \alpha \nabla_x f^{m+1} \right\|_\infty,
		\end{split}\Ee
		where we have used 
		\[
		\alpha(s,X^m(s;t,x,v),V^m(s;t,x,v)) \ge  e^{ - C ( \| \nabla \phi^m \|_\infty + \| \nabla ^2 \phi^m \|_\infty) } \alpha(t,x,v).
		\]
		 		
Next, we consider (\ref{g_Gamma}). From (\ref{genlemma1}) and the computations in (\ref{int1overalphadv}), (\ref{int1overxi}), we have for $1 < \beta <2$,
\begin{equation} \label{nonlocall3l1est} \begin{split}
 & \|(\ref{g_Gamma})\|_{L^3_x L^{1+ \delta}_v}  
 \\ \le & \left\|\left\| \int^t_{\max\{t-\tb, 0\}}e^{ -\int_s^t  - \frac{\varpi}{2}\langle V^m(\tau) \rangle d\tau }  e^{-\varpi \langle V^m(s) \rangle s } 
 \int_{\R^3} \frac{e^{-C_{\theta'} |V^m(s) - u |^2 }}{ |V^m(s) - u | ^{2 -\kappa } } \nabla_v f^m(s,X^m(s),u)| \dd u \dd s
			\right\|_{L_{v}^{1+ \delta}(  \R^3)}\right\|_{L^{3}_x}
\\ \le & \sup_{0 \le t \le T} \ \left\|  e^{\theta' |v|^2 } e^{-\varpi \langle v \rangle t} \alpha \nabla_x f^{m} \right\|_\infty
\\ & \times   \left\|\left\| \int^t_{\max\{t-\tb, 0\}}e^{ -\int_s^t  - \frac{\varpi}{2}\langle V^m(\tau) \rangle d\tau } \int_{\R^3} \frac{e^{-C_{\theta'} |V^m(s) - u |^2 }}{ |V^{m}(s) - u | ^{2 -\kappa } } \frac{e^{- \theta' |u|^2 } e^{\varpi \langle V^{m-1}(s) \rangle s}}{\alpha(s,X(s),u)} \dd u \dd s
			\right\|_{L_{v}^{1+ \delta}(  \R^3)}\right\|_{L^{3}_x}
\\ \lesssim &e^{C \| \nabla \phi^{m-1} \|_\infty }   \sup_{0 \le t \le T} \ \left\|  e^{\theta' |v|^2 } e^{-\varpi \langle v \rangle t } \alpha \nabla_x f^{m} \right\|_\infty 
\\ & \times \left\|\left\| \int^t_{\max\{t-\tb, 0\}}e^{ -\int_s^t  - \frac{\varpi}{2}\langle V^m(\tau) \rangle d\tau } \int_{\R^3} \frac{e^{-C_{\theta'} |V^m(s) - u |^2 }}{ |V^{m}(s) - u | ^{2 -\kappa } } \frac{e^{- \frac{\theta'}{2} |u|^2 }  }{(\alpha(s,X(s),u))^\beta} \dd u \dd s
			\right\|_{L_{v}^{1+ \delta}(  \R^3)}\right\|_{L^{3}_x}
\\ \lesssim &e^{C (\| \nabla \phi^{m-1} \|_\infty  + \| \nabla \phi^m \| +  \| \nabla \phi^m \|^2 +  \| \nabla^2 \phi^m \|)}  \sup_{0 \le t \le T} \ \left\|  e^{\theta' |v|^2 } e^{- \varpi \langle v \rangle t } \alpha \nabla_x f^{m} \right\|_\infty 
\\ & \times \left\|\left\|
\frac{  \delta ^{\frac{3 -\beta}{2} }}{(\alpha(t,x,v))^{\beta -2 }  (|v|^2 + 1 )^{\frac{3 -\beta}{2}}}  + \frac{ (|v| +1)^{\beta -1} } { \delta ^{\beta -1 } \varpi \langle v \rangle  ( \alpha ( t,x,v) )^{\beta - 1 } }			\right\|_{L_{v}^{1+ \delta}(  \R^3)}\right\|_{L^{3}_x}
\\ \lesssim &e^{C (\| \nabla \phi^{m-1} \|_\infty  + \| \nabla \phi^m \| +  \| \nabla \phi^m \|^2 +  \| \nabla^2 \phi^m \|)}  \sup_{0 \le t \le T} \ \left\|  e^{\theta' |v|^2 } e^{-\varpi \langle v \rangle t } \alpha \nabla_x f^{m} \right\|_\infty 
\\ & \times \left( O(\delta^{\frac{3-\beta}{2}} )   + \frac{1}{ \delta ^{\beta -1 } \varpi}  \left\|\left\|
   \frac{ 1} { \langle v \rangle^{2 -\beta}  ( \alpha ( t,x,v) )^{\beta - 1 } }			\right\|_{L_{v}^{1+ \delta}(  \R^3)}\right\|_{L^{3}_x} \right)
\\ \lesssim & C(\delta^{\frac{3-\beta}{2}}  +  \frac{1}{ \delta ^{\beta -1 } \varpi}  )e^{C (\| \nabla \phi^{m-1} \|_\infty  + \| \nabla \phi^m \|_\infty +  \| \nabla \phi^m \|_\infty^2 +  \| \nabla^2 \phi^m \|_\infty)}  \sup_{0 \le t \le T} \ \left\|  e^{\theta' |v|^2 } e^{-\varpi \langle v \rangle t } \alpha \nabla_x f^{m} \right\|_\infty,
\end{split} \end{equation}
for $\beta$ satisfies $\frac{  (\beta-1)(1+\delta) -1}{2} \frac{3}{1+\delta} < 1$, which is equivalent to $\beta< \frac{5}{3} + \frac{1}{1+\delta}$. Therefore any $1 < \beta < \frac{5}{3}$ would work.

		Collecting terms from (\ref{g_initial})-(\ref{nablaphigint}), and (\ref{est_g_initial}), (\ref{est_g_bdry}), (\ref{est_g_phi}), (\ref{init_p_xf}), (\ref{nonlocall3l1est}), we derive
		\Be\begin{split}\label{bound_nabla_v_g}
			&\sup_m \sup_{0 \leq s \leq t}\| e^{-\varpi \langle v \rangle t }  \nabla_vf^m(s) \|_{L^3_xL^{1+ \delta}_v} \\
	\lesssim & \| e^{\theta' |v| ^2 } \nabla_v f (0) \|_{L^3_{x,v}} + \sup_m \| e^{\theta' |v|^2 } f^m \|_\infty )^2 + \sup_m \| e^{\theta' |v|^2 } f^m \|_\infty
	\\ & + \sup_m e^{C (\| \nabla \phi^{m} \|_\infty   +  \| \nabla \phi^m \|_\infty^2 +  \| \nabla^2 \phi^m \|_\infty)} \sup_m \sup_{0 \le t \le T} \ \left\|  e^{\theta' |v|^2 } e^{-\varpi \langle v \rangle t } \alpha \nabla_x f^{m} \right\|_\infty
	\\ < & \infty.
			\end{split} \Ee
This proves (\ref{l3l1stabilityseq}).


		\vspace{4pt}

\textit{Step 4.} \quad
let $ h^m = e^{-\varpi \langle v \rangle t } f^m $ where $f^m$ is constructed in (\ref{VPBsq1}). We claim for $\varpi \gg 1$, and $0<T \ll 1$ small enough, that
\begin{equation} \label{strongcovinL1+}
h^m \to h\text{ strongly in } \, L^{\infty}((0,T); L^{1+}( \Omega \times \mathbb R^3 )),
\end{equation}
for some $h$.
%
By direction computation we get from (\ref{VPBsq1}) that
\begin{equation} \begin{split} \label{VPBsqw}
 (\p_t + & v\cdot \nabla_x - \nabla_x \phi^m \cdot \nabla_v + \frac{v}{2} \cdot \nabla_x \phi^m + \varpi \langle v \rangle - \frac{v}{ \langle v \rangle } \varpi t \cdot \nabla_x \phi^m + \nu(\sqrt \mu f^m ) ) (h^{m+1}) 
 \\ = & e^{-\varpi \langle v \rangle t }  \Gamma_{gain} (f^m,f^m).
\end{split} \end{equation}
Note that $h^{m+1} - h^m$ satisfies $h^{m+1} - h^m)|_{t=0} \equiv0$, so from (\ref{VPBsqw}) we have
		\begin{equation} \label{wf^m+1-wf^m}
		\begin{split}
		& \big[\p_t + v\cdot \nabla_x- \nabla_x  \phi^m  \cdot \nabla_v
		 + \frac{v}{2} \cdot \nabla_x \phi^m + \varpi \langle v \rangle - \frac{v}{ \langle v \rangle } \varpi t \cdot \nabla_x \phi^m + \nu(\sqrt \mu f^m ) \big](h^{m+1}-h^{m}) \\
		&=  (\nabla_x \phi_{F^m}- \nabla_x \phi_{F^{m-1}} ) \cdot \nabla_v (h^{m})   - (\frac{v}{2} -\frac{v}{ \langle v \rangle } \varpi t  ) \cdot (\nabla_x \phi_{F^m} - \nabla_x \phi_{F^{m-1}}) h^{m}  \\
	           & + e^{-\varpi \langle v \rangle t }  \Gamma_{\text{gain}}(f^m,f^m) - e^{-\varpi \langle v \rangle t}\Gamma_{\text{gain}}(f^{m-1},f^{m-1}) - \nu (\sqrt \mu (f^{m} -f^{m-1} ) ) h^m.
		\end{split}\end{equation} 
Now since 
\[
\nu_{\varpi}^m := \frac{v}{2} \cdot \nabla_x \phi^m + \varpi \langle v \rangle - \frac{v}{ \langle v \rangle } \varpi t \cdot \nabla_x \phi^m + \nu(\sqrt \mu f^m ) > \frac{\varpi}{2} \langle v \rangle,
\]
for  $\varpi \gg 1$.
Then by the Green's theorem for $L^{1+ \delta}$-space with $0 < \delta \ll 1$, we obtain from (\ref{wf^m+1-wf^m}) that  
		\Be\begin{split}\label{f-g_energy}
			&\| [h^{m+1}- h^m](t)\|_{1+ \delta}^{1+ \delta} + \int^t_0 \| (\nu_{\varpi}^m )^{ {1}/{1+ \delta}} [h^{m+1}- h^m] \|_{1+ \delta}^{1+ \delta}  + \int_0^t |[h^{m+1}- h^m]|_{1+ \delta, + }^{1 + \delta} \\
			\leq & \ \| [h^{m+1}- h^m](0)\|_{1+ \delta}^{1+ \delta}+
			\int^t_0 \iint_{\O \times \R^3}
			|\text{RHS of } (\ref{wf^m+1-wf^m}) | |h^{m+1}- h^m|^{\delta}
			+\int_0^t |[h^{m+1}- h^m]|_{1+ \delta, - }^{1 + \delta} .
		\end{split}
		\Ee	
For $0 < \delta \ll 1$, by the H\"older inequality with $1=\frac{1}{\frac{3(1+ \delta)}{2- \delta}} + \frac{1}{3} +  \frac{1}{\frac{1+ \delta}{\delta}}$ and the Sobolev embedding $W^{1, 1+ \delta} (\O)\subset L^{\frac{3(1+ \delta)}{2- \delta}}(\O)$ when $\O \subset \R^3$,
		\Be\label{gronwall_f-g}
		\begin{split}
			&\int^t_0 \iint_{\O \times \R^3} |(\nabla_x \phi_{F^m}- \nabla_x \phi_{F^{m-1}}) \cdot \nabla_v h^m|  |h^{m+1}-h^m|^{\delta}  \\
			& \lesssim 
			\int^t_0 \| \nabla_x \phi_{F^m}- \nabla_x \phi_{F^{m-1}}\|_{L^{ \frac{3(1+ \delta)}{2- \delta}}_x} \| \nabla_v h^m \|_{L^{3}_xL^{1+ \delta}_v} \left\| |h^m-h^{m-1}|^\delta\right\|_{L_{x,v}^{\frac{1+ \delta}{\delta}}}\\
			& \lesssim \sup_{0 \leq s \leq t} \| \nabla_v h^m (s) \|_{L^{3}_xL^{1+ \delta}_v}\times \int^t_0 \| [h^m-h^{m-1}](s) \|_{1+ \delta}^{1+ \delta} \dd s.
		\end{split} \Ee
%
%
%
We also have
\Be \label{RHS1} \begin{split}
& \int_0^t  \int_\Omega \int_{\mathbb R^3 } e^{- ( 1+\delta) \varpi \langle v \rangle s }\Gamma_{\text{gain}} (f^m, f^m - f^{m-1} ) | (f^m - f^{m-1} ) (v) | ^\delta dv dx ds
\\ \lesssim & \int_0^t  \int_\Omega \int_{\mathbb R^3 }   e^{- ( 1+\delta) \varpi \langle v \rangle s } \| e^{\theta' |v|^2 } f^m \|_\infty  (\int_{\mathbb R^3 } \frac{e^{-C_{\theta'} |v-u|^2 }}{|v -u |^{2 - \kappa}  } | ( f^m  -f^{m-1} )(u) | du ) | (f^m - f^{m-1} ) (v) | ^\delta dv dx ds
\\   \lesssim &  \int_0^t \int_\Omega \int_{\mathbb R^3 }  e^{- ( 1+\delta) \varpi \langle v \rangle s }   \| e^{\theta' |v|^2 } f^m \|_\infty   (\int_{\mathbb R^3 } (\frac{e^{-C_{\theta'} |v-u|^2 }}{|v -u |^{2 - \kappa}  } )|  f^m (u)- f^{m-1} (u) |^{1+\delta} du)^{1/(1+\delta)}
\\   & \qquad \qquad \qquad  \times  |f^m(v) - f^{m-1}(v) | ^\delta dv dx ds
\\  \lesssim &  \| e^{\theta' |v|^2 } f^m \|_\infty \int_0^t \int_\Omega \int_{\mathbb R^3 }  e^{- ( 1+\delta) \varpi \langle v \rangle s }  | f^{m}(v) - f^{m-1}(v) | ^{1 +\delta} dvdxds 
\\ & + \| e^{\theta' |v|^2 }  f^m\|_\infty \int_0^t \int_\Omega \int_{\mathbb R^3 } \int_{\mathbb R^3 }  e^{- ( 1+\delta) \varpi \langle v \rangle s } (\frac{e^{-C_{\theta'} |v-u|^2 }}{|v -u |^{2 - \kappa}  } )|  f^m (u) - f^{m-1}(u) |^{1+\delta} du dv dx ds
\\  = &  \| e^{\theta' |v|^2}f^m  \|_\infty \int_0^t \int_\Omega \int_{\mathbb R^3 }   e^{- ( 1+\delta) \varpi \langle v \rangle s }| f^{m}(v)-f^{m-1}(v) | ^{1+ \delta} dvdxds 
\\ & + \| e^{\theta' |v|^2 }f^m  \|_\infty \int_0^t \int_\Omega \int_{\mathbb R^3 } (\int_{\mathbb R^3 } \left( \frac{e^{-\varpi \langle v \rangle s }}{e^{-\varpi \langle u \rangle s }} \right)^{1+\delta} \frac{e^{-C_{\theta'} |v-u|^2 }}{|v -u |^{2 - \kappa}  } ) dv) e^{- ( 1+\delta) \varpi \langle u \rangle s } |  f^m (u) - f^{m-1}(u) |^{1 + \delta} dudxds
\\  = &  \| e^{\theta' |v|^2 }f^m  \|_\infty \int_0^t \int_\Omega \int_{\mathbb R^3 }   e^{- ( 1+\delta) \varpi \langle v \rangle s } | f^{m}(v)-f^{m-1}(v) | ^{1+ \delta} dvdxds 
\\ & + \| e^{\theta' |v|^2 }f^m  \|_\infty \int_0^t \int_\Omega \int_{\mathbb R^3 } (\int_{\mathbb R^3 } \frac{e^{2(1+\delta)\varpi \langle v -u \rangle-C_{\theta'} |v-u|^2 }}{|v -u |^{2 - \kappa}  } ) dv)  e^{- ( 1+\delta) \varpi \langle u \rangle s } |  f^m (u) - f^{m-1}(u) |^{1 + \delta} dudxds
\\  \lesssim &  \| e^{\theta' |v|^2 } f^m \|_\infty  \int_0^t \int_\Omega \int_{\mathbb R^3 }  e^{- ( 1+\delta) \varpi \langle v \rangle s } |f^m(v)- f^{m-1}(v) | ^{1+\delta} dvdxds 
\\ & +  \| e^{\theta' |v|^2}f^m  \|_\infty \int_0^t \int_\Omega \int_{\mathbb R^3 }   e^{- ( 1+\delta) \varpi \langle u \rangle s } | f^m(u) - f^{m-1} (u) |^{1 + \delta} dudxds.
\end{split} \Ee
And similarly, we have
\Be \label{RHS2} \begin{split}
\nu(\sqrt \mu(f^m -f^{m-1}) ) & e^{- ( 1+\delta) \varpi \langle v \rangle s} |f^m(v) -f^{m-1}(v)|^\delta 
\\ & \lesssim \| e^{\theta'|v|^2 } f^m\|_\infty  \left(\int_{\mathbb R^3 } \frac{e^{-C_{\theta'} |v-u|^2 }}{|v -u |^{2 - \kappa}  } | ( f^m  -f^{m-1} )(u) | du \right)  e^{- ( 1+\delta) \varpi \langle v \rangle s } | (f^m - f^{m-1} ) (v) | ^\delta.
\end{split} \Ee
Thus we use \eqref{RHS1}, \eqref{RHS2} to conclude that 
\begin{equation} \label{hseqnonlocalbdd}
\begin{split}
\int_0^t \iint_{\Omega \times \mathbb R^3} & |\text{RHS of } (\ref{wf^m+1-wf^m}) | |h^{m+1}- h^m|^{\delta} 
\\ \lesssim & ( \max_{i=m,m-1} \sup_{0 \le s \le t} \| e^{\theta' |v|^2 } f^i(s) \|_\infty + \sup_{0 \leq s \leq t} \| \nabla_v h^m (s) \|_{L^{3}_xL^{1+ \delta}_v} ) \int_0^t \| [h^m - h^{m-1}](s) \|_{1+\delta}^{1+\delta}.
\end{split}
 \end{equation}
		
		Then following the argument of (\ref{gamma-intestimate}) and applying the trace theorem, we can obtain
		\Be\label{gamma_+,1+delta}
		\begin{split}
			&\int_0^t |[h^{m+1}-h^{m}]|_{1+ \delta, - }^{1 + \delta} \\
			&\lesssim   o(1) \int_0^t |[h^{m+1}-h^m]|_{1+ \delta, + }^{1 + \delta}+\| [h^{m+1}- h^m](0)\|_{1+ \delta}^{1+ \delta} \\
			&  +
			\sup_{0 \leq s \leq t}\big\{ 1+ \| \nabla_v h^{m-1} (s) \|_{L^{3}_xL^{1+ \delta}_v} + \|e^{\theta' |v|^2} f^{m-1}(s) \|_\infty + \|e^{\theta' |v|^2}  f^{m-2} (s) \|_\infty
			\big\}  \int^t_0 \| [h^{m-1}-h^{m-2}](s) \|_{1+ \delta}^{1+ \delta}.
		\end{split}
		\Ee
		
Now using $[h^{m+1} - h^m] ( 0) = 0 $, and combining (\ref{f-g_energy}), (\ref{hseqnonlocalbdd}), and (\ref{gamma_+,1+delta}) we conclude that
\[ \begin{split}
\sup_{0 \leq s \leq t} & \| h^{m+1} (s) - h^m(s) \|_{1+\delta}^{1+\delta} 
\\  \lesssim&  t(1 + \sup_{0 \le s \le t }  \sup_i  \| e^{\theta' |v|^2} f^i \|_\infty + \sup_{0 \leq s \leq t} \sup_{i} \| \nabla_v h^i (t) \|_{L^3_x L^{1+ \delta}_v} )
\\ & \times   ( \sup_{0 \le s \le t }  \| h^{m} (s) - h^{m-1}(s) \|_{1+\delta}^{1+\delta} +  \sup_{0 \le s \le t }  \| h^{m-1} (s) - h^{m-2}(s) \|_{1+\delta}^{1+\delta}).
\end{split} \]
Then by (\ref{uniformmlinftybdd}), (\ref{l3l1stabilityseq}), we have for $t \ll 1$ small enough,
\[ \begin{split}
\sup_{0 \leq s \leq t} & \| h^{m+1} (s) - h^m(s) \|_{1+\delta}^{1+\delta} + \sup_{0 \leq s \leq t}  \| h^{m+2} (s) - h^{m+1}(s) \|_{1+\delta}^{1+\delta} 
\\  \le&  O(t)    \left( \sup_{0 \le s \le t }  \| h^{m} (s) - h^{m-1}(s) \|_{1+\delta}^{1+\delta} +  \sup_{0 \le s \le t }  \| h^{m-1} (s) - h^{m-2}(s) \|_{1+\delta}^{1+\delta}\right).
\end{split} \]		
Therefore, inductively we have
\[ \begin{split}
& \sup_{0 \leq s \leq t}   \| h^{m+1} (s) - h^m(s) \|_{1+\delta}^{1+\delta} 
\\ & \le  \sup_{0 \leq s \leq t}  \| h^{m+1} (s) - h^m(s) \|_{1+\delta}^{1+\delta} + \sup_{0 \leq s \leq t}  \| h^{m+2} (s) - h^{m+1}(s) \|_{1+\delta}^{1+\delta} 
\\ & \le O(t)^m.
\end{split} \]
Hence we derive stability
\[
\sup_{0 \leq s \leq t}  \| h^{m} (s) - h^l(s) \|_{1+\delta}^{1+\delta}  \le O(t)^{\min\{m,l\}}.
\]
Therefore we conclude
\[
h^m \to h\text{ strongly in } \, L^{\infty}((0,T); L^{1+}( \Omega \times \mathbb R^3 )),
\]
for some $h$, and this proves (\ref{strongcovinL1+}).

		\vspace{4pt}

\textit{Step 5.} \quad
From (\ref{uniformmlinftybdd}) we have up to a subsequence the weak-$\ast$ convergence: $ e^{\theta' |v|^2} f^m(t,x,v) \overset{\ast}{\rightharpoonup} e^{\theta' |v|^2} f(t,x,v) $ in $L^\infty ([0,T) \times \Omega \times \mathbb R^3) \cap L^\infty ([0,T) \times \gamma) $ for some $f$. By (\ref{strongcovinL1+}) the limit is unique, therefore $ (e^{\theta' |v|^2} f^m(t,x,v), e^{\theta' |v|^2} f^{m+1}(t,x,v)) \overset{\ast}{\rightharpoonup} (e^{\theta' |v|^2} f(t,x,v), e^{\theta' |v|^2} f(t,x,v))$.

		Thus from (\ref{VPBsq1}), we have for any $\varphi \in C^\infty_c (\R \times \bar{\O} \times \R^3)$, 
		\Be\begin{split}\label{weak_form_ell}
			&\int_0^T\iint_{\O \times \R^3} f^{m+1} [-\p_t  - v\cdot \nabla_x  +\nabla_x \phi_E \cdot \nabla_v  + \frac{v}{2} \cdot \nabla_x \phi_E ] \varphi
			+\underbrace{ f^{m+1} \{\nabla_x \phi_{F^m} \cdot \nabla_v\varphi
				+ \frac{v}{2} \cdot \nabla_x \phi_{F^m} \varphi\}}_{(\ref{weak_form_ell})_\phi}
			\\
			=& \int_0^T \iint_{\O \times \R^3}  
			  \underbrace{\Gamma_{\text{gain}} ( {f^m} , {f^m} ) \varphi}_{(\ref{weak_form_ell})_\text{gain}} -   \underbrace{\nu (\sqrt \mu f^m ) f^{m+1}\varphi }_{(\ref{weak_form_ell})_{\text{loss}}}\\
			&+ \int^T_0 \int_{\gamma_+} f^{m+1} \varphi - \int^T_0 \int_{\gamma_-} 
			c_\mu \sqrt{\mu} \int_{n \cdot u>0} f^m \sqrt{\mu}\{n \cdot u\} \dd u 
			\varphi.
		\end{split}\Ee
Except the underbraced terms in (\ref{weak_form_ell}) all terms converges to limits with $f$ instead of $f^{m+1}$ or $f^m$.

		We define, for $(t,x,v) \in \mathbb{R} \times  \bar{\Omega} \times \mathbb{R}^{3}$ and for $0 < \delta \ll 1$, 
		\begin{equation}\label{Z_dyn}
		\begin{split}
		f_{\delta}^m(t,x,v) 
		:=  &  \ \kappa_\delta (x,v) f^m(t,x,v)
		\\
		: =  &  \  \chi\Big(\frac{|n(x) \cdot v|}{\delta}-1\Big) 
		\Big[ 1- \chi(\delta|v|) \Big]  
		f^m(t,x,v )
		.
		\end{split}
		\end{equation}
		Note that $f_\delta (t,x,v)=0$ if either $|n(x) \cdot v| \leq \delta$ or $|v| \geq \frac{1}{\delta}$.
		%
Now		\Be\notag
		\begin{split}
			&\left|\int^T_0 \iint (\ref{weak_form_ell})_\text{loss}
			- \int^T_0 \iint \nu (\sqrt{\mu } f) \varphi\right|
			\\
			\leq &\left| \int^T_0 \iint_{\O \times \R^3}
			\int_{\R^3}   |v-u|^\kappa q_0 \{f^m ( u) - f(u) \}\sqrt{\mu(u)}  \dd u f^{m+1} (v) \varphi(t,x,v
			) \dd v \dd x \dd t\right|\\
			&+\left|\int^T_0 \iint_{\O \times \R^3}
			\int_{\R^3}   |v-u|^\kappa q_0  f(u) \sqrt{\mu(u)}  \dd u \{ f^{m+1} (v)- f(v) \} \varphi(t,x,v
			) \dd v \dd x \dd t\right|.
		\end{split}\Ee
		The second term converges to zero from the weak$-*$ convergence in $L^\infty$ by (\ref{uniformmlinftybdd}). The first term is bounded by, from (\ref{uniformmlinftybdd}),
		\Be \label{difference_f^ell-f}
		\begin{split}
			&\left[\int^T_0 \left\|  \int_{\R^3} \kappa_\delta (x,u) (f^m(t,x,u) - f(t,x,u)) \langle u\rangle^\kappa \sqrt{\mu(u)} \dd u \right\|^2_{L^2(\O \times \R^3)}\right]^{1/2}\\
			&\times \sup_{0 \leq t \leq T} \|w_{\vartheta} f^{m+1}(t)\|_\infty 
			+O(\delta).
		\end{split} \Ee
		
		On the other hand, from Lemma \ref{extension_dyn}, we have an extension $\bar{f}^m(t,x,v)$ of $\kappa_\delta (x,u)  f^m(t,x,u)$. Note that from (\ref{linfinitybdofpotential}) $\sup_m \| \nabla \phi^m \|_\infty < \infty$, and $ \nabla \phi^{m-1} \cdot \nabla_v f^m = \nabla_v \cdot (  \nabla \phi^{m-1} f^m )$ with $\sup_m \| \nabla \phi^{m-1} f^m \|_{L^2} < \infty$. Thus we apply the average lemma (see Theorem 7.2.1 in page 187 of \cite{gl}, for example) to $\bar{f}^m(t,x,v)$. From (\ref{uniformmlinftybdd}),
		
		\Be\label{H_1/4}
		\sup_m\left\| \int_{\R^3}   \bar{f}^m(t,x,u)  
		{\langle u\rangle}^{\kappa} \sqrt{\mu(u)} \dd u
		\right\|_{H^{1/4}_{t,x} (\R \times \R^3)}< \infty.
		\Ee
		Then by $H^{1/4} \subset\subset L^2$, up to subsequence, we conclude that 
		\[
		\int_{\R^3} \kappa_\delta (x,u)  f^m (t,x,u)  \langle u\rangle^\kappa \sqrt{\mu(u)} \dd u 
		\rightarrow \int_{\R^3} \kappa_\delta (x,u)  f(t,x,u) \langle u\rangle^\kappa \sqrt{\mu(u)} \dd u \  \ \text{strongly in } L^2_{t,x}.
		\]
		So we conclude that $(\ref{difference_f^ell-f})\rightarrow 0$ as $m \rightarrow \infty$.
		
		For $(\ref{weak_form_ell})_{\text{gain}}$ let us use a test function $\varphi_1(v) \varphi_2 (t,x)$. From the density argument, it suffices to prove a limit by testing with $\varphi(t,x,v)$.
		
		We use a standard change of variables $(v,u) \mapsto(v^\prime,u^\prime)$ and $(v,u) \mapsto(u^\prime,v^\prime)$ (for example see page 10 of \cite{gl}) to get    
		\begin{eqnarray} 
		&&\int^T_0 \iint  (\ref{weak_form_ell})_{\text{gain}} - \int^T_0 \iint \Gamma_{\text{gain}}(f,f) \varphi \nonumber\\
		& =& \ \int^T_0 \iint\Gamma_{\text{gain}}(f^m - f,f^m) \varphi + \int^T_0 \iint\Gamma_{\text{gain}}( f,f^m-f) \varphi \nonumber \\
		&= &   \int^T_0 \iint_{\O \times \R^3} \left( \int_{\R^3}  \int_{\S^2} 
		(   f^m (t,x,u) -  f(t,x,u)  )   \sqrt{\mu(u^\prime)}     |v-u|^\kappa q_0   \varphi_1(v^\prime)            \dd \o\dd u\right)\nonumber\\
		&& \ \ \ \ \ \ \ \ \ \ \ \ \ \   \times 
		f^m (t,x,v)   
		\varphi_2 (t,x)
		\dd v \dd x\dd t  \label{weak_fell-f1}\\
		&  +&  \int^T_0 \iint_{\O \times \R^3} \left( \int_{\R^3}  \int_{\S^2} 
		(   f^m (t,x,u) -  f(t,x,u)  )   \sqrt{\mu(v^\prime)}    |v-u|^\kappa q_0  \varphi_1(u^\prime)            \dd \o\dd u\right)\nonumber\\
		&& \ \ \ \ \ \ \ \ \ \ \ \ \ \   \times 
		f (t,x,v)   
		\varphi_2 (t,x)
		\dd v \dd x\dd t  . \label{weak_fell-f2}
		\end{eqnarray} 
		For $N\gg 1$ we decompose the integration of (\ref{weak_fell-f1}) and (\ref{weak_fell-f2}) using 
		\Be
		\begin{split}\label{decomposition_N}
			1=& \{1- \chi (|u|-N)\}\{1- \chi (|v|-N)\}\\
			& +  \chi (|u|-N) + \chi (|v|-N) - \chi (|u|-N)   \chi (|v|-N).
		\end{split} \Ee
		Note that $\{1- \chi (|u|-N)\}\{1- \chi (|v|-N)\} \neq 0$ if $|v| \leq N+1$ and $|u| \leq N+1$, and if $ \chi (|u|-N) + \chi (|v|-N) - \chi (|u|-N)   \chi (|v|-N) \neq 0$ then either $|v|\geq N$ or $|u| \geq N$. From (\ref{uniformmlinftybdd}), the second part of (\ref{weak_fell-f1}) and (\ref{weak_fell-f2}) from (\ref{decomposition_N}) are bounded by 
		\Be
		\begin{split}
			&\int^T_0 \iint_{\O \times \R^3} \int_{\R^3} \int_{\S^2} [\cdots]  \times \{\chi (|u|-N) + \chi (|v|-N) - \chi (|u|-N)   \chi (|v|-N)\}\\
			\leq &  \ \sup_\ell \| w_{\vartheta}f^\ell \|_\infty \| w_{\vartheta} f\|_\infty \times 
			\{
			e^{-\frac{\vartheta}{2} |v|^2} e^{-\frac{\vartheta}{2} |u|^2} 
			\}\{ \mathbf{1}_{|v|\geq N} +  \mathbf{1}_{|u|\geq N}  \}\\
			\leq& \  O(\frac{1}{N}). \notag
		\end{split}
		\Ee	
		Now we only need to consider the parts with $ \{1- \chi (|u|-N)\}\{1- \chi (|v|-N)\}$. Then 
		\Be\begin{split}\label{bound_weak_fell-f1}
			&(\ref{weak_fell-f1})\\
			= & \int^T_0 \iint_{\O \times \R^3}    \int_{\R^3}  
			(   f^m (t,x,u) -  f(t,x,u)  ) \\
			& \ \ \ \ \ \ \ \ \ \ \ \ \ \   \ \  \times  \{1- \chi (|u|-N)\}\left(\int_{\S^2}  \sqrt{\mu(u^\prime)}     |v-u|^\kappa q_0   \varphi_1(v^\prime)            \dd \o \right)\dd u \\
			& \ \ \ \ \ \ \ \ \ \ \ \ \ \   \ \  \times 
			\{1- \chi (|v|-N)\}  f^m (t,x,v)   
			\varphi_2 (t,x)
			\dd v \dd x\dd t.
		\end{split}
		\Ee
		Let us define 
		\Be\label{Phi_v}
		\Phi _v(u ) :=  \{1- \chi (|u|-N)\}\int_{\S^2}  \sqrt{\mu(u^\prime)}     |v-u|^\kappa q_0   \varphi_1(v^\prime)            \dd \o \ \ \text{for} \ |v| \leq N+1.
		\Ee          
		
		For $0<\delta\ll1$ we have $O(\frac{N^3}{\delta^3})$ number of $v_i \in \R^3$ such that $\{v \in \R^3: |v| \leq N+1\}\subset\bigcup_{i=1}^{O(\frac{N^3}{\delta^3})} B(v_i, \delta)$. Since (\ref{Phi_v}) is smooth in $u$ and $v$ and compactly supported, for $0<\e\ll1$ we can always choose $\delta>0$ such that
		\Be\label{Phi_v_continuous}
		|\Phi_v(u) - \Phi_{v_i} (u)|< \e  \ \ \text{if} \ v \in B(v_i, \delta).
		\Ee
		
		Now we replace $\Phi_v(u)$ in the second line of (\ref{bound_weak_fell-f1}) by $\Phi_{v_i}(u)$ whenever $v \in B(v_i, \delta)$. Moreover we use $\kappa_\delta$-cut off in (\ref{Z_dyn}). If $v$ is included in several balls then we choose the smallest $i$.  From (\ref{Phi_v_continuous}) and (\ref{uniformmlinftybdd}) the difference of (\ref{bound_weak_fell-f1}) and the one with $\Phi_{v_i}(u)$ can be controlled and we conclude that
		\Be
		\begin{split}\label{bound_weak_fell-f1_ell}
			(\ref{bound_weak_fell-f1}) & =\{O( \e)+ O(\delta)\} \sup_{m} \| w_{\vartheta} f^m \|_\infty  ^2\\
			&+  \int^T_0 \int_{\O  } \sum_{i} \int_{\R^3} \mathbf{1}_{v \in B(v_i,\delta) } \int_{\R^3}  
			\kappa_\delta(x,u)  (   f^m (t,x,u) -  f(t,x,u)  ) \Phi_{v_i} (u)  \dd u \\
			& \ \ \ \ \ \ \ \ \ \ \ \ \ \ \ \ \ \ \ \ \  \ \ \ \ \ \ \ \ \ \  \times 
			\{1- \chi (|v|-N)\}  f^m (t,x,v)   
			\varphi_2 (t,x)
			\dd v \dd x\dd t.
		\end{split}\Ee
		From Lemma \ref{extension_dyn} and the average lemma
		\Be\label{H_1/4_gain}
		\max_{ 1\leq i \leq O(\frac{N^3}{\delta^3})}\sup_m \left\| \int_{\R^3} \kappa_\delta (x,u) f^m(t,x,u)  
		\Phi_{v_i} (u) \dd u
		\right\|_{H^{1/4}_{t,x} (\R \times \R^3)}< \infty.
		\Ee
		For $i=1$ we extract a subsequence $m_1 \subset\mathcal{I}_1$ such that 
		\Be\label{strong_converge_extract}
		\int_{\R^3} \kappa_\delta (x,u) f^{m_1}(t,x,u)  
		\Phi_{v_i} (u) \dd u \rightarrow \int_{\R^3} \kappa_\delta (x,u) f (t,x,u)  
		\Phi_{v_i} (u) \dd u \  \text{ strongly in }  \ L^2_{t,x}.
		\Ee
		Successively we extract subsequences $\mathcal{I}_{O(\frac{N^3}{\delta^3})} \subset \cdots\subset \mathcal{I}_2 \subset \mathcal{I}_1$. Now we use the last subsequence $m \in\mathcal{I}_{O(\frac{N^3}{\delta^3})}$ and redefine $f^m$ with it. Clearly we have (\ref{strong_converge_extract}) for all $i$. Finally we bound the last term of (\ref{bound_weak_fell-f1_ell}) by
		\Be\notag
		\begin{split}
			&C_{\varphi_2,N} \max_i \int^T_0\left\|
			\int_{\R^3} \kappa_\delta (x,u) (f^m(t,x,u)- f(t,x,u)  )
			\Phi_{v_i} (u) \dd u
			\right\|_{L^2_{t,x}} \sup_m \| w_{\vartheta} f^m \|_\infty\\
			&\rightarrow 0 \ \ \text{as} \ \ m \rightarrow \infty.
		\end{split}
		\Ee
		Together with (\ref{bound_weak_fell-f1_ell}) we prove $(\ref{weak_fell-f1})\rightarrow 0$. Similarly we can prove $(\ref{weak_fell-f2})      \rightarrow 0$.  
		
		Now we consider $(\ref{weak_form_ell})_{\phi}$. From 
		\Be\begin{split}\notag
			- ( \Delta \phi_{F^m}-  \Delta \phi) 
			= \int  \kappa_\delta (f^m- f)\sqrt{\mu} +   \int (1-\kappa_\delta) (f^m -f)\sqrt{\mu} ,
		\end{split} \Ee
		we have
		\Be\begin{split}
			\| \nabla_x \phi_{F^m} - \nabla_x \phi \|_{L^2_{t,x}} 
			\leq  \left\| \int  \kappa_\delta (f^m- f)\sqrt{\mu}\right\|_{L^2_{t,x}}
			+ O(\delta) \sup_m \| w_{\vartheta} f ^m \|_\infty.
		\end{split}
		\Ee
		Then following the previous argument, we prove $  \nabla_x \phi_{F^m} \rightarrow  \nabla_x \phi$ strongly in $L^2_{t,x}$ as $m \rightarrow \infty$. Combining with $e^{\theta'|v|^2} f^m \overset{\ast}{\rightharpoonup} e^{\theta ' |v|^2 }  f$ in $L^\infty$, we prove $\int^T_0 \iint_{\O \times \R^3}(\ref{weak_form_ell})_{\phi}$ converges to $\int^T_0 \iint_{\O \times \R^3}f  \{\nabla_x \phi \cdot \nabla_v\varphi
		+ \frac{v}{2} \cdot \nabla_x \phi \varphi\}$. This proves the existence of a (weak) solution $f \in L^\infty$. 
		
				\vspace{4pt}

\textit{Step 6.} \quad
From (\ref{uniformmlinftybdd}) and the weak-$*$ lower-semi continuity of $L^\infty$ we conclude (\ref{linfinitybddsolution}).
To prove (\ref{weightedC1bddsolution}), we have from (\ref{c1bddsequence}) that $e^{\theta' |v|^2}e^{-\varpi \langle v \rangle t } \partial f^{m+1}$ has (up to subsequence) a weak-$*$ limit. So for any test function $\varphi(t,x,v)$ we have
\[ \begin{split}
\lim_{m \to \infty} & \int_0^T \iint_{\Omega \times \mathbb R^3 } e^{\theta' |v|^2}e^{-\varpi \langle v \rangle t } \partial f^{m+1} \varphi 
\\ = & \lim_{m \to \infty}   \bigg( \int_0^T \iint_{\Omega \times \mathbb R^3 } \partial(e^{\theta' |v|^2}e^{-\varpi \langle v \rangle t }   \varphi )f^{m+1}
+ \int^T_0 \int_{\gamma_+} e^{\theta' |v|^2}e^{-\varpi \langle v \rangle t } f^{m+1} \varphi 
\\ &  -  \int^T_0 \int_{\gamma_-} 
			c_\mu \sqrt{\mu} e^{\theta' |v|^2}e^{-\varpi \langle v \rangle t } \int_{n \cdot u>0} f^m \sqrt{\mu}\{n \cdot u\} \dd u \varphi \bigg)
\\  =  & \int_0^T \iint_{\Omega \times \mathbb R^3 } \partial(e^{\theta' |v|^2}e^{-\varpi \langle v \rangle t }   \varphi )f^{}
+ \int^T_0 \int_{\gamma_+} e^{\theta' |v|^2}e^{-\varpi \langle v \rangle t } f^{} \varphi 
  -  \int^T_0 \int_{\gamma_-} 
			c_\mu \sqrt{\mu} e^{\theta' |v|^2}e^{-\varpi \langle v \rangle t } \int_{n \cdot u>0} f \sqrt{\mu}\{n \cdot u\} \dd u \varphi
\\  = &  \int_0^T \iint_{\Omega \times \mathbb R^3 } e^{\theta' |v|^2}e^{-\varpi \langle v \rangle t } \partial f^{} \varphi.
\end{split} \]
Therefore $e^{\theta' |v|^2}e^{-\varpi \langle v \rangle t } \partial f^{m+1} \overset{\ast}{\rightharpoonup} e^{\theta' |v|^2}e^{-\varpi \langle v \rangle t } \partial f \in L^\infty$. And (\ref{weightedC1bddsolution}) is obtained by the weak-$*$ lower-semi continuity.
And similarly, from (\ref{l3l1stabilityseq}) we conclude (\ref{L3L1plusbddsolution}). 

Finally, we prove the uniqueness of the solution. Assume $G_0(x,v) = \sqrt \mu g_0(x,v)$ satisfies (\ref{VPBf0assumption}) and $G(t,x,v) = \sqrt \mu g(t,x,v)$ is a solution to (\ref{Boltzmann_E}), (\ref{Bextfield1}), (\ref{VPB2}) with $g(0,x,v) = g_0(x,v)$. Now replace $h^{m+1} - h^m$ by $e^{-\varpi \langle v \rangle t} f - e^{-\varpi \langle v \rangle t} g  $ in the equation (\ref{wf^m+1-wf^m}) and by the same argument as (\ref{gronwall_f-g}) \--- (\ref{gamma_+,1+delta}) we conclude
\[
\| e^{-\varpi \langle v \rangle t} f(t) - e^{-\varpi \langle v \rangle t} g(t) \|_{L^{1+\delta}(\Omega \times \mathbb R^3 ) } \lesssim_t \| f_0 - g_0 \|_{L^{1+\delta}(\Omega \times \mathbb R^3 ) },
\]
and thus the uniqueness.
		\vspace{4pt}

\end{proof}

\appendix
\section{}
 
Recall $\kappa_\delta(x,v)$ in (\ref{Z_dyn}). Let us denote $f_{\delta}(t,x,v) 
	:=      \kappa_\delta (x,v) f(t,x,v)$. We assume that $f(s,x,v)=e^s f_0(x,v)$ for $s<0$. Then $
	\| f_{\delta} \|_{L^{2} (\mathbb{R} \times \Omega \times \mathbb{R}^{3})} 
	\lesssim   \| f \|_{L^{2} (\mathbb{R}_{+} \times \Omega \times \mathbb{R}^{3})}
	+ \| f_{0}\|_{L^{2} (\Omega \times \mathbb{R}^{3})}$, 
	$\| f_{\delta} \|_{L^{2} ( \mathbb{R} \times \gamma)}  \lesssim  \| f_{\gamma} \|_{L^{2} ( \mathbb{R}_{+} \times \gamma)} + \| f_{0} \|_{L^{2} (\gamma)}$.

\begin{lemma} \label{extension_dyn}Assume $\O$ is convex in (\ref{Bextfield1}) and $\sup_{0 \leq t \leq T}\|E(t)\|_{L^\infty (\O)} < \infty$. Let $\bar{E}(t,x) = \mathbf{1}_{\O}(x) E(t,x)$ for $x \in \R^3$. There exists $\bar{f}(t,x,v) \in L^{2}( \mathbb{R} \times   \mathbb{R}^{3} \times \mathbb{R}^{3})$, an extension of $f_{\delta}$, such that 
		\begin{equation}\notag
		\bar{f}  |_{\Omega \times \mathbb{R}^{3}}\equiv f_{\delta}    \  \text{ and } \  \bar{f}  |_{\gamma}\equiv f_{ \delta} |_{\gamma}   \  \text{ and } \ \bar{f } |_{t=0} \equiv f_{\delta} |_{t=0}.
		\end{equation} Moreover, in the sense of distributions on $\mathbb{R} \times \mathbb{R}^{3} \times \mathbb{R}^{3}$,
		\begin{equation}\label{eq_barf_dyn}
		[\partial_{t} +  v\cdot \nabla_{x} + \bar{E} \cdot \nabla_{v}]\bar{f} =  h  ,
		\end{equation}
		where
		\Be\begin{split}\label{barf_h}
			h_{} (t,x,v)=& \ 
			\kappa_\delta(x,v)    \mathbf{1}_{t \in [0,\infty)} 
			[ \partial_{t} + v\cdot \nabla_{x}   + E
			\cdot \nabla_{v}  ] f\\
			&
			+ \kappa_\delta(x,v)  \mathbf{1}%
			_{t \in ( - \infty, 0 ]} e^t
			[1  + v\cdot \nabla_{x}   + E
			\cdot \nabla_{v}] f_0 \kappa_\delta(x,v) \\
			& 
			+ f (t,x,v) [v\cdot \nabla_{x} + E
			\cdot \nabla_{v}] \kappa_\delta(x,v),\\
		\end{split}\Ee
		where $t_{\mathbf{b}}^{EX}, x_{\mathbf{b}}^{EX}, t_{\mathbf{f}}^{EX}, x_{\mathbf{f}}^{EX}$ are defined in (\ref{def_tb_EX}).
		
		Moreover,
		\Be\begin{split}\label{estimate_h1_h2}
			\| h_{} \|_{  L^{2}( \mathbb{R} \times   \mathbb{R}^{3} \times \mathbb{R}^{3})}  
			\lesssim  & \  \|  [ \partial_{t} + v\cdot \nabla_{x}   + E
			\cdot \nabla_{v}  ] f\|_{L^{2}(
				\mathbb{R}_{+} \times 
				\Omega \times \mathbb{R}^{3})} 
			+  \| f \|_{L^{2} (\R \times \Omega \times \mathbb{R}^{3})} \\
			&  
			+ \| [ v\cdot \nabla_{x} + E\cdot \nabla_{v} ] f_{0} \|_{L^{2 } (\Omega \times \mathbb{R}^{3})}.
		\end{split}\Ee
		
	\end{lemma}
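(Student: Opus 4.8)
The plan is to build $\bar f$ by extending $f_\delta$ to a function that is constant along every exterior characteristic of the field $\bar E$ and matches $f_\delta$ at each crossing of that characteristic with $\bar\Omega$, exploiting two facts: $\bar E\equiv 0$ on $\mathbb R^3\setminus\bar\Omega$, so exterior characteristics are straight lines along which the velocity is unchanged; and $\Omega$ is convex, so each such line meets $\bar\Omega$ in at most one closed interval. Concretely, set $\bar f:=f_\delta$ on $\bar\Omega\times\mathbb R^3$, and for $x\in\mathbb R^3\setminus\bar\Omega$ follow the line $s\mapsto x+(s-t)v$ until it first meets $\bar\Omega$: if it does so backward, at the exit data $(t-t_{\mathbf{b}}^{EX},x_{\mathbf{b}}^{EX},v)$ of (\ref{def_tb_EX}), put $\bar f(t,x,v):=f_\delta(t-t_{\mathbf{b}}^{EX},x_{\mathbf{b}}^{EX},v)$; if forward, at $(t+t_{\mathbf{f}}^{EX},x_{\mathbf{f}}^{EX},v)$, put $\bar f(t,x,v):=f_\delta(t+t_{\mathbf{f}}^{EX},x_{\mathbf{f}}^{EX},v)$; and if the line misses $\bar\Omega$ entirely, put $\bar f(t,x,v):=0$. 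By construction $\bar f|_{\Omega\times\mathbb R^3}=f_\delta$, $\bar f|_{\gamma}=f_\delta|_{\gamma}$, $\bar f|_{t=0}=f_\delta|_{t=0}$, and $\bar f$ is bounded by $\|f_\delta\|_\infty$ and measurable, the exterior exit times being semicontinuous. Note that no interior flow of $E$ is needed: on the exterior one only ever evaluates the given function $f_\delta$ at a boundary crossing.

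First I would record the two local computations. Convexity guarantees that $\bar f$ is constant along every exterior characteristic, so on the open set $(\mathbb R^3\setminus\bar\Omega)\times\mathbb R^3$ one has $[\partial_t+v\cdot\nabla_x+\bar E\cdot\nabla_v]\bar f=[\partial_t+v\cdot\nabla_x]\bar f=0$ a.e. (hence as a distribution on that open set), since $\bar f$ is constant along $s\mapsto x+(s-t)v$. On $\mathbb R\times\Omega\times\mathbb R^3$, $\bar f=\kappa_\delta(x,v)f(t,x,v)$, and the Leibniz rule --- together with $f(s)=e^sf_0$ for $s<0$ (so that $\partial_tf=e^sf_0$ and the bracketed operator produces the factor $[1+v\cdot\nabla_x+E\cdot\nabla_v]f_0$) and the initial condition $f(0)=f_0$ (so that $f$, hence $\bar f$, has no jump at $t=0$) --- gives exactly the three terms of $h$ in (\ref{barf_h}), which is a function supported in $\bar\Omega\times\mathbb R^3$.

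To glue these into the global identity (\ref{eq_barf_dyn}) one must check that $[\partial_t+v\cdot\nabla_x+\bar E\cdot\nabla_v]\bar f$ carries no surface-measure contribution on $\partial\Omega\times\mathbb R^3$, that is, the traces of $\bar f$ from $\Omega$ and from the exterior coincide $|v\cdot n(x)|\,dS_x\,dv$-a.e.; this is \emph{the main obstacle}. At a transversal crossing point $(t',x',v')$ with $x'\in\partial\Omega$, the exterior trace equals $f_\delta(t',x',v')$ by the definition of $\bar f$, which is also the interior trace of $f_\delta$. At the remaining, grazing, configurations --- precisely where the alternative ``line meets $\bar\Omega$'' / ``line misses $\bar\Omega$'' is ambiguous, equivalently where $|n(x)\cdot v|\le\delta$ --- one has $\kappa_\delta=0$, so $f_\delta$ vanishes on both sides and matching with the value $0$ is automatic. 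This is exactly where convexity and the support property ``$f_\delta=0$ when $|n(x)\cdot v|\le\delta$ or $|v|\ge1/\delta$'' enter in an essential way. Granting the trace identity, the two local computations combine to (\ref{eq_barf_dyn}) with $h$ as in (\ref{barf_h}).

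Finally I would establish the $L^2$ bounds. For $\bar f\in L^2(\mathbb R\times\mathbb R^3\times\mathbb R^3)$, the interior part is $\|f_\delta\|_{L^2(\mathbb R\times\Omega\times\mathbb R^3)}$, finite by the remark preceding the lemma; for the exterior part, parametrizing $x\notin\bar\Omega$ by its crossing point on $\partial\Omega$, its (unchanged) crossing velocity, and the distance $\ell\ge0$ along the line, an elementary change of variables (cf. Lemma~\ref{int_id} and Lemma~\ref{covlemma1}), with $d\ell=|v|\,ds$ for the time coordinate along the line and the surface element $|v\cdot n(x)|\,dS_x$ --- both producing factors bounded on $\{|v|\le1/\delta\}$ --- bounds it by $\lesssim_\delta\|f_\delta\|_{L^2(\mathbb R\times\gamma)}$, again finite by the remark. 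For (\ref{estimate_h1_h2}), apply the triangle inequality to the three terms of (\ref{barf_h}): the first is $\le\|[\partial_t+v\cdot\nabla_x+E\cdot\nabla_v]f\|_{L^2(\mathbb R_+\times\Omega\times\mathbb R^3)}$; the second, carrying $e^t\in L^2((-\infty,0])$, is $\lesssim\|f_0\|_{L^2(\Omega\times\mathbb R^3)}+\|[v\cdot\nabla_x+E\cdot\nabla_v]f_0\|_{L^2(\Omega\times\mathbb R^3)}$; and the third is $\lesssim_\delta\|f\|_{L^2(\mathbb R\times\Omega\times\mathbb R^3)}$, since for fixed $\delta$ the function $[v\cdot\nabla_x+E\cdot\nabla_v]\kappa_\delta$ is bounded on $\{|v|\le1/\delta\}$ (using $\|E\|_\infty<\infty$ and $\partial\Omega\in C^2$). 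This yields (\ref{estimate_h1_h2}) and completes the proof.
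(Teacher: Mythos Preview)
Your construction and verification of (\ref{eq_barf_dyn}) and (\ref{estimate_h1_h2}) follow the paper exactly: extend $f_\delta$ along the exterior free-streaming characteristics (convexity making the crossing unique, the $\kappa_\delta$-cutoff killing grazing so the interior and exterior boundary traces match), then compute $h$ by the Leibniz rule inside $\Omega$ using the negative-time extension $f(s)=e^{s}f_0$, and bound the three pieces of $h$ termwise. The paper simply cites Lemma~\ref{tracebddpotential} for (\ref{estimate_h1_h2}), but your direct triangle-inequality estimate is exactly what that amounts to.

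One step does not close as written. In your exterior $L^2$ argument, after the change of variables $(t,x)\mapsto(t',x'_\parallel,s)$ with $t=t'+s$ and $x=x'+sv$, the Jacobian is $|v\cdot n(x')|\,dS_{x'}\,dt'\,ds$ and the integrand $|f_\delta(t',x',v)|^2$ is independent of $s$; the factor $\int_0^\infty ds$ therefore diverges, so in general $\bar f\notin L^2(\mathbb{R}\times\mathbb{R}^3\times\mathbb{R}^3)$. The paper's own proof does not address this point either --- it only proves $h\in L^2$ --- so this is a defect in the lemma's statement rather than in your strategy. For the intended application to velocity averaging one restricts to a bounded $(t,x)$-window, where $s$ then ranges over a bounded interval and your bound $\lesssim_\delta\|f_\delta\|_{L^2(\mathbb{R}\times\gamma)}$ becomes correct.
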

	
	\begin{proof} In the sense of distributions  
		\begin{equation} \label{eqtn_f_delta}
		\partial_{t} f_{\delta}+ v\cdot \nabla_{x} f_{\delta} + E
		\cdot \nabla_{v} f_{\delta} = h  \text{ in }  (\ref{barf_h}).
		\end{equation}
		Clearly $| [v\cdot \nabla_{x} + E
		\cdot \nabla_{v}] \kappa_\delta(x,v)| \lesssim_\delta 1$.\hide
		\begin{eqnarray}
		&&\Big|\{v\cdot \nabla_{x} + \e^{2} \Phi \cdot \nabla_{v}\} [1-\chi(\frac{%
			n(x) \cdot v}{\delta}) \chi \big( \frac{ \xi(x )}{\delta}\big) ]
		\chi(\delta|v|)\Big|  \label{der_chi} \\
		&=&\Big| - \frac{1}{\delta} \{v\cdot \nabla_{x} n(x) \cdot v + \e^{2} \Phi
		\cdot n(x) \} \chi^{\prime} \big(\frac{n(x) \cdot v}{\delta} \big) \chi %
		\big( \frac{ \xi(x )}{\delta}\big) \chi(\delta|v|)  \notag \\
		&& - \ \frac{1}{\delta} v\cdot \nabla_{x} \xi(x) \chi^{\prime} \big( \frac{%
			\xi(x)}{\delta}\big) \chi (\frac{n(x) \cdot v}{\delta}) \chi(\delta|v|) + \e%
		^{2}\delta \Phi \cdot \frac{v}{|v|} \chi^{\prime} (\delta|v|)[1-\chi(\frac{%
			n(x) \cdot v}{\delta}) \chi \big( \frac{\xi(x)}{\delta}\big) ] \Big|  \notag
		\\
		&\leq& \frac{4}{\delta}( |v|^{2}\|\xi\|_{C^2} + \e^{2}\|\Phi\|_\infty )
		\chi(\delta|v|) + \frac{C_{\Omega}}{\delta} |v|\chi(\delta|v|) + \e^{2}
		\delta\|\Phi\|_\infty \mathbf{1}_{|v| \leq {2}{\delta}^{-1}}  \notag \\
		&\lesssim & {\delta^{-3}} \mathbf{1}_{|v| \leq 2 \delta^{-1}}.  \notag
		\end{eqnarray}\unhide

		For $x \in \R^3 \backslash \bar{\O}$ we define
		\Be\begin{split}\label{def_tb_EX}
			\tb^{EX}(x,v) &:= \sup\{s \geq 0: x-\tau v \in \R^3 \backslash \bar{\O}
			\ \text{ for all } \ \tau \in (0,s)
			\},\\
			\tf^{EX}(x,v) &:= \sup\{s \geq 0: x+\tau v \in \R^3 \backslash \bar{\O}
			\ \text{ for all } \ \tau \in (0,s)
			\},
		\end{split}\Ee
		and $\xb^{EX}(x,v) = x- \tb^{EX}(t,x,v))v$, $\xf^{EX}(x,v) = x + \tf^{EX}(t,x,v))v$.
		
		We define, for $x \in \R^3 \backslash \bar{\O}$,
		\Be\label{def_f_E}
		\begin{split}
			f_E (t,x,v) =& \mathbf{1}_{\xb^{EX} (t,x,v) \in \p \O} f_\delta(t-\tb^{EX}(x,v), \xb^{EX} (x,v),v)\\
			+& \mathbf{1}_{\xf^{EX} (t,x,v) \in \p \O}f_\delta(t+\tf^{EX}(x,v), \xf^{EX} (x,v),v).
		\end{split}\Ee
		Recall that, from (\ref{Z_dyn}), $f_\delta\equiv0$ when $n(x) \cdot v = 0$, and hence $f_E\equiv0$ for $n(x) \cdot v = 0$. Since $\O$ is convex if $v\neq 0$ then $\{\xb^{EX} (x,v) \in \p \O\} \cap \{\xf^{EX} (x,v) \in \p \O\}= \emptyset$. Note that 
		\Be\label{no_jump_bdry}
		f_E(t,x,v) = f_\gamma(t,x,v) = f_\delta (t,x,v) \ \ \text{for }   x \in \p\O.
		\Ee
			And since for any $s>0$, 
			\[ \begin{split}
			&(t +s - \tb^{EX}(x+sv,v), \xb^{EX}(x+sv,v),v  ) = (t - \tb^{EX}(x,v),\xb^{EX}(x,v),v) 
			\\ & (t +s + \tf^{EX}(x+sv,v), \xf^{EX}(x+sv,v),v  ) = (t - \tf^{EX}(x,v),\xf^{EX}(x,v),v),
			\end{split} \]
		so in the sense of distribution, in $\R \times [\R^3 \backslash \bar{\O}] \times \R^3$ 
		\Be
		\label{eqtn_f_E}
		\p_t f_E + v\cdot \nabla_x f_E = 0.
		\Ee

		We define 
		\Be\label{def_bar_f}
		\bar{f}(t,x,v) : = \mathbf{1}_{\O} (x)  f_\delta (t,x,v)
		+ \mathbf{1}_{\R^3 \backslash \bar{\O}} (x) f_E (t,x,v).
		\Ee

		From (\ref{eqtn_f_delta}), (\ref{no_jump_bdry}), and (\ref{eqtn_f_E}) we prove (\ref{eq_barf_dyn}). The estimates of (\ref{estimate_h1_h2}) are direct consequence of Lemma \ref{tracebddpotential}.
		\end{proof}

\textbf{Acknowledgements.} This paper is part of the author's thesis. He thanks his advisor Professor Chanwoo Kim for helpful discussions. The research is supported in part by National Science Foundation under Grant No. 1501031.

\end{document}